\definecolor{MyDarkBlue}{cmyk}{0.8,0.3,0.8,0.4}
\definecolor{yellow}{rgb}{0.99,0.99,0.70}
\definecolor{white}{rgb}{1.0,1.0,1.0}
\definecolor{black}{rgb}{0.00,0.00,0.00}
\numberwithin{equation}{section}
\newcommand{\be}{\begin{eqnarray}}
\newcommand{\ee}{\end{eqnarray}}
\newcommand{\ce}{\begin{eqnarray*}}
\newcommand{\de}{\end{eqnarray*}}
\newtheorem{theorem}{Theorem}[section]
\newtheorem{lemma}{Lemma}[section]
\newtheorem{definition}{Definition}[section]
\newtheorem{corollary}{Corollary}[section]
\newtheorem{hypothesis}{Hypothesis}[section]
\newtheorem{remark}[theorem]{Remark}
\def\[{{\Big[}}
\def\]{{\Big]}}
\def\<{{\langle}}
\def\>{{\rangle}}
\def\({{\big(}}
\def\){{\big)}}
\def\bb2{{\boldsymbol{2}}}
\def\={&\!\!=\!\!&}
\def\b1{{\mathbbm 1}}
\def\geq{\geqslant}
\def\leq{\leqslant}
\def\[{{\Big[}}
\def\]{{\Big]}}
\def\<{{\langle}}
\def\>{{\rangle}}
\def\={&\!\!=\!\!&}
\def\bt{\begin{theorem}}
\def\et{\end{theorem}}
\def\bl{\begin{lemma}}
\def\el{\end{lemma}}
\def\br{\begin{remark}}
\def\er{\end{remark}}
\def\bd{\begin{definition}}
\def\ed{\end{definition}}
\def\bc{\begin{corollary}}
\def\ec{\end{corollary}}
\def\geq{\geqslant}
\def\leq{\leqslant}
\def\<{\langle} \def\>{\rangle}
\def \eref#1{\hbox{(\ref{#1})}}
\begin{document}

\title[Stochastic 3D Navier-Stokes Equations in $\mathbb{H}^{\frac{1}{2}}$]
{Stochastic Forced 3D Navier-Stokes Equations in $\mathbb{H}^{1/2}$-Space $^\dagger$}

%\thanks{$\dagger$
% The research of W. Liu is supported by NSFC (No.~12171208, 11831014, 12090011) and the PAPD of Jiangsu Higher Education Institutions. }

\thanks{$\dagger$
This work is supported by National Key R\&D program of China (No.~2023YFA1010101). The research of W. Hong is supported by  NSFC (No.~12401177) and Basic
Research Program of Jiangsu (No.~BK20241048).  The research of S. Li is supported by NSFC (No.~12371147). The research of W. Liu is supported by NSFC (No.~12571155). }

\maketitle
\centerline{\scshape Wei Hong, Shihu Li,   Wei Liu\footnote{Corresponding author: weiliu@jsnu.edu.cn}}

\vspace{3mm}
 {\footnotesize
\centerline{   School of Mathematics and Statistics, Jiangsu Normal University, Xuzhou 221116, China}}

\begin{abstract}
In the classical work \cite{FK}, Fujita and Kato established  the local existence of solutions to the 3D  Navier-Stokes equations in the critical $\mathbb{H}^{1/2}$-space. In this paper, we  are concerned with the global well-posedness of the stochastic forced 3D  Navier-Stokes equations  in the $\mathbb{H}^{1/2}$-space under general initial  conditions, where the stochastic forcing comprises a transport forcing and a nonlocal turbulent forcing.
  In this setting, the random noise  is shown to provide a  regularization effect on the energy estimates, which we obtain by constructing suitable Lyapunov functions. However, its nonlocality  also brings analytical challenges.  We develop a bootstrap type estimate based on the kinematic viscosity together with a delicate stopping time argument to  prove the  global existence and  uniqueness of solutions, as well as   continuous dependence on the initial value. Furthermore, we also investigated the long-time behavior of the stochastic forced 3D  Navier-Stokes equations.

%In particular, one cannot show the convergence of the approximating sequence associated to  stochastic forced 3D Navier-Stokes equations in any space of continuous functions  on the interval $[0,T]$. To this end, we employ an improved regularity by the kinematic viscosity and the delicate stopping time argument  to establish the global existence and uniqueness and the continuous dependence on initial values of solutions.

\bigskip
\noindent
\textbf{Keywords}: 3D Navier-Stokes equations;  Nonlocal stochastic forcing;  Well-posedness; Decay estimate; Invariant measure
\\
\textbf{Mathematics Subject Classification (2020)}: 60H15, 35R60, 60H50

\end{abstract}
\maketitle

%\pagecolor{MyDarkBlue}\color{yellow}
\tableofcontents

\section{Introduction}
\subsection{Background of Navier–Stokes equations}

\subsubsection{Deterministic case}

The classical 3D Navier–Stokes equations describe the time evolution of an
incompressible fluid, which are given by
\begin{equation}\label{DNS}
\begin{cases}
&\!\!\!\!\!\!\partial_tu + (u \cdot \nabla ) u
  -  \nu \Delta u  + \nabla p = f , \\
&\!\!\!\!\!\!\nabla\cdot u =0, \\
&\!\!\!\!\!\!u(0) = x,
\end{cases}
\end{equation}
where $u:=u(t,\xi)$ is the unknown solenoidal velocity field of the fluid, $p:=p(t,\xi)$ denotes the pressure and $f:=f(t,\xi)$ is an external force field acting on the fluid.

It is well known that the system \eref{DNS} possesses an invariant scaling structure; if $u$ solves \eref{DNS}, then the rescaled functions $$u_\lambda(t,\xi):=\lambda u(\lambda^2t,\lambda \xi)$$ also satisfy \eref{DNS} with $f$ replaced by $f_\lambda(t, \xi) := \lambda^3f(\lambda^2t,\lambda \xi)$ for all $\lambda>0$.
Furthermore, we call that the solution space $\mathbb{X}$ is critical if $\|u_\lambda(0,\cdot)\|_{\mathbb{X}}=\|u(0,\cdot)\|_{\mathbb{X}}$, for all $\lambda>0$. Examples of such critical spaces include
$$\mathbb{H}^{1/2}\hookrightarrow L^3\hookrightarrow B^{-1+\frac{3}{p}}_{p,\infty}\hookrightarrow {BMO}^{-1},$$
where $p<\infty$. The critical spaces
play an crucial role for the theory of nonlinear partial differential equations. If the
equation has a class of scaling invariance, then it coincides with the most suitable space
to construct the solution which is expected unique and regular.

In the classical work \cite{FK}, Fujita and Kato proved the local existence of solutions to 3D Navier-Stokes equations in the critical $\mathbb{H}^{1/2}$-space. Kato~\cite{Ka84}, Giga and Miyakawa~\cite{GM85} studied the solutions of $n$-dimensional Navier-Stokes equations with initial data in the critical $L^n$-space. For more works on the study of Navier-Stokes equations in  critical spaces, we also refer interested readers to \cite{CP96,FT25,KK11,KT01} and references therein.

\subsubsection{Stochastic case}
The relationship between  Navier-Stokes equations and hydrodynamic turbulence is regarded as one of the most challenging problems in fluid mechanics. It is commonly believed that the start of turbulence is associated with the randomness of the background fluid movement. In order to capture the chaotic characteristics and intrinsic randomness of fluids, it is highly necessary to study the corresponding stochastic forced Navier-Stokes equations.

Since the pioneering work \cite{BT73} by Bensoussan and Temam,  the stochastic forced Navier-Stokes equations have attracted extensive attention. It is known that the stochastic forced Navier-Stokes equations could be derived from the stochastic flow as well as the Newton's second law. For example,  Mikulevicius and Rozovskii in \cite{MR04} assumed that the dynamics of fluid particles were characterized by the
stochastic Lagrangian flow
\begin{equation}\label{SF}
\begin{cases}
&\!\!\!\!\!\!d \mathcal{X}(t,\xi)  =  u(t,\mathcal{X}(t,\xi))dt+  \zeta(t,\mathcal{X}(t,\xi))\circ\dot{\mathcal{W}}(t) , \\
&\!\!\!\!\!\!\mathcal{X}(0,\xi) = \xi,
\end{cases}
\end{equation}
with undetermined local characteristics $u(t, \xi)$, where $\circ$ denotes the  integration of Stratonovich type. In the flow \eref{SF}, the random field $\zeta (t, \xi)\circ\dot{\mathcal{W}}(t)$ models the turbulent part of the velocity field, while $u(t, \xi)$ models its regular component,
this idea of splitting up the velocity field into a sum of slow oscillating (deterministic) component and fast oscillating (stochastic) component can be traced back to the works of Reynolds in 1880s as mentioned in \cite{MR04}.

By the classical scheme of Newtonian fluid mechanics, Mikulevicius and Rozovskii derived the following stochastic forced Navier–Stokes equations
\begin{equation}\label{sNS01}
\begin{cases}
&\!\!\!\!\!\!\partial_t u + (u \cdot \nabla ) u
  -  \nu \Delta u  + \nabla p =    [(\zeta\cdot\nabla)u-\nabla \tilde{p}+g(u)]\circ\dot{\mathcal{W}}(t) , \\
&\!\!\!\!\!\!\nabla\cdot u =0, \\
&\!\!\!\!\!\!u(0) = x,
\end{cases}
\end{equation}
where $\tilde{p}$ denotes the unknown pressure, see \cite[Section 2]{MR04} for more details.
During recent decades, the research on the well-posedness and asymptotic properties of stochastic forced Navier-Stokes equations (\ref{sNS01}) have been widely investigated, the interested readers can refer to \cite{AKX25,AV24,BBP22,BZPW,CD19,DD03,FL21,FR08,HM06,HZZ,KNS20,Kim10,LR1} and the references
therein.

\subsection{Well-posedness}\label{secmr}

In recent years, there  has been a growing interest in the study of stochastic forced Navier-Stokes equations in the critical space. Several notable works have contributed to this area.  For example, Agresti and Veraar \cite{AV24}  applied the maximal regularity theory to construct local solutions for initial data in a range of Besov spaces and obtained the global existence of solutions with high probability for small initial data in critical spaces. Recently, Aydin, Kukavica and Xu \cite{AKX25} also considered the global existence of solutions to the stochastic forced Navier-Stokes equations with multiplicative noises and with small initial data in the critical $\mathbb{H}^{1/2}$-space, they proved that the solutions
exist globally in time with high probability, which is close to $1$ if the initial data and the noise are sufficiently small. 
%One can see also \cite{KX25} for studies concerning the global existence of solutions to stochastic forced 3D Navier-Stokes equations in $L^3$-space with small data.

In addition, the well-posedness of stochastic forced 3D Navier-Stokes equations with subcritical initial data has also drawn a lot of attention.
For instance, Kim~\cite{Kim10} first obtained the local existence of solutions in 3D
with $\mathbb{H}^s$ ($s > 1/2$) initial data, and derived the global existence of solutions with a high probability for small initial data.
Fernando et al. \cite{FRS} and Kukavica et al. \cite{KXZ22} studied the local existence and uniqueness of solutions to stochastic forced 3D Navier-Stokes equations in $L^p$-spaces.
%In \cite{KXZ22}, Kukavica et al. established the existence and uniqueness of local strong solution for stochastic 3D Navier-Stokes equations with initial data in $L^p$, for $p > 5$. They also address the global existence of the solution when the initial data is small in $L^p$, with the same range of $p$.

The above literatures shows that
\begin{itemize}
  \item In the stochastic setting, the local existence and uniqueness of solutions for stochastic forced 3D Navier-Stokes equations with critical initial data have been  well studied. However, the global well-posedness results  have only been established under the small initial data assumption, which coincides with the deterministic case, and with high probability (not probability one). Up to now, the global well-posedness of  the stochastic forced 3D Navier-Stokes equations in the  critical space has not been resolved under general initial conditions.

  \item Due to the lack of the global well-posedness  under general initial conditions, the subsequent and considerably more delicate problem, for example studying the long-time behavior, such as the  ergodicity  of the associated Markov semigroup, of the stochastic forced 3D Navier-Stokes equations seems far out of reach.
\end{itemize}

Motivated by the aforementioned points, our goal here is to address the global well-posedness of the stochastic forced 3D Navier-Stokes equations in the critical $\mathbb{H}^{1/2}$-space, and to further investigate their long-time behavior. To be more precise, we consider the following stochastic Navier-Stokes equations  on the 3D torus $\mathbb{T}^3$,  which involve  a transport noise and, possibly, a nonlocal stochastic turbulent forcing,  i.e.
\begin{equation}\label{sNS}
\begin{cases}
&\!\!\!\!\!\!d u + [(u \cdot \nabla ) u
  -  \nu \Delta u  + \nabla p]dt =  \sum_{i=1}^{\infty}\big[((\zeta_i\cdot\nabla)u-\nabla \tilde{p})d\beta_i(t)+ g_i(t,u) d\hat{\beta}_i(t)\big] , \\
&\!\!\!\!\!\!\nabla\cdot u =0, \\
&\!\!\!\!\!\!u(0) = x\in \mathbb{H}^{1/2},
\end{cases}
\end{equation}
where $(\beta_i)_{i\geq 1}$ and $(\hat{\beta}_i)_{i\geq 1}$ are sequences of independent standard Brownian motions on a complete filtered probability space. The transport noise is now widely serves as a representation for studying the influence of small-scale turbulence on large-scale fluid dynamics (cf.~\cite{MR04}). In the present work, we are also interested in capturing the intrinsic impact for a class of nonlocal stochastic turbulent forcing in fluid dynamics, see Subsection \ref{noise} for more precise statement.

%Let us first briefly collect our main results established in this work.
 We first present the global well-posedness of Eq.~(\ref{sNS}) with probability one    under suitable  assumptions concerning the coefficients $\zeta$ and $g$. More precise statement of our results is provided in Theorem \ref{thg} and Theorem \ref{thF}.

\begin{theorem}\label{th01}
Suppose that Hypothesis \ref{h2}-\ref{h3} (see Subsection \ref{sub2.1} below) hold.
\begin{enumerate}[(i)]
  \item For any initial value $x\in\mathbb{H}^{1/2}$, Eq.~$(\ref{sNS})$ has a unique strong solution $u(t)$.
Moreover,  the following energy moment estimates hold
\begin{equation}\label{esap}
\sup_{t\in[0,T]}\mathbf{E}|u(t)|_{1/2}^{2-\gamma}+\mathbf{E}\int_0^T|u(t)|_{3/2}^{2-\gamma}dt<\infty,
\end{equation}
where the constant $\gamma\in(1,2)$.
  \item The solution of Eq.~$(\ref{sNS})$ is continuous with respect to the initial value.
In particular, the corresponding Markov semigroup $(\mathcal{T}_t)_{t\geq 0}$ satisfies the Feller property.
\end{enumerate}

\end{theorem}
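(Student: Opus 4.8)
This statement collects the conclusions of the more detailed Theorems \ref{thg} and \ref{thF}; I outline the overall strategy and indicate where the main difficulty lies. The plan is to proceed in four stages: local existence, a noise-driven a priori estimate, global continuation, and stability. For the local theory I would pass through a Galerkin scheme, projecting Eq.~(\ref{sNS}) onto the span of the first $N$ eigenfunctions of the Stokes operator; this yields finite-dimensional It\^o systems with locally Lipschitz coefficients, hence local solutions, and standard a priori bounds together with a compactness/martingale argument produce, for every $x\in\mathbb{H}^{1/2}$, a local maximal solution defined up to $\tau_\infty=\lim_{N\to\infty}\tau_N$, where $\tau_N:=\inf\{t\ge 0:\ |u(t)|_{1/2}\ge N\}$. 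Because the space is critical, at this stage $\tau_\infty$ is only known to be positive, exactly as in the deterministic Fujita--Kato theory.

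The core of the argument, and the source of the bound (\ref{esap}), is an application of It\^o's formula to the concave Lyapunov function
\begin{equation*}
\Phi(X)=X^{\frac{2-\gamma}{2}},\qquad X:=|u|_{1/2}^2,\qquad \gamma\in(1,2),
\end{equation*}
whose exponent lies in $(0,\tfrac12)$, so that $\Phi$ is strictly concave. In the drift of $dX$ the viscous term contributes $-2\nu|u|_{3/2}^2$, while the trilinear term obeys only the scaling-critical bound $|\langle (u\cdot\nabla)u,\,u\rangle_{1/2}|\lesssim |u|_{1/2}\,|u|_{3/2}^2$, whose prefactor is not small for general data and therefore cannot be absorbed by dissipation alone. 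The decisive point, realizing the regularization effect announced in the abstract, is that the nonlocal turbulent forcing produces a quadratic variation $d\langle X\rangle$ of strictly higher order in $|u|_{1/2}$ than a pure transport noise would give; since $\Phi''<0$, the second-order It\^o term $\tfrac12\Phi''(X)\,d\langle X\rangle$ is negative and, once weighted by $\Phi'(X)=\tfrac{2-\gamma}{2}X^{-\gamma/2}$, dominates the supercritical trilinear contribution. A direct power count shows that this balance closes precisely for the fractional moment $\mathbf{E}\,\Phi(X)=\mathbf{E}|u|_{1/2}^{2-\gamma}$ with $\gamma\in(1,2)$, but not for the full second moment -- which is why (\ref{esap}) carries the exponent $2-\gamma$. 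Taking expectations, localizing the martingale part by $\tau_N$, and closing a Gronwall inequality then yields both bounds in (\ref{esap}) uniformly in $N$.

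Feeding these estimates back into the dynamics, I would show $\tau_\infty=\infty$ almost surely: the uniform bound forces $\mathbf{P}(\tau_N\le T)\to0$, and a bootstrap trading the kinematic viscosity $\nu\Delta$ against the nonlinearity on short time windows converts the a.e.-finiteness of $|u|_{3/2}$ into a genuine continuation criterion, precluding blow-up. Pathwise uniqueness follows from an $\mathbb{H}^{1/2}$ estimate for the difference $w=u^{1}-u^{2}$ of two solutions, in which the same concave-function and quadratic-variation mechanism controls the differences of the nonlinear and forcing terms; running this estimate with two initial data $x_1,x_2$ and letting $|x_1-x_2|_{1/2}\to0$ gives continuous dependence, after which the Feller property of $(\mathcal{T}_t)_{t\ge0}$ is immediate by dominated convergence applied to $\mathcal{T}_t\varphi(x)=\mathbf{E}\,\varphi(u^{x}(t))$ for $\varphi\in C_b(\mathbb{H}^{1/2})$.

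I expect the genuine obstacle to be the nonlocality of the turbulent forcing $g_i(t,u)$. Because $g_i$ depends on $u$ through global quantities (such as $|u|_{1/2}$) rather than pointwise, neither $d\langle X\rangle$ nor the difference estimate can be closed with purely local Sobolev product estimates, and $g$ is not Lipschitz in the naive sense; one must verify that it nonetheless supplies a nonnegative and sufficiently strong contribution to the quadratic variation while remaining compatible with the stopping-time localization, so that the Gronwall step closes uniformly in $N$. Making this localization robust against the nonlocality is, I anticipate, precisely the delicate stopping time argument referred to in the introduction.
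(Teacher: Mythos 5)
Your central Lyapunov mechanism is exactly the paper's: It\^o's formula for the concave power $|u|_{1/2}^{2-\gamma}$ (Lemma \ref{lem3.0}, via the regularized function $\Phi^{\varepsilon}(x)=(\varepsilon+x)^{\alpha}$ with $\alpha=1-\gamma/2$), with the negative second-order It\^o correction, weighted by $\Phi''<0$, absorbing the supercritical trilinear and transport-noise contributions through assumption $(\mathbf{H}_{g}^{2})$; your explanation of why only the fractional moment $2-\gamma$ closes is also the paper's. However, your global architecture contains a genuine gap. You build a \emph{local maximal} solution up to $\tau_\infty$ and then argue that the uniform bound forces $\mathbf{P}(\tau_N\le T)\to 0$, where $\tau_N$ is the exit time from the $\mathbb{H}^{1/2}$-ball, concluding $\tau_\infty=\infty$. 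In the critical space this conclusion requires a blow-up criterion stating that the maximal time is characterized by explosion of the $\mathbb{H}^{1/2}$-norm; but in Fujita--Kato theory the local existence time depends on the \emph{profile} of the data, not merely on $|x|_{1/2}$, so boundedness of the critical norm does not by itself permit continuation. Your proposed fix --- ``a bootstrap trading the kinematic viscosity against the nonlinearity on short time windows converts the a.e.-finiteness of $|u|_{3/2}$ into a genuine continuation criterion'' --- is precisely the hard step, and it is left unproved; endpoint regularity criteria of this type are deep even deterministically and do not transfer routinely to the transport-noise setting. The paper never faces this issue: its Faedo--Galerkin approximations are globally defined (finite-dimensional, non-explosive under $(\mathbf{H}_{g}^{2})$), the fractional-moment bounds are uniform in $n$, and global solutions are obtained by passing to the limit, so no continuation argument in the critical space is ever invoked.

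A second discrepancy is that you locate the main difficulty in the wrong place. You treat the ``compactness/martingale argument'' as standard and expect the nonlocality of $g$ to bite in the Gronwall/quadratic-variation step; in the paper that step is handled cleanly by $(\mathbf{H}_{g}^{2})$, whereas the real technical work is the limit passage: $g$ is \emph{not weakly continuous}, so weak $L^2_t\mathbb{H}^{3/2}$ convergence of the Galerkin sequence is insufficient, and since only moments of order $2-\gamma<1$ are available one cannot run $L^2(\Omega)$-based arguments. This is why the paper needs the bootstrap $\mathbb{H}^1$-estimates on $[\varepsilon,T]$ (Lemma \ref{lem5}), tightness in $C((0,T];\mathbb{H}^{-1/2})$, the Jakubowski--Skorokhod theorem (Theorem \ref{sko1}) on a non-Polish path space, and then a separate stopping-time argument plus Proposition 4.2 of \cite{GC1} to recover continuity of the limit at $t=0$ in $\mathbb{H}^{1/2}$ --- none of which appears in your outline. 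Two further omissions: the passage from weak existence plus pathwise uniqueness to \emph{strong} solutions requires the infinite-dimensional Yamada--Watanabe theorem, which you never mention; and the paper's uniqueness proof does not reuse the concave-function mechanism (whose favorable sign structure is lost for differences of solutions) but instead runs a direct $|z|_{1/2}^2$ estimate with Kato--Ponce commutator bounds, $(\mathbf{H}_{g}^{1})$, stopping times built from both solutions, and the stochastic Gronwall lemma.
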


In the case of critical initial data, notable progress has been made in the study of stochastic Navier-Stokes  equations, primarily focusing on proving the local well-posedness  and the global well-posedness for small initial data (see e.g. \cite{AV24,AKX25}). To the best of our knowledge, this is the first result concerning the global well-posedness of stochastic forced 3D Navier-Stokes equations in the critical space. A key feature distinguishing our work from previous results is that, by explicitly leveraging the regularization effects induced by  random noise, we are able to remove the commonly adopted assumption of small initial data.

By the way, we also mention that the regularization by noise for stochastic partial differential equations has been achieved significant progress in recent years.
A foundational work by Flandoli, Gubinelli and Priola \cite{FGP10} studied the well-posedness of the linear transport equation with a globally H\"{o}lder continuous and bounded vector field, which is ill-posed in the deterministic case.
  For  studies on the regularization by noise for stochastic forced Navier-Stokes equations, we refer to the works \cite{FL21,RZZ14} for the case of the linear noise and the works \cite{CL24,HLL24,TW22} for the case of the nonlinear noise. However, due to the low regularity of the critical initial data, the framework proposed in  \cite{CL24,HLL24,TW22} cannot be applied to the stochastic forced 3D Navier-Stokes equations in the critical $\mathbb{H}^{1/2}$ space.

%  We also refer the interested readers to \cite{BGM,BMX,CL24,HLL24,RZZ14,TW22} for more results on this topic.

\subsection{Long-time behavior}\label{secmr}

Building upon the global well-posedness, we are interested in the asymptotic behavior of Eq.~\eref{sNS}, as $t$ tends to infinity.  Particularly, a natural problem is to consider whether the solution decays and to give an explicit  estimate of the decay rate.

The decay problem for Navier-Stokes equations was first raised by Leray \cite{Le34}, then was affirmatively solved by Kato \cite{Ka84} for the Cauchy problem in $\mathbb{R}^3$.  Borchers and Miyakawa \cite{BM90} deduced an algebraic decay rate for the total kinetic energy of weak solutions for non-stationary Navier-Stokes equations. For the decay problem in critical spaces, Benameur and  Selmi \cite{BS10} proved  the 3D periodic Navier-Stokes with $\mathbb{H}^{1/2}$ initial data decays exponentially fast to zero as time tends to infinity. Recently, Fujii and Tsurumi \cite{FT25} also investigate the decay problem for the forced Navier-Stokes equations in critical Besov spaces. Of course, concerning the decay problem of 3D Navier-Stokes equations in critical spaces, it is usually necessary to assume small initial data conditions. For more results on deterministic 3D Navier-Stokes equations in this direction, we refer to \cite{BBT,OE00,Sc85,Sc91} and the reference therein.

In this work, we further investigate the decay estimate of the stochastic forced 3D Navier-Stokes equations in the $\mathbb{H}^{1/2}$-space, under a stronger assumption compared to Theorem \ref{th01}, and arrive at the following result (for more details, please refer to Theorem \ref{th4}).

\begin{theorem}\label{th002a}
Suppose that  Hypothesis \ref{h2}-\ref{h3}   hold with $(\mathbf{H}_{g}^{2})$ replaced by $(\mathbf{H}_{g}^{2*})$ (see Subsection \ref{subsre2} below).   There exist a constant $\kappa>0$,  and an $\mathbf{P}$-a.s. finite random time $\tau$ such that for any initial value $x\in\mathbb{H}^{1/2}$,
  \begin{equation*}
|u(t)|_{1/2}^{2-\gamma}\leq e^{-\kappa t}|x|_{1/2}^{2-\gamma},~~t\geq \tau,~~~~~~\mathbf{P}\text{-a.s.}.
\end{equation*}
\end{theorem}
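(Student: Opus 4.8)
The plan is to obtain an exponential decay estimate for the quantity $|u(t)|_{1/2}^{2-\gamma}$ via a Lyapunov/It\^o argument combined with a stopping-time analysis, mirroring the strategy already used to establish the energy moment estimates \eref{esap} in Theorem \ref{th01}. First I would apply the It\^o formula to the functional $V(u) := |u|_{1/2}^{2-\gamma}$ along the solution flow of Eq.~\eref{sNS}. The viscous term $-\nu\Delta u$ contributes a dissipative piece controlled from below by $|u|_{3/2}^2$, while the nonlinear transport term $(u\cdot\nabla)u$ and the transport noise $(\zeta_i\cdot\nabla)u$ must be estimated using the critical-space interpolation and the divergence-free structure. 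The crucial input is the strengthened hypothesis $(\mathbf{H}_g^{2*})$ replacing $(\mathbf{H}_g^{2})$: I expect this to ensure that the turbulent forcing $g$ provides a coercive (negative) contribution dominating any growth, so that the drift of $V(u(t))$ is bounded above by $-\kappa\, V(u(t))$ plus a martingale term, at least once the solution enters a favorable regime.

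The key mechanism is the regularization effect of the noise on the energy. Because we only control fractional moments (the exponent $2-\gamma$ with $\gamma\in(1,2)$), I would exploit the fact that the It\^o correction from the multiplicative/transport noise, when applied to the concave power $|\cdot|_{1/2}^{2-\gamma}$, generates a strictly negative term proportional to $-c\,|u|_{1/2}^{2-\gamma}$; this is precisely the source of the decay rate $\kappa$. Combining this with the viscous dissipation and the coercivity from $(\mathbf{H}_g^{2*})$, I aim to derive a differential inequality of the form
\begin{equation*}
\dif V(u(t)) \le -\kappa\, V(u(t))\,\dif t + \dif M(t),
\end{equation*}
where $M(t)$ is a local martingale. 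By Gr\"onwall's inequality in its stochastic (comparison) form, this yields $V(u(t)) \le e^{-\kappa t} V(x)$ on the event where the inequality holds pathwise.

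The role of the random time $\tau$ is to handle the transition: a priori the differential inequality may only hold once $|u(t)|_{1/2}$ has become small enough that the nonlinear term is dominated by the dissipation and the coercive forcing. Here I would define $\tau$ as the first time the solution enters a ball on which the supercritical nonlinearity is subordinate to the linear dissipative and noise-induced decay. The energy moment bound \eref{esap} guarantees, via a Borel--Cantelli or recurrence argument, that $\tau < \infty$ $\mathbf{P}$-a.s. The main obstacle will be establishing that $\tau$ is almost surely finite and that the drift inequality genuinely closes on $\{t \ge \tau\}$: this requires carefully controlling the nonlocal turbulent forcing, whose nonlocality (as emphasized in the abstract) obstructs naive pointwise estimates, and showing that the martingale term does not destroy the pathwise exponential bound. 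I anticipate that a delicate stopping-time argument, together with the bootstrap estimate based on the kinematic viscosity $\nu$, will be needed to convert the in-expectation decay into the $\mathbf{P}$-a.s. pathwise statement claimed.
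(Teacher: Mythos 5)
Your Lyapunov skeleton (It\^o's formula for the concave power $|\cdot|_{1/2}^{2-\gamma}$, the negative It\^o correction produced by concavity, and the strengthened condition $(\mathbf{H}_{g}^{2*})$) matches the paper's Step 1, but the proposal contains two genuine gaps, both tied to a misreading of where the decay and the random time $\tau$ come from. First, under $(\mathbf{H}_{g}^{2*})$ the drift inequality is \emph{global}, not conditional on the solution being small: since $\kappa_1\geq\delta_1$ (the constant from Lemma \ref{Bes12}), the It\^o correction $-2(1-\alpha)\|(g(u)\cdot,u)_{1/2}\|_{\mathcal{L}_2(l^2;\mathbb{R})}^2$ with $\alpha=1-\gamma/2$ dominates the nonlinear contribution $\delta_1|u|_{1}^{4}|u|_{1/2}^2$ and the quadratic variation $\|g(u)\|_{\mathcal{L}_2(l^2;\mathbb{H}^{1/2})}^2$ for \emph{every} $u\in\mathbb{H}^{3/2}$; no smallness of $|u|_{1/2}$ is needed. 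Moreover, because $(\mathbf{H}_{g}^{2*})$ is only an inequality, after this cancellation the noise leaves no guaranteed surplus proportional to $-V$; the strictly negative multiple of $V$ comes from the viscous dissipation through the Poincar\'e inequality $|u|_{3/2}^2\geq\lambda^*|u|_{1/2}^2$. Your statement that the It\^o correction ``is precisely the source of the decay rate $\kappa$'' is therefore a misattribution, and your plan to define $\tau$ as the first hitting time of a favorable ball (with finiteness from \eref{esap} and recurrence) is both unnecessary and unclosed: even granting $\tau<\infty$ a.s., a single entrance time does not propagate the drift inequality forward in time, since the solution may exit the ball again, and nothing in the proposal addresses this; it would also naturally produce a bound anchored at $u(\tau)$ rather than the stated envelope $e^{-\kappa t}|x|_{1/2}^{2-\gamma}$ anchored at the initial datum.

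Second, the passage from the drift inequality to the pathwise statement is the actual content of the theorem and is left open. A pathwise inequality $dV\leq-\kappa V\,dt+dM$ does \emph{not} yield $V(u(t))\leq e^{-\kappa t}V(x)$ by a Gr\"onwall comparison, because of the martingale term; it only says that $e^{\kappa t}V(u(t))$ is a supermartingale, hence decay in expectation. The paper's mechanism is: take the full rate $c_0=\lambda^*(1-\frac{\gamma}{2})$ so that $e^{c_0t}|u(t)|_{1/2}^{2-\gamma}$ is a nonnegative supermartingale; then, for any strictly smaller $\kappa<c_0$, apply the maximal supermartingale inequality on each interval $[k,k+1]$ to obtain
$\mathbf{P}\big(\sup_{t\in[k,k+1]}e^{\kappa t}|u(t)|_{1/2}^{2-\gamma}\geq|x|_{1/2}^{2-\gamma}\big)\leq e^{-(c_0-\kappa)k}$,
and conclude by Borel--Cantelli. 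The random time $\tau$ is simply the index after which these excursion events no longer occur; it has nothing to do with entering a small set. You invoke Borel--Cantelli only for the finiteness of a hitting time; the summable-probability estimate over time intervals, which requires the strict gap between $\kappa$ and the Poincar\'e rate $\lambda^*(1-\frac{\gamma}{2})$, is the missing step your ``delicate stopping time argument'' would have to supply.
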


In contrast to the deterministic case, which is restricted to small initial values (see e.g.~\cite{BS10,FT25}),  Theorem \ref{th002a} shows that for any $\mathbb{H}^{1/2}$ initial data, the solution of Eq.~$(\ref{sNS})$ decays exponentially fast to zero as time tends to infinity. As a  consequence, we further obtain the ergodicity for the stochastic forced 3D Navier-Stokes equations in  $\mathbb{H}^{1/2}$-space.

\begin{theorem}\label{co01}
Suppose that the assumptions in Theorem \ref{th002a} hold.
 There exists a unique invariant measure to the  Markov semigroup  associated to Eq.~$(\ref{sNS})$.
\end{theorem}

Due to the lack of global well-posedness of the stochastic forced 3D Navier-Stokes equations, the only available result in the existing works is
the ergodicity for every Markov selections. For instance, Da Prato and Debussche \cite{DD03} first proved the existence of  solutions in the mild sense to 3D Navier–Stokes equations with additive noise. Due to the lack of uniqueness, they  constructed a selected transition semigroup and established its ergodicity. In the work \cite{FR08}, Flandoli and Romito also proved the Feller property of the selected Markov semigroup to stochastic 3D Navier–Stokes equations with additive noise. However, due to the lack of continuity of solutions in the space of finite energy, the Markov property holds almost everywhere in time. Note that the ergodicity established in previous works only relates to the Markov process constructed therein.

%\begin{corollary}\label{th02}
%Suppose that Hypothesis \ref{h2}-\ref{h3} hold.
%\begin{enumerate}
%  \item There exists an  invariant measure $\mu$ associated to the  transition semigroup $(\mathcal{T}_t)_{t\geq 0}$ of, which satisfies the following concentration property
%\begin{equation}\label{sup1}
%\int_{\mathbb{H}^{3/2}}|x|_{3/2}^{2-\gamma}\mu(dx)<\infty,
%\end{equation}
%where the constant $\gamma\in(1,2)$.
%  \item If further $(\mathbf{H}_{g}^{2*})$ holds,
%then there exists a unique invariant measure.
%\end{enumerate}
%\end{corollary}

Theorem \ref{co01} establishes, for the first time, the ergodicity  of the Markov semigroup associated to  the  stochastic forced 3D Navier-Stokes equations (\ref{sNS}), rather than regarding a Markov selection. We point out that even for initial data with higher regularity, such as subcritical or even smooth initial conditions, this result still remains new.

\subsection{Nonlocal stochastic forcing} \label{noise}
Our main results can be applied to a class of {\it{nonlocal}} stochastic forcing.
In the field such as the fluid mechanics, the research on models with nonlocal forcing holds significant physical motivation and importance. For instance, Burgers in the seminal work \cite{Bur39} proposed the  mathematical models of the turbulence, in which a nonlocal turbulent forcing $\Phi(u)=\int_{\mathcal{O}}|u(t,\xi)|^2d\xi$ is involved  to
simulate flow in a channel. As stated in \cite{HO},  the now well-known Burgers equation is in fact  a simplified version of the {\it{nonlocal}} Burgers system, see also \cite{DKL,Bur48} for  more precise physical consideration.

In the well-known work \cite{Lad67}, Ladyzenskaja proposed a nonlocal model, which can be seen as a modification of Navier–Stokes equations, where  a nonlocal viscosity  $\nu(u)=\nu\Big[1+\int_{\mathcal{O}}|\nabla u(t,\xi)|^2d\xi\Big]$ is introduced. This type of nonlocal viscosity forcing  is related to the total dissipation energy of Newtonian fluid. The authors in \cite{CR05} showed that the model introduced in \cite{Lad67} is a limit of systems describing motions of incompressible viscous heat-conducting fluids of Newtonian and Bingham types.

%\begin{equation}\label{Bur1}
%\begin{cases}
%&\!\!\!\!\!\!\partial_t v +2v\frac{\partial v}{\partial\xi}-\frac{1}{R} \Delta v=uv , \\
%&\!\!\!\!\!\!\partial_t u +\frac{1}{R} u=f-\int_{\mathbb{T}}v^2d\xi ,
%\end{cases}
%\end{equation}
%where $\mathbb{T}:=(0,1)$, $u$ denotes the principal velocity in the channel due to some applied force $f$, while $v$
%signifies the turbulent perturbation of the motion.
Owing to the impact of random factors such as stochastic environment, a  question arises: how to characterize fluid models with nonlocal stochastic forcing? Inspired by aforementioned works,  we consider the stochastic 3D Navier-Stokes equations perturbed by the following nonlocal   forcing
\begin{equation}\label{ex001}
\sum_{i=1}^{\infty}\int_0^tg_i(s,u(s)) d\hat{\beta}_i(s)=\sum_{i=1}^{\infty}\int_0^t\alpha_i\Big[1+\int_{\mathbb{T}^3}|\nabla u(s,\xi)|^2d\xi\Big]u(s)  d\hat{\beta}_i(s).
\end{equation}
We found that this type of stochastic forcing  shows a certain regularization effect on deterministic Navier-Stokes systems.  In particular, applying our main results, one can obtain the global well-posedness of the corresponding stochastic forced 3D Navier-Stokes equations, as well as analyze their long-time behavior  including the ergodicity. Nevertheless, the  stochastic  forcing (\ref{ex001}) also brings highly nontrivial  challenges due to its nonlocality, in particular, it is not a weakly continuous operator, see the next subsection for more  detailed statements for the difficulties in the proof.

We also mention that the nonlocal forcing also has a wide range of applications in other fields. For instance,  the nonlocal forcing (\ref{ex001}) is  connected with the non-degenerate
 Kirchhoff type forcing in the study of the wave equations (cf.~\cite{K83}), see also \cite{APS,G13} and reference therein. One can see also \cite{WK50}  for a similar axial force introduced by Woinowsky-Krieger to describe the dynamic bucking of a hinged extensible beam.

\subsection{Strategy of the proof}

To study the global well-posedness and long-time behaviour of the stochastic forced 3D Navier-Stokes equations,  we do not follow the arguments in the deterministic case, see e.g.~\cite{FK}. Moreover, different from existing results on stochastic Navier-Stokes equations with critical initial data (cf.~\cite{AV24,AKX25}), our goal is not to extend existing deterministic results to the stochastic setting. Instead, we  capture the impact of the nonlocal stochastic forcing on the system, and thereby characterize the well-posedness and long-time behavior of the stochastic system. However,  in the critical space, it is quite non-trivial to close the energy estimates for the $\mathbb{H}^{1/2}$-norm and establish the continuity of solutions belonging to  $C([0,T];\mathbb{H}^{1/2})$, which is significantly different from the recent result established in the variational framework \cite{HLL24}. The main steps and difficulties in the proof are outlined below.

%capture and characterize the regularization effect offered by the nonlocal stochastic forcing (\ref{ex001}).

$\bullet$ \textbf{Lyapunov method for energy estimates}: To construct a (probabilistically) weak solution to Eq.~(\ref{sNS}), we employ a Faedo-Galerkin approximation  and derive energy estimates  for the $\mathbb{H}^{1/2}$-norm by using a suitable Lyapunov function, in which we observe that the nonlocal stochastic forcing (\ref{ex001})  might provide an intrinsic balance for the energy of the  system via a ``second order'' effect. Moreover, by designing a different Lyapunov function we obtain a decay estimate for solutions of  Eq.~(\ref{sNS}) under stronger assumptions on the noise.
Unlike existing works for Leray-Hopf weak solutions in the $L^2$-setting (cf.~e.g.~\cite{BM13}),   we can only obtain the energy moment estimates of order $\alpha$ ($\alpha<1$) in the critical space.

Although the nonlocal stochastic forcing (\ref{ex001}) contributes a certain regularization effect, some highly nontrivial  difficulties are appeared due to its nonlocality and  the lack of finite second moments.

 $\bullet$  \textbf{Difficulties induced by nonlocal stochastic forcing}:
In order to pass to the limit of the Faedo-Galerkin approximation, we want to show the tightness of the approximation sequence. However, due to the natural nonlocality of the stochastic forcing (\ref{ex001}), the equicontinuity  essentially requires the uniform  $\mathbb{H}^1$-estimates. Moreover, the nonlocal operator $g(t,\cdot)$ in stochastic forcing (\ref{ex001}) is not weakly continuous, from this reason, it is necessary to establish the strong convergence in  $\mathbb{H}^1$-space of the approximation sequence. On the other hand, since the finite second moments are not available,  we need to carefully deal with the convergence of all terms in the approximation sequence.

These points are extremely challenging for the stochastic forced 3D Navier-Stokes equations in the critical $\mathbb{H}^{1/2}$-space. Our new ideas, which allow us to overcome the above mentioned technical difficulties,
is to develop a bootstrap type estimate.

\vspace{1mm}
$\bullet$ \textbf{Bootstrap type estimates}:
Owing to the inherent parabolic smoothing  from the kinematic viscosity, we adopt the bootstrap method for iterative regularity enhancement to improve the regularity of the approximating sequence, and derive uniform $\mathbb{H}^1$-estimates  over time intervals where zero point is excluded (i.e.~$(0,T]$).  This  bootstrap type estimate plays a crucial role in employing the stochastic compactness argument and  passing to the limit of  the approximating sequence. However, unlike the results in deterministic  partial differential equations, during the bootstrapping process,
we also need to leverage appropriate  regularization effect induced by random noise, thereby only establishing uniform  logarithmical moment estimates for $\mathbb{H}^1$-norm, see Lemma \ref{lem5} for  details.

$\bullet$ \textbf{Continuity of solutions}:  Based on  the bootstrap type estimates and the cut-off technique, one can get a limiting process $\tilde{u}$ associated with the Faedo-Galerkin approximation, which only lies in
$C((0,T];\mathbb{H}^{-1/2})$. A further problem is to verify that  $\tilde{u}$ is a (probabilistically) weak solution of Eq.~(\ref{sNS}) in the sense of Definition \ref{dew}. More precisely, one need to show that $\tilde{u}$ belongs to $C([0,T];\mathbb{H}^{1/2})\cap L^2([0,T];\mathbb{H}^{3/2})$ and satisfies the equality (\ref{de2}). However, in view of the convergence of the Faedo-Galerkin approximation, we do not know whether $\tilde{u}(t)$ is continuous at $t=0$.
To handle this problem, we will construct a  $\tilde{\mathbf{P}}\otimes dt$-version (denoted by $\bar{u}$) of $\tilde{u}$ with $\bar{u}(0)=x$, which satisfies the original equation. Then we will verify that $\bar{u}$ belongs to   $C([0,T];\mathbb{H}^{1/2})\cap L^2([0,T];\mathbb{H}^{3/2})$.
This will be carried out in two steps: First, by employing a delicate stopping time argument, we  demonstrate that $\bar{u}\in C([0,T];\mathbb{H}^{-1/2})$; Then, by utilizing a localization procedure and employing an important lemma developed in \cite[Proposition 4.2]{GC1}, we  conclude  $\bar{u}\in C([0,T];\mathbb{H}^{1/2})\cap L^2([0,T];\mathbb{H}^{3/2})$.

Building upon the above-mentioned arguments, we are  able to construct a (probabilistically) weak solution of Eq.~(\ref{sNS}).
Then utilizing the commutator estimates and the infinite dimensional version of  Yamada-Watanabe theorem, the existence and uniqueness of (probabilistically) strong solutions are also derived. Furthermore, as a consequence of  the aforementioned decay estimate, we also establish the ergodicity for stochastic forced 3D Navier-Stokes equations.

The rest of manuscript is organized as follows. In Section \ref{sec-preliminaries NSES}, we recall  some basic definitions of function spaces and operators. In Section \ref{mainR}, we state the well-posedness result of  the stochastic forced 3D Navier-Stokes equations and give its proof.  Furthermore, in Section \ref{sec4}, we derive a decay estimate of solutions to the stochastic forced 3D Navier-Stokes equations, and as a consequence, we investigate the ergodicity. Some useful lemmas are left in Section \ref{appendix} as Appendix.
Throughout the work, we use $a\lesssim b$ to denote $a\leq C b$ for some unimportant constant $C>0$. We also use $a\lesssim_{\lambda} b$ to denote $a\leq C_\lambda b$ when we want to emphasize that the implicit constant $C$ depends on $\lambda$.

\section{Preliminaries}  \label{sec-preliminaries NSES}

\subsection{Notations and definitions}
In this subsection, we collect some  definitions of function spaces that are commonly used in the analysis of stochastic Navier-Stokes equations on the periodic domain $\mathbb{T}^3=\mathbb{R}^3/(2\pi\mathbb{Z})^3$. Throughout the paper, we restrict ourselves to deal with  flows which have zero average on $\mathbb{T}^3$, i.e.,
$$\int_{\mathbb{T}^3}u(\xi)d\xi=0,$$
where $d\xi$ denotes the volume measure on $\mathbb{T}^3$.

Let $l^2$ be the Hilbert space consisting of all sequences of square summable real numbers with standard norm $|\cdot|_{l^2}$. We denote by $L^p:=L^p(\mathbb{T}^3)^3$, $p\geq 1$,   the Banach space  of Lebesgue measurable $\mathbb{R}^3 $-valued $p$-th power integrable functions on $\mathbb{T}^3 $, which is equipped with the norm
$$
      |u|_{{L}^{p}}{} := \Bigg( \int_{\mathbb{T}^3 } |u(\xi){|}^{p} \, d\xi \Bigg)^{\frac{1}{p}} , \qquad u \in {L}^{p}.
$$
In particular,  ${L}^{2}$ is a Hilbert space with the inner product given by
$$
  (u,v) := \int_{\mathbb{T}^3 } u(\xi) \cdot v (\xi)^* \, d\xi, \qquad u,v \in {L}^{2},
$$
where $v^*$ denotes the complex conjugate of $v$ and $u\cdot v$ denotes the standard Euclidean scalar product in $\mathbb{R}^3$. Moreover, we denote by ${L}^{\infty }:= {L}^{\infty }(\mathbb{T}^3 )^3$  the Banach space of Lebesgue measurable essentially bounded $\mathbb{R}^3 $-valued functions on $\mathbb{T}^3 $ with  the norm given by
$$
     |u|_{L^\infty }{}:= \mbox{\rm esssup} \,\big\{  |u(\xi)| , \, \, \xi \in \mathbb{T}^3  \big\}  , \qquad u \in {L}^{\infty } .
$$

Note that we work with the periodic boundary condition, one can represent  the
function in Fourier series as
$$u(\xi)=\sum_{k\in\mathbb{Z}_0^3}\hat{u}_ke^{ik\cdot{\xi}},~~\mbox{with}~\hat{u}_k\in\mathbb{C}^3,~\hat{u}_{-k}=\hat{u}^*_k~~\text{for every}~k,$$
 where
$\mathbb{Z}^3_0=\mathbb{Z}^3\setminus\{0\}$ and $\hat{u}^*_k$ denotes the complex conjugate of $\hat{u}_k$.

For $s\in\mathbb{R}$,
the Sobolev space $H^s:=H^s(\mathbb{T}^3)^3$ with vanishing spatial average can be represented as
$$H^s=\left\{u=\sum_{k\in\mathbb{Z}_0^3}\hat{u}_ke^{ik\cdot{\xi}}:\hat{u}_{-k}=\hat{u}^*_k,~\int_{\mathbb{T}^3}u(\xi)d\xi=0,~|u|_s^2:=\sum_{k\in\mathbb{Z}_0^3}|k|^{2s}|\hat{u}_k|^2<\infty\right\}.$$
In the Fourier space, the divergence free condition can be represented as
$$\hat{u}_k\cdot{k}=0~~\mbox{for every}~k.$$
We define the spaces for the divergence free velocity vectors
\begin{equation*}\mathcal{V}:=\big\{u\in\mathcal{C}_c^{\infty}(\mathbb{T}^3)^3:\hat{u}_k\cdot{k}=0 ~~\mbox{for
every}~k \big\}\end{equation*}
and
\begin{equation*}
\mathbb{H}^s:=\big\{ u\in{H}^s:\hat{u}_k\cdot{k}=0 ~~\mbox{for
every}~k \big\}
\end{equation*} which is a Hilbert space with scalar product
$$(u,v)_{s}:=\sum_{k\in\mathbb{Z}_0^3}|k|^{2s}\hat{u}_k\cdot{\hat{v}}_{-k}.$$

We remain denote by $|u|_{L^2}$ the norm in space
$\mathbb{H}^0$  and inner product
$(u,v)=\sum_k \hat{u}_k\cdot\hat{v}_{-k} $. Therefore,
$\mathbb{H}^0$ is the Hilbert space consisting of all $L^2$-integrable $\mathbb{R}^3$-valued functions
on $\mathbb{T}^3$  which are
divergence free and have zero mean.  We
identify the continuous dual space of $\mathbb{H}^s$ as
$\mathbb{H}^{-s}$ with the dual pairing between $\mathbb{H}^s$ and $\mathbb{H}^{-s}$ by  $\langle{u},v\rangle$.

 We define the nonlocal operator
$\Lambda^{s}$  as
$$\Lambda^s u:=\sum_{k\in\mathbb{Z}_0^3}|k|^{s}\hat{u}_ke^{ik\cdot{\xi}},$$
then it is clear that $\Lambda^2=-\Delta$. For sake of simplicity, we denote $\Lambda=\Lambda^1$. Notice that $\Lambda^s$ maps $H^r$ onto
$H^{r-s}$ and
$$|u|_s^2=\sum_{k\in\mathbb{Z}_0^3}|k|^{2s}|\hat{u}_k|^2=|\Lambda^s{u}|_{L^2}^2.$$

Denote by $\mathcal{P}$ the Leray-Helmholtz projection from $H^\beta$
to $\mathbb{H}^\beta$. It's well-known that the operators $\mathcal{P}$
and $\Lambda^s$ are commutative.

\vspace{1mm}
 We have the following Sobolev
embedding theorem (cf.~\cite{Ste70}).
\begin{lemma}\label{lemso}
If $0\leq{}s<\frac{3}{2}$ and
$\frac{1}{p}+\frac{s}{3}=\frac{1}{2}$, then $H^s\subset{L}^p$.
Moreover,
$$|f|_{L^p}\lesssim_{s,p} |{f}|_{s}.$$
\end{lemma}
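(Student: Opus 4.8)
The plan is to reduce the statement to a fractional-integration (Hardy--Littlewood--Sobolev) estimate. The endpoint $s=0$ forces $p=2$ and is trivial, so assume $0<s<\tfrac32$. Since $|f|_s=|\Lambda^s f|_{L^2}$, I would set $g:=\Lambda^s f$, so that $|g|_{L^2}=|f|_s$ and $f=\Lambda^{-s}g$; it then suffices to prove the mapping property $|\Lambda^{-s}g|_{L^p}\lesssim_{s,p}|g|_{L^2}$ with $\tfrac1p=\tfrac12-\tfrac s3$.

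First I would realize $\Lambda^{-s}$ as a convolution operator on $\mathbb{T}^3$. In Fourier series it is the multiplier $\hat g_k\mapsto |k|^{-s}\hat g_k$ (with the zero mode annihilated, consistent with the vanishing-mean convention), hence $\Lambda^{-s}g=G_s*g$ for the periodic kernel $G_s(\xi)=\sum_{k\in\mathbb{Z}_0^3}|k|^{-s}e^{ik\cdot\xi}$. The key structural fact is that, since $0<s<3$, this kernel is smooth away from the lattice points and has the local singularity $G_s(\xi)\sim c_s|\xi|^{s-3}$ as $\xi\to 0$; consequently $G_s$ lies in the weak Lebesgue space $L^{q,\infty}(\mathbb{T}^3)$ with $\tfrac1q=1-\tfrac s3\in(\tfrac12,1)$.

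With the kernel bound in hand I would invoke the weak Young inequality (equivalently Hardy--Littlewood--Sobolev): convolution against an $L^{q,\infty}$ kernel maps $L^2$ into $L^p$ provided $\tfrac1p+1=\tfrac12+\tfrac1q$, i.e.\ $\tfrac1p=\tfrac12-\tfrac s3$, which is exactly the hypothesis $\tfrac1p+\tfrac s3=\tfrac12$. This yields $|f|_{L^p}=|G_s*g|_{L^p}\lesssim_{s,p}|G_s|_{L^{q,\infty}}|g|_{L^2}\lesssim_{s,p}|f|_s$. An equally workable alternative avoids the explicit kernel: decompose $f=\sum_j\Delta_j f$ into dyadic Littlewood--Paley blocks, use Bernstein's inequality $|\Delta_j f|_{L^p}\lesssim 2^{3j(1/2-1/p)}|\Delta_j f|_{L^2}=2^{js}|\Delta_j f|_{L^2}$, and sum via the square-function characterization of $L^p$ for $p\ge 2$ together with $\sum_j 2^{2js}|\Delta_j f|_{L^2}^2\approx|f|_s^2$.

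The main obstacle is the kernel estimate, namely controlling the periodic sum $G_s$ and pinning down its $|\xi|^{s-3}$ singularity uniformly. I would handle this by comparing $G_s$ with the whole-space Riesz kernel: only the local behaviour near $\xi=0$ governs membership in $L^{q,\infty}$, the contribution away from the diagonal being bounded, so the torus kernel inherits the weak-$L^q$ bound of the Euclidean Riesz potential. This reduces everything to the classical fractional-integration theorem (cf.~\cite{Ste70}), from which the claimed continuous embedding $H^s\subset L^p$ follows.
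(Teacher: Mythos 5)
Your proposal is correct. Note that the paper itself gives no proof of this lemma: it is quoted as the classical Sobolev embedding with a pointer to Stein \cite{Ste70}, and the proof in that reference is exactly the route you take, namely writing $f=\Lambda^{-s}g$ with $|g|_{L^2}=|f|_s$ and invoking the Hardy--Littlewood--Sobolev fractional integration theorem. The only ingredient you need beyond the Euclidean statement is the transference to $\mathbb{T}^3$, and your treatment of it is sound: the periodic kernel $G_s$ agrees with the Euclidean Riesz kernel $c_s|\xi|^{s-3}$ up to a contribution that is bounded near the origin (this can be made rigorous via the subordination formula $|k|^{-s}=\Gamma(s/2)^{-1}\int_0^\infty t^{s/2-1}e^{-t|k|^2}\,dt$ together with the comparison of the periodic and Euclidean heat kernels), so $G_s\in L^{q,\infty}(\mathbb{T}^3)$ with $1/q=1-s/3$, and the weak Young inequality applies since all exponents lie strictly between $1$ and $\infty$ (here $q\in(1,2)$ and $p=\tfrac{6}{3-2s}\in[2,\infty)$ because $0\le s<\tfrac32$). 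Your Littlewood--Paley alternative is equally valid and arguably cleaner on the torus, since Bernstein's inequality plus the square-function characterization for $p\ge 2$ gives $H^s=B^s_{2,2}\hookrightarrow B^0_{p,2}\hookrightarrow L^p$ without any kernel analysis; either route is a complete proof of the statement, including the trivial endpoint $s=0$, $p=2$.
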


Define the commutator
$$[\Lambda^s,f]g=\Lambda^s(fg)-f\Lambda^sg.$$

The following commutator estimate is useful for later use (cf. \cite{KP88}).
\begin{lemma}
$($Commutator estimate$)$ Suppose that $s>0$, $p,p_2,p_3\in(1,\infty)$ and $p_1,p_4\in(1,\infty]$ satisfy
\begin{equation*}
\frac{1}{p}\geq\frac{1}{p_1}+\frac{1}{p_2},~~~\frac{1}{p}\geq\frac{1}{p_3}+\frac{1}{p_4}.
\end{equation*}
Then we have
\begin{equation}\label{escom}
|[\Lambda^s,f]g|_{L^p}\lesssim\big(|\nabla{f}|_{L^{p_1}}|\Lambda^{s-1}{g}|_{L^{p_2}}+|\Lambda^s{f}|_{L^{p_3}}|g|_{L^{p_4}}\big).
\end{equation}
\end{lemma}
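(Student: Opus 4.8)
The plan is to prove the commutator estimate \eqref{escom} by the Littlewood--Paley/Bony paraproduct method combined with the Coifman--Meyer bilinear multiplier theorem, following the scheme of Kato and Ponce. Passing to Fourier variables and writing $\zeta=\xi+\eta$, where $\xi$ records the frequency of $f$ and $\eta$ that of $g$, one has
\begin{equation*}
\mathcal{F}\big([\Lambda^s,f]g\big)(\zeta)=\sum_{\xi+\eta=\zeta}m(\xi,\eta)\,\hat f(\xi)\,\hat g(\eta),\qquad m(\xi,\eta)=|\xi+\eta|^s-|\eta|^s,
\end{equation*}
so that $[\Lambda^s,f]g$ is the bilinear Fourier multiplier with symbol $m$, homogeneous of degree $s$. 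I would then apply Bony's decomposition $fg=T_fg+T_gf+R(f,g)$ simultaneously to $fg$ and to $f\Lambda^s g$, and \emph{pair matching paraproducts}, so that
\begin{equation*}
[\Lambda^s,f]g=\big[\Lambda^s(T_fg)-T_f\Lambda^sg\big]+\big[\Lambda^s(T_gf)-T_{\Lambda^sg}f\big]+\big[\Lambda^s R(f,g)-R(f,\Lambda^sg)\big].
\end{equation*}
Here $T_fg=\sum_j S_{j-1}f\,\Delta_jg$ (low frequency of $f$), $T_gf$ (high frequency of $f$), and $R$ the comparable-frequency remainder. On the torus this is realized through the discrete projections $\Delta_j,S_j$, or else one transfers the $\mathbb{R}^3$ estimate by the de Leeuw transference principle.

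For the first bracket, $\Lambda^s(T_fg)-T_f\Lambda^sg=\sum_j[\Lambda^s,S_{j-1}f]\Delta_jg$, the symbol $m(\xi,\eta)$ is evaluated at $|\xi|\lesssim 2^{j-1}$, $|\eta|\sim 2^{j}$, so $|\xi+\eta|\sim|\eta|$ stays away from the origin and the smoothness of $m$ can be exploited. By the mean value theorem,
\begin{equation*}
m(\xi,\eta)=\sum_{\ell=1}^{3}\xi_\ell\int_0^1 s\,|\eta+t\xi|^{s-2}(\eta+t\xi)_\ell\,dt=\sum_{\ell=1}^{3}\xi_\ell\,|\eta|^{s-1}\sigma_\ell(\xi,\eta),
\end{equation*}
where each $\sigma_\ell$ is homogeneous of degree $0$ and smooth on the relevant cone, hence a Coifman--Meyer symbol. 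The factor $\xi_\ell$ is the multiplier of $\partial_\ell f$ and $|\eta|^{s-1}$ that of $\Lambda^{s-1}g$, so this bracket is a finite sum of Coifman--Meyer bilinear operators applied to $(\nabla f,\Lambda^{s-1}g)$; the Coifman--Meyer theorem and Hölder's inequality (using $\tfrac1p\ge\tfrac1{p_1}+\tfrac1{p_2}$) give the first term $|\nabla f|_{L^{p_1}}|\Lambda^{s-1}g|_{L^{p_2}}$.

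For the second bracket, with $f$ at the high frequency $2^j$ and $g$ low, $|\xi+\eta|\sim|\xi|$ away from the origin, and the matched pairing cancels the bad $\nabla g$ contribution, leaving the dominant piece $T_g(\Lambda^sf)$ together with lower-order corrections; Hölder (using $\tfrac1p\ge\tfrac1{p_3}+\tfrac1{p_4}$) yields the second term $|\Lambda^sf|_{L^{p_3}}|g|_{L^{p_4}}$, with the endpoints $p_1=\infty$ or $p_4=\infty$ covered by the $L^\infty\times L^p\to L^p$ boundedness of paraproducts. The resonant bracket, where both frequencies are comparable to $2^j$, is handled by the algebraic splitting $2^{js}=2^{j}\cdot 2^{j(s-1)}$ (requiring $s>0$), which distributes the derivatives so as to bound it by \emph{both} right-hand terms; summing the three contributions proves \eqref{escom}.

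The step I expect to be the main obstacle is precisely this comparable-frequency/resonant interaction, where $\xi+\eta$ may vanish and $|\xi+\eta|^s$ fails to be smooth, so that $m$ is \emph{not} a global Coifman--Meyer symbol and a naive mean value expansion breaks down. It is exactly the Littlewood--Paley truncation, combined with the identity $2^{js}=2^{j}2^{j(s-1)}$, that circumvents the singularity. The accompanying technical care lies in verifying that the extracted symbols $\sigma_\ell$ (and their high-frequency analogues) satisfy the Coifman--Meyer/Marcinkiewicz--Mikhlin derivative bounds $|\partial_\xi^a\partial_\eta^b\sigma|\lesssim(|\xi|+|\eta|)^{-|a|-|b|}$ uniformly on their respective cones, and in transferring the Euclidean multiplier bound to the periodic setting $\mathbb{T}^3$.
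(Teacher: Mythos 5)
The paper offers no proof of this lemma at all: it is imported verbatim from Kato--Ponce \cite{KP88}, so there is no internal argument to compare yours against, and your proposal has to be judged on its own merits. Its skeleton is the standard modern route to the cited estimate (Bony decomposition plus the Coifman--Meyer theorem), and most of it is sound: the algebraic pairing of the three paraproduct brackets is correct; the mean-value factorization of $m(\xi,\eta)=|\xi+\eta|^s-|\eta|^s$ on the low-high cone $|\xi|\lesssim|\eta|$ legitimately produces Coifman--Meyer symbols there, since $|\eta+t\xi|\sim|\eta|$ stays away from the origin; the resonant bracket with the splitting $2^{js}=2^{j}2^{j(s-1)}$ and the summability provided by $s>0$ is standard; and on $\mathbb{T}^3$ the relaxed H\"older conditions $\frac{1}{p}\geq\frac{1}{p_1}+\frac{1}{p_2}$, $\frac{1}{p}\geq\frac{1}{p_3}+\frac{1}{p_4}$ come for free from the finiteness of the measure.

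The genuine gap is your treatment of the second bracket $\Lambda^s(T_gf)-T_{\Lambda^sg}f$. The difficulty there is not resolved by a ``matched pairing cancelling the bad $\nabla g$ contribution'': no cancellation between the two terms is available or needed, and each must be bounded separately. The piece $\Lambda^s(T_gf)$ is indeed a Coifman--Meyer operator applied to $(\Lambda^sf,g)$, because on the cone $|\eta|\ll|\xi|$ the symbol $|\xi+\eta|^s|\xi|^{-s}$ is smooth with the correct decay. But $T_{\Lambda^sg}f$ is not: its symbol carries the factor $|\eta|^s$, which is \emph{not} smooth at $\eta=0$ (a point inside that cone); its $\eta$-derivatives of order $|b|>s$ grow like $|\eta|^{s-|b|}$ and violate the Coifman--Meyer bound $(|\xi|+|\eta|)^{-|a|-|b|}$ no matter how the cone is truncated, and your mean-value device is equally useless here, since expanding $|\eta|^s$ produces exactly the forbidden $\nabla g$-type factor. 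The standard repair, absent from your outline, is elementary but different: the multiplier $2^{-js}|\eta|^s\hat{\phi}(2^{-j+1}\eta)$ has inverse Fourier transform with an integrable radial majorant (decay $|x|^{-3-s}$) uniformly in $j$, whence the pointwise bound $|S_{j-1}\Lambda^sg|\lesssim 2^{js}Mg$ with $M$ the Hardy--Littlewood maximal function; moving $2^{js}$ onto $\Delta_jf$ (a uniformly bounded family of multipliers turns $2^{js}\Delta_jf$ into a Littlewood--Paley piece of $\Lambda^sf$), and using almost-orthogonality of the outputs, the square function estimate, H\"older and the maximal inequality give $|T_{\Lambda^sg}f|_{L^p}\lesssim|\Lambda^sf|_{L^{p_3}}|g|_{L^{p_4}}$, including the endpoint $p_4=\infty$. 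With that substitution for your ``cancellation'' step, and with the torus case obtained either by redoing the Littlewood--Paley theory periodically or by a bilinear transference theorem, your plan yields \eqref{escom}.
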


We also need to use the following interpolation inequality in 3D. %(cf. \cite{KP88}).
\begin{lemma}\label{interp1}
$($Interpolation inequality$)$ There exists a constant
$\delta_0>0$
 such that
\begin{equation}\label{inteq2}
|u|_{1}^2\leq\delta_0|u|_{1/2}|u|_{3/2}.
\end{equation}
\end{lemma}

For a Banach space $(\mathbb{X},|\cdot|_{\mathbb{X}})$, we denote by $\mathbb{C}_T(\mathbb{X}):=C([0,T];\mathbb{X})$ the space of all continuous functions from $[0,T]$ to $\mathbb{X}$, which is a Banach space equipped with  the  norm
$$\|u\|_{\mathbb{C}_T(\mathbb{X})}:=\sup_{t\in[0,T]}|u(t)|_{\mathbb{X}},~u\in \mathbb{C}_T(\mathbb{X}).$$
Let $\mathscr{B}_b(\mathbb{X})$ (resp.~$C_b(\mathbb{X})$) be the space of all bounded and Borel measurable (resp.~continuous) real functions on $\mathbb{X}$.
The space $\text{Lip}_b(\mathbb{X})$ consists of  all the bounded and Lipschitz continuous real functions, and its norm is given by
$$\|\varphi\|_{L}:=\|\varphi\|_{\infty}+C_{\text{Lip}}^{\mathbb{X}},~\varphi\in \text{Lip}_b(\mathbb{X}),$$
where $\|\cdot\|_{\infty}$ is the supremum norm and  $C_{\text{Lip}}^{\mathbb{X}}$ is the Lipschitz constant of $\varphi$ on $\mathbb{X}$. Furthermore, let $(\mathbb{X},(\cdot,\cdot)_{\mathbb{X}})$, $(\mathbb{Y},(\cdot,\cdot)_{\mathbb{Y}})$ be two separable Hilbert spaces. The space of all Hilbert-Schmidt operators from $\mathbb{X}$ to $\mathbb{Y}$ is denoted by $\mathcal{L}_2(\mathbb{X};\mathbb{Y})$ equipped with the  Hilbert-Schmidt norm $\|\cdot\|_{\mathcal{L}_2(\mathbb{X};\mathbb{Y})}$.

%In the space $\mathbb{V} $ the inner product is inherited from ${H}^{1}(\mathbb{R}^3 , \mathbb{R}^3 )$, i.e.,
%\begin{equation*}
% (u,v)_{1 } := (u,v)_{{L}^{2}} + ((u,v)),
%\end{equation*}
%where
%\begin{equation*}
% ((u,v)):=(\nabla u,\nabla v)_{{L}^{2}} , \qquad u,v \in  \mathbb{V} .
%\end{equation*}
%In particular, the norm in $\mathbb{V}$ is given by
%\begin{equation*}
%  |u|_{1}^{2} :=  |u|_{{L}^{2}}^{2}+|\nabla u|_{{L}^{2}}^{2},  \qquad u \in  \mathbb{V} .
%\end{equation*}
%In addition, we also use the notation $\|\cdot\|$ for the following seminorm
%\[
%\|u\|^{2}:=((u,u)), \qquad u \in  \mathbb{V} .
%\]

\subsection{Bilinear operators}

Let us denote
$\mathbb{H}:=\mathbb{H}^0,~\mathbb{V}:=\mathbb{H}^1.$
Consider the trilinear form $b(\cdot,\cdot,\cdot):\mathbb{V}\times\mathbb{V}\times \mathbb{V}\to \mathbb{R}$ by  $$b(u,v,w):=\int_{\mathbb{T}^3}(u\cdot\nabla)v\cdot wd\xi=\sum_{i,i=1}^3\int_{\mathbb{T}^3}u_i\frac{\partial v_j}{\partial x_i}w_j d\xi.$$
%Recall the Gagliardo-Nirenberg interpolation inequality
%\begin{equation}\label{GNI}
%|u|_{L^4}\leq 2^{1/4}|u|_{L^2}^{1/4}|u|_{1}^{3/4},~u\in H^{1}.
%\end{equation}
By H\"{o}lder's inequality and Lemma \ref{lemso}, we have
\begin{equation}\label{cb1}
|b(u,v,w)|\leq |u|_{L^4}|w|_{L^4}|v|_1
\lesssim  |u|_{1}|v|_{1}|w|_{1},~u,v,w\in \mathbb{V}.
\end{equation}
Hence, $b$ is continuous on $\mathbb{V}$.  In addition, we define a bilinear operator $B$ by $$B(u,v):=b(u,v,\cdot),$$ then in view of (\ref{cb1}) it follows that $B(u,v)\in \mathbb{V}^{*}$ for all $u,v\in \mathbb{V}$ and  that the following estimate holds
$$|B(u,v)|_{-1}\lesssim|u|_1|v|_1,~u,v\in \mathbb{V}.$$
Thus, the mapping $B: \mathbb{V}\times  \mathbb{V}\to \mathbb{V}^{*}$ is bilinear and continuous. For sake of simplicity, we write $$B(u):=B(u,u).$$

We recall an important property of the trilinear form $b$ (cf.\cite{Tem95}).
\begin{lemma}
For any $u,v,w\in \mathbb{V}$,
\begin{equation}\label{cb2}
b(u,v,w)=-b(u,w,v),~b(u,v,v)=0.
\end{equation}
Moreover, $(\ref{cb2})$ holds more generally for any $u,v,w$ giving a meaning to
the trilinear forms, as stated precisely in the following:
\begin{equation}
\langle{B}(u,v),w\rangle\lesssim|u|_{m_1}|v|_{m_2+1}|w|_{m_3},\label{2.2}
\end{equation}
with the nonnegative parameters fulfilling
$$m_1+m_2+m_3\geq\frac{3}{2}~~~~~\text{if}~m_i\neq\frac{3}{2}~\text{for~any~i}$$
or$$m_1+m_2+m_3>\frac{3}{2}~~~~~\text{if}~m_i=\frac{3}{2}~\text{for~some~i}.$$
\end{lemma}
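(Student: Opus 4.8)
The plan is to prove the two assertions by different means: the algebraic identities in \eqref{cb2} come from integration by parts using incompressibility, whereas the estimate \eqref{2.2} is a H\"older--Sobolev argument whose admissible exponents produce exactly the stated constraints on $m_1,m_2,m_3$.

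First I would derive the antisymmetry. Starting from $b(u,v,w)=\sum_{i,j}\int_{\mathbb{T}^3}u_i(\partial_i v_j)w_j\,d\xi$ and integrating by parts in $\xi_i$ (no boundary terms appear by periodicity on $\mathbb{T}^3$), one obtains $b(u,v,w)=-\sum_{i,j}\int_{\mathbb{T}^3}\partial_i(u_i w_j)\,v_j\,d\xi$. Expanding the derivative splits this into a term proportional to $\sum_i\partial_i u_i=\nabla\cdot u$, which vanishes since $u\in\mathbb{V}$ is divergence free, plus exactly $-b(u,w,v)$. Taking $w=v$ then gives $b(u,v,v)=-b(u,v,v)=0$. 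I would first carry this out for smooth fields and extend to all of $\mathbb{V}$ by density, using the continuity \eqref{cb1}.

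For \eqref{2.2}, noting $\langle B(u,v),w\rangle=b(u,v,w)$, I would first settle the borderline case $m_1+m_2+m_3=\tfrac32$ with every $m_i\in[0,\tfrac32)$. H\"older's inequality with exponents $p_i$ given by $\tfrac{1}{p_i}=\tfrac12-\tfrac{m_i}{3}$ gives $\tfrac{1}{p_1}+\tfrac{1}{p_2}+\tfrac{1}{p_3}=\tfrac32-\tfrac{m_1+m_2+m_3}{3}=1$, whence $|b(u,v,w)|\le|u|_{L^{p_1}}|\nabla v|_{L^{p_2}}|w|_{L^{p_3}}$. Lemma \ref{lemso} then bounds $|u|_{L^{p_1}}\lesssim|u|_{m_1}$ and $|w|_{L^{p_3}}\lesssim|w|_{m_3}$, and $|\nabla v|_{L^{p_2}}\lesssim|\nabla v|_{m_2}\lesssim|v|_{m_2+1}$ (the last inequality because $\partial_j$ acts as the Fourier multiplier $ik_j$ with $|k_j|\le|k|$); each embedding is legitimate precisely because $0\le m_i<\tfrac32$. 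This proves the estimate on the borderline, and at the same time exhibits $B$ as a continuous map on the relevant product of Sobolev spaces, which is what ``gives a meaning to the trilinear forms.''

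It remains to reduce the general hypotheses to this borderline. Because the Fourier modes on the zero-mean space obey $|k|\ge1$, we have $|\cdot|_{s'}\le|\cdot|_s$ whenever $s'\le s$, so it suffices to find $\tilde m_i\le m_i$ with $\tilde m_1+\tilde m_2+\tilde m_3=\tfrac32$ and all $\tilde m_i\in[0,\tfrac32)$, and then apply the borderline case followed by monotonicity of the norms. The one subtle point --- and the source of the dichotomy in the statement --- is the existence of such $\tilde m_i$: if some $m_i=\tfrac32$ the embedding $H^{3/2}\hookrightarrow L^\infty$ fails, so that index must be strictly lowered, which the slack $m_1+m_2+m_3>\tfrac32$ permits; when instead all $m_i\ne\tfrac32$, the equality $m_1+m_2+m_3=\tfrac32$ already keeps each $\tilde m_i$ below the endpoint, so $\ge$ is enough. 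I expect this endpoint bookkeeping to be the only genuine obstacle; the rest is routine.
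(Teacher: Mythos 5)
Your treatment of the identities (\ref{cb2}) is correct: integration by parts on $\mathbb{T}^3$ (no boundary terms), the divergence-free condition killing the $\nabla\cdot u$ term, and a density argument via the continuity bound (\ref{cb1}). Your H\"older--Sobolev argument for the borderline case $m_1+m_2+m_3=\frac{3}{2}$ with all $m_i\in[0,\frac{3}{2})$ is also correct. For context, the paper gives no proof of this lemma at all (it simply cites Temam), so there is no in-paper argument to compare against; the assessment below concerns the internal correctness of your proposal.

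There is, however, a genuine gap in your reduction of the general case to the borderline case: the $\tilde m_i$ you require do not always exist. Take $(m_1,m_2,m_3)=(2,0,0)$, which is admissible for the lemma (the sum is $2\geq\frac{3}{2}$ and no index equals $\frac{3}{2}$). Your constraints force $\tilde m_2=\tilde m_3=0$ and hence $\tilde m_1=\frac{3}{2}$, which your borderline case excludes; the same obstruction occurs for every permutation of $(s,0,0)$ with $s>\frac{3}{2}$, since the capped sum $\sum_i\min(m_i,\frac{3}{2})$ can then only be approached, never attained, below the needed value. The estimate is still true in these cases --- e.g. $|b(u,v,w)|\leq|u|_{L^{\infty}}|\nabla v|_{L^2}|w|_{L^2}\lesssim|u|_{2}|v|_{1}|w|_{L^2}$ --- but proving it requires the supercritical embedding $H^{s}(\mathbb{T}^3)\hookrightarrow L^{\infty}$ for $s>\frac{3}{2}$, which lies outside the toolkit of Lemma \ref{lemso} (valid only for $s<\frac{3}{2}$) that your argument uses exclusively. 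The repair is a case distinction: whenever some $m_i>\frac{3}{2}$, place the corresponding factor in $L^{\infty}$ via this embedding (using H\"older exponents such as $(\infty,2,2)$) and run your reduction only on the remaining indices. Note that your endpoint bookkeeping is in fact sound in the case some $m_i=\frac{3}{2}$, because the hypothesis $m_1+m_2+m_3>\frac{3}{2}$ then forces the other indices to carry positive total weight; what it misses is precisely an index strictly above $\frac{3}{2}$ paired with two zero indices.
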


We also need to use the following estimates concerning the mapping $B$, whose proof is placed in Appendix \ref{subsecpr}.
\begin{lemma}\label{Bes12}
There exist positive constant $\delta_1$ and $\delta_2$ such that
\begin{eqnarray}\label{es7}
(B(u),\Lambda u)
\leq\frac{1}{2}| u|_{3/2}^2+\delta_1|u|_{1}^4|u|_{1/2}^2 , ~ ~u\in\mathbb{H}^{3/2}
\end{eqnarray}
and
\begin{eqnarray}\label{es29}
(B(u),\Lambda^2 u)
\leq \frac{1}{2}| u|_{2}^2+\delta_2|u|_{1}^6, ~ ~u\in\mathbb{H}^{2}.
\end{eqnarray}
\end{lemma}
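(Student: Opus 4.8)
The plan is to prove both inequalities by the same three–step scheme: a crude trilinear bound obtained from the generalized estimate (\ref{2.2}), a Sobolev interpolation to correct the homogeneity, and a concluding application of Young's inequality.

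First, note that both left-hand sides are honest duality pairings: since $\Lambda u$ and $\Lambda^2 u$ are divergence free, $(B(u),\Lambda u)=b(u,u,\Lambda u)=\langle B(u,u),\Lambda u\rangle$ and likewise $(B(u),\Lambda^2 u)=\langle B(u,u),\Lambda^2 u\rangle$. Applying (\ref{2.2}) with $(m_1,m_2,m_3)=(1,\tfrac12,0)$ gives, for $u\in\mathbb{H}^{3/2}$,
\[(B(u),\Lambda u)\lesssim |u|_1\,|u|_{3/2}\,|\Lambda u|_0=|u|_1^2\,|u|_{3/2},\]
the index constraint $m_1+m_2+m_3=\tfrac32$ being met with all $m_i\neq\tfrac32$. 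The same choice, now with the third slot carrying $|\Lambda^2 u|_0=|u|_2$, yields for $u\in\mathbb{H}^2$
\[(B(u),\Lambda^2 u)\lesssim |u|_1\,|u|_{3/2}\,|u|_2.\]
An equivalent route, which I would mention as an alternative, is to shift a power of $\Lambda$ onto the test function, exploit the cancellation $b(u,\Lambda^{1/2}u,\Lambda^{1/2}u)=0$ (resp. $b(u,\Lambda u,\Lambda u)=0$) from (\ref{cb2}), and estimate the remaining commutator through (\ref{escom}); this reproduces the same crude bounds.

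Second, and this is the decisive point, these crude bounds are not yet of the stated form: a direct application of Young's inequality to $|u|_1^2|u|_{3/2}$ would produce $|u|_1^4$ rather than the required $|u|_1^4|u|_{1/2}^2$. I would therefore first redistribute derivatives via the elementary interpolation inequalities $|u|_1\le |u|_{1/2}^{1/2}|u|_{3/2}^{1/2}$ and $|u|_{3/2}\le |u|_1^{1/2}|u|_2^{1/2}$, each an immediate consequence of the Cauchy–Schwarz inequality applied to the Fourier series defining $|\cdot|_s$. Using the first on one factor of $|u|_1$ gives $(B(u),\Lambda u)\lesssim |u|_{1/2}^{1/2}\,|u|_1\,|u|_{3/2}^{3/2}$, while the second gives $(B(u),\Lambda^2 u)\lesssim |u|_1^{3/2}\,|u|_2^{3/2}$; both right-hand sides are now homogeneous of degree three.

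Finally, I would close each estimate with Young's inequality with conjugate exponents $(\tfrac43,4)$, splitting off the top-order factor: for the first, $|u|_{3/2}^{3/2}\cdot\big(|u|_{1/2}^{1/2}|u|_1\big)\le \tfrac12|u|_{3/2}^2+\delta_1|u|_1^4|u|_{1/2}^2$, and for the second, $|u|_2^{3/2}\cdot|u|_1^{3/2}\le \tfrac12|u|_2^2+\delta_2|u|_1^6$, where the implicit constants from (\ref{2.2}) are absorbed into $\delta_1,\delta_2$ and the $\varepsilon$ in Young's inequality is tuned so that the top-order coefficient is exactly $\tfrac12$. I expect the only genuine subtlety to lie in the second step: one must interpolate \emph{before} invoking Young in order to surface the correct low-order factor ($|u|_{1/2}^2$, respectively the balanced power of $|u|_1$), since the naive order of operations yields a structurally different — and, for the later Lyapunov estimates, less useful — right-hand side.
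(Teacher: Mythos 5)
Your proof is correct, and it shares the skeleton of the paper's argument: reduce each pairing to a product of Sobolev norms of total homogeneity three, redistribute derivatives by interpolation, and close with Young's inequality so that the top-order term carries coefficient $\frac12$. The differences lie in the ingredients used for the crude bounds. The paper obtains them by hand: for (\ref{es7}), H\"older plus the embeddings $H^1\subset L^6$ and $H^{1/2}\subset L^3$ give $(B(u),\Lambda u)\le |B(u)|_{L^{3/2}}|\nabla u|_{L^3}\lesssim |u|_1^2|u|_{3/2}$, and for (\ref{es29}) it invokes Agmon's inequality $|u|_{L^\infty}\lesssim |u|_1^{1/2}|u|_2^{1/2}$ to reach $|u|_1^{3/2}|u|_2^{3/2}$ in one stroke. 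You instead derive both crude bounds uniformly from the generalized trilinear estimate (\ref{2.2}) with $(m_1,m_2,m_3)=(1,\frac12,0)$, and for the second one interpolate $|u|_{3/2}\le |u|_1^{1/2}|u|_2^{1/2}$; this avoids Agmon's inequality entirely and makes the two cases symmetric, at the cost of leaning on the cited lemma rather than elementary embeddings. For (\ref{es7}) there is also a harmless reordering: the paper applies Young, then the interpolation $|u|_1^2\lesssim |u|_{1/2}|u|_{3/2}$ to the resulting $|u|_1^4$, then Young again, whereas you interpolate first and finish with a single application of Young with exponents $(\frac43,4)$; both orderings produce the same right-hand side, and your closing remark that the $|u|_{1/2}^2$ factor must be surfaced before the final Young step is exactly what the paper's Young--interpolation--Young maneuver accomplishes.
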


%We also recall the following product estimate (cf.~\cite{BF,RZZ2}).
%\begin{lemma} Suppose that $s>0$,
%$p,p_2,p_3\in(1,\infty)$, $p_1,p_4\in(1,\infty]$ satisfy
%\begin{equation*}
%\frac{1}{p}\geq\frac{1}{p_1}+\frac{1}{p_2},~~~\frac{1}{p}\geq\frac{1}{p_3}+\frac{1}{p_4}.
%\end{equation*}
%Then, we have
%$$|\Lambda^s(fg)|_{L^p}\lesssim\Big(|{f}|_{L^{p_1}}|\Lambda^{s}{g}|_{L^{p_2}}+|\Lambda^s{f}|_{L^{p_3}}|g|_{L^{p_4}}\Big).$$
%\end{lemma}

\section{Well-posedness}\label{mainR}
In this section, we establish the global well-posedness of the stochastic forced 3D Navier-Stokes equations. In subsection \ref{sub2.1}, we present the main assumptions on the random noise and state the main results. In subsection \ref{sub2.2}, we make some necessary preparations including several tightness criterions.  In subsections \ref{sub2.3}-\ref{sub2.5}, we construct the Faedo-Galerkin approximation of the stochastic forced 3D Navier-Stokes equations and passage to its limit.  In subsections \ref{sub2.6}-\ref{sub2.7}, we prove the main theorems.
\subsection{Mathematical framework and main results}\label{sub2.1}
Taking into account the Leray-Helmholtz projection $\mathcal{P}$,  the system (\ref{sNS}) can be transferred to  an abstract formulation as follows
\begin{equation}\label{sns00}
\begin{cases}
&\!\!\!\!\!\!du(t)+[\nu\mathcal{A} u(t) +B( u(t))] dt= \sum_{i=1}^{\infty}\big[\mathcal{P}((\zeta_i\cdot\nabla)u(t))d\beta_i(t)+ g_i(t,u(t)) d\hat{\beta}_i(t)\big], \\
&\!\!\!\!\!\!u(0) = x,
\end{cases}
\end{equation}
where $(\beta_i)_{i\geq 1}$ and $(\hat{\beta}_i)_{i\geq 1}$ are sequences of independent standard Brownian motions on a complete filtered probability space $\left(\Omega,\mathscr{F},(\mathscr{F}_t)_{t\geq0},\mathbf{P}\right)$.
 Without loss of generality, throughout the work, we assume $\nu\equiv 1$.

For $i\in\mathbb{N}$, we denote
\begin{equation}\label{sigma1}
\sigma_i(u):= \mathcal{P}((\zeta_i\cdot\nabla)u).
\end{equation}
Then, for any $h\in l^2$  we define
$$\sigma(u)h:=\sum_{i=1}^{\infty}\sigma_i(u)h_i,~g(t,u)h:=\sum_{i=1}^{\infty} g_i(t,u)h_i.$$

We recall that the sequence of independent standard Brownian motions $(\beta_i)_{i\geq 1}$ (resp.~$(\hat{\beta}_i)_{i\geq 1}$) induces uniquely an $l^2$-valued cylindrical Brownian motion  $\mathcal{W}$ (resp.~$\hat{\mathcal{W}}$). In this case, we can rewrite Eq.~(\ref{sns00}) as
\begin{equation}\label{sns1}
\begin{cases}
&\!\!\!\!\!\!du(t)+[\mathcal{A} u(t) +B( u(t))] dt =\sigma(u(t))d\mathcal{W}(t) +    g(t,u(t)) d\hat{\mathcal{W}}(t) , \\
&\!\!\!\!\!\!u(0) = x,
\end{cases}
\end{equation}
where $\mathcal{W}$ and $\hat{\mathcal{W}}$ are independent $l^2$-valued cylindrical Brownian motions.

We first recall the definitions of (probabilistically) weak and strong solutions to Eq.~(\ref{sns1}).

\begin{definition}\label{dew} $($Weak solutions$)$ We say that Eq.~$(\ref{sns1})$ has a weak solution  with initial value $x\in \mathbb{H}^{1/2}$, if there exist a stochastic basis $(\Omega,\mathscr{F},(\mathscr{F}_t)_{t\geq 0},\mathbf{P})$, and  an $(\mathscr{F}_t)$-adapted process $u$  and independent $l^2$-valued cylindrical Brownian motions  $\mathcal{W}$ and $\hat{\mathcal{W}}$  such  that the following holds:

\vspace{2mm}
$(i)$ $u\in \mathbb{C}_T(\mathbb{H}^{1/2})\cap L^2([0,T];\mathbb{H}^{3/2})$\,\,\, $\mathbf{P}$-a.s.;

\vspace{2mm}
$(ii)$ it holds that  $\mathbf{P}$\text{-a.s.}
\begin{eqnarray}\label{de2}
\!\!\!\!\!\!\!\!&&(u(t),\phi)+\int_0^t (B(u(s)),\phi) ds+\int_0^t (\nabla u(s),\nabla\phi)ds
\nonumber \\
\!\!\!\!\!\!\!\!&&=
(x,\phi)+\int_0^t (\sigma(u(s))d \mathcal{W}(s),\phi)+\int_0^t (g(s,u(s))d \hat{\mathcal{W}}(s),\phi)
\end{eqnarray}
for all $t\in[0,T]$ and all $\phi\in \mathcal{V}$.
\end{definition}

\begin{definition}\label{des}
$($Strong solutions$)$ We call that there exists a $($probabilistically$)$ strong solution to stochastic 3D Navier-Stokes equations $(\ref{sns1})$ with initial value $x\in \mathbb{H}^{1/2}$ if for every probability space $(\Omega,\mathscr{F},(\mathscr{F}_t)_{t\geq 0},\mathbf{P})$ with independent $l^2$-valued cylindrical Brownian motions  $\mathcal{W}$ and $\hat{\mathcal{W}}$, there exists
an $(\mathscr{F}_t)$-adapted process $u$ such that properties $(i)$-$(iii)$ in Definition \ref{dew} hold.
\end{definition}

%In the sequel, we present the main assumptions on mappings $f,\sigma,g$ in Eq.~(\ref{sns1}).
%\begin{hypothesis}\label{h1}
% $f\in L^2([0,T];\mathbb{H}^{1})$.
%
%\end{hypothesis}

For $s\in[0,1]$, we define

\begin{equation}\label{Ms}
\mathcal{N}_s:=\sum_{i=1}^{\infty}|\Lambda^s \zeta_i|_{L^{\infty}}^2.
\end{equation}

The conditions on the vector field $\zeta$ are provided as follows.
\begin{hypothesis}\label{h2}

The measurable mapping $\zeta_i:\mathbb{T}^3\to \mathbb{R}^3$, $i\in\mathbb{N}$, is of $C^{1}$-class.   Moreover, the following assumptions hold.

\vspace{1mm}
\begin{enumerate}
\item [$(\mathbf{H}_{\sigma}^1)$] $\mathcal{N}_1<\infty.$

\vspace{2mm}
\item [$(\mathbf{H}_{\sigma}^2)$]
$\mathcal{N}_0< 1/8.$

\end{enumerate}
\end{hypothesis}

\begin{remark}
Here, the factor $1/8$ in $(\mathbf{H}_{\sigma}^2)$  is not optimal, which is related to the viscosity constant $\nu$ assumed to be $1$. That is to say, the first-order term appearing in the transport noise will be absorbed
by the Laplacian.
\end{remark}

%$\delta_0$, $\delta_1$ are positive constants  come from \eref{inteq2} and \eref{es7},
Recall the positive constants $\delta_0$, $\delta_1$ and $\delta_2$ given in Lemma \ref{interp1} and Lemma \ref{Bes12}. The conditions on the mapping $g$ are provided as follows.
\begin{hypothesis}\label{h3}

For any $t\in[0,T]$, we suppose that $g(t,0)=0$ and that the following  assumptions  hold for all $u,v\in \mathbb{H}^{3/2}$.

\vspace{1mm}
\begin{enumerate}

\item [$(\mathbf{H}_{g}^1)$] There are some constants $\beta\geq 2$ and $C>0$ such that
\begin{eqnarray*}
\!\!\!\!\!\!\!\!&&\|g(t,u)-g(t,v)\|_{\mathcal{L}_2(l^2;\mathbb{H}^{1/2})}^2
\nonumber \\
\leq \!\!\!\!\!\!\!\!&&
(C+\rho_1(u)+\rho_2(v))|u-v|^{2}_{1/2}+(C+\eta_1(u)+\eta_2(v))|u-v|^{2}_{1},
\end{eqnarray*}
where $\rho_1,\rho_2,\eta_1,\eta_2: \mathbb{H}^{3/2}\to [0,\infty)$ are  measurable functions satisfying
\begin{eqnarray}
\!\!\!\!\!\!\!\!&&\rho_1(u)+\rho_2(u)\lesssim (1+|u|_{3/2}^{2})(1+|u|_{1/2}^{\beta}),~u\in \mathbb{H}^{3/2},\label{es1}
 \\
\!\!\!\!\!\!\!\!&&\eta_1(u)+\eta_2(u)\lesssim (1+|u|_{3/2})(1+|u|_{1/2}^{\beta}),~u\in \mathbb{H}^{3/2}.\label{es0}
\end{eqnarray}

\item [$(\mathbf{H}_{g}^2)$]   There exist some constants  $C>0$, $\gamma\in(1,2)$ such that
\begin{equation*}
\kappa_1(|u|_{1}^{4}+1)|u|_{1/2}^4+\|g(t,u)\|_{\mathcal{L}_2(l^2;\mathbb{H}^{1/2})}^2|u|_{1/2}^2
\leq C|u|_{1/2}^{\gamma+2}+\gamma\|(g(t,u)\cdot,u)_{1/2}\|_{\mathcal{L}_2(l^2;\mathbb{R})}^2,
\end{equation*}
where $\kappa_1\geq\max\big\{\delta_1,\frac{4\delta_0^2\mathcal{N}_{1/2}^2}{1-8\mathcal{N}_0}\big\}$,  $\|(g(t,u)\cdot,u)_{1/2}\|_{\mathcal{L}_2(l^2;\mathbb{R})}^2=\sum_{i=1}^{\infty}|(g(t,u)h_k,u)_{1/2}|^2$, $\{h_1,h_2,\cdots\}$ is a complete orthonormal basis on  $l^2$.

\vspace{1mm}
\item [$(\mathbf{H}_{g}^3)$]   There exist  some constants  $C>0$ and  $\kappa_2\geq2\delta_2$    such that
\begin{equation*}
\kappa_2|u|_{1}^{6}+\|g(t,u)\|_{\mathcal{L}_2(l^2;\mathbb{H}^{1})}^2
\leq C(1+|u|_{1}^2)+\frac{2\|(g(t,u)\cdot,u)_{1}\|_{\mathcal{L}_2(l^2;\mathbb{R})}^2}{1+|u|_{1}^2}.
\end{equation*}

\item [$(\mathbf{H}_{g}^4)$] There are some constants $\alpha,\beta\geq 2$ such that
\begin{equation}\label{cong1}
\|g(t,u)\|_{\mathcal{L}_2(l^2;\mathbb{H}^{1/2})}^2\lesssim (1+|u|_{3/2}^{2})(1+|u|_{1/2}^{\beta}),
\end{equation}
\begin{equation}\label{cong4}
\|g(t,u)\|_{\mathcal{L}_2(l^2;\mathbb{H}^{1})}^2\lesssim (1+|u|_{2}^{2})(1+|u|_{1}^{\beta}),
\end{equation}
and
\begin{equation}\label{cong2}
\|g(t,u)\|_{\mathcal{L}_2(l^2;\mathbb{H}^{-1/2})}^2\lesssim (1+|u|_{1}^{\alpha})(1+|u|_{1/2}^{\beta}).
\end{equation}

\end{enumerate}

\end{hypothesis}

In the following remark, we present a motivational example for random noise that satisfies Hypothesis \ref{h3}.
%\begin{remark}
%It worth mentioning that the constant
%\end{remark}

\begin{remark}\label{reb2}
(i) In the field of  partial differential equations, the integral-type nonlocal forcing plays an important role since it can be used to characterize the spatial dependence for systems introduced in the engineering,
biology and fluid mechanics, which have been extensively studied  (cf.~the monograph \cite{KS18}). In particular, the now well-known Burgers equation is in fact  a simplified version of the {\it{nonlocal}} Burgers system, in which a nonlocal turbulent forcing $\Phi(u)=\int_{\mathcal{O}}|u(t,\xi)|^2d\xi$ is involved  to simulate flow in a channel. For related introduction for the original {\it{nonlocal}} Burgers system, we refer to the seminal work  \cite{Bur39}, as well as some subsequent follow-up studies \cite{HO,DKL,Bur48}.

On the other hand, in the field of fluid mechanics, the nonlocal viscous forcing have also attracted widespread attention. In the well-known work \cite{Lad67}, Ladyzenskaja considered the Navier-Stokes equations with the following nonlocal viscosity constant
$$\nu(u):=\nu\Big[1+\int_{\mathcal{O}}|\nabla u(t,\xi)|^2d\xi\Big],$$
which is related to the total dissipation energy of Newtonian fluid.  As noticed in \cite{CR05}, such nonlocal viscosity enables us to view the model
as a particular asymptotic limit  of certain nonisothermal flows with very high thermal conductivity in the
fluid.

Motivated by the deterministic case,  we apply our main results to the nonlocal stochastic  forcing of the following form
\begin{equation}\label{exaB1}
\int_0^tg(s, u(s,x)) d\hat{\mathcal{W}}(s):=\sum_{i=1}^{\infty}\int_0^t\alpha_i\Big[1+\int_{\mathbb{T}^3}|\nabla u(s,\xi)|^2d\xi\Big]u(s)  d\hat{\beta}_i(s),
\end{equation}
where the constant $\varrho:=\sum_{i=1}^{\infty}\alpha_i^2\geq\frac{\kappa_1}{\gamma-1}\vee\kappa_2$. The  nonlocal stochastic forcing $(\ref{exaB1})$ can be used to characterize random perturbations with spatial dependence, where the intensity of the noise is related to the total dissipation of energy. It worth noting that the gradient-integral-type nonlocal forcing have extensive applications in many fields, see e.g.~the non-degenerate
 Kirchhoff type forcing in the study of the wave equations (cf.~\cite{K83} and reference therein) and the Woinowsky-Krieger nonlocal  forcing introduced in \cite[Chap 6]{KS18}. The reader can also refer to \cite{WK50,APS,G13} for additional applications.

\vspace{1mm}
(ii) For reader's convenience,  we sketch the proof for which the nonlocal stochastic forcing $(\ref{exaB1})$ satisfies the assumptions  $(\mathbf{H}_{g}^2)$ and $(\mathbf{H}_{g}^3)$, whereas  the remaining  assumptions in Hypothesis \ref{h3} are more straightforward.
\begin{proof}
Recall that $\{h_1,h_2,\cdots\}$ is a complete orthonormal basis on  $l^2$.
Note that
\begin{equation*}
   g(t,u)y=\sum_{i=1}^{\infty}\alpha_i\big[1+|u|_{1}^2\big] u( y,h_i)_{l^2},~y\in l^2.
\end{equation*}
Therefore, it follows that
\begin{eqnarray*}
\!\!\!\!\!\!\!\!&&\|(g(t,u)\cdot,u)_{1/2}\|_{\mathcal{L}_2(l^2;\mathbb{R})}^2
\\
=\!\!\!\!\!\!\!\!&&\|g(t,u)^*u\|_{l^2}^2  \\
=\!\!\!\!\!\!\!\!&&( u,g(t,u)g(t,u)^*u)_{1/2}   \\
=\!\!\!\!\!\!\!\!&&\sum_{i=1}^{\infty}\alpha_i\big[1+|u|_{1}^{2}\big]( u, u)_{1/2}( g(t,u)^*u,h_i)_{l^2}   \\
=\!\!\!\!\!\!\!\!&&\sum_{i=1}^{\infty}\alpha_i\big[1+|u|_{1}^{2}\big]|u|_{1/2}^{2}( u,\sum_{j=1}^{\infty}\alpha_j\big[1+|u|_{1}^{2}\big] u( h_j,h_i)_{l^2})_{1/2}   \\
=\!\!\!\!\!\!\!\!&&\varrho\big[1+|u|_{1}^{2}\big]^2|u|_{1/2}^{4}.
 \end{eqnarray*}
 Similarly, we can also get
 \begin{equation*}
 \|(g(t,u)\cdot,u)_{1}\|_{\mathcal{L}_2(l^2;\mathbb{R})}^2=\varrho\big[1+|u|_{1}^{2}\big]^2|u|_{1}^{4}.
 \end{equation*}

As for the assumption  $(\mathbf{H}_{g}^2)$, we know that there exists a constant $\gamma\in(1,2)$ such that for $\varrho\geq\frac{\kappa_1}{\gamma-1}$,
\begin{eqnarray*}
\!\!\!\!\!\!\!\!&&\kappa_1\big[1+|u|_{1}^{4}\big]|u|_{1/2}^4+\|g(t,u)\|_{\mathcal{L}_2(l^2;\mathbb{H}^{1/2})}^2|u|_{1/2}^2\\
\leq\!\!\!\!\!\!\!\!&&
 \kappa_1\big[1+|u|_{1}^{2}\big]^2|u|_{1/2}^4+\varrho\big[1+|u|_{1}^{2}\big]^2|u|_{1/2}^4
 \\
\leq\!\!\!\!\!\!\!\!&&\gamma\varrho\big[1+|u|_{1}^{2}\big]^2|u|_{1/2}^4 =\gamma\|(g(t,u)\cdot,u)_{1/2}\|_{\mathcal{L}_2(l^2;\mathbb{R})}^2      .
 \end{eqnarray*}
On the other hand, as for the assumption  $(\mathbf{H}_{g}^3)$, we can deduce that for $\varrho\geq\kappa_2$
\begin{eqnarray*}
\!\!\!\!\!\!\!\!&&\kappa_2|u|_{1}^{6}+\|g(t,u)\|_{\mathcal{L}_2(l^2;\mathbb{H}^{1})}^2\\
\leq\!\!\!\!\!\!\!\!&&
 \kappa_2|u|_{1}^{6}+\varrho\big[1+|u|_{1}^{2}\big]^2|u|_{1}^2
 \\
 \leq\!\!\!\!\!\!\!\!&&
  (\kappa_2\vee\varrho)\frac{\big[1+|u|_{1}^{2}\big]^2|u|_{1}^2+\big[1+|u|_{1}^{2}\big]^2|u|_{1}^4}{1+|u|_{1}^2}
  \\
 \leq\!\!\!\!\!\!\!\!&&
  \varrho\frac{\big[1+|u|_{1}^{2}\big]^2(\frac{1}{4}+|u|_{1}^4)+\big[1+|u|_{1}^{2}\big]^2|u|_{1}^4}{1+|u|_{1}^2}
  \\
\leq\!\!\!\!\!\!\!\!&&\frac{\varrho}{4}(1+|u|_{1}^{2})+2 \frac{\|(g(t,u)\cdot,u)_{1}\|_{\mathcal{L}_2(l^2;\mathbb{R})}^2}{1+|u|_{1}^2},
 \end{eqnarray*}
where we used Young's inequality in the third step.
\end{proof}

\end{remark}

Now, we state the first main result concerning the global existence and uniqueness of solutions  to stochastic 3D Navier-Stokes equations (\ref{sns1}).
\begin{theorem}\label{thg}
Suppose that Hypothesis \ref{h2}-\ref{h3} hold. For any initial value $x\in\mathbb{H}^{1/2}$, Eq.~$(\ref{sns1})$ has a unique strong solution $u(t)$.
Moreover,  the following energy moment estimates hold
\begin{equation}\label{esap}
\sup_{t\in[0,T]}\mathbf{E}|u(t)|_{1/2}^{2-\gamma}+\mathbf{E}\int_0^T|u(t)|_{3/2}^{2-\gamma}dt<\infty,
\end{equation}
where the constant $\gamma\in(1,2)$ is the same as in $(\mathbf{H}_{g}^2)$.
\end{theorem}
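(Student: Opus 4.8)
The plan is to produce a probabilistically strong, pathwise-unique solution by a Faedo--Galerkin scheme, passing to the limit through a stochastic compactness argument and then invoking the infinite-dimensional Yamada--Watanabe theorem; the estimate \eqref{esap} is obtained first for the approximations and then propagated to the limit. Project \eqref{sns1} onto the span $\mathbb{H}_n$ of the first $n$ eigenfunctions of $\mathcal{A}$ to get finite-dimensional It\^o SDEs $u_n$ with global solutions. The core is the \emph{concave} Lyapunov function $V(u)=|u|_{1/2}^{2-\gamma}$, which is meaningful because $2-\gamma\in(0,1)$ by $(\mathbf{H}_g^2)$; after a routine regularization $(\varepsilon+|u|_{1/2}^2)^{(2-\gamma)/2}$, It\^o's formula gives a $dt$-drift (writing $\|\cdot\|_{1/2}$ for $\|\cdot\|_{\mathcal{L}_2(l^2;\mathbb{H}^{1/2})}$)
\begin{equation*}
\tfrac{2-\gamma}{2}|u_n|_{1/2}^{-\gamma}\Big[-2|u_n|_{3/2}^2-2(B(u_n),\Lambda u_n)+\|\sigma(u_n)\|_{1/2}^2+\|g\|_{1/2}^2-\gamma|u_n|_{1/2}^{-2}\|(g(t,u_n)\cdot,u_n)_{1/2}\|_{\mathcal{L}_2(l^2;\mathbb{R})}^2\Big].
\end{equation*}
Here the concavity produces the crucial negative term $-\gamma|u_n|_{1/2}^{-2}\|(g\cdot,u_n)_{1/2}\|^2$, which is exactly the ``second order'' regularization by the nonlocal noise. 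I would bound $-2(B(u_n),\Lambda u_n)$ by \eqref{es7} (applied to $\pm u_n$, legitimate since $B$ is even and $\Lambda$ odd), absorb the $\mathbb{H}^{3/2}$-part of the transport It\^o correction $\|\sigma(u_n)\|_{1/2}^2$ into the dissipation via $(\mathbf{H}_\sigma^2)$ (producing the factor $1-8N_0$), and feed the residue $\kappa_1(1+|u_n|_1^4)|u_n|_{1/2}^2+\|g\|_{1/2}^2$ into $(\mathbf{H}_g^2)$ --- this is precisely why $\kappa_1\geq\max\{\delta_1,4M_0^2/(1-8N_0)\}$. The bracket then collapses to $\leq -|u_n|_{3/2}^2+C|u_n|_{1/2}^\gamma$, and since $|u_n|_{1/2}\leq|u_n|_{3/2}$ on $\mathbb{Z}^3_0$ one has $|u_n|_{1/2}^{-\gamma}|u_n|_{3/2}^2\geq|u_n|_{3/2}^{2-\gamma}$, yielding the uniform version of \eqref{esap} for $u_n$.

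Because the nonlocal operator $g(t,\cdot)$ (e.g.\ \eqref{exaB1}) is \emph{not} weakly continuous, identifying the limit in the stochastic term forces strong $\mathbb{H}^1$-convergence, hence uniform-in-$n$ $\mathbb{H}^1$-bounds on $u_n$. These cannot hold up to $t=0$ for critical data, so I exploit the parabolic smoothing of $\mathcal{A}$: iterating It\^o's formula on $|u_n|_1$-type quantities with \eqref{es29} and $(\mathbf{H}_g^3)$ (where $\kappa_2\geq2\delta_2$), again using the concave-function regularization, I obtain uniform \emph{logarithmic} moment bounds for $|u_n(t)|_1$ on compact subsets of $(0,T]$, with a weight degenerating at $t=0$. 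With these bounds plus fractional-in-time equicontinuity I prove tightness of the laws of $u_n$ in a path space carrying weak topologies (e.g.\ $C([0,T];\mathbb{H}^{-1/2})\cap L^2_{\mathrm{loc}}((0,T];\mathbb{H}^1)$), apply Jakubowski's Skorokhod representation to get a.s.\ convergent copies, and pass to the limit in the martingale problem, the strong $\mathbb{H}^1$-convergence handling the non-weakly-continuous $g$. Since the bound degenerates at $t=0$, the limit $\tilde u$ a priori lies only in $C((0,T];\mathbb{H}^{-1/2})$; I then exhibit a $\tilde{\mathbf{P}}\otimes dt$-version $\bar u$ with $\bar u(0)=x$ solving \eqref{de2}, show $\bar u\in C([0,T];\mathbb{H}^{-1/2})$ by a stopping-time argument, and finally $\bar u\in C([0,T];\mathbb{H}^{1/2})\cap L^2([0,T];\mathbb{H}^{3/2})$ by localization and an abstract continuity lemma, producing a weak solution in the sense of Definition \ref{dew}.

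For uniqueness, take two solutions $u,v$ on the same stochastic basis, set $w=u-v$, and estimate $|w|_{1/2}^2$. The delicate term $(B(u)-B(v),\Lambda w)=(B(w,u)+B(v,w),\Lambda w)$ is controlled by the commutator estimate \eqref{escom} together with \eqref{2.2}, closing a pathwise Gr\"onwall inequality whose random coefficient is integrable thanks to the $\mathbb{H}^{3/2}$-control in \eqref{esap}; the It\^o corrections from $\sigma$ and $g$ are absorbed using $(\mathbf{H}_\sigma^2)$ and $(\mathbf{H}_g^1)$. Pathwise uniqueness plus existence of a weak solution then upgrade, via the Yamada--Watanabe theorem, to a probabilistically strong solution satisfying Definition \ref{des}, and \eqref{esap} descends to it by Fatou's lemma and lower semicontinuity.

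The main obstacle is the interplay in the middle step: establishing the uniform $\mathbb{H}^1$-bounds away from $t=0$ and converting them into the \emph{strong} convergence required by the nonlocal, non-weakly-continuous forcing $g$, while simultaneously recovering continuity of the solution at $t=0$ in the critical space $\mathbb{H}^{1/2}$. The fact that only moments of order $2-\gamma<1$ are available (no finite second moments) means tightness, the Skorokhod identification, and every limit passage must be argued with care rather than by standard uniform integrability.
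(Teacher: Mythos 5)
Your proposal is correct and follows essentially the same route as the paper's proof: Galerkin approximation with the concave Lyapunov function $(\varepsilon+|u|_{1/2}^2)^{1-\gamma/2}$ exploiting the negative It\^o correction under $(\mathbf{H}_g^2)$, the bootstrap logarithmic $\mathbb{H}^1$-estimates away from $t=0$ via $(\mathbf{H}_g^3)$, tightness plus Jakubowski--Skorokhod with the strong convergence needed for the non-weakly-continuous $g$, recovery of $C([0,T];\mathbb{H}^{1/2})$-continuity through a version $\bar u$ with $\bar u(0)=x$, stopping times and the abstract lemma of Goodair--Crisan, pathwise uniqueness via commutator estimates and stochastic Gronwall, and finally Yamada--Watanabe. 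The only differences are cosmetic (e.g.\ your decomposition $B(u)-B(v)=B(w,u)+B(v,w)$ versus the paper's $B(u,z)+B(z,v)$, and transferring \eqref{esap} by Fatou rather than rerunning the Lyapunov argument on the limit), none of which changes the substance.
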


\begin{remark}
 (i) From the energy estimates $(\ref{esap})$ one observes that, unlike the existing results in the $L^2$-setting (cf.~e.g.~\cite{BM13}), the finite second moments are unavailable here for global solutions in the $\mathbb{H}^{1/2}$-space. This originates from the Lyapunov  approach we employ to capture the regularization effect of random noise.

 (ii) This seems to be the first result in the literature that establishes the global existence and uniqueness of solutions for stochastic forced 3D Navier-Stokes equations in the  critical space under general initial conditions. The contribution also includes developing a bootstrap type estimate  to prove the global solvability of the 3D Navier-Stokes equations perturbed by nonlocal stochastic forcing in the $\mathbb{H}^{1/2}$-space.
\end{remark}
%{\color{blue}

%}

For any $\varphi\in \mathscr{B}_b(\mathbb{H}^{1/2})$, $t\geq 0$, we define a function $\mathcal{T}_t\varphi:\mathbb{H}^{1/2}\to \mathbb{R}$ by
\begin{equation}\label{semig}
\mathcal{T}_t\varphi(x):=\mathbf{E}\varphi(u(t,x)),~x\in \mathbb{H}^{1/2},
\end{equation}
where  $u(t,x)$ is the  solution to (\ref{sns1}) with initial data $x$.

\vspace{1mm}
The following result establishes the continuous dependence on initial values in probability.
\begin{theorem}\label{thF}
Suppose that  Hypothesis \ref{h2}-\ref{h3} hold. Let $\{x_n\}_{n\in\mathbb{N}}$ and $x$ be a sequence with $|x_n-x|_{1/2}\to 0$.
Then as $n\to\infty$,
\begin{equation}\label{apri3}
\|u(\cdot,x_n)-u(\cdot,x)\|_{\mathbb{C}_T(\mathbb{H}^{1/2})}+\|u(\cdot,x_n)-u(\cdot,x)\|_{L^2([0,T];\mathbb{H}^{3/2})} \to 0~~\text{in probability}.
\end{equation}
In particular, if $g$ is independent of $t$ $($i.e.,~ $g(t,u)=g(u)$$)$, Eq.~$(\ref{sns1})$ defines a Feller Markov process, that is, $\mathcal{T}_t:C_b(\mathbb{H}^{1/2})\to C_b(\mathbb{H}^{1/2}) $ and
\begin{equation}\label{markov}
\mathbf{E}[\varphi(u(t+s,x))|\mathscr{F}_t]=(\mathcal{T}_s\varphi)(u(t,x)),~\text{for any}~\varphi\in C_b(\mathbb{H}^{1/2}),x\in\mathbb{H}^{1/2},t,s>0.
\end{equation}
Besides, the semigroup property $\mathcal{T}_{t+s}=\mathcal{T}_{t}\circ \mathcal{T}_{s}$ holds.

\end{theorem}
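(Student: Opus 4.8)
The plan is to prove the quantitative continuous dependence \eqref{apri3} first, deduce the Feller property immediately from it, and then read off the Markov and semigroup properties from the strong uniqueness already established in Theorem \ref{thg}. By strong uniqueness all the solutions may be realized on one stochastic basis driven by the same pair $\mathcal{W},\hat{\mathcal{W}}$, so that, writing $u_n:=u(\cdot,x_n)$, $u:=u(\cdot,x)$ and $w_n:=u_n-u$, the difference solves the equation obtained by subtraction, with drift difference $B(u_n)-B(u)=B(w_n,u_n)+B(u,w_n)$. I would apply It\^o's formula to $|w_n(t)|_{1/2}^2$: the viscosity supplies the dissipation $-2\int_0^t|w_n|_{3/2}^2\,ds$; the bilinear terms are estimated through the trilinear bound \eqref{2.2}, the commutator estimate \eqref{escom} and interpolation; and the noise contributions are handled by $(\mathbf{H}_{\sigma}^1)$, $(\mathbf{H}_{\sigma}^2)$ and the Lipschitz-type hypothesis $(\mathbf{H}_{g}^1)$. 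The higher-order $|w_n|_1^2$-terms produced by $(\mathbf{H}_{g}^1)$ and by the transport noise are absorbed into the dissipation through $|w_n|_1^2\le\varepsilon|w_n|_{3/2}^2+C_\varepsilon|w_n|_{1/2}^2$, leading to
$$
d|w_n|_{1/2}^2+|w_n|_{3/2}^2\,dt\ \le\ K_n(t)\,|w_n|_{1/2}^2\,dt+d M_n(t),
$$
where $M_n$ is a local martingale with $M_n(0)=0$ and $K_n\ge0$ is, after the interpolation, $\mathbf{P}$-a.s. integrable on $[0,T]$, being controlled via \eqref{2.2}, \eqref{escom} by the very norms appearing in the energy estimate \eqref{esap}.

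The essential difficulty, and the main obstacle, is that only the sub-unit moments \eqref{esap} of order $2-\gamma<1$ are available, so one cannot take expectations in the inequality above and close a Gronwall estimate directly. I would overcome this by combining a random exponential transform with a stopping-time localization adapted to \eqref{esap}. Set
$$
\tau_R^n:=\inf\Big\{t\ge0:\sup_{s\le t}\big(|u_n(s)|_{1/2}^{2-\gamma}+|u(s)|_{1/2}^{2-\gamma}\big)+\int_0^t\big(|u_n|_{3/2}^{2-\gamma}+|u|_{3/2}^{2-\gamma}\big)ds\ge R\Big\}\wedge T,
$$
so that on $[0,\tau_R^n]$ one bounds $\int_0^{t}K_n\,ds\le C(R)$ (using H\"older in time and the pointwise bound on $\sup_{s\le t}|u_n|_{1/2},\sup_{s\le t}|u|_{1/2}$). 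Then $Y_n(t):=|w_n(t)|_{1/2}^2\exp(-\int_0^tK_n\,ds)$ satisfies $dY_n\le e^{-\int_0^tK_n}\,dM_n$, and after a further localization making the stopped stochastic integral a true martingale, taking expectations together with $e^{-\int_0^{t\wedge\tau_R^n}K_n}\ge e^{-C(R)}$ and the Burkholder--Davis--Gundy inequality yields
$$
\mathbf{E}\sup_{t\le\tau_R^n}|w_n(t)|_{1/2}^2+\mathbf{E}\int_0^{\tau_R^n}|w_n|_{3/2}^2\,ds\ \le\ C(R)\,|x_n-x|_{1/2}^2.
$$

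To pass to convergence in probability, fix $\delta>0$ and let $\mathcal{E}_n:=\{\|w_n\|_{\mathbb{C}_T(\mathbb{H}^{1/2})}^2+\|w_n\|_{L^2([0,T];\mathbb{H}^{3/2})}^2>\delta\}$, so that $\mathbf{P}(\mathcal{E}_n)\le\mathbf{P}(\tau_R^n<T)+\mathbf{P}(\mathcal{E}_n\cap\{\tau_R^n=T\})$. On $\{\tau_R^n=T\}$ the stopped estimate controls the whole interval, so the second term is at most $C(R)|x_n-x|_{1/2}^2/\delta\to0$ as $n\to\infty$ for each fixed $R$; for the first term, the uniform energy estimate \eqref{esap} (whose constant depends only on $\sup_n|x_n|_{1/2}<\infty$) together with Markov's inequality gives $\sup_n\mathbf{P}(\tau_R^n<T)\le C_0/R$, which tends to $0$ as $R\to\infty$. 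Choosing $R$ large first and then $n$ large proves \eqref{apri3}.

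Finally, \eqref{apri3} gives $u(t,x_n)\to u(t,x)$ in $\mathbb{H}^{1/2}$ in probability for each $t$, so for $\varphi\in C_b(\mathbb{H}^{1/2})$ the bounded variables $\varphi(u(t,x_n))$ converge to $\varphi(u(t,x))$ in probability and hence in $L^1$, whence $\mathcal{T}_t\varphi(x_n)\to\mathcal{T}_t\varphi(x)$ and $\mathcal{T}_t$ maps $C_b(\mathbb{H}^{1/2})$ into itself. When $g$ does not depend on $t$ the solution is time-homogeneous, and the Markov property \eqref{markov} follows from strong uniqueness through the flow identity $u(t+s,x)=u(s,u(t,x))$ restarted with the noise increments after time $t$, which are independent of $\mathscr{F}_t$ and identically distributed; taking expectations in \eqref{markov} then yields the Chapman--Kolmogorov relation $\mathcal{T}_{t+s}=\mathcal{T}_t\circ\mathcal{T}_s$.
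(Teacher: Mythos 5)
Your overall architecture (It\^o's formula for the difference, stopping-time localization, a Gronwall-type closure, Markov-inequality splitting over $\{\tau_R^n<T\}$, then Feller/Markov/semigroup exactly as you describe) is the same as the paper's, and your exponential transform is an acceptable substitute for the stochastic Gronwall lemma (Lemma \ref{appen2}) the paper invokes. However, there is a genuine gap in your localization: your stopping time caps $\int_0^t\big(|u_n|_{3/2}^{2-\gamma}+|u|_{3/2}^{2-\gamma}\big)ds$, but the Gronwall coefficient $K_n$ produced by the commutator estimate \eqref{escom}, the trilinear bound \eqref{2.2} and $(\mathbf{H}_g^1)$ is of the form $K_n\sim C_R\big(1+|u_n|_{3/2}^{2}+|u|_{3/2}^{2}\big)$ (the squares arise from Young's inequality when you absorb $\epsilon|w_n|_{3/2}^2$). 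Since $2-\gamma<1<2$, a bound on $\int_0^{\tau}|u|_{3/2}^{2-\gamma}ds$ gives \emph{no} bound on $\int_0^{\tau}|u|_{3/2}^{2}ds$: H\"older's inequality runs in the opposite direction (it controls low powers by high powers, never the reverse), and the pointwise bound on $|u|_{1/2}$ cannot be interpolated against $|u|_{3/2}^{2-\gamma}$ to recover $|u|_{3/2}^{2}$. Consequently your claim $\int_0^{\tau_R^n}K_n\,ds\le C(R)$ fails, and with it the lower bound $e^{-\int_0^{t\wedge\tau_R^n}K_n}\ge e^{-C(R)}$ on which the whole closure rests. The fix is exactly the paper's choice: define $\tau_{n,R}$ by capping $|u_n(t)|_{1/2}+|u(t)|_{1/2}+\int_0^t|u_n|_{3/2}^{2}ds+\int_0^t|u|_{3/2}^{2}ds$, which is legitimate because solutions lie a.s.\ in $\mathbb{C}_T(\mathbb{H}^{1/2})\cap L^2([0,T];\mathbb{H}^{3/2})$ by Definition \ref{dew}; this yields the paper's estimate \eqref{es76} with constant $C_R$.

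A secondary, smaller issue: even with the corrected stopping time, $\sup_n\mathbf{P}(\tau_{n,R}<T)\to 0$ does not follow from \eqref{esap} ``together with Markov's inequality'' alone, because \eqref{esap} only controls fixed-time moments $\sup_t\mathbf{E}|u_n(t)|_{1/2}^{2-\gamma}$ and the integrated $(2-\gamma)$-moment, not the tail of $\sup_{t\le T}|u_n(t)|_{1/2}$ or of $\int_0^T|u_n|_{3/2}^2dt$. One needs the tail estimates of Lemma \ref{lem9} type (i.e.\ the stopping-time/supermartingale argument behind \eqref{apri}), whose constants depend only on $|x_n|_{1/2}$ and are therefore uniform in $n$ since $|x_n|_{1/2}\to|x|_{1/2}$. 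Once these two points are repaired, the remainder of your argument — the splitting $\mathbf{P}(\mathcal{E}_n)\le\mathbf{P}(\tau_{n,R}<T)+C(R)|x_n-x|_{1/2}^2/\delta$, the Feller property from convergence in probability plus boundedness of $\varphi$, the Markov property via independence of post-$t$ noise increments and uniqueness in law, and the semigroup identity by taking expectations — coincides with the paper's proof.
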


\begin{remark}
Since we have established the Feller property of the transition semigroup $\mathcal{T}_{t}$, our next goal is to investigate the existence and uniqueness of invariant measures for the stochastic forced 3D Navier-Stokes equations, which will be done in Section \ref{sec4}.

\end{remark}

\subsection{Stochastic compactness criterions}\label{sub2.2}
In this subsection,  we introduce several stochastic compactness criterions and a Jakubowski's beautiful generalization of the Skorokhod theorem, which is pivotal for proving the convergence of sequences on the nonmetric space.

It is clear that the embeddings $\mathbb{H}^{3/2}\subset\mathbb{H}^{1}\subset \mathbb{H}^{1/2}\subset\mathbb{H}$ are compact and dense.
Denote by $\mathbb{H}^*$   the dual space of $\mathbb{H}$. We have the following embedding of spaces
\begin{equation}\label{emb}
\mathbb{H}^{3/2}\subset\mathbb{H}^{1}\subset \mathbb{H}^{1/2}\subset\mathbb{H}(\simeq \mathbb{H}^*)\subset\mathbb{H}^{-1/2}\subset\mathbb{H}^{-1}\subset\mathbb{H}^{-3/2},
\end{equation}
where   $\mathbb{H}^*$ is identified
with $\mathbb{H}$ via  the Riesz isomorphism.

Now we set the following space
\begin{eqnarray*}
\!\!\!\!\!\!\!\!&&\mathcal{X}_1:= L^2_w([0,T];\mathbb{H}^{3/2})\cap L^2([0,T];\mathbb{H}^{1/2})\cap L^{\infty}_{w^*}([0,T];\mathbb{H}^{1/2}),
\nonumber \\
\!\!\!\!\!\!\!\!&&\mathcal{X}_2:=C((0,T];\mathbb{H}^{-1/2}),
\end{eqnarray*}
where $L^2_w([0,T];\mathbb{H}^{3/2})$ is the space   $L^2([0,T];\mathbb{H}^{3/2})$ with  the weak topology, $L^{\infty}_{w^*}([0,T];\mathbb{H}^{1/2})$ is the space   $L^{\infty}([0,T];\mathbb{H}^{1/2})$ with  the weak-$*$ topology, and  $ C((0,T];\mathbb{H}^{-1/2} )$ is the space consisting of all continuous functions from $(0,T]$ to $\mathbb{H}^{-1/2}$ which is equipped with the complete metric
$$ d(u,v):=\sum_{k=1}^{\infty}\frac{1}{2^k}\Bigg(\sup_{t\in[\frac{1}{k},T]}|u(t)-v(t)|_{-1/2}\wedge 1 \Bigg).         $$
Here the intersection space $\mathcal{X}_1$ takes the intersection topology denoted by $\tau_{\mathcal{X}_1}$: the class of
open sets of $\mathcal{X}_1$  are generated by the sets of the form $\mathcal{O}_1\cap\mathcal{O}_2\cap\mathcal{O}_3$, where $\mathcal{O}_1$, $\mathcal{O}_2$ and $\mathcal{O}_3$ are open sets in   $L^2_w([0,T];\mathbb{H}^{3/2})$, $L^2([0,T];\mathbb{H}^{1/2})$, $L^{\infty}_{w^*}([0,T];\mathbb{H}^{1/2})$, respectively. The space $\mathcal{X}_1$ will considered w.r.t.~the Borel $\sigma$-algebra $\mathscr{B}(\tau_{\mathcal{X}_1})$.

The following lemma gives a tightness criterion for the set of measures induced on  $\mathcal{X}_1$.
\begin{lemma}\label{lemt}
Let $(u_n)_{n\in\mathbb{N}}$ be a sequence  such that
\begin{enumerate}[$(i)$]

\item  $$\lim_{R\rightarrow\infty}\sup_{n\in\mathbb{N}}\mathbf{P}\Big(\sup_{ t\in[0, T]}|u_n(t)|_{1/2}>R\Big)=0,$$

\item $$\lim_{R\rightarrow\infty}\sup_{n\in\mathbb{N}}\mathbf{P}\Bigg(\int_0^T|u_n(t)|_{3/2}^2dt>R\Bigg)=0,$$

\item For any $\varepsilon>0$,
\begin{equation}\label{tight2}
\lim_{\Delta\to 0^+}\sup_{n\in\mathbb{N}}\mathbf{P}\Bigg(\sup_{\delta\in[0,\Delta]}\int_0^{T-\delta}|u_n(t+\delta)-u_n(t)|_{1/2}^2dt>\varepsilon   \Bigg)=0.
\end{equation}
\end{enumerate}
Let $\mu_n$ be the law of $u_n$ on the Borel $\sigma$-algebra $\mathscr{B}(\tau_{\mathcal{X}_1})$. Then for every $\varepsilon>0$, there exists a compact subset $\mathcal{K}_{\varepsilon}$ of $\mathcal{X}_1$ such that
$$\sup_{n\in\mathbb{N}}\mu_n(\mathcal{K}_{\varepsilon}^c)\leq \varepsilon.$$
\end{lemma}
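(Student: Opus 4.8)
The plan is to establish the tightness of $\{\mu_n\}$ on $\mathcal{X}_1$ by realizing $\mathcal{K}_{\varepsilon}$ as an intersection of three compact sets, one tailored to each of the three topologies comprising $\tau_{\mathcal{X}_1}$. Since $\mathcal{X}_1$ carries the intersection topology, a set that is compact in each of $L^2_w([0,T];\mathbb{H}^{3/2})$, $L^2([0,T];\mathbb{H}^{1/2})$, and $L^{\infty}_{w^*}([0,T];\mathbb{H}^{1/2})$ simultaneously will be compact in $\mathcal{X}_1$; thus it suffices to use hypotheses $(i)$--$(iii)$ to confine the laws, up to probability $\varepsilon$, inside such a set. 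First I would exploit the reflexivity-type compactness in the weak and weak-$*$ topologies: by $(ii)$, for each $\varepsilon$ there is $R_1$ with $\sup_n \mathbf{P}\big(\int_0^T|u_n(t)|_{3/2}^2\,dt>R_1\big)\le \varepsilon/3$, and the ball $\{v:\int_0^T|v(t)|_{3/2}^2\,dt\le R_1\}$ is compact in $L^2_w([0,T];\mathbb{H}^{3/2})$ by the Banach--Alaoglu theorem applied to the reflexive space $L^2([0,T];\mathbb{H}^{3/2})$. Similarly, by $(i)$ there is $R_2$ with $\sup_n\mathbf{P}\big(\sup_{t\in[0,T]}|u_n(t)|_{1/2}>R_2\big)\le\varepsilon/3$, and the corresponding ball is compact in $L^{\infty}_{w^*}([0,T];\mathbb{H}^{1/2})$.

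The substantive part is the strong $L^2([0,T];\mathbb{H}^{1/2})$ compactness, where I would invoke a deterministic compactness lemma of Aubin--Lions--Simon type adapted to the fractional-time-modulus setting, for instance the classical criterion which states that a set $\mathcal{B}\subset L^2([0,T];\mathbb{H}^{1/2})$ is relatively compact provided it is bounded in $L^2([0,T];\mathbb{H}^{3/2})$ and the time-translation modulus
\begin{equation*}
\sup_{v\in\mathcal{B}}\int_0^{T-\Delta}|v(t+\Delta)-v(t)|_{1/2}^2\,dt\to 0\quad\text{as }\Delta\to 0^+
\end{equation*}
holds, using the compact embedding $\mathbb{H}^{3/2}\subset\mathbb{H}^{1/2}$ to interpolate between the higher-regularity bound and the equicontinuity. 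The key step is therefore to convert the probabilistic modulus-of-continuity bound $(iii)$ together with the probabilistic $L^2_t\mathbb{H}^{3/2}$ bound $(ii)$ into a single compact set of arbitrarily large probability. Concretely, for each $k\in\mathbb{N}$ pick $\Delta_k\to 0$ and thresholds $\varepsilon_k\to 0$ so that, via $(iii)$, $\sup_n\mathbf{P}\big(\int_0^{T-\Delta_k}|u_n(t+\Delta_k)-u_n(t)|_{1/2}^2\,dt>\varepsilon_k\big)\le \varepsilon\,2^{-k}/3$, and intersect the resulting events with the $L^2_t\mathbb{H}^{3/2}$-ball from $(ii)$; the set
\begin{equation*}
\mathcal{K}^{(3)}:=\Big\{v:\int_0^T|v(t)|_{3/2}^2\,dt\le R_1,\ \sup_k\tfrac{1}{\varepsilon_k}\!\int_0^{T-\Delta_k}|v(t+\Delta_k)-v(t)|_{1/2}^2\,dt\le 1\Big\}
\end{equation*}
is then relatively compact in $L^2([0,T];\mathbb{H}^{1/2})$ by the cited criterion and carries total mass at least $1-2\varepsilon/3$ uniformly in $n$.

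Finally I would set $\mathcal{K}_{\varepsilon}$ to be the intersection (closure) of the three sets constructed above; a union bound gives $\sup_n\mu_n(\mathcal{K}_{\varepsilon}^c)\le \varepsilon$, and compactness in the intersection topology follows because a sequence drawn from $\mathcal{K}_{\varepsilon}$ has, by the three separate compactness properties, a subsequence converging in all three topologies to a common limit (the limits must agree since each of the three topologies is Hausdorff and all are coarser than, or comparable through, convergence in the space of distributions on $(0,T)\times\mathbb{T}^3$). The main obstacle I anticipate is the careful bookkeeping needed to reconcile the three modes of convergence on the single limit point and to verify that the intersection set is genuinely compact rather than merely contained in the product of three compacts; this is where the Hausdorff separation and the comparability of the three topologies on bounded sets must be used, and where a routine but delicate diagonal-subsequence argument over the countable family $\{\Delta_k\}$ enters. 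I expect the weak and weak-$*$ parts to be essentially immediate from Banach--Alaoglu, so the real work is the strong-topology compactness driven by $(iii)$ and the consistency of limits.
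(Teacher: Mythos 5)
Your proposal is correct and follows essentially the same route as the paper: the probabilistic bounds (i)--(iii) are converted, via a $2^{-k}$-weighted union bound over a countable family of time-shifts, into a single deterministic set of uniformly large mass, whose relative compactness in $\mathcal{X}_1$ is obtained exactly as you describe --- a Simon-type criterion for the strong $L^2([0,T];\mathbb{H}^{1/2})$ component and Banach--Alaoglu for the weak and weak-$*$ components, with the limits reconciled across the three topologies. The only cosmetic difference is that the paper packages this deterministic compactness step as a separate appendix lemma (Lemma \ref{lemc}) rather than carrying it out inline.
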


\begin{proof}
Due to (i), for any $\varepsilon>0$, there exists $R_1>0$ such that
\begin{equation}\label{comp1}
\sup_{n\in\mathbb{N}}\mathbf{P}\Big(\sup_{ t\in[0, T]}|u_n(t)|_{1/2}>R_1\Big)\leq \frac{\varepsilon}{3},
\end{equation}
then we denote
$$\mathcal{K}_{1}:=\Big\{u_n\in\mathcal{X}_1:\sup_{ t\in[0, T]}|u_n(t)|_{1/2}\leq R_1\Big\}.$$
Similarly, by (ii) for any $\varepsilon>0$, there exists $R_2>0$ such that
\begin{equation}\label{comp2}
\sup_{n\in\mathbb{N}}\mathbf{P}\Bigg(\int_0^T|u_n(t)|_{3/2}^2dt>R_2\Bigg)\leq \frac{\varepsilon}{3}.
\end{equation}
We denote
$$\mathcal{K}_{2}:=\Bigg\{u_n\in\mathcal{X}_1:\int_0^T|u_n(t)|_{3/2}^2dt\leq R_2\Bigg\}.$$
Due to (iii), for any $k\in\mathbb{N}$, there exists $\Delta_k>0$  such that
$$\sup_{n\in\mathbb{N}}\mathbf{P}\Bigg(\sup_{\delta\in[0,\Delta_k]}\int_0^{T-\delta}|u_n(t+\delta)-u_n(t)|_{1/2}^2dt>\frac{1}{k}   \Bigg)\leq \frac{1}{3}\cdot\frac{\varepsilon}{2^{k}}.$$
Denote
$$\Xi_k:=\Bigg\{u_n\in\mathcal{X}_1:\sup_{\delta\in[0,\Delta_k]}\int_0^{T-\delta}|u_n(t+\delta)-u_n(t)|_{1/2}^2dt\leq\frac{1}{k} \Bigg\}.$$

Finally, we denote by $\mathcal{K}_{\varepsilon}$ the closure of the set $\mathcal{K}_{1}\cap \mathcal{K}_{2}\cap \cap_{k=1}^{\infty}\Xi_k$ in $\mathcal{Z}_T$. Due to the compactness criterion presented in Lemma \ref{lemc} in Appendix, we know that $\mathcal{K}_{\varepsilon}$ is a compact set in $\mathcal{Z}_T$.  Then (\ref{comp1}) and (\ref{comp2}) imply
\begin{eqnarray*}
\sup_{n\in\mathbb{N}}\mathbf{P}\big(u_n\in\mathcal{K}_{\varepsilon}^c   \big)\leq\!\!\!\!\!\!\!\!&&\sup_{n\in\mathbb{N}}\mathbf{P}\big(u_n\in\mathcal{K}_{1}^c \big)+\sup_{n\in\mathbb{N}}\mathbf{P}\big(u_n\in\mathcal{K}_{2}^c\big)+\sum_{k=1}^{\infty}\sup_{n\in\mathbb{N}}\mathbf{P}\big(u_n\in\Xi_k^c\big)
\nonumber \\
\leq\!\!\!\!\!\!\!\!&&\frac{2\varepsilon}{3}+\frac{1}{3}\sum_{k=1}^{\infty}\frac{\varepsilon}{2^{k}}\leq    \varepsilon.
\end{eqnarray*}
The proof is complete.
\end{proof}

\vspace{1mm}

Let $k\in\mathbb{N}$. We recall the Aldous condition in $\mathbb{H}^{-1/2}$ with the time interval $[\frac{1}{k},T]$.
\begin{definition}\label{aldous}
A sequence $(u_n)_{n\geq 1}$ is said to satisfy the Aldous condition in $\mathbb{H}^{-1/2}$ with the time interval $[\frac{1}{k},T]$ iff for any $\varepsilon,\eta>0$, there exists $\Delta>0$ such that for every stopping time sequence $(\zeta_n)_{n\in\mathbb{N}}$ with $\zeta_n\in [\frac{1}{k},T]$ one has
\begin{equation*}
\sup_{n\in\mathbb{N}}\sup_{0\leq \theta\leq \Delta}\mathbf{P}(|u_n(\zeta_n+\theta)-u_n(\zeta_n)|_{-1/2}\geq \eta)\leq \varepsilon.
\end{equation*}
\end{definition}

We present the following tightness criterion for the set of measures induced on $ \mathcal{X}_2$.
\begin{lemma}\label{lemt1}
Let $(u_n)_{n\in\mathbb{N}}$ be a sequence  such that
\begin{enumerate}[$(i)$]

\item  For any $k\in\mathbb{N},\varepsilon>0$, there exists a constant $C_{k,\varepsilon}>0$ such that

$$\sup_{n\in\mathbb{N}}\mathbf{P}\Big(\sup_{ t\in[\frac{1}{k}, T]}|u_n(t)|_{1/2}>C_{k,\varepsilon}\Big)\leq \varepsilon,$$

\item For any $\varepsilon,\eta>0$, there exists $\Delta>0$ such that
\begin{equation}\label{stop5}
\sup_{n\in\mathbb{N}}\mathbf{P}\Bigg(\sum_{k=1}^{\infty}\frac{1}{2^k}\bigg(\sup_{|t-s|\leq \Delta, t,s\in[\frac{1}{k},T]}|u_n(t)-u_n(s)|_{-1/2}\wedge 1 \bigg) > \eta   \Bigg)\leq \varepsilon.
\end{equation}
\end{enumerate}
Let $\mu_n$ be the law of $u_n$. Then for every $\varepsilon>0$, there exists a compact subset $\mathcal{K}_{\varepsilon}$ of $ \mathcal{X}_2$ such that
$$\sup_{n\in\mathbb{N}}\mu_n(\mathcal{K}_{\varepsilon}^c)\leq \varepsilon.$$
\end{lemma}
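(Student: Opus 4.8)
The plan is to establish tightness directly, in the spirit of the proof of Lemma \ref{lemt}: for each $\varepsilon>0$ I would exhibit a single compact set $\mathcal{K}_\varepsilon\subset\mathcal{X}_2$ carrying all but $\varepsilon$ of the mass of every $\mu_n$. The candidate set is assembled from two families of high-probability sublevel sets, one coming from $(i)$ (uniform boundedness in the stronger norm $|\cdot|_{1/2}$) and one from $(ii)$ (uniform control of the oscillations in $|\cdot|_{-1/2}$). Its compactness is then verified by an Arzel\`a--Ascoli argument carried out on each truncated interval $[\tfrac1k,T]$ and glued together into compactness for the metric $d$ by a diagonal extraction.

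First I would use $(i)$: for each $k$ apply the hypothesis with $\varepsilon$ replaced by $\varepsilon/2^{k+1}$ to obtain $C_k:=C_{k,\varepsilon/2^{k+1}}$, and set
\[
\mathcal{A}:=\bigcap_{k=1}^{\infty}\Big\{u\in\mathcal{X}_2:\sup_{t\in[\frac1k,T]}|u(t)|_{1/2}\le C_k\Big\},
\]
so that $\sup_n\mathbf{P}(u_n\in\mathcal{A}^c)\le\sum_k\varepsilon/2^{k+1}=\varepsilon/2$. Writing $\Phi_\Delta(u)$ for the weighted oscillation sum appearing in \eqref{stop5}, I would then use $(ii)$ with $(\eta,\varepsilon)$ replaced by $(1/m,\varepsilon/2^{m+1})$ to obtain $\Delta_m>0$ with $\sup_n\mathbf{P}(\Phi_{\Delta_m}(u_n)>1/m)\le\varepsilon/2^{m+1}$, and set $\mathcal{B}:=\bigcap_{m\ge1}\{u\in\mathcal{X}_2:\Phi_{\Delta_m}(u)\le 1/m\}$, so that $\sup_n\mathbf{P}(u_n\in\mathcal{B}^c)\le\varepsilon/2$. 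Taking $\mathcal{K}_\varepsilon:=\overline{\mathcal{A}\cap\mathcal{B}}$ then yields $\sup_n\mu_n(\mathcal{K}_\varepsilon^c)\le\varepsilon$, and it remains only to verify that $\mathcal{K}_\varepsilon$ is compact in $\mathcal{X}_2$.

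For the compactness I would fix $k$ and examine the restrictions to $[\tfrac1k,T]$. Boundedness is immediate from $\mathcal{A}$: for $u\in\mathcal{A}$ the values $u(t)$, $t\in[\tfrac1k,T]$, lie in the closed ball $\{|v|_{1/2}\le C_k\}$, which is compact in $\mathbb{H}^{-1/2}$ by the compact embedding $\mathbb{H}^{1/2}\hookrightarrow\mathbb{H}^{-1/2}$ of \eqref{emb}. Equicontinuity is extracted from $\mathcal{B}$ by discarding all but the $k$-th summand: for $u\in\mathcal{B}$ one has $\frac{1}{2^k}\big(\sup_{|t-s|\le\Delta_m,\,t,s\in[\frac1k,T]}|u(t)-u(s)|_{-1/2}\wedge1\big)\le\Phi_{\Delta_m}(u)\le 1/m$, so that for $m>2^k$ the truncation is inactive and the oscillation over $|t-s|\le\Delta_m$ is bounded by $2^k/m$. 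Hence, given $\rho\in(0,1)$, choosing $m>2^k/\rho$ and $\delta=\Delta_m$ shows that the family is uniformly equicontinuous on $[\tfrac1k,T]$. Arzel\`a--Ascoli then yields relative compactness of $(\mathcal{A}\cap\mathcal{B})|_{[\frac1k,T]}$ in $C([\tfrac1k,T];\mathbb{H}^{-1/2})$ for every $k$; a diagonal subsequence argument (extract on $[1,T]$, refine on $[\tfrac12,T]$, and so on) upgrades this to relative sequential compactness in $(\mathcal{X}_2,d)$, the limit being continuous on $(0,T]$ since convergence is uniform on each $[\tfrac1k,T]$. As $\mathcal{X}_2$ is a complete metric space, $\mathcal{K}_\varepsilon$ is therefore compact.

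\textbf{Main obstacle.} The one genuinely delicate point is the equicontinuity step: condition $(ii)$ controls only a single infinite weighted sum of moduli of continuity rather than the modulus on a fixed interval, so one must recover honest uniform equicontinuity on each $[\tfrac1k,T]$ by isolating one summand and absorbing the factor $2^k$ against $1/m$, and then reconcile the per-interval Arzel\`a--Ascoli compactness with the inductive-limit-type metric $d$ through diagonalization. The compact embedding $\mathbb{H}^{1/2}\hookrightarrow\mathbb{H}^{-1/2}$ is exactly what makes the pointwise relative compactness available, which is why the bound in $(i)$ is imposed in the stronger norm $|\cdot|_{1/2}$ while the oscillations in $(ii)$ are measured only in $|\cdot|_{-1/2}$.
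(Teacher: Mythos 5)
Your proposal is correct and takes essentially the same route as the paper: the same sublevel-set construction from hypotheses $(i)$ and $(ii)$ with geometric tolerances $\varepsilon/2^{k+1}$, the closure of the intersection as $\mathcal{K}_\varepsilon$, a union bound for the probability estimate, and compactness via Arzel\`a--Ascoli on each $[\tfrac1k,T]$ followed by a diagonal extraction. The only difference is presentational: the paper delegates the compactness step to its deterministic criterion (Lemma \ref{lemc1} in the Appendix), whereas you inline that argument, spelling out the pointwise relative compactness from the embedding $\mathbb{H}^{1/2}\hookrightarrow\mathbb{H}^{-1/2}$ and the extraction of equicontinuity from the weighted oscillation sum.
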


\begin{proof}
Due to (i), for any $k\in\mathbb{N}$, there exists $C_k>0$ such that for any $\varepsilon>0$,
\begin{equation}\label{comp3}
\sup_{n\in\mathbb{N}}\mathbf{P}\Big(\sup_{ t\in[\frac{1}{k}, T]}|u_n(t)|_{1/2}>C_k\Big)\leq \frac{\varepsilon}{2^{k+1}},
\end{equation}
then we denote
$$\Gamma_{k}:=\Big\{u_n\in C([\frac{1}{k},T];\mathbb{H}^{-1/2} ):\sup_{ t\in[\frac{1}{k}, T]}|u_n(t)|_{1/2}\leq C_k\Big\}.$$
Due to (ii), for any $j\in\mathbb{N}$, there exists $\Delta_j>0$  such that
\begin{equation}\label{comp4}
\sup_{n\in\mathbb{N}}\mathbf{P}\Bigg(\sum_{k=1}^{\infty}\frac{1}{2^k}\bigg(\sup_{|t-s|\leq \Delta_j, t,s\in[\frac{1}{k},T]}|u_n(t)-u_n(s)|_{-1/2}\wedge 1 \bigg) > \frac{1}{j}  \Bigg)\leq \frac{\varepsilon}{2^{j+1}}.
\end{equation}
Denote
$$\tilde{\Xi}_j:=\Bigg\{u_n\in C([\frac{1}{k},T];\mathbb{H}^{-1/2}):\sum_{k=1}^{\infty}\frac{1}{2^k}\bigg(\sup_{|t-s|\leq \Delta_j, t,s\in[\frac{1}{k},T]}|u_n(t)-u_n(s)|_{-1/2}\wedge 1 \bigg) \leq \frac{1}{j} \Bigg\}.$$

Finally, we denote by $\mathcal{K}_{\varepsilon}$ the closure of the set $(\cap_{k=1}^{\infty}\Gamma_{k})\cap(\cap_{j=1}^{\infty}\tilde{\Xi}_j)$ in $ \mathcal{X}_2$. Due to the compactness criterion presented in Lemma \ref{lemc} in Appendix, we know that $\mathcal{K}_{\varepsilon}$ is a compact subset in $ \mathcal{X}_2$.  Then (\ref{comp3}) and (\ref{comp4}) imply
\begin{equation*}
\sup_{n\in\mathbb{N}}\mathbf{P}\big(u_n\in\mathcal{K}_{\varepsilon}^c   \big)\leq\sum_{k=1}^{\infty}\sup_{n\in\mathbb{N}}\mathbf{P}\big(u_n\in\Gamma_k^c\big)+\sum_{j=1}^{\infty}\sup_{n\in\mathbb{N}}\mathbf{P}\big(u_n\in\tilde{\Xi}_j^c\big)
\leq    \varepsilon.
\end{equation*}
The proof is complete.
\end{proof}

The path space $\mathcal{X}_1$ is not a Polish space and so our compactness argument is based on
 the Jakubowski-Skorokhod representation theorem instead of the classical Skorokhod
 representation theorem.
\begin{theorem}\label{sko1}$($Theorem A.1 in \cite{BO}$)$
Let $\mathbb{X}$ be a topological space such that there exists a sequence of continuous functions $f_m:\mathbb{X}\to \mathbb{R}$ that separates points of  $\mathbb{X}$. Let us denote by $\mathscr{S}$ the $\sigma$-algebra generated by the maps $f_m$. Then

$(i)$ every compact subset of $\mathbb{X}$ is metrizable;

\vspace{1mm}
$(ii)$ if $(\mu_m)$ is tight sequence of probability measures on $(\mathbb{X},\mathscr{S})$, then there exists a subsequence  denoted also by $(m)$, a probability space $(\Omega,\mathscr{F},\mathbf{P})$ with $\mathbb{X}$-valued Borel measurable variables $\xi_m$, $\xi$ such that $\mu_m$ is the law of $\xi_m$ and $\xi_m$ converges to $\xi$ almost surely on $\Omega$. Moreover, the law of $\xi$ is a Random measure.
\end{theorem}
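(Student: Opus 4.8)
The plan is to reduce the whole statement to the classical Skorokhod representation theorem on a Polish space, using the separating sequence $(f_m)$ as a coordinate system. First I would assemble the $f_m$ into a single map $f:=(f_m)_{m\geq 1}:\mathbb{X}\to\mathbb{R}^{\mathbb{N}}$, which is continuous (each $f_m$ is) and injective (the $f_m$ separate points), where $\mathbb{R}^{\mathbb{N}}$ carries the product topology and is thus a Polish space. For part $(i)$, let $K\subset\mathbb{X}$ be compact. Then $f|_K:K\to f(K)$ is a continuous bijection from a compact space onto a subset of the Hausdorff space $\mathbb{R}^{\mathbb{N}}$, hence a homeomorphism (it carries closed, i.e.\ compact, subsets of $K$ to compact, hence closed, subsets of $f(K)$). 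Since $f(K)$ is metrizable as a subspace of the metrizable space $\mathbb{R}^{\mathbb{N}}$, so is $K$.

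For part $(ii)$ I would push the measures forward: set $\nu_m:=\mu_m\circ f^{-1}$ on $\mathbb{R}^{\mathbb{N}}$, which is well defined because $f$ is $\mathscr{S}/\mathscr{B}(\mathbb{R}^{\mathbb{N}})$-measurable by the very definition of $\mathscr{S}$. Tightness transfers along $f$: choosing compacts $K_n\subset\mathbb{X}$ with $\sup_m\mu_m(K_n^c)\leq 1/n$, the images $f(K_n)$ are compact in $\mathbb{R}^{\mathbb{N}}$ and satisfy $\nu_m(f(K_n))\geq\mu_m(K_n)\geq 1-1/n$, so $(\nu_m)$ is tight. By Prokhorov's theorem there is a subsequence (still indexed by $m$) with $\nu_m\Rightarrow\nu$ weakly on $\mathbb{R}^{\mathbb{N}}$, and the classical Skorokhod theorem on the Polish space $\mathbb{R}^{\mathbb{N}}$ produces a probability space $(\Omega,\mathscr{F},\mathbf{P})$ carrying $\mathbb{R}^{\mathbb{N}}$-valued variables $Y_m,Y$ with laws $\nu_m,\nu$ and $Y_m\to Y$ a.s.

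It then remains to lift the $Y_m$ back to $\mathbb{X}$. Set $\mathcal{A}:=\bigcup_n f(K_n)$, a $\sigma$-compact Borel subset of $\mathbb{R}^{\mathbb{N}}$ contained in $f(\mathbb{X})$; applying the Portmanteau theorem to the closed sets $f(K_n)$ gives $\nu(f(K_n))\geq\limsup_m\nu_m(f(K_n))\geq 1-1/n$, whence $\nu(\mathcal{A})=1$, and likewise $\nu_m(\mathcal{A})=1$. On $\mathcal{A}$ the inverse $g:=f^{-1}$ is well defined (as $f$ is injective) and Borel measurable, being continuous on each $f(K_n)$ by part $(i)$ and $\mathcal{A}$ being the increasing union of these sets. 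Defining $\xi_m:=g(Y_m)$ and $\xi:=g(Y)$ on the full-measure event $\{Y_m,Y\in\mathcal{A}\text{ for all }m\}$, the identity $g\circ f=\mathrm{id}$ on $\bigcup_n K_n$ (a set of full $\mu_m$-measure) shows that $\xi_m$ has law $\mu_m$ and $\xi$ has the limiting law $\nu\circ g^{-1}$, which is Radon since $\nu$ is Radon and concentrated on $\mathcal{A}$.

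The main obstacle is the a.s.\ convergence $\xi_m\to\xi$ in the topology of $\mathbb{X}$ itself, rather than merely in the weaker topology induced by $f$: the inverse $g$ is continuous only on each individual compact $f(K_n)$ and not globally, so one must rule out the ``crossing'' of trajectories between different compacts in the limit. This is the delicate core of Jakubowski's theorem; I would handle it by exploiting tightness to confine $Y_m$ together with its limit $Y$, along a.s.\ convergent realizations, to a common compact set $f(K_n)$ for all large $m$, on which $g$ is a homeomorphism, so that $Y_m\to Y$ transfers to $\xi_m\to\xi$ in $\mathbb{X}$. Everything else is routine once this confinement argument is in place.
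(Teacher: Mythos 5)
The paper itself contains no proof of this statement: it is quoted as Theorem A.1 of \cite{BO} (Jakubowski's generalization of the Skorokhod representation theorem), so your attempt can only be measured against the known proof in the literature. Your part $(i)$ is correct and standard, and your identification of the laws of $\xi_m$, $\xi$ and of the Radon property of the limit law is sound; but part $(ii)$ has a genuine gap, sitting exactly at the step you yourself flag as ``the delicate core.''

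The gap is this: after applying Prokhorov and the classical Skorokhod theorem to the pushforwards $\nu_m=\mu_m\circ f^{-1}$ on $\mathbb{R}^{\mathbb{N}}$, you are handed \emph{some} coupling with $Y_m\to Y$ a.s., and you then need the trajectory $\{Y_m(\omega)\}_m\cup\{Y(\omega)\}$ to lie, for all large $m$, in a \emph{single} compact $f(K_n)$, because $g=f^{-1}$ is continuous on each $f(K_n)$ separately but not on $\mathcal{A}=\bigcup_n f(K_n)$. That confinement does not follow from tightness. Tightness gives $\sup_m\mathbf{P}\big(Y_m\notin f(K_n)\big)\leq 1/n$, which for fixed $n$ is constant in $m$ and not summable, so Borel--Cantelli yields nothing; and a.s. convergence in $\mathbb{R}^{\mathbb{N}}$ together with $Y_m,Y\in\mathcal{A}$ does not prevent $Y_m(\omega)$ from lying in $f(K_{n_m})$ with $n_m\to\infty$, in which case $g(Y_m(\omega))$ may fail to converge in $\mathbb{X}$ even though $Y_m(\omega)\to Y(\omega)$: convergence in the topology induced by $f$ is strictly weaker off a common compact. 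For the particular coupling produced by the classical Skorokhod theorem there is simply no reason the confinement holds, and it cannot be repaired after the fact; one must build a different coupling. This is precisely why Jakubowski's proof does not pass through the classical theorem on $\mathbb{R}^{\mathbb{N}}$ at all: it fixes a ladder $K_1\subset K_2\subset\cdots$ with $\inf_m\mu_m(K_j)\geq 1-2^{-j}$, uses metrizability of each $K_j$ (your part $(i)$) and a diagonal argument so that the restrictions of $\mu_m$ to each $K_j$ converge weakly on $K_j$, and then constructs the representation on $([0,1],\mathscr{B},\mathrm{Leb})$ layer by layer, so that almost every $\omega$ is assigned, for all large $m$, values in one fixed $K_j$, where the compact-metric theory applies. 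The confinement is engineered into the construction rather than deduced from tightness; your proposal states the needed property but does not prove it.
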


Now we present the following result that is pivotal in applying the Jakubowski-Skorokhod theorem.
\begin{theorem}\label{th1}
The topological space $(\mathcal{X}_1,\mathscr{B}(\tau_{\mathcal{X}_1}))$ satisfies the assumptions in Theorem \ref{sko1}.

\end{theorem}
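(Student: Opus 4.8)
The goal is to exhibit a countable family of $\tau_{\mathcal{X}_1}$-continuous real functions on $\mathcal{X}_1$ that separates points, after which Theorem \ref{sko1} applies verbatim. The plan is to treat the three constituent spaces separately, build a countable separating family of continuous functionals on each, and then restrict all of them to $\mathcal{X}_1$. The structural fact I would use repeatedly is that, by the very definition of the intersection topology $\tau_{\mathcal{X}_1}$, each of the three canonical inclusions of $\mathcal{X}_1$ into $L^2_w([0,T];\mathbb{H}^{3/2})$, into $L^2([0,T];\mathbb{H}^{1/2})$, and into $L^\infty_{w^*}([0,T];\mathbb{H}^{1/2})$ is continuous (one takes $\mathcal{O}_i$ to be the whole space in the remaining two slots). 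Consequently the restriction to $\mathcal{X}_1$ of any continuous function on any one of these component spaces is automatically $\tau_{\mathcal{X}_1}$-continuous.

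For the weak $L^2([0,T];\mathbb{H}^{3/2})$ component I would use that this is a separable Hilbert space, so by the Riesz representation its weakly continuous linear functionals are exactly $u\mapsto (u,\phi)_{L^2([0,T];\mathbb{H}^{3/2})}$; choosing $\{\phi_k\}$ in a countable dense subset yields a countable family of weakly continuous functions separating the points of $L^2([0,T];\mathbb{H}^{3/2})$. For the strong $L^2([0,T];\mathbb{H}^{1/2})$ component, separability again provides a countable orthonormal basis $\{e_k\}$, and the maps $u\mapsto (u,e_k)_{L^2([0,T];\mathbb{H}^{1/2})}$ are continuous and separate points. For the weak-$*$ component I would first identify $L^\infty([0,T];\mathbb{H}^{1/2})$ with the dual of the separable space $L^1([0,T];\mathbb{H}^{1/2})$, which is legitimate because $\mathbb{H}^{1/2}$ is a separable Hilbert space, hence reflexive with the Radon–Nikodym property; picking $\{\psi_k\}$ dense in $L^1([0,T];\mathbb{H}^{1/2})$, the pairings $u\mapsto\int_0^T(u(t),\psi_k(t))_{1/2}\,dt$ are weak-$*$ continuous and separate the points of $L^\infty([0,T];\mathbb{H}^{1/2})$.

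Collecting the three countable families and restricting them to $\mathcal{X}_1$ produces, after relabelling, a single sequence $(f_m)_{m\ge1}$ of $\tau_{\mathcal{X}_1}$-continuous real functions. That they separate points follows already from the middle family: since $\mathcal{X}_1$ consists of genuine (classes of) functions, two distinct elements $u\neq v$ differ in $L^2([0,T];\mathbb{H}^{1/2})$, whence $(u-v,e_k)_{L^2([0,T];\mathbb{H}^{1/2})}\neq 0$ for some $k$. Thus $(f_m)$ separates the points of $\mathcal{X}_1$ and the hypotheses of Theorem \ref{sko1} are met. The one genuinely delicate step, which I would write out with care, is the weak-$*$ identification: one must justify the vector-valued duality $L^\infty([0,T];\mathbb{H}^{1/2})=\big(L^1([0,T];\mathbb{H}^{1/2})\big)^*$ together with the separability of the predual, since only then are the chosen pairings guaranteed to be weak-$*$ continuous and to form a countable separating set; the weak and strong steps are routine Hilbert-space facts.
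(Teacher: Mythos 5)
Your construction of the countable separating family is correct and is essentially the first half of the paper's own proof: the paper likewise restricts dual pairings to $\mathcal{X}_1$, taking $f_m(u):=\int_0^T\langle v_m(t),u(t)\rangle\,dt$ with $\{v_m\}$ dense in $L^2([0,T];\mathbb{H}^{-3/2})$ for the $L^2_w([0,T];\mathbb{H}^{3/2})$ slot and invoking Polishness of $L^2([0,T];\mathbb{H}^{1/2})$ for the strong slot; your treatment is in fact more explicit than the paper's, since you also spell out the weak-$*$ slot through the duality $L^{\infty}([0,T];\mathbb{H}^{1/2})=\big(L^1([0,T];\mathbb{H}^{1/2})\big)^*$, which the paper leaves implicit. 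Your observation that the three canonical inclusions are $\tau_{\mathcal{X}_1}$-continuous, and that the middle family alone already separates points of the intersection, is also sound.

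However, there is a genuine gap: you stop once the separating sequence $(f_m)$ is exhibited and never address the $\sigma$-algebra $\mathscr{B}(\tau_{\mathcal{X}_1})$ appearing in the statement. In Theorem \ref{sko1}, the representation result (ii) applies to sequences of measures that are tight on $(\mathbb{X},\mathscr{S})$, where $\mathscr{S}$ is the $\sigma$-algebra generated by the chosen $f_m$ --- a priori strictly smaller than the Borel $\sigma$-algebra of a non-metrizable space such as $\mathcal{X}_1$. The laws to which the theorem is subsequently applied (Lemmas \ref{lemt} and \ref{lem8}) are laws on $\mathscr{B}(\tau_{\mathcal{X}_1})$, so the proof must identify $\mathscr{S}$ with $\mathscr{B}(\tau_{\mathcal{X}_1})$. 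This is precisely what the second half of the paper's proof does: it shows, following \cite{HLL24}, that $(\mathcal{X}_1,\mathscr{B}(\tau_{\mathcal{X}_1}))$ is a standard Borel space in the sense of Definition \ref{de5}, and then invokes Theorem \ref{th2}, which guarantees that on a standard Borel space the $\sigma$-algebra generated by any measurable point-separating sequence is the whole Borel $\sigma$-algebra. Without this step, the conclusion of Theorem \ref{sko1} would only concern measures on $\mathscr{S}$, and the almost-sure Skorokhod representation could not be asserted for the Borel laws $\mathcal{L}_{u_n}$. To complete your argument you would need to add this identification (or prove directly that your family of pairings generates $\mathscr{B}(\tau_{\mathcal{X}_1})$).
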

\begin{proof}
Since $L^{2}([0,T];\mathbb{H}^{1/2})$ is a Polish space, it is clear that the assumptions in Theorem \ref{sko1} hold. For the space $L^2_w([0,T];\mathbb{H}^{3/2})$, it suffices to put
$$f_m(u):=\int_0^T\langle v_m(t),u(t)\rangle dt\in\mathbb{R},~u\in L^2_w([0,T];\mathbb{H}^{3/2}),~m\in\mathbb{N},$$
where $\{v_m\}_{m\geq 1}$ is a dense subset of $L^2([0,T];\mathbb{H}^{-3/2})$, which separates the points of $L^2([0,T];\mathbb{H}^{3/2})$.

Furthermore, following similar arguments as in the proof of  \cite{HLL24}, we can deduce that $(\mathcal{X}_1,\mathscr{B}(\tau_{\mathcal{X}_1}))$  is a standard Borel space (cf.~Definition \ref{de5}).
Due   to Theorem \ref{th2}, the $\sigma$-algebra generated by the sequence of the above continuous functions
separating the points in $\mathcal{X}_1$ is exactly $\mathscr{B}(\tau_{\mathcal{X}_1})$. Hence, all the conditions in Theorem \ref{sko1} are satisfied for $\mathcal{X}_1$.
\end{proof}

We also prepare the following lemmas for the estimates of the coefficient $\sigma$, which are frequently used in the proof.
\begin{lemma}\label{lemsig}
For any $u\in \mathbb{H}^{3/2}$, we have
\begin{equation}\label{sig1}
\|\sigma(u)\|_{\mathcal{L}_2(l^2;\mathbb{H}^{1/2})}^2\leq \frac{1}{4}|u|_{3/2}^2+\frac{4\delta_0^2\mathcal{N}_{1/2}^2}{1-8\mathcal{N}_0}|u|_{1/2}^2,
\end{equation}
where $\delta_0$, $\mathcal{N}_{1/2}$ and $\mathcal{N}_{0}$ come from Lemma \ref{interp1} as well as $\eref{Ms}$.
Moreover, there exists a constant $C>0$ such that for any $u\in \mathbb{H}^{2}$,
\begin{equation}\label{sig2}
\|\sigma(u)\|_{\mathcal{L}_2(l^2;\mathbb{H}^{1})}^2\leq \frac{1}{4}|u|_{2}^2+C|u|_{1}^2.
\end{equation}
\end{lemma}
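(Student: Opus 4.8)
The goal is to prove Lemma~\ref{lemsig}, namely the two Hilbert--Schmidt bounds on the transport coefficient $\sigma(u)h=\sum_i\mathcal{P}((\zeta_i\cdot\nabla)u)h_i$. My plan is to estimate the Hilbert--Schmidt norm by summing the squared $\mathbb{H}^{1/2}$- (resp.~$\mathbb{H}^1$-) norms of the individual columns $\sigma_i(u)=\mathcal{P}((\zeta_i\cdot\nabla)u)$. First I would use that the Leray projection $\mathcal{P}$ is bounded on every $\mathbb{H}^s$ and commutes with $\Lambda^s$, so that $|\sigma_i(u)|_{1/2}\le|\Lambda^{1/2}((\zeta_i\cdot\nabla)u)|_{L^2}$, and then I would bring in the commutator $[\Lambda^{1/2},\zeta_i]$ to write
\begin{equation*}
\Lambda^{1/2}((\zeta_i\cdot\nabla)u)=(\zeta_i\cdot\nabla)\Lambda^{1/2}u+[\Lambda^{1/2},\zeta_i]\cdot\nabla u.
\end{equation*}
The first term is estimated directly: in Fourier or via $|(\zeta_i\cdot\nabla)v|_{L^2}\le|\zeta_i|_{L^\infty}|\nabla v|_{L^2}$ with $v=\Lambda^{1/2}u$, giving $|\zeta_i|_{L^\infty}|u|_{3/2}$. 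Squaring and summing over $i$ produces the factor $N_0=\sum_i|\zeta_i|_{L^\infty}^2$ multiplying $|u|_{3/2}^2$.

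For the commutator term I would invoke the commutator estimate \eref{escom} with $s=1/2$, choosing exponents so that one factor lands in $L^\infty$ against $\Lambda\zeta_i$ (controlled by $M_0=\sum_i|\Lambda\zeta_i|_{L^\infty}^2$) and the other factor is a Sobolev norm of $u$ of order strictly between $1/2$ and $3/2$. Concretely $|[\Lambda^{1/2},\zeta_i]\nabla u|_{L^2}\lesssim|\Lambda\zeta_i|_{L^\infty}|\Lambda^{-1/2}\nabla u|_{L^2}+(\text{lower order})\lesssim|\Lambda\zeta_i|_{L^\infty}|u|_{1/2}$, so squaring and summing gives a constant multiple of $M_0|u|_{1/2}^2$. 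The point is that the commutator gains one derivative relative to the naive count, so this term is controlled purely by the $\mathbb{H}^{1/2}$-norm of $u$ and never touches $|u|_{3/2}$.

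The remaining work is bookkeeping with Young's inequality: the cross terms from expanding $|a+b|^2\le(1+\eps)|a|^2+(1+\eps^{-1})|b|^2$ must be split so that the coefficient of $|u|_{3/2}^2$ collapses to exactly $1/4$ under the constraint $N_0<1/8$, which is where the hypothesis $(\mathbf{H}_\sigma^2)$ and the precise constant $4M_0^2/(1-8N_0)$ in front of $|u|_{1/2}^2$ come from; choosing $\eps$ appropriately (so that $(1+\eps)N_0\le 1/4$, i.e.~$\eps$ of order $1/N_0$) yields the stated bound \eref{sig1}. The proof of \eref{sig2} is entirely parallel: use $s=1$ instead of $s=1/2$, so $|\sigma_i(u)|_1\le|\Lambda((\zeta_i\cdot\nabla)u)|_{L^2}$, split off $(\zeta_i\cdot\nabla)\Lambda u$ (bounded by $|\zeta_i|_{L^\infty}|u|_2$) and a commutator $[\Lambda,\zeta_i]\nabla u$ (bounded via \eref{escom} by $|\Lambda\zeta_i|_{L^\infty}|\nabla u|_{L^2}=|\Lambda\zeta_i|_{L^\infty}|u|_1$), again using $N_0<1/8$ to pin the leading coefficient at $1/4$ and absorbing everything else into a generic constant $C|u|_1^2$.

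The main obstacle I anticipate is the commutator step: one must verify that the admissible exponents in \eref{escom} can be chosen (e.g.~$p=2$, one index $\infty$ for the $L^\infty$-factor on $\zeta_i$, the conjugate index for $u$) so that the resulting Sobolev norm of $u$ is genuinely of order $\le 1/2$ (resp.~$\le 1$) rather than the full $3/2$ (resp.~$2$), since otherwise the $|u|_{3/2}^2$-coefficient could not be forced below $1/4$. This hinges on the one-derivative gain in the commutator together with the $C^1$-regularity of $\zeta_i$ encoded in $M_0<\infty$; the rest is a routine application of Minkowski's inequality in $\ell^2$ and Young's inequality.
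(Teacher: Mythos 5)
Your overall strategy (commutator splitting plus Young absorption) is genuinely different from the paper's, but as written it has a gap at exactly the step you flag as the ``main obstacle'': the claimed bound $|[\Lambda^{1/2},\zeta_i]\nabla u|_{L^2}\lesssim|\Lambda\zeta_i|_{L^\infty}|u|_{1/2}$ does not follow from the commutator estimate \eref{escom}. That estimate has two terms, and for $p=2$ the admissible exponents in the second term, $|\Lambda^{1/2}\zeta_i|_{L^{p_3}}|\nabla u|_{L^{p_4}}$, force $p_3\in(1,\infty)$ and hence $p_4>2$ (from $1/p_3+1/p_4\le 1/2$); your suggestion to put ``the conjugate index for $u$'', i.e. $p_4=2$, would require $p_3=\infty$, which \eref{escom} excludes. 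Consequently the second term always carries $|\nabla u|_{L^{p_4}}$ with $p_4>2$, which by Sobolev embedding costs a norm $|u|_{s'}$ with $s'=1+3(1/2-1/p_4)>1$ --- strictly more than $|u|_1$, let alone $|u|_{1/2}$. So the commutator does ``touch'' high norms of $u$, contrary to your claim, and the same defect appears in your treatment of \eref{sig2}. The gap is repairable: since $s'<3/2$, one can interpolate $|u|_{s'}\le|u|_{1/2}^{3/2-s'}|u|_{3/2}^{s'-1/2}$ and, after squaring and summing in $i$, absorb the resulting term by Young's inequality because the exponent on $|u|_{3/2}$ stays strictly below $2$ --- but this extra step is missing from your argument, and with it the implicit constants of \eref{escom} and of the Sobolev embedding enter, so you would obtain some finite coefficient $C(M_0,N_0)$ in front of $|u|_{1/2}^2$ rather than the specific constant $4M_0^2/(1-8N_0)$ stated in \eref{sig1} (and wired into $(\mathbf{H}_{g}^2)$ through $\kappa_1$).

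For comparison, the paper avoids the commutator altogether: it splits via a fractional Leibniz rule, bounding $|\Lambda^{1/2}((\zeta_i\cdot\nabla)u)|_{L^2}$ by $|\Lambda^{1/2}\zeta_i\cdot\nabla u|_{L^2}+|\zeta_i\cdot\Lambda^{1/2}\nabla u|_{L^2}\le|\Lambda^{1/2}\zeta_i|_{L^\infty}|u|_{1}+|\zeta_i|_{L^\infty}|u|_{3/2}$, then uses the interpolation $|u|_1^2\le|u|_{1/2}|u|_{3/2}$ on the first term and Young's inequality; this is what pins the coefficient $1/4$ (from $2N_0<1/4$) and produces exactly $4M_0^2/(1-8N_0)$. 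If you want to keep the commutator route, you must either add the interpolation-absorption step above (accepting a nonexplicit constant), or invoke a genuinely stronger commutator estimate of Calder\'{o}n type, namely that $[\Lambda^{1/2},f]\Lambda^{1/2}$ is bounded on $L^2$ with norm $\lesssim\|\nabla f\|_{L^\infty}$, which is much deeper than \eref{escom} and not among the paper's tools.
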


\begin{proof}
Let us consider a standard orthonormal basis $(h_i)_{i\in\mathbb{N}}$ in $l^2$. Then, by Hypothesis \ref{h2} we have
\begin{eqnarray*}
\!\!\!\!\!\!\!\!&&\|\sigma(u)\|_{\mathcal{L}_2(l^2;\mathbb{H}^{1/2})}^2
%\nonumber \\
%=\!\!\!\!\!\!\!\!&&\sum_{i=1}^{\infty}|\big(\Lambda^{1/2}(\zeta\cdot \nabla u)\big)h_i|_{L^2}^2
\nonumber \\
=\!\!\!\!\!\!\!\!&&\sum_{i=1}^{\infty}|\Lambda^{1/2}((\zeta_i\cdot \nabla) u)|_{L^2}^2
\nonumber \\
\leq\!\!\!\!\!\!\!\!&&2\sum_{i=1}^{\infty}|\Lambda^{1/2}\zeta_i\cdot \nabla u|_{L^2}^2+2\sum_{i=1}^{\infty}|\zeta_i\cdot \Lambda^{1/2}\nabla u|_{L^2}^2
\nonumber \\
\leq\!\!\!\!\!\!\!\!&&2\mathcal{N}_{1/2}|u|_{1}^2+2\mathcal{N}_{0} | u|_{3/2}^2
\nonumber \\
\leq\!\!\!\!\!\!\!\!&&2\delta_0\mathcal{N}_{1/2}|u|_{1/2}|u|_{3/2}+2\mathcal{N}_{0} | u|_{3/2}^2
\nonumber \\
\leq\!\!\!\!\!\!\!\!&&\frac{1}{4}|u|_{3/2}^2+\frac{4\delta_0^2\mathcal{N}_{1/2}^2}{1-8\mathcal{N}_0}|u|_{1/2}^2,
\end{eqnarray*}
where we have used the interpolation inequality \eref{inteq2} in the fourth step and Young inequality in the last inequality.
Applying similar argument, we can also infer that (\ref{sig2}) holds.
\end{proof}

\begin{lemma}
For any $u\in \mathbb{H}^{1}$, we have
\begin{equation}\label{sig3}
\|\sigma(u)\|_{\mathcal{L}_2(l^2;\mathbb{H}^{-1/2})}^2\lesssim_{\mathcal{N}_1} |u|_{1}^2.
\end{equation}
Moreover, for any $u\in \mathbb{H}$, we have
\begin{equation}\label{sig4}
\|\sigma(u)\|_{\mathcal{L}_2(l^2;\mathbb{H}^{-1})}^2\lesssim_{\mathcal{N}_1} |u|_{L^2}^2.
\end{equation}
\end{lemma}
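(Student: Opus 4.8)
The plan is to reduce both estimates to the contraction property of the Leray projection together with elementary Fourier monotonicity. Since $\{h_i\}$ is an orthonormal basis of $l^2$ and $\sigma_i(u)=\mathcal P((\zeta_i\cdot\nabla)u)$, one has
$$\|\sigma(u)\|_{\mathcal L_2(l^2;\mathbb H^{s})}^2=\sum_{i=1}^{\infty}|\sigma_i(u)|_{s}^2=\sum_{i=1}^{\infty}|\mathcal P((\zeta_i\cdot\nabla)u)|_{s}^2 .$$
Because $\mathcal P$ is an orthogonal projection that commutes with $\Lambda^s$, it is a contraction on every $\mathbb H^s$, so $|\mathcal P((\zeta_i\cdot\nabla)u)|_{s}\le|(\zeta_i\cdot\nabla)u|_{s}$ and it suffices to control the latter. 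I will also use repeatedly that, since every frequency satisfies $|k|\ge 1$ on $\mathbb Z_0^3$, one has $|v|_{s}\le|v|_{t}$ whenever $s\le t$; in particular $|v|_{-1/2}\le|v|_{L^2}$ and $|v|_{-1}\le|v|_{L^2}$.

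For $(\ref{sig3})$, since $u\in\mathbb H^1$ the field $\nabla u$ already lies in $L^2$, so the derivative may be kept on $u$: by H\"older's inequality $|(\zeta_i\cdot\nabla)u|_{L^2}\le|\zeta_i|_{L^\infty}|\nabla u|_{L^2}=|\zeta_i|_{L^\infty}|u|_{1}$, whence
$$|\sigma_i(u)|_{-1/2}\le|(\zeta_i\cdot\nabla)u|_{L^2}\le|\zeta_i|_{L^\infty}|u|_{1}.$$
Summing in $i$ and invoking $(\mathbf H_{\sigma}^2)$ gives $\sum_i|\sigma_i(u)|_{-1/2}^2\le N_0|u|_{1}^2$, which is the asserted bound, the implicit constant being finite by Hypothesis \ref{h2}.

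For $(\ref{sig4})$ the difficulty is that $u\in\mathbb H$ only, so $\nabla u$ no longer lies in $L^2$ and the previous pointwise estimate is unavailable; the derivative must be transferred off $u$. I would rewrite the transport term in divergence form, $(\zeta_i\cdot\nabla)u=\mathrm{div}(u\otimes\zeta_i)-(\mathrm{div}\,\zeta_i)u$ (equivalently, integrate by parts against a test field $\phi\in\mathcal V$, which is legitimate since such $\phi$ are smooth and divergence free, so that $\mathcal P\phi=\phi$). Using that $\mathrm{div}$ maps $\mathbb H^0$ into $\mathbb H^{-1}$ boundedly, the embedding $\mathbb H^0\hookrightarrow\mathbb H^{-1}$, and H\"older's inequality, I obtain
$$|\sigma_i(u)|_{-1}\le|\mathrm{div}(u\otimes\zeta_i)|_{-1}+|(\mathrm{div}\,\zeta_i)u|_{-1}\lesssim\big(|\zeta_i|_{L^\infty}+|\mathrm{div}\,\zeta_i|_{L^\infty}\big)|u|_{L^2}.$$
Squaring, summing in $i$, and using the $C^1$-regularity and summability of the $\zeta_i$ from Hypothesis \ref{h2} (which control $\sum_i|\zeta_i|_{L^\infty}^2=N_0$ and $\sum_i|\mathrm{div}\,\zeta_i|_{L^\infty}^2$) yields $\sum_i|\sigma_i(u)|_{-1}^2\lesssim_{M_0}|u|_{L^2}^2$.

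The only genuinely delicate point is this transfer of the derivative in $(\ref{sig4})$: one cannot estimate $(\zeta_i\cdot\nabla)u$ pointwise when $u$ is merely $L^2$, so the divergence structure (or the integration by parts) is essential, whereas in $(\ref{sig3})$ the extra regularity of $u$ makes the bound immediate. Everything else is bookkeeping of the contraction of $\mathcal P$, the Fourier monotonicity $|v|_{s}\le|v|_{t}$ for $s\le t$, and H\"older's inequality, with all resulting constants finite by Hypothesis \ref{h2}.
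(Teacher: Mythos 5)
Your proof is correct, but for the first estimate it takes a genuinely different route from the paper, while for the second it coincides with the paper's argument. For (\ref{sig3}), the paper argues by duality through the trilinear form: testing against $v\in\mathbb{H}^{1/2}$ and invoking the estimate (\ref{2.2}) gives $|b(\zeta_i,u,v)|\lesssim|\zeta_i|_{1}|u|_{1}|v|_{1/2}$, hence $|(\zeta_i\cdot\nabla)u|_{-1/2}\lesssim|\zeta_i|_{1}|u|_{1}$, and the sum is closed using $(\mathbf{H}_{\sigma}^1)$, which is why the constant is written as $\lesssim_{M_0}$. You instead exploit the extra regularity $u\in\mathbb{H}^{1}$ directly: H\"older gives $|(\zeta_i\cdot\nabla)u|_{L^2}\le|\zeta_i|_{L^\infty}|u|_{1}$, and since $\sigma_i(u)$ is zero-mean (the Leray projection removes the $k=0$ mode) the Fourier monotonicity $|v|_{-1/2}\le|v|_{L^2}$ on $\mathbb{Z}_0^3$ yields the bound with constant $N_0$ from $(\mathbf{H}_{\sigma}^2)$. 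Your argument is more elementary (no Sobolev-type trilinear estimate, no duality) and even delivers a cleaner constant; the paper's version has the mild advantage of following a template that reuses the already-stated bilinear machinery. For (\ref{sig4}), your divergence-form rewriting $(\zeta_i\cdot\nabla)u=\mathrm{div}(u\otimes\zeta_i)-(\mathrm{div}\,\zeta_i)u$ is exactly the paper's integration-by-parts against test fields, and your reliance on Hypothesis \ref{h2} to control $\sum_i|\mathrm{div}\,\zeta_i|_{L^\infty}^2$ matches the paper's own use of $M_0$ at the same step, so there is no gap relative to the paper's reading of its hypotheses.
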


\begin{proof}
We first prove (\ref{sig3}). For any $v\in \mathbb{H}^{1/2}$, we can use the estimate (\ref{2.2}) to get
\begin{equation*}
|\langle (\zeta_i\cdot \nabla)u,v\rangle|=|b(\zeta_i,u,v)|\lesssim|\zeta_i|_1|u|_{1}|v|_{1/2}.
\end{equation*}
Then it follows that
\begin{equation*}
|(\zeta_i\cdot \nabla)u|_{-1/2}\lesssim|\zeta_i|_1|u|_{1}.
\end{equation*}
Due to $(\mathbf{H}_{\sigma}^1)$, it follows that
\begin{equation*}
\|\sigma(u)\|_{\mathcal{L}_2(l^2;\mathbb{H}^{-1/2})}^2= \sum_{i=1}^{\infty}|(\zeta_i\cdot \nabla)u|_{-1/2}^2\lesssim|u|_{1}^2\sum_{i=1}^{\infty}|\zeta_i|_1^2\lesssim_{\mathcal{N}_1}|u|_{1}^2.
\end{equation*}

Now, we turn to prove (\ref{sig4}). For any $u\in\mathcal{V}$, $i\in\mathbb{N}$,
\begin{equation*}
\sum_{j=1}^3\frac{\partial}{\partial \xi_j}(\zeta_i^ju)=\sum_{j=1}^3\frac{\partial \zeta_i^j}{\partial \xi_j} u+\sum_{j=1}^3\zeta_i^j\frac{\partial u}{\partial \xi_j}=(\nabla\cdot\zeta_i) u+\sum_{j=1}^3\zeta_i^j\frac{\partial u}{\partial \xi_j}.
\end{equation*}
Making use of the integration by parts formula, it follows that for any $u,v\in\mathcal{V}$,
\begin{eqnarray*}
((\zeta_i\cdot\nabla)u,v)=\!\!\!\!\!\!\!\!&&\sum_{j=1}^3(\zeta_i^j\frac{\partial u}{\partial \xi_j}, v)=\sum_{j=1}^3(\frac{\partial}{\partial \xi_j}(\zeta_i^ju), v)-((\nabla\cdot\zeta_i) u,v)
\nonumber \\
=\!\!\!\!\!\!\!\!&&-\sum_{j=1}^3(\zeta_i^ju, \frac{\partial v}{\partial \xi_j} )-((\nabla\cdot\zeta_i) u,v).
\end{eqnarray*}
Then by H\"{o}lder's inequality we obtain
\begin{eqnarray*}
|((\zeta_i\cdot\nabla)u,v)|\leq\!\!\!\!\!\!\!\!&&\sum_{j=1}^3|(\zeta_i^ju, \frac{\partial v}{\partial \xi_j} )|+|((\nabla\cdot\zeta_i) u,v)|
\nonumber \\
\leq\!\!\!\!\!\!\!\!&&\sum_{j=1}^3\Big(|\zeta_i^j|_{L^{\infty}}|u|_{L^2}||\frac{\partial v}{\partial \xi_j}|_{L^2}\Big)+|\nabla\cdot\zeta_i|_{L^{\infty}}|u|_{L^2}|v|_{L^2}
\nonumber \\
\leq\!\!\!\!\!\!\!\!&&|\zeta_i|_{L^{\infty}}|u|_{L^2}|v|_{1}+|\nabla\cdot\zeta_i|_{L^{\infty}}|u|_{L^2}|v|_{1}.
\end{eqnarray*}
Therefore, by we can deduce that
$$|(\zeta_i\cdot\nabla)u|_{-1}^2\lesssim \Big(|\zeta_i|_{L^{\infty}}^2+|\nabla\cdot\zeta_i|_{L^{\infty}}^2\Big)|u|_{L^2}^2,$$
which yields
\begin{eqnarray*}
\|\sigma(u)\|_{\mathcal{L}_2(l^2;\mathbb{H}^{-1})}^2\!\!\!\!\!\!\!\!&&= \sum_{i=1}^{\infty}|(\zeta_i\cdot\nabla)u|_{-1}^2
\nonumber \\
\!\!\!\!\!\!\!\!&&\lesssim
 \Big(\sum_{i=1}^{\infty}|\zeta_i|_{L^{\infty}}^2+\sum_{i=1}^{\infty}|\nabla\cdot\zeta_i|_{L^{\infty}}^2\Big)|u|_{L^2}^2
 \nonumber \\
\!\!\!\!\!\!\!\!&&\lesssim_{\mathcal{N}_1}|u|_{L^2}^2.
\end{eqnarray*}
We complete the proof.
\end{proof}

\subsection{Energy estimates}\label{sub2.3}

We denote by $\Pi_n:\mathbb{H}^{-1/2} \to \mathbb{H}_n(:={\rm span}\{e_k:k\leq{n}\})$  the projection operator, where $\{e_1,e_2,\cdots\}\subset\mathcal{D}(\mathcal{A})$ is a complete orthonormal basis on  $\mathbb{H}$, which is given by
$$\Pi_n x:=\sum\limits_{i=1}^{n}\langle x,e_i\rangle e_i,~x\in \mathbb{H}^{-1/2}.$$
%Denote by $\{g_1,g_2,\cdots\}$ the ONB of $l^2$. Let
%\begin{equation*}
%\mathcal{W}_{n}(t):=\tilde{\Pi}_n\mathcal{W}(t)=\sum\limits_{i=1}^{n}\langle \mathcal{W}(t),g_i\rangle_{l^2}g_i,~n\in\mathbb{N},
%\end{equation*}
%where $\tilde{\Pi}_n$ is an orthonormal projection onto $l^2_n:=\text{span}\{g_1,g_2,\cdots,g_n\}$ on $l^2$.

Consider the Faedo-Galerkin approximation of Eq.~(\ref{sns1}), i.e.,
\begin{equation}\label{eqf}
\begin{cases}
&\!\!\!\!\!\!du_{n}(t)+[\mathcal{A} u_{n}(t)+\Pi_nB( u_{n}(t))]dt = \Pi_n\sigma(u_n(t))d \mathcal{W}(t)+\Pi_ng(t,u_n(t))d \hat{\mathcal{W}}(t) , \\
&\!\!\!\!\!\!u_{n}(0)=x_{n}:=\Pi_n x.
\end{cases}
\end{equation}
Under $(\mathbf{H}_{g}^1)$ and $(\mathbf{H}_{g}^4)$, it is clear that there exists a weak solution to Eq.~(\ref{eqf})  up to its life time. Furthermore, under the assumption $(\mathbf{H}_{g}^2)$, the solution is non-explosive,  whose proof is the same as that of the following lemma. Thus, there is a global weak solution to Eq.~(\ref{eqf}).

\vspace{2mm}
We have the following energy moment estimates.
\begin{lemma}\label{lem3.0}
$($Energy estimates for $\mathbb{H}^{1/2}$-norm$)$ For any $x\in\mathbb{H}^{1/2}$, we have
$$\sup_{t\in[0,T]}\mathbf{E}|u_n(t)|_{1/2}^{2-\gamma}+\mathbf{E}\int_0^{T}| u_n(s)|_{3/2}^{2-\gamma}ds\lesssim|x|_{1/2}^{2-\gamma}
+ T.$$
\end{lemma}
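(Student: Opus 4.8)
The plan is to apply It\^o's formula to the functional $F(u)=|u|_{1/2}^{2-\gamma}=X^{\kappa}$, where $X:=|u_n|_{1/2}^2$ and $\kappa:=\frac{2-\gamma}{2}\in(0,\frac12)$ since $\gamma\in(1,2)$; the whole point is that $2-\gamma<2$ forces $\kappa<1$, so the second-order It\^o term carries a favorable negative sign. Because $u_n$ lives in the finite-dimensional space $\mathbb{H}_n$, the Galerkin system \eqref{eqf} is a genuine finite-dimensional SDE and $F$ is smooth away from the origin; to cope with the singularity of $X^{\kappa}$ at $X=0$ I would work with the regularization $(\delta+X)^{\kappa}$ and the stopping times $\tau_R:=\inf\{t:|u_n(t)|_{1/2}>R\}$, derive the estimate on $[0,t\wedge\tau_R]$, and then let $\delta\to0$ and $R\to\infty$ by monotone/dominated convergence; the resulting uniform bound simultaneously yields the non-explosion of $u_n$ asserted before the lemma.

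Carrying out the It\^o computation in the $(\cdot,\cdot)_{1/2}$-inner product, and using that $\Pi_n$ is orthogonal in every $\mathbb{H}^s$ with $u_n=\Pi_n u_n$ (so that $\mathcal{A}u_n\in\mathbb{H}_n$ and the projections drop out of the quadratic variation), the drift of $X$ is $b=-2|u_n|_{3/2}^2-2(B(u_n),\Lambda u_n)+\|\sigma(u_n)\|_{\mathcal{L}_2(l^2;\mathbb{H}^{1/2})}^2+\|g(t,u_n)\|_{\mathcal{L}_2(l^2;\mathbb{H}^{1/2})}^2$, while $d[X]=4\big(\|(\sigma(u_n)\cdot,u_n)_{1/2}\|_{\mathcal{L}_2(l^2;\mathbb{R})}^2+\|(g(t,u_n)\cdot,u_n)_{1/2}\|_{\mathcal{L}_2(l^2;\mathbb{R})}^2\big)dt$. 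Factoring $\kappa X^{\kappa-2}$ out of the drift of $F$ and using the algebraic identity $2(\kappa-1)=-\gamma$, the bracketed drift becomes $X\cdot b-\gamma\|(\sigma(u_n)\cdot,u_n)_{1/2}\|^2-\gamma\|(g(t,u_n)\cdot,u_n)_{1/2}\|^2$. The term $-\gamma\|(g\cdot,u_n)_{1/2}\|^2$, produced by the ``second order'' It\^o correction, is exactly the quantity on the right-hand side of $(\mathbf{H}_{g}^{2})$; this is the intrinsic balance that the nonlocal forcing provides.

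Next I would absorb the top-order norm: bounding the nonlinear term by \eqref{es7} and $\|\sigma(u_n)\|_{1/2}^2$ by \eqref{sig1}, the contributions proportional to $|u_n|_{3/2}^2$ combine (after multiplication by $X=|u_n|_{1/2}^2$) into a strictly negative multiple $-c|u_n|_{3/2}^2|u_n|_{1/2}^2$, while the leftover quartic terms together with $\|g(t,u_n)\|_{1/2}^2|u_n|_{1/2}^2$ are, by the choice $\kappa_1\ge\max\{\delta_1,\frac{4M_0^2}{1-8N_0}\}$, dominated by the left-hand side of $(\mathbf{H}_{g}^{2})$. Applying $(\mathbf{H}_{g}^{2})$ and cancelling $\gamma\|(g\cdot,u_n)_{1/2}\|^2$ against the second-order term (and discarding $-\gamma\|(\sigma\cdot,u_n)_{1/2}\|^2\le0$) leaves the bracket bounded by $-c|u_n|_{3/2}^2|u_n|_{1/2}^2+C|u_n|_{1/2}^{\gamma+2}$. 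Multiplying back by $\kappa X^{\kappa-2}=\kappa|u_n|_{1/2}^{-\gamma-2}$ turns the growth term into the constant $C\kappa$ and the dissipative term into $-c\kappa|u_n|_{1/2}^{-\gamma}|u_n|_{3/2}^2$; a single Young inequality $|u_n|_{3/2}^{2-\gamma}\le\epsilon|u_n|_{1/2}^{-\gamma}|u_n|_{3/2}^2+C_\epsilon|u_n|_{1/2}^{2-\gamma}$ then converts this into a genuine $-c'|u_n|_{3/2}^{2-\gamma}$ dissipation at the cost of a lower-order term $C''F(u_n)$.

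Finally I would take expectations (the local martingale has zero mean after the $\tau_R$-localization), obtaining $\mathbf{E}F(u_n(t\wedge\tau_R))+c'\mathbf{E}\int_0^{t\wedge\tau_R}|u_n|_{3/2}^{2-\gamma}\,ds\le F(x_n)+C''\mathbf{E}\int_0^{t\wedge\tau_R}F(u_n)\,ds+C\kappa T$, and close the argument with Gr\"onwall's lemma for $\mathbf{E}F(u_n(t\wedge\tau_R))$ followed by reinsertion to bound the dissipation integral; since $|x_n|_{1/2}=|\Pi_n x|_{1/2}\le|x|_{1/2}$, all constants are uniform in $n$, and letting $\delta\to0$, $R\to\infty$ yields the claim. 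The main obstacle is precisely the bookkeeping in the third paragraph: arranging the hypotheses so that the $|u_n|_{3/2}^2$ terms net out negative while the exact quartic remainder is swallowed by $(\mathbf{H}_{g}^{2})$ and the dangerous $\gamma\|(g\cdot,u_n)_{1/2}\|^2$ is annihilated by the negative second-order term, together with the technical care required to legitimize It\^o's formula for the sub-linear power $X^{\kappa}$ near the origin and to ensure the stochastic integral is a true martingale under the localization.
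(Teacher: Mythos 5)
Your proposal is correct and follows essentially the same route as the paper: Itô's formula for the regularized fractional power $(\delta+|u_n|_{1/2}^2)^{\alpha}$ with $\alpha=1-\gamma/2$, absorption of the top-order terms via \eqref{es7} and \eqref{sig1} using $\kappa_1\ge\max\{\delta_1,\tfrac{4M_0^2}{1-8N_0}\}$, the exact identification of the negative second-order Itô correction (coefficient $-2\alpha(1-\alpha)$, i.e.\ your $2(\kappa-1)=-\gamma$) with the right-hand side of $(\mathbf{H}_{g}^{2})$, and a stopping-time localization followed by $\delta\to0$, $R\to\infty$ and Fatou. Your observation that the projections $\Pi_n$ drop out of the quadratic-variation terms is also the correct justification for applying $(\mathbf{H}_{g}^{2})$ exactly at the Galerkin level.

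The one place you deviate is the extraction of the dissipation integral. You convert $|u_n|_{1/2}^{-\gamma}|u_n|_{3/2}^2$ into $|u_n|_{3/2}^{2-\gamma}$ by a pointwise Young inequality, which produces the lower-order term $C_\epsilon|u_n|_{1/2}^{2-\gamma}$ and hence forces a Gr\"onwall step, so your final constant carries a factor of order $e^{CT}$ (or, avoiding Gr\"onwall by reinserting the sup bound, a factor polynomial in $T$). The paper instead keeps the dissipation in the weighted form $\int_0^T\frac{|u_n|_{3/2}^2}{(1+|u_n|_{1/2}^2)^{1-\alpha}}\,dt$ throughout (noting $(\varepsilon+X)^{1-\alpha}\le(1+X)^{1-\alpha}$) and only at the very end uses $|u_n|_{3/2}^{2-\gamma}\lesssim 1+\frac{|u_n|_{3/2}^2}{(1+|u_n|_{1/2}^2)^{1-\alpha}}$, whose error term integrates to $T$ rather than to $\int F(u_n)$; this yields the bound $\lesssim|x|_{1/2}^{2-\gamma}+T$ with a constant \emph{uniform in} $T$, and no Gr\"onwall. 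For the lemma at fixed $T$ both versions suffice, but the $T$-uniformity is what the paper later reuses verbatim for the time-average estimate in the ergodicity argument (Lemma \ref{lemta}), so your variant would need to be adjusted there.
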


\begin{proof}
By It\^{o}'s formula to $|\cdot|_{1/2}^2$, we obtain for all $t\in [0,T]$,
\begin{eqnarray}\label{es4}
\!\!\!\!\!\!\!\!&&|u_n(t)|_{1/2}^2
\nonumber \\
=\!\!\!\!\!\!\!\!&&|x|_{1/2}^2+2\mathcal{M}^1(t)+2\mathcal{M}^2(t)+2\int_0^t(-\mathcal{A}u_n(s),\Lambda u_n(s))ds-2\int_0^t(B(u_n(s)),\Lambda u_n(s))ds
\nonumber \\
\!\!\!\!\!\!\!\!&&+\int_0^t\|\Pi_n\sigma(u_n(s))\|_{\mathcal{L}_2(l^2;\mathbb{H}^{1/2})}^2ds+\int_0^t\|\Pi_n g(s,u_n(s))\|_{\mathcal{L}_2(l^2;\mathbb{H}^{1/2})}^2ds,
\end{eqnarray}
where $\mathcal{M}^1(t)$ and $\mathcal{M}^2(t)$ are  continuous local martingales given by
\begin{eqnarray*}
\!\!\!\!\!\!\!\!&&\mathcal{M}^1(t):=\int_0^t(\sigma(u_n(s))d\mathcal{W}(s),u_n(s))_{1/2},
\nonumber \\
\!\!\!\!\!\!\!\!&&
\mathcal{M}^2(t):=\int_0^t(g(s,u_n(s))d\hat{\mathcal{W}}(s),u_n(s))_{1/2}.
\end{eqnarray*}

Next, applying It\^{o}'s formula to the  auxiliary function $\Phi^{\varepsilon}(x):=(\varepsilon+x)^{\alpha}$, for any $\varepsilon,\alpha\in(0,1)$,  we have
\begin{eqnarray}\label{es5}
\!\!\!\!\!\!\!\!&&\Phi^{\varepsilon}(|u_n(t)|_{1/2}^2)
\nonumber \\
=\!\!\!\!\!\!\!\!&&\Phi^{\varepsilon}(|x|_{1/2}^2)+2\alpha\mathcal{M}^{1,\varepsilon}(t)+2\alpha\mathcal{M}^{2,\varepsilon}(t)
\nonumber \\
\!\!\!\!\!\!\!\!&&
+\alpha\int_0^t\frac{-2|u_n(t)|_{3/2}^2-2(B(u_n(t)),\Lambda u_n(t))_{L^2}}{(\varepsilon+|u_n(t)|_{1/2}^2)^{1-\alpha}}dt
\nonumber \\
\!\!\!\!\!\!\!\!&&
+\alpha\int_0^t\frac{\|\Pi_n\sigma(u_n(s))\|_{\mathcal{L}_2(l^2;\mathbb{H}^{1/2})}^2}{(\varepsilon+|u_n(s)|_{1/2}^2)^{1-\alpha}}ds
\nonumber \\
\!\!\!\!\!\!\!\!&&
+\alpha\int_0^t\frac{\|\Pi_ng(s,u_n(s))\|_{\mathcal{L}_2(l^2;\mathbb{H}^{1/2})}^2}{(\varepsilon+|u_n(s)|_{1/2}^2)^{1-\alpha}}ds
\nonumber \\
\!\!\!\!\!\!\!\!&&
-2\alpha(1-\alpha)\int_0^t\frac{\|(\sigma(u_n(s))\cdot,u_n(s))_{1/2}\|_{\mathcal{L}_2(l^2;\mathbb{R})}^2}{(\varepsilon+|u_n(s)|_{1/2}^2)^{2-\alpha}}ds
\nonumber \\
\!\!\!\!\!\!\!\!&&
-2\alpha(1-\alpha)\int_0^t\frac{\|(g(s,u_n(s))\cdot,u_n(s))_{1/2}\|_{\mathcal{L}_2(l^2;\mathbb{R})}^2}{(\varepsilon+|u_n(s)|_{1/2}^2)^{2-\alpha}}ds
\nonumber \\
=:\!\!\!\!\!\!\!\!&&\Phi^{\varepsilon}(|x|_{1/2}^2)+2\alpha\mathcal{M}^{1,\varepsilon}(t)+2\alpha\mathcal{M}^{2,\varepsilon}(t)+\alpha (\text{I}+\text{II}+\text{III})-2\alpha(1-\alpha)(\text{IV}+\text{V}),~
\end{eqnarray}
%$\Phi(x):=(1+x)^{\alpha}$, $\alpha\in(0,1)$, it follows that
%\begin{eqnarray}
%\!\!\!\!\!\!\!\!&&\Phi(|u_n(t)|_{1/2}^2)
%\nonumber \\
%=\!\!\!\!\!\!\!\!&&\Phi(|x|_{1/2}^2)+2\alpha\tilde{\mathcal{M}}^1(t)+2\alpha\tilde{\mathcal{M}}^2(t)
%\nonumber \\
%\!\!\!\!\!\!\!\!&&
%+\alpha\int_0^t\frac{-2\Big\{| u_n(s)|_{3/2}^2-(B(u_n(s)),\Lambda u_n(s))+(\mathcal{P}f(s),\Lambda u_n(s))\Big\}}{(1+|u_n(s)|_{1/2}^2)^{1-\alpha}}ds
%\nonumber \\
%\!\!\!\!\!\!\!\!&&
%+\alpha\int_0^t\frac{\|\Pi_n\sigma(u_n(s))\|_{\mathcal{L}_2(l^2;\mathbb{H}^{1/2})}^2}{(1+|u_n(s)|_{1/2}^2)^{1-\alpha}}ds
%\nonumber \\
%\!\!\!\!\!\!\!\!&&
%+\alpha\int_0^t\frac{\|\Pi_ng(s,u_n(s))\|_{\mathcal{L}_2(l^2;\mathbb{H}^{1/2})}^2}{(1+|u_n(s)|_{1/2}^2)^{1-\alpha}}ds
%\nonumber \\
%\!\!\!\!\!\!\!\!&&
%-2\alpha(1-\alpha)\int_0^t\frac{\|(\sigma(u_n(s))\cdot,u_n(s))_{1/2}\|_{\mathcal{L}_2(l^2;\mathbb{R})}^2}{(1+|u_n(s)|_{1/2}^2)^{2-\alpha}}ds
%\nonumber \\
%\!\!\!\!\!\!\!\!&&
%-2\alpha(1-\alpha)\int_0^t\frac{\|(g(s,u_n(s))\cdot,u_n(s))_{1/2}\|_{\mathcal{L}_2(l^2;\mathbb{R})}^2}{(1+|u_n(s)|_{1/2}^2)^{2-\alpha}}ds
%\nonumber \\
%=:\!\!\!\!\!\!\!\!&&\Phi(|x|_{1/2}^2)+2\alpha\tilde{\mathcal{M}}^1(t)+2\alpha\tilde{\mathcal{M}}^2(t)+\alpha (\text{I}+\text{II}+\text{III})-2\alpha(1-\alpha)(\text{IV}+\text{V}),
%\end{eqnarray}
where $\mathcal{M}^{1,\varepsilon}(t)$ and $\mathcal{M}^{2,\varepsilon}(t)$ are  continuous local martingales given by
\begin{eqnarray*}
\!\!\!\!\!\!\!\!&&\mathcal{M}^{1,\varepsilon}(t):=\int_0^t\frac{(\sigma(u_n(s))d\mathcal{W}(s),u_n(s))_{1/2}}{(\varepsilon+|u_n(s)|_{1/2}^2)^{2-\alpha}}ds,
\nonumber \\
\!\!\!\!\!\!\!\!&&
\mathcal{M}^{2,\varepsilon}(t):=\int_0^t\frac{(g(s,u_n(s))d\hat{\mathcal{W}}(s),u_n(s))_{1/2}}{(\varepsilon+|u_n(s)|_{1/2}^2)^{2-\alpha}}ds.
\end{eqnarray*}

Thus, by (\ref{es7})  it follows that
\begin{eqnarray}\label{es2}
\text{I}\leq\!\!\!\!\!\!\!\!&&\int_0^t\frac{-| u_n(s)|_{3/2}^2+\delta_1|u_n(s)|_{1}^4|u_n(s)|_{1/2}^2}{(\varepsilon+|u_n(s)|_{1/2}^2)^{1-\alpha}}ds.
\end{eqnarray}
Moreover, using the estimate (\ref{sig1}) yields
\begin{eqnarray}\label{es3}
\text{II}\leq\int_0^t\frac{\frac{1}{2}| u_n(s)|_{3/2}^2+\frac{4\delta_0^2\mathcal{N}_{1/2}^2}{1-8\mathcal{N}_0}| u_n(s)|_{1/2}^2}{(\varepsilon+|u_n(s)|_{1/2}^2)^{1-\alpha}}ds.
\end{eqnarray}

 Combining (\ref{es5})-(\ref{es3}),  for all $t\in[0,T]$,
\begin{eqnarray}\label{es18}
\!\!\!\!\!\!\!\!&&\Phi^{\varepsilon}(|u_n(t)|_{1/2}^2)+\frac{\alpha}{2}\int_0^t\frac{| u_n(s)|_{3/2}^2}{(\varepsilon+|u_n(s)|_{1/2}^2)^{1-\alpha}}ds
\nonumber \\
\leq\!\!\!\!\!\!\!\!&&\Phi^{\varepsilon}(|x|_{1/2}^2)+2\alpha\mathcal{M}^{1,\varepsilon}(t)+2\alpha\mathcal{M}^{2,\varepsilon}(t)+\alpha\int_0^t\frac{\frac{4M_0^2}{1-8N_0}|u_n(s)|_{1/2}^2}{(\varepsilon+|u_n(s)|_{1/2}^2)^{1-\alpha}}ds
\nonumber \\
\!\!\!\!\!\!\!\!&&
+\alpha\int_0^t\Bigg\{\frac{\big(\delta_1|u_n(s)|_{1}^{4}|u_n(s)|_{1/2}^2+\|g(s,u_n(s))\|_{\mathcal{L}_2(l^2;\mathbb{H}^{1/2})}^2\big)(\varepsilon+|u_n(s)|_{1/2}^2)}{(\varepsilon+|u_n(s)|_{1/2}^2)^{2-\alpha}}
\nonumber \\
\!\!\!\!\!\!\!\!&&+\frac{-2(1-\alpha)\|(g(s,u_n(s))\cdot,u_n(s))_{1/2}\|_{\mathcal{L}_2(l^2;\mathbb{R})}^2}{(\varepsilon+|u_n(s)|_{1/2}^2)^{2-\alpha}}\Bigg\}ds
%\nonumber \\
%\leq\!\!\!\!\!\!\!\!&&\Phi(|x|_{1/2}^2)+\alpha\|f\|_{L^2([0,T];\mathbb{H}^{1/2})}^2+2\alpha\mathcal{M}^{1,\varepsilon}(t)+2\alpha\mathcal{M}^{2,\varepsilon}(t)+\alpha\int_0^t\frac{\epsilon_0+\tilde{C}_{3}|u_n(s)|_{1/2}^6}{(1+|u_n(s)|_{1/2}^2)^{1-\alpha}}ds
%\nonumber \\
%\!\!\!\!\!\!\!\!&&
%+\alpha\int_0^t\Bigg\{\frac{\big(\frac{3C_2}{2}|u_n(s)|_{1}^{4}|u_n(s)|_{1/2}^2+\|g(s,u_n(s))\|_{\mathcal{L}_2(l^2;\mathbb{H}^{1/2})}^2\big)(\varepsilon+|u_n(s)|_{1/2}^2)}{(\varepsilon+|u_n(s)|_{1/2}^2)^{2-\alpha}}
%\nonumber \\
%\!\!\!\!\!\!\!\!&&+\frac{-2(1-\alpha)\|(g(s,u_n(s))\cdot,u_n(s))_{1/2}\|_{\mathcal{L}_2(l^2;\mathbb{R})}^2}{(1+|u_n(s)|_{1/2}^2)^{2-\alpha}}\Bigg\}ds
\nonumber \\
\leq\!\!\!\!\!\!\!\!&&\Phi^{\varepsilon}(|x|_{1/2}^2)+2\alpha\mathcal{M}^{1,\varepsilon}(t)+2\alpha\mathcal{M}^{2,\varepsilon}(t)
\nonumber \\
\!\!\!\!\!\!\!\!&&
+\alpha\int_0^t\Bigg\{\frac{\big(\kappa_1(|u_n(s)|_{1}^{4}+1)|u_n(s)|_{1/2}^2+\|g(s,u_n(s))\|_{\mathcal{L}_2(l^2;\mathbb{H}^{1/2})}^2\big)(\varepsilon+|u_n(s)|_{1/2}^2)}{(\varepsilon+|u_n(s)|_{1/2}^2)^{2-\alpha}}
\nonumber \\
\!\!\!\!\!\!\!\!&&+\frac{-2(1-\alpha)\|(g(s,u_n(s))\cdot,u_n(s))_{1/2}\|_{\mathcal{L}_2(l^2;\mathbb{R})}^2}{(\varepsilon+|u_n(s)|_{1/2}^2)^{2-\alpha}}\Bigg\}ds,
\end{eqnarray}
where we have used the assumption $\kappa_1\geq\max\{\delta_1,\frac{4\delta_0^2\mathcal{N}_{1/2}^2}{1-8\mathcal{N}_0}\}$.

Then it follows from (\ref{es18}) and the assumption $g(t,0)=0$ that
\begin{eqnarray}\label{es188}
\!\!\!\!\!\!\!\!&&\Phi^{\varepsilon}(|u_n(t)|_{1/2}^2)+\frac{\alpha}{2}\int_0^t\frac{| u_n(s)|_{3/2}^2}{(1+|u_n(s)|_{1/2}^2)^{1-\alpha}}ds
\nonumber \\
\leq\!\!\!\!\!\!\!\!&&\Phi^{\varepsilon}(|x|_{1/2}^2)+2\alpha\mathcal{M}^{1,\varepsilon}(t)+2\alpha\mathcal{M}^{2,\varepsilon}(t)
\nonumber \\
\!\!\!\!\!\!\!\!&&
+\alpha\int_0^t\Bigg\{\frac{\big(\kappa_1(|u_n(s)|_{1}^{4}+1)|u_n(s)|_{1/2}^2+\|g(s,u_n(s))\|_{\mathcal{L}_2(l^2;\mathbb{H}^{1/2})}^2\big)(\varepsilon+|u_n(s)|_{1/2}^2)}{(\varepsilon+|u_n(s)|_{1/2}^2)^{2-\alpha}}
\nonumber \\
\!\!\!\!\!\!\!\!&&+\frac{-2(1-\alpha)\|(g(s,u_n(s))\cdot,u_n(s))_{1/2}\|_{\mathcal{L}_2(l^2;\mathbb{R})}^2}{(\varepsilon+|u_n(s)|_{1/2}^2)^{2-\alpha}}\Bigg\}\mathbf{1}_{\{|u_n(s)|_{1/2}>0\}}ds.
\end{eqnarray}

For any $R>0$, we define some stopping times
\begin{equation*}
\tau^1_{n,R}:=\inf\Big\{t\geq 0:|u_n(t)|_{1/2}\geq R\Big\},
\end{equation*}
\begin{equation*}
\tau^2_{n,R}:=\inf\Bigg\{t\geq 0:\int_0^t\frac{| u_n(s)|_{3/2}^2}{(1+|u_n(s)|_{1/2}^2)^{1-\alpha}}ds\geq R\Bigg\},
\end{equation*}
and
\begin{equation}\label{stop1}
\tau_{n,R}:=\tau^1_{n,R}\wedge \tau^2_{n,R}.
\end{equation}
Then, it follows from the condition (\ref{cong1}) and the estimate (\ref{sig1}) that $\mathcal{M}^{1,\varepsilon}(t)$ and  $\mathcal{M}^{2,\varepsilon}(t)$ are martingales up to $\tau_{n,R}$.

Considering the stopping time $\tau_{n,R}$ and taking expectation on both sides of (\ref{es188}) gives
\begin{eqnarray}\label{es189}
\!\!\!\!\!\!\!\!&&\mathbf{E}\Phi^{\varepsilon}(|u_n(t\wedge\tau_{n,R})|_{1/2}^2)+\frac{\alpha}{2}\mathbf{E}\int_0^{t\wedge\tau_{n,R}}\frac{| u_n(s)|_{3/2}^2}{(1+|u_n(s)|_{1/2}^2)^{1-\alpha}}ds
\nonumber \\
\leq\!\!\!\!\!\!\!\!&&\Phi^{\varepsilon}(|x|_{1/2}^2)
+\alpha\mathbf{E}\int_0^{t\wedge\tau_{n,R}}\mathbf{1}_{\{|u_n(s)|_{1/2}>0\}}\Bigg\{\frac{-2(1-\alpha)\|(g(s,u_n(s))\cdot,u_n(s))_{1/2}\|_{\mathcal{L}_2(l^2;\mathbb{R})}^2}{(\varepsilon+|u_n(s)|_{1/2}^2)^{2-\alpha}}
\nonumber \\
\!\!\!\!\!\!\!\!&&+\frac{\big(\kappa_1(|u_n(s)|_{1}^{4}+1)|u_n(s)|_{1/2}^2+\|g(s,u_n(s))\|_{\mathcal{L}_2(l^2;\mathbb{H}^{1/2})}^2\big)(\varepsilon+|u_n(s)|_{1/2}^2)}{(\varepsilon+|u_n(s)|_{1/2}^2)^{2-\alpha}}\Bigg\}ds.
\end{eqnarray}
Define the function  $\Phi(x):=x^{\alpha}$. Now letting $\varepsilon\to 0$ on both sides of (\ref{es189}) and taking $\alpha=1-\frac{\gamma}{2}$, we derive
\begin{eqnarray}\label{es10}
\!\!\!\!\!\!\!\!&&\mathbf{E}\Phi(|u_n(t\wedge\tau_{n,R})|_{1/2}^2)+\frac{\alpha}{2}\mathbf{E}\int_0^{t\wedge\tau_{n,R}}\frac{| u_n(s)|_{3/2}^2}{(1+|u_n(s)|_{1/2}^2)^{1-\alpha}}ds
\nonumber \\
\leq\!\!\!\!\!\!\!\!&&\Phi(|x|_{1/2}^2)
+\alpha\mathbf{E}\int_0^{t\wedge\tau_{n,R}}\mathbf{1}_{\{|u_n(s)|_{1/2}>0\}}\Bigg\{\frac{-2(1-\alpha)\|(g(s,u_n(s))\cdot,u_n(s))_{1/2}\|_{\mathcal{L}_2(l^2;\mathbb{R})}^2}{|u_n(s)|_{1/2}^{2(2-\alpha)}}
\nonumber \\
\!\!\!\!\!\!\!\!&&+\frac{\big(\kappa_1(|u_n(s)|_{1}^{4}+1)|u_n(s)|_{1/2}^2+\|g(s,u_n(s))\|_{\mathcal{L}_2(l^2;\mathbb{H}^{1/2})}^2\big)|u_n(s)|_{1/2}^2}{|u_n(s)|_{1/2}^{2(2-\alpha)}}\Bigg\}ds
\nonumber \\
\leq\!\!\!\!\!\!\!\!&&\Phi(|x|_{1/2}^2)
+\alpha C\mathbf{E}\int_0^{t\wedge\tau_{n,R}}\mathbf{1}_{\{|u_n(s)|_{1/2}>0\}}\frac{|u_n(s)|_{1/2}^{2+\gamma}}{|u_n(s)|_{1/2}^{2(2-\alpha)}}ds
\nonumber \\
\leq\!\!\!\!\!\!\!\!&&\Phi(|x|_{1/2}^2)
+\alpha CT,
\end{eqnarray}
where we used the condition $(\mathbf{H}_{g}^2)$ in the first inequality.

Letting $R\to\infty$  in (\ref{es10}), by Fatou's lemma we have
\begin{equation}\label{es20}
\sup_{t\in[0,T]}\mathbf{E}\Phi(|u_n(t)|_{1/2}^2)+\frac{\alpha}{2}\mathbf{E}\int_0^{T}\frac{|u_n(t)|_{3/2}^2}{(1+|u_n(t)|_{1/2}^2)^{1-\alpha}}dt
\leq \Phi(|x|_{1/2}^2)
+\alpha CT.
\end{equation}
On the other hand,
we observe that
\begin{eqnarray}\label{es13}
\mathbf{E}\int_0^{T}|u_n(t)|_{3/2}^{2-\gamma}dt\!\!\!\!\!\!\!\!&&=\mathbf{E}\int_0^{T}\frac{|u_n(t)|_{3/2}^{2-\gamma}(1+|u_n(t)|_{1/2}^2)^{\gamma/2}}{(1+|u_n(t)|_{1/2}^2)^{\gamma/2}}dt
\nonumber \\
\!\!\!\!\!\!\!\!&& \lesssim\mathbf{E}\int_0^{T}\frac{1+|u_n(t)|_{3/2}^2}{(1+|u_n(t)|_{1/2}^2)^{\gamma/2}}dt
\nonumber \\
\!\!\!\!\!\!\!\!&&\lesssim
 \Phi(|x|_{1/2}^2)
+T.
\end{eqnarray}
Combining (\ref{es20}) and (\ref{es13}), we complete the proof of Lemma \ref{lem3.0}.
\end{proof}

\vspace{1mm}
Let $\varepsilon>0$. In the following, we provide a bootstrap type estimate by the  kinematic viscosity, which plays an essential role in proving the tightness and the convergence of the approximating sequence in $\mathcal{X}_2$.

\begin{lemma}\label{lem5}
$($Bootstrap type estimates$)$   For any $x\in\mathbb{H}^{1/2}$, there is a constant $C_{T,x,\varepsilon}>0$,
$$\sup_{t\in[\varepsilon,T]}\mathbf{E}\log(1+|u_n(t)|_{1}^2)+\mathbf{E}\int_{\varepsilon}^{T}\frac{| u_n(s)|_{2}^2}{1+|u_n(s)|_{1}^2}ds\leq C_{T,x,\varepsilon}.$$

%For any $\eta>0$, there exists  $\mathcal{K}_{\eta}>0$ such that for any $p\geq 2$,
%\begin{equation}\label{apri4}
%\sup_{n\in\mathbb{N}}\mathbf{P}\Bigg(\sup_{t\in[\varepsilon,T]}|u_{n}(t)|_{1}^p\geq \mathcal{K}_{\eta}\Bigg)
%\leq \eta.
%\end{equation}
\end{lemma}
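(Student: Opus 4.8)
The plan is to run an Itô argument on the logarithmic Lyapunov functional $\log(1+|u_n(t)|_1^2)$, which is tailored so that the dangerous sixth-order term in $|u_n|_1$ is absorbed by the quadratic-variation of the nonlocal forcing, and then to bootstrap off the singular point $t=0$ using the $\mathbb{H}^{1/2}$-estimate of Lemma \ref{lem3.0}. Since $x\in\mathbb{H}^{1/2}$ need not belong to $\mathbb{H}^1$, the whole difficulty is to avoid integrating from $t=0$, which is exactly why the statement is restricted to $[\varepsilon,T]$ and why only a logarithmic (rather than polynomial) moment survives.

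First I would apply Itô's formula to $X_t:=|u_n(t)|_1^2=(u_n,\Lambda^2u_n)$. As $\{e_k\}$ diagonalizes $\mathcal{A}$, the projection $\Pi_n$ is self-adjoint in every $\mathbb{H}^s$ and commutes with $\Lambda^s$, so the viscous term yields $-2|u_n|_2^2$ and the convective term is $-2(B(u_n),\Lambda^2u_n)$. Using $(\ref{es29})$ together with the cubic symmetry $(B(-u),\Lambda^2(-u))=-(B(u),\Lambda^2u)$ — so that $(\ref{es29})$ in fact controls $|(B(u),\Lambda^2u)|$ — I get $-2(B(u_n),\Lambda^2u_n)\le|u_n|_2^2+2\delta_2|u_n|_1^6$. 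Combining with the transport-noise Itô correction, bounded by $(\ref{sig2})$ as $\tfrac14|u_n|_2^2+C|u_n|_1^2$, the drift of $X_t$ is at most $-\tfrac34|u_n|_2^2+2\delta_2|u_n|_1^6+C|u_n|_1^2+\|g(t,u_n)\|_{\mathcal{L}_2(l^2;\mathbb{H}^1)}^2$, while the martingale part of $X_t$ has quadratic variation $4\big(\|(\sigma(u_n)\cdot,u_n)_1\|_{\mathcal{L}_2(l^2;\mathbb{R})}^2+\|(g(t,u_n)\cdot,u_n)_1\|_{\mathcal{L}_2(l^2;\mathbb{R})}^2\big)\,dt$.

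The crux is then the transformation $\log(1+X_t)$. Its second-order term produces precisely $-\tfrac{2\|(g(t,u_n)\cdot,u_n)_1\|_{\mathcal{L}_2(l^2;\mathbb{R})}^2}{(1+X_t)^2}\,dt$ (after discarding the nonpositive $\sigma$-contribution). Dividing the drift by $1+X_t$ and invoking $(\mathbf{H}_{g}^3)$ with $\kappa_2\ge2\delta_2$, which dominates the sixth-order contribution through $2\delta_2|u_n|_1^6+\|g(t,u_n)\|_{\mathcal{L}_2(l^2;\mathbb{H}^1)}^2\le C(1+|u_n|_1^2)+\tfrac{2\|(g(t,u_n)\cdot,u_n)_1\|_{\mathcal{L}_2(l^2;\mathbb{R})}^2}{1+|u_n|_1^2}$, the positive term $\tfrac{2\|(g(t,u_n)\cdot,u_n)_1\|^2}{(1+X_t)^2}$ cancels \emph{exactly} against the logarithmic correction. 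What remains is
$$d\log(1+|u_n|_1^2)+\frac34\,\frac{|u_n|_2^2}{1+|u_n|_1^2}\,dt\le C_0\,dt+d\bar M_t,$$
with $C_0$ a constant and $\bar M_t$ a local martingale. This cancellation, encoding the ``second order'' regularization by the nonlocal noise, is the main obstacle: the Lyapunov function and the constants $\kappa_2,\delta_2$ must be matched so that the sixth powers disappear.

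Finally I would integrate this inequality from a time $s\in(0,\varepsilon]$ to $t\in[\varepsilon,T]$, localize by $\tau_{n,R}:=\inf\{t:|u_n(t)|_1\ge R\}$ to annihilate $\bar M$ in expectation and send $R\to\infty$ by Fatou, obtaining $\mathbf{E}\log(1+|u_n(t)|_1^2)+\tfrac34\mathbf{E}\int_s^t\frac{|u_n|_2^2}{1+|u_n|_1^2}\,dr\le\mathbf{E}\log(1+|u_n(s)|_1^2)+C_0T$. Averaging in $s$ over $[0,\varepsilon]$, and using $|u_n|_1\le|u_n|_{3/2}$ with the elementary bound $\log(1+z^2)\le C|z|^{2-\gamma}$, reduces the starting term to $\tfrac1\varepsilon\mathbf{E}\int_0^\varepsilon|u_n(s)|_{3/2}^{2-\gamma}\,ds$, which is finite uniformly in $n$ by Lemma \ref{lem3.0}. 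Taking the supremum over $t\in[\varepsilon,T]$ yields the first term, and choosing $t=T$ together with $\int_s^T\ge\int_\varepsilon^T$ yields the dissipation integral, both with the same constant $C_{T,x,\varepsilon}$; the averaging off $t=0$ is then routine.
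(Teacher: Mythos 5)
Your proposal is correct and is essentially the paper's own argument: It\^{o}'s formula for the logarithmic Lyapunov functional $\log(1+|u_n|_1^2)$, the bounds (\ref{es29}) and (\ref{sig2}), the exact cancellation of the $2\delta_2|u_n|_1^6+\|g\|_{\mathcal{L}_2(l^2;\mathbb{H}^1)}^2$ contribution against the quadratic-variation correction through $(\mathbf{H}_{g}^3)$ with $\kappa_2\geq 2\delta_2$, then localization, expectation, Fatou, and a bootstrap off Lemma \ref{lem3.0} to handle the fact that $x\notin\mathbb{H}^1$. The only deviation is cosmetic: where you integrate from every $s\in(0,\varepsilon]$ and average in $s$, the paper pigeonholes Lemma \ref{lem3.0} to select a single time $\theta_{\varepsilon,n}\in[\varepsilon/2,\varepsilon]$ with $\mathbf{E}(1+|u_n(\theta_{\varepsilon,n})|_1^2)^{1-\gamma/2}\lesssim_{T,x}\varepsilon^{-1}$ and starts the It\^{o} estimate there; in your version the localizing stopping time should be defined from time $s$ onward (as the paper's is from $\theta_{\varepsilon,n}$), so that the stopped boundary term is exactly $\mathbf{E}\log(1+|u_n(s)|_1^2)$ rather than carrying a $\log(1+R^2)\,\mathbf{P}(\tau_{n,R}<s)$ remainder that would need a separate tail estimate to dismiss.
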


\begin{proof}
Taking $\varepsilon>0$ and by Sobolev embedding theorem (i.e.~Lemma \ref{lemso}),  it follows from Lemma \ref{lem3.0} that
\begin{equation}\label{es14}
\mathbf{E}\int_{\varepsilon/2}^{\varepsilon}|u_n(t)|_{1}^{2-\gamma}dt\lesssim_T
 1+\Phi(|x|_{1/2}^2).
\end{equation}
Therefore, we can find $\theta_{\varepsilon,n}\in[\frac{\varepsilon}{2},\varepsilon]$ such that
\begin{equation*}
\frac{\varepsilon}{2}\mathbf{E}|u_n(\theta_{\varepsilon,n})|_{1}^{2-\gamma}\lesssim_T
 1+\Phi(|x|_{1/2}^2),
\end{equation*}
which leads to
\begin{equation}\label{es15}
\mathbf{E}\big(1+|u_n(\theta_{\varepsilon,n})|_{1}^{2}\big)^{\alpha}\leq \frac{C_{T,x}}{\varepsilon},
\end{equation}
where we recall $\alpha=1-\frac{\gamma}{2}$.

On the other hand,  applying It\^{o}'s formula to $|\cdot|_{1}^2$ we have
\begin{eqnarray}\label{es16}
|u_n(t)|_{1}^2
=\!\!\!\!\!\!\!\!&&|u_n(\theta_{\varepsilon,n})|_{1}^2+2\mathcal{M}_{\varepsilon}^1(t)+2\mathcal{M}_{\varepsilon}^2(t)
\nonumber \\
\!\!\!\!\!\!\!\!&&
+2\int_{\theta_{\varepsilon,n}}^t(-\mathcal{A}u_n(s),\Lambda^2 u_n(s))ds
\nonumber \\
\!\!\!\!\!\!\!\!&&
-2\int_{\theta_{\varepsilon,n}}^t(B(u_n(s)),\Lambda^2 u_n(s))ds
\nonumber \\
\!\!\!\!\!\!\!\!&&+\int_{\theta_{\varepsilon,n}}^t\|\Pi_n\sigma(u_n(s))\|_{\mathcal{L}_2(l^2;\mathbb{H}^{1})}^2ds
\nonumber \\
\!\!\!\!\!\!\!\!&&
+\int_{\theta_{\varepsilon,n}}^t\|\Pi_n g(s,u_n(s))\|_{\mathcal{L}_2(l^2;\mathbb{H}^{1})}^2ds,
\end{eqnarray}
where $\mathcal{M}_{\varepsilon}^1(t)$ and $\mathcal{M}_{\varepsilon}^2(t)$ are  continuous local martingales given by
\begin{equation*}
\mathcal{M}_{\varepsilon}^1(t):=\int_{\theta_{\varepsilon,n}}^t(\sigma(u_n(s))d\mathcal{W}(s),u_n(s))_{1}
\end{equation*}
and
\begin{equation*}
\mathcal{M}_{\varepsilon}^2(t):=\int_{\theta_{\varepsilon,n}}^t(g(s,u_n(s))d\hat{\mathcal{W}}(s),u_n(s))_{1}.
\end{equation*}
Then, applying It\^{o}'s formula to the  auxiliary function $\Psi(x):=\log(1+x)$,  it follows that
\begin{eqnarray}\label{es17}
\!\!\!\!\!\!\!\!&&\Psi(|u_n(t)|_{1}^2)
\nonumber \\
=\!\!\!\!\!\!\!\!&&\Psi(|u_n(\theta_{\varepsilon,n})|_{1}^2)+2\tilde{\mathcal{M}}_{\varepsilon}^1(t)+2\tilde{\mathcal{M}}_{\varepsilon}^2(t)
\nonumber \\
\!\!\!\!\!\!\!\!&&
+\int_{\theta_{\varepsilon,n}}^t\frac{-2| u_n(s)|_{2}^2-2(B(u_n(s)),\Lambda^2  u_n(s))}{1+|u_n(s)|_{1}^2}ds
\nonumber \\
\!\!\!\!\!\!\!\!&&
+\int_{\theta_{\varepsilon,n}}^t\frac{\|\Pi_n\sigma(u_n(s))\|_{\mathcal{L}_2(l^2;\mathbb{H}^{1})}^2}{1+|u_n(s)|_{1}^2}ds
\nonumber \\
\!\!\!\!\!\!\!\!&&
+\int_{\theta_{\varepsilon,n}}^t\frac{\|\Pi_ng(s,u_n(s))\|_{\mathcal{L}_2(l^2;\mathbb{H}^{1})}^2}{1+|u_n(s)|_{1}^2}ds
\nonumber \\
\!\!\!\!\!\!\!\!&&
-\int_{\theta_{\varepsilon,n}}^t\frac{2\|(\sigma(u_n(s))\cdot,u_n(s))_{1}\|_{\mathcal{L}_2(l^2;\mathbb{R})}^2}{(1+|u_n(s)|_{1}^2)^2}ds
\nonumber \\
\!\!\!\!\!\!\!\!&&
-\int_{\theta_{\varepsilon,n}}^t\frac{2\|(g(s,u_n(s))\cdot,u_n(s))_{1}\|_{\mathcal{L}_2(l^2;\mathbb{R})}^2}{(1+|u_n(s)|_{1}^2)^2}ds
\nonumber \\
=:\!\!\!\!\!\!\!\!&&\Psi(|u_n(\theta_{\varepsilon,n})|_{1}^2)+2\tilde{\mathcal{M}}_{\varepsilon}^1(t)+2\tilde{\mathcal{M}}_{\varepsilon}^2(t)+ \text{I}+\text{II}+\text{III}+\text{IV}+\text{V},
\end{eqnarray}
where $\tilde{\mathcal{M}}_{\varepsilon}^1(t)$ and $\tilde{\mathcal{M}}_{\varepsilon}^2(t)$ are  continuous local martingales given by
$$\tilde{\mathcal{M}}_{\varepsilon}^1(t):=\int_{\theta_{\varepsilon,n}}^t(\sigma(u_n(s))d\mathcal{W}(s),u_n(s))_{1}$$
and
$$\tilde{\mathcal{M}}_{\varepsilon}^2(t):=\int_{\theta_{\varepsilon,n}}^t(g(s,u_n(s))d\hat{\mathcal{W}}(s),u_n(s))_{1}.$$

It follows from (\ref{es29}) we have
\begin{eqnarray}\label{es19}
\text{I}\leq\!\!\!\!\!\!\!\!&&\int_{\theta_{\varepsilon,n}}^t\frac{-| u_n(s)|_{2}^2+2\delta_2|u_n(s)|_{1}^6}{1+|u_n(s)|_{1}^2}ds.
\end{eqnarray}
 Moreover, by the estimate (\ref{sig2}) we infer that
\begin{equation}\label{es30}
\text{II}\leq\frac{1}{2}\int_{\theta_{\varepsilon,n}}^t\frac{| u_n(s)|_{2}^2}{1+|u_n(s)|_{1}^2}ds+CT.
\end{equation}

Combining (\ref{es17}), (\ref{es19}) and (\ref{es30}), it follows that $(\mathbf{H}_{g}^3)$ that
\begin{eqnarray}\label{es31}
\!\!\!\!\!\!\!\!&&\Psi(|u_n(t)|_{1}^2)+\frac{1}{2}\int_{\theta_{\varepsilon,n}}^t\frac{| u_n(s)|_{2}^2}{1+|u_n(s)|_{1}^2}ds
\nonumber \\
\leq\!\!\!\!\!\!\!\!&&\Psi(|u_n(\theta_{\varepsilon,n})|_{1}^2)+CT+2\tilde{\mathcal{M}}_{\varepsilon}^1(t)+2\tilde{\mathcal{M}}_{\varepsilon}^2(t)
\nonumber \\
\!\!\!\!\!\!\!\!&&
+\int_{\theta_{\varepsilon,n}}^t\Bigg\{\frac{\big(\delta_2|u_n(s)|_{1}^6+\|g(s,u_n(s))\|_{\mathcal{L}_2(l^2;\mathbb{H}^{1})}^2\big)(1+|u_n(s)|_{1}^2)}{(1+|u_n(s)|_{1}^2)^2}
\nonumber \\
\!\!\!\!\!\!\!\!&&+\frac{-2\|(g(s,u_n(s))\cdot,u_n(s))_{1}\|_{\mathcal{L}_2(l^2;\mathbb{R})}^2}{(1+|u_n(s)|_{1}^2)^2}\Bigg\}ds
\nonumber \\
\leq\!\!\!\!\!\!\!\!&&\Psi(|u_n(\theta_{\varepsilon,n})|_{1}^2)+CT+2\tilde{\mathcal{M}}_{\varepsilon}^1(t)+2\tilde{\mathcal{M}}_{\varepsilon}^2(t).
\end{eqnarray}

For any $R>0$, we define some stopping times
\begin{equation*}
\tau^1_{\varepsilon,n,R}:=\inf\Big\{t\geq \theta_{\varepsilon,n}:|u_n(t)|_{1}\geq R\Big\},
\end{equation*}
\begin{equation*}
\tau^2_{\varepsilon,n,R}:=\inf\Bigg\{t\geq \theta_{\varepsilon,n}:\int_{\theta_{\varepsilon,n}}^t\frac{| u_n(s)|_{2}^2}{1+|u_n(s)|_{1/2}^2}ds\geq R\Bigg\},
\end{equation*}
and
\begin{equation}\label{stop2}
\tau_{\varepsilon,n,R}:=\tau^1_{\varepsilon,n,R}\wedge \tau^2_{\varepsilon,n,R}.
\end{equation}
Then, it follows from the condition (\ref{cong4}) and the estimate (\ref{sig2}) that $\tilde{\mathcal{M}}_{\varepsilon}^1(t)$ and  $\tilde{\mathcal{M}}_{\varepsilon}^2(t)$ are martingales up to $\tau_{\varepsilon,n,R}$.  From (\ref{es15}) and (\ref{es31}) we can get
\begin{eqnarray}\label{es444}
\!\!\!\!\!\!\!\!&&\mathbf{E}\Psi(|u_n(t\wedge\tau_{\varepsilon,n,R} )|_{1}^2)+\frac{1}{2}\mathbf{E}\int_{\theta_{\varepsilon,n}}^{t\wedge\tau_{\varepsilon,n,R}}\frac{| u_n(s)|_{2}^2}{1+|u_n(s)|_{1}^2}ds
\nonumber \\
\!\!\!\!\!\!\!\!&&\leq\mathbf{E}\Psi(|u_n(\theta_{\varepsilon,n})|_{1}^2)+CT
\leq C_{T,x,\varepsilon}.
\end{eqnarray}
By letting $R\to\infty$  and using Fatou's lemma.
$$\sup_{t\in[\varepsilon,T]}\mathbf{E}\Psi(|u_n(t)|_{1}^2)+\frac{1}{2}\mathbf{E}\int_{\varepsilon}^{T}\frac{| u_n(s)|_{2}^2}{1+|u_n(s)|_{1}^2}ds\leq C_{T,x,\varepsilon}.$$
 We complete the proof.
\end{proof}

Building upon Lemmas \ref{lem3.0} and \ref{lem5}, we also have the following estimates.
\begin{lemma}\label{lem9}
 For any $\eta>0$, there exists  $\mathcal{K}_{\eta}>0$ such that for any $p\geq 2$,
\begin{equation}\label{apri}
\sup_{n\in\mathbb{N}}\mathbf{P}\Bigg(\sup_{t\in[0,T]}|u_{n}(t)|_{1/2}^p+\int_0^T|u_{n}(t)|_{3/2}^{2}dt\geq \mathcal{K}_{\eta}\Bigg)
\leq \eta
\end{equation}
and
\begin{equation}\label{apri4}
\sup_{n\in\mathbb{N}}\mathbf{P}\Bigg(\sup_{t\in[\varepsilon,T]}|u_{n}(t)|_{1}^p\geq \mathcal{K}_{\eta}\Bigg)
\leq \eta,
\end{equation}
where $\varepsilon$ is the same as in Lemma \ref{lem5}.
\end{lemma}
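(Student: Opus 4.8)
The plan is to turn the pointwise-in-time energy bounds of Lemmas \ref{lem3.0} and \ref{lem5} into tail estimates that are uniform in $n$. The natural temptation is to take the supremum in time inside the It\^o computation and apply a Burkholder--Davis--Gundy inequality to the running maximum; this is awkward here, because after the $\Phi$-transform the quadratic variations of $\mathcal{M}^{1}$ and $\mathcal{M}^{2}$ carry weights that are singular as $|u_n|_{1/2}\to 0$. I would instead control the supremum through a two-parameter stopping-time / optional-sampling argument, in which the martingale contributions simply drop out and one only needs the value the process attains at the instant it crosses a threshold.

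First I treat the $\sup_t|u_n|_{1/2}$ part of (\ref{apri}). With $\alpha=1-\gamma/2$ and $\Phi(x)=x^{\alpha}$ as in Lemma \ref{lem3.0}, for $R,R'>0$ introduce
\[
\tau^1_{n,R}:=\inf\{t\ge 0:|u_n(t)|_{1/2}\ge R\},\qquad \tau^2_{n,R'}:=\inf\Big\{t\ge 0:\int_0^t\tfrac{|u_n(s)|_{3/2}^2}{(1+|u_n(s)|_{1/2}^2)^{1-\alpha}}ds\ge R'\Big\},
\]
and set $\varsigma:=\tau^1_{n,R}\wedge\tau^2_{n,R'}\wedge T$. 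Up to $\varsigma$ both $|u_n|_{1/2}$ and the weighted dissipation integral are bounded, so the stochastic integrals are genuine martingales, and the computation leading to (\ref{es10}) (now with the mixed stopping time $\tau^1_{n,R}\wedge\tau^2_{n,R'}$) gives $\mathbf{E}\,\Phi(|u_n(\varsigma)|_{1/2}^2)\le \Phi(|x|_{1/2}^2)+\alpha CT=:C_1$, uniformly in $n$ since $|x_n|_{1/2}\le|x|_{1/2}$. As $u_n$ has continuous paths in the finite-dimensional space $\mathbb{H}_n$, on $\{\tau^1_{n,R}\le T,\ \tau^1_{n,R}\le\tau^2_{n,R'}\}$ we have $|u_n(\varsigma)|_{1/2}=R$, whence $R^{2-\gamma}\,\mathbf{P}(\tau^1_{n,R}\le T,\ \tau^1_{n,R}\le\tau^2_{n,R'})\le C_1$. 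Together with the Markov bound $\mathbf{P}(\tau^2_{n,R'}\le T)\le C_2/R'$ supplied by (\ref{es20}), this yields
\[
\sup_{n\in\mathbb N}\mathbf{P}\big(\sup_{t\in[0,T]}|u_n(t)|_{1/2}\ge R\big)\le \frac{C_1}{R^{2-\gamma}}+\frac{C_2}{R'},
\]
which is made arbitrarily small by choosing $R'$ large and then $R$ large.

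For the dissipation integral in (\ref{apri}) I use that on $\{\sup_{t\le T}|u_n|_{1/2}\le R\}$ one has $(1+|u_n|_{1/2}^2)^{1-\alpha}\le(1+R^2)^{1-\alpha}$, so $\int_0^T|u_n|_{3/2}^2dt\le(1+R^2)^{1-\alpha}\int_0^T\tfrac{|u_n|_{3/2}^2}{(1+|u_n|_{1/2}^2)^{1-\alpha}}dt$; splitting over this event and its complement and combining the previous step with Markov's inequality and (\ref{es20}) gives a uniform tail bound for $\int_0^T|u_n|_{3/2}^2dt$. A union bound over the two summands, followed by replacing $R$ by $\mathcal{K}_\eta^{1/p}$ (so that $\{\sup_t|u_n|_{1/2}^p\ge\mathcal K_\eta\}=\{\sup_t|u_n|_{1/2}\ge\mathcal K_\eta^{1/p}\}$), proves (\ref{apri}) for each fixed $p\ge2$. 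Estimate (\ref{apri4}) follows by the identical scheme applied to Lemma \ref{lem5}, with $\Phi$ replaced by the concave $\Psi(x)=\log(1+x)$ and the origin shifted to $\theta_{\varepsilon,n}\in[\varepsilon/2,\varepsilon]$: using the stopping times $\tau^1_{\varepsilon,n,R}=\inf\{t\ge\theta_{\varepsilon,n}:|u_n(t)|_1\ge R\}$ and the analogous $\tau^2_{\varepsilon,n,R'}$ built from $\int_{\theta_{\varepsilon,n}}^{\cdot}\tfrac{|u_n|_2^2}{1+|u_n|_1^2}$, the stopped form of (\ref{es444}) gives $\log(1+R^2)\,\mathbf{P}(\tau^1_{\varepsilon,n,R}\le T,\ \tau^1_{\varepsilon,n,R}\le\tau^2_{\varepsilon,n,R'})\le C_{T,x,\varepsilon}$ and $\mathbf{P}(\tau^2_{\varepsilon,n,R'}\le T)\le C_{T,x,\varepsilon}/R'$, and choosing $R',R$ large (with $R=\mathcal K_\eta^{1/p}$) yields the claim.

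The main obstacle is precisely the first step. Because only the fractional moment of order $2-\gamma<1$ (respectively the logarithmic moment) is available, neither a direct Markov estimate on $\mathbf{E}\sup_t|u_n|^p$ nor a routine BDG control of the running maximum is admissible. The point that rescues the argument is to measure the excursions of $|u_n|_{1/2}$ (respectively $|u_n|_1$) through the two-parameter stopping times and optional sampling, so that the martingale terms vanish in expectation and one only has to balance the single value $\Phi(R^2)=R^{2-\gamma}$ (respectively $\log(1+R^2)$) attained at the crossing time against the uniform expectation bound $C_1$ (respectively $C_{T,x,\varepsilon}$).
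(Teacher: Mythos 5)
Your proof is correct and follows essentially the same route as the paper: the paper likewise avoids any BDG bound on the running maximum by stopping at the first crossing of level $R$ (its $\tau_{n,R}$ from (\ref{stop1}), resp.\ $\tau_{\varepsilon,n,R}$ from (\ref{stop2})), applying Markov's inequality with the concave function $\Phi$ (resp.\ $\Psi$) to the stopped-expectation bounds (\ref{es10}) and (\ref{es444}), and then controlling the unweighted dissipation integral exactly as you do, via $(1+|u_n|_{1/2}^2)^{1-\alpha}\le (1+R^2)^{1-\alpha}$ on the event $\{\tau_{n,R}\ge T\}$. Your use of two separate thresholds $R,R'$ together with path continuity at the crossing time is only a cosmetic variant of the paper's single-threshold argument based on the event identity (\ref{pro1}).
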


\begin{proof}
\textbf{Proof of (\ref{apri}).} By definition of the stopping time $\tau_{n,R}$ defined in (\ref{stop1}), it is clear that
\begin{equation}\label{pro1}
\mathbf{P}\Bigg(\Big\{|u(\tau_{n,R})|_{1/2}\geq R\Big\}\cup\bigg\{\int_0^{\tau_{n,R}}\frac{|u_n(s)|_{3/2}^2}{(1+|u_n(s)|_{1/2}^2)^{1-\alpha}}ds\geq R\bigg\}\Bigg)=1.
 \end{equation}
Then, it follows from (\ref{es10}) and (\ref{pro1}) that
\begin{eqnarray}\label{es11}
\!\!\!\!\!\!\!\!&&\mathbf{P}\Big(\sup_{t\in[0,T]}|u_n(t)|_{1/2}\geq R\Big)+\mathbf{P}\Bigg(\int_0^{T}\frac{|u_n(t)|_{3/2}^2}{(1+|u_n(t)|_{1/2}^2)^{1-\alpha}}dt\geq R\Bigg)
\nonumber \\
\!\!\!\!\!\!\!\!&&=\mathbf{P}\big(\tau^1_{n,R}\leq T\big)+\mathbf{P}\big(\tau^2_{n,R}\leq T\big)
\nonumber \\
\!\!\!\!\!\!\!\!&&\leq2\mathbf{P}\big(\tau_{n,R}\leq T\big)
\nonumber \\
\!\!\!\!\!\!\!\!&&=2\mathbf{P}\Bigg(\big\{\tau_{n,R}\leq T\big\}\cap\bigg(\Big\{|u(\tau_{n,R})|_{1/2}\geq R\Big\}\cup\bigg\{\int_0^{\tau_{n,R}}\frac{|u_n(t)|_{3/2}^2}{(1+|u_n(t)|_{1/2}^2)^{1-\alpha}}dt\geq R\bigg\}\bigg)\Bigg)
\nonumber \\
\!\!\!\!\!\!\!\!&&\lesssim\mathbf{P}\Big(|u(T\wedge\tau_{n,R})|_{1/2}\geq R\Big)+\mathbf{P}\Bigg(\int_0^{T\wedge\tau_{n,R}}\frac{|u_n(t)|_{3/2}^2}{(1+|u_n(t)|_{1/2}^2)^{1-\alpha}}dt\geq R\Bigg)
\nonumber \\
\!\!\!\!\!\!\!\!&&\lesssim\mathbf{P}\Big(\Phi(|u(T\wedge\tau_{n,R})|_{1/2}^2)\geq \Phi(R^2)\Big)+\mathbf{P}\Bigg(\int_0^{T\wedge\tau_{n,R}}\frac{|u_n(t)|_{3/2}^2}{(1+|u_n(t)|_{1/2}^2)^{1-\alpha}}dt\geq R\Bigg)
\nonumber \\
\!\!\!\!\!\!\!\!&&\lesssim\frac{\mathbf{E}\Phi(|u(T\wedge\tau_{n,R})|_{1/2}^2)}{\Phi(R^2)}+\bigg\{\mathbf{E}\int_0^{T\wedge\tau_{n,R}}\frac{|u_n(t)|_{3/2}^2}{(1+|u_n(t)|_{1/2}^2)^{1-\alpha}}dt\bigg\}\Big/R
\nonumber \\
\!\!\!\!\!\!\!\!&&\lesssim\frac{\Phi(|x|_{1/2}^2)+T}{\Phi(R^2)}+\frac{\Phi(|x|_{1/2}^2)+T}{R}.
\end{eqnarray}

Meanwhile, using (\ref{es11}), for any $M>0$ we also get
\begin{eqnarray}\label{es12}
\!\!\!\!\!\!\!\!&&\mathbf{P}\Bigg(\int_0^T|u_n(t)|_{3/2}^2dt\geq M\Bigg)
\nonumber \\
\leq\!\!\!\!\!\!\!\!&&\mathbf{P}\Bigg(\int_0^T|u_n(t)|_{3/2}^2dt\geq M,\tau_{n,R}\geq T\Bigg)
+\mathbf{P}(\tau_{n,R}< T)
\nonumber \\
\leq\!\!\!\!\!\!\!\!&&\mathbf{P}\Bigg(\int_0^{T\wedge \tau_{n,R}}\frac{| u_n(t)|_{3/2}^2}{(1+|u_n(t)|_{1/2}^2)^{1-\alpha}}\cdot(1+|u_n(t)|_{1/2}^2)^{1-\alpha}dt\geq M\Bigg)
+\mathbf{P}(\tau_{n,R}< T)
\nonumber \\
\leq\!\!\!\!\!\!\!\!&&\mathbf{P}\Bigg(\int_0^{T}\frac{| u_n(t)|_{3/2}^2}{(1+|u_n(t)|_{1/2}^2)^{1-\alpha}}dt\geq \frac{M}{C_R}\Bigg)
+\mathbf{P}(\tau_{n,R}< T)
\nonumber \\
\leq\!\!\!\!\!\!\!\!&&\frac{C_{R,T}\big(1+\Phi(|x|_{1/2}^2)\big)}{M}+\mathbf{P}(\tau_{n,R}< T).
\end{eqnarray}
Combining (\ref{es11}) and (\ref{es12}), taking $M\uparrow\infty$ then  $R\uparrow\infty$, we conclude that (\ref{apri}) holds.

\vspace{1mm}
\noindent\textbf{Proof of (\ref{apri4}).} It follows from (\ref{es444}) that
\begin{eqnarray*}
\!\!\!\!\!\!\!\!&&\mathbf{P}\Big(\sup_{t\in[\varepsilon,T]}|u_n(t)|_{1}\geq R\Big)
\nonumber \\
\!\!\!\!\!\!\!\!&&\leq\mathbf{P}\Big(\sup_{t\in[\theta_{\varepsilon,n},T]}|u_n(t)|_{1}\geq R\Big)
\nonumber \\
\!\!\!\!\!\!\!\!&&\leq\mathbf{P}(\tau_{\varepsilon,n,R}\leq T)
\nonumber \\
\!\!\!\!\!\!\!\!&&\leq\mathbf{P}\Big(|u(T\wedge\tau_{\varepsilon,n,R})|_{1}\geq R\Big)+\mathbf{P}\Bigg(\int_{\theta_{\varepsilon,n}}^{T\wedge\tau_{\varepsilon,n,R}}\frac{|u_n(t)|_{2}^2}{1+|u_n(t)|_{1}^2}dt\geq R\Bigg)
\nonumber \\
\!\!\!\!\!\!\!\!&&\leq\mathbf{P}\Big(\Phi(|u(T\wedge\tau_{\varepsilon,n,R})|_{1}^2)\geq \Phi(R^2)\Big)+\mathbf{P}\Bigg(\int_{\theta_{\varepsilon,n}}^{T\wedge\tau_{\varepsilon,n,R}}\frac{|u_n(t)|_{2}^2}{1+|u_n(t)|_{1}^2}dt\geq R\Bigg)
\nonumber \\
\!\!\!\!\!\!\!\!&&\leq\frac{C_{T,x,\varepsilon}}{\Phi(R^2)}+\frac{C_{T,x,\varepsilon}}{R}.
\end{eqnarray*}
Taking $R\to\infty$, then (\ref{apri4}) follows. We complete the proof of Lemma \ref{lem9}.
\end{proof}

\subsection{Tightness of approximating sequence}\label{sub2.4}
First, the tightness of laws of $(u_n)_{n\geq 1}$ in $\mathcal{X}_1$ is given as follows.
\begin{lemma}\label{lem2}
The sequence  $(\mathcal{L}_{u_n})_{n\geq 1}$ is tight in $\mathcal{X}_1$.
\end{lemma}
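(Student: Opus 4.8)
The plan is to apply the tightness criterion of Lemma \ref{lemt}: it suffices to verify the three conditions (i)--(iii) stated there for the sequence $(u_n)_{n\geq1}$. Conditions (i) and (ii), namely the uniform (in $n$) control in probability of $\sup_{t\in[0,T]}|u_n(t)|_{1/2}$ and of $\int_0^T|u_n(t)|_{3/2}^2\,dt$, are already contained in the estimate $(\ref{apri})$ of Lemma \ref{lem9}: indeed $(\ref{apri})$ gives, for every $\eta>0$, a constant $\mathcal{K}_\eta$ with $\sup_n\mathbf{P}\big(\sup_{t}|u_n(t)|_{1/2}^2+\int_0^T|u_n|_{3/2}^2\,dt\geq\mathcal{K}_\eta\big)\leq\eta$, which is exactly what (i) and (ii) require. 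Hence the whole difficulty is concentrated in the time-equicontinuity condition (iii).

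For (iii) the first step is to lower the regularity of the increment by interpolation. Using $|v|_{1/2}^2\leq|v|_{-1/2}\,|v|_{3/2}$ and the Cauchy--Schwarz inequality in $t$, I would bound
\begin{equation*}
\int_0^{T-\Delta}|u_n(t+\Delta)-u_n(t)|_{1/2}^2\,dt\leq\Big(\int_0^{T-\Delta}|u_n(t+\Delta)-u_n(t)|_{-1/2}^2\,dt\Big)^{\!1/2}\Big(\int_0^{T-\Delta}|u_n(t+\Delta)-u_n(t)|_{3/2}^2\,dt\Big)^{\!1/2}.
\end{equation*}
The second factor is harmless, since $|u_n(t+\Delta)-u_n(t)|_{3/2}^2\leq2|u_n(t+\Delta)|_{3/2}^2+2|u_n(t)|_{3/2}^2$ yields a bound by $4\int_0^T|u_n|_{3/2}^2\,dt$, a quantity already controlled in probability by $(\ref{apri})$. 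Everything thus reduces to showing that the $\mathbb{H}^{-1/2}$-valued time increments $\int_0^{T-\Delta}|u_n(t+\Delta)-u_n(t)|_{-1/2}^2\,dt$ are small, uniformly in $n$, as $\Delta\to0^+$.

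To estimate the $\mathbb{H}^{-1/2}$-increments I would use the Galerkin equation $(\ref{eqf})$ to write $u_n(t+\Delta)-u_n(t)$ as the sum of a drift integral $-\int_t^{t+\Delta}[\mathcal{A}u_n+\Pi_nB(u_n)]\,ds$ and the two stochastic integrals $\int_t^{t+\Delta}\Pi_n\sigma(u_n)\,d\mathcal{W}$ and $\int_t^{t+\Delta}\Pi_ng(s,u_n)\,d\hat{\mathcal{W}}$. Since Lemma \ref{lem3.0} only furnishes moments of order $2-\gamma<1$ in $\mathbb{H}^{1/2}$, I cannot take expectations of the squared increments directly; instead I localise with the stopping time $\tau^n_R:=\inf\{t\geq0:|u_n(t)|_{1/2}+\int_0^t|u_n(s)|_{3/2}^2\,ds\geq R\}$, for which $\sup_n\mathbf{P}(\tau^n_R<T)\to0$ as $R\to\infty$ by $(\ref{apri})$. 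On the stopped interval all coefficients are controlled: for the drift, Cauchy--Schwarz in time together with $|\mathcal{A}u_n|_{-1/2}=|u_n|_{3/2}$ and the trilinear bound $|B(u_n)|_{-1/2}\lesssim|u_n|_{1/2}|u_n|_{3/2}$ from $(\ref{2.2})$, followed by Fubini, give a contribution of order $\Delta^2$; for the stochastic integrals, the It\^o isometry in $\mathbb{H}^{-1/2}$ combined with the coefficient estimates $(\ref{sig3})$ and $(\ref{cong2})$ and the interpolation $(\ref{es77})$ (i.e.\ $|u_n|_1^2\lesssim|u_n|_{1/2}|u_n|_{3/2}$) produces a contribution of order $\Delta$. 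Both bounds are uniform in $n$ after stopping. A Chebyshev inequality then lets me take first $\Delta\to0^+$ (killing the stopped part for each fixed $R$) and afterwards $R\to\infty$ (killing $\mathbf{P}(\tau^n_R<T)$), which establishes (iii).

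The main obstacle, and the place where the critical-space setting really bites, is precisely the absence of finite second moments in $\mathbb{H}^{1/2}$: it rules out any direct $L^2(\Omega)$ estimate of the increments and forces the combination of (a) routing the increment through the weaker space $\mathbb{H}^{-1/2}$, where the viscous term, the nonlinearity and the two noises are all integrable against the available $\int_0^T|u_n|_{3/2}^2\,dt$ bound after interpolation, and (b) a stopping-time localisation whose failure probability is controlled uniformly in $n$ by $(\ref{apri})$. Once (iii) is in place, Lemma \ref{lemt} delivers, for each $\varepsilon>0$, a compact set $\mathcal{K}_\varepsilon\subset\mathcal{X}_1$ with $\sup_n\mu_n(\mathcal{K}_\varepsilon^c)\leq\varepsilon$, that is, the tightness of $(\mathcal{L}_{u_n})_{n\geq1}$ in $\mathcal{X}_1$.
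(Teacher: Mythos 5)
Your overall architecture is sound and, for condition (iii), genuinely different from the paper's: the paper applies It\^{o}'s formula for $|\cdot|_{1/2}^2$ to the stopped increment process $u_n(\cdot)-u_n(t\wedge\tau_R^n)$, so that the only coefficient bounds it ever needs are the $\mathbb{H}^{1/2}$-level ones, namely (\ref{sig1}) and (\ref{cong1}), both of which have right-hand sides of the form $C_R(1+|u_n|_{3/2}^2)$ on the stopped set and are then integrated out by the Fubini trick of (\ref{e9}); you instead interpolate $|w|_{1/2}^2\leq|w|_{-1/2}|w|_{3/2}$ and estimate the increment in $\mathbb{H}^{-1/2}$ directly from the integral form of (\ref{eqf}), which avoids It\^{o}'s formula altogether. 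Your drift estimate and your $\sigma$-estimate (via (\ref{sig3}) and $|u_n|_1^2\lesssim|u_n|_{1/2}|u_n|_{3/2}$) go through on the stopped set.

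However, there is a genuine gap in your treatment of the $g$-stochastic integral. You invoke (\ref{cong2}), whose right-hand side is $(1+|u_n|_1^{\alpha})(1+|u_n|_{1/2}^{\beta})$ with $\alpha\geq 2$ a fixed but arbitrary constant from $(\mathbf{H}_g^4)$. Your stopping time $\tau_R^n$ only controls $|u_n|_{1/2}\leq R$ and $\int_0^T|u_n|_{3/2}^2\,ds\leq R$; interpolation then gives $|u_n|_1^{\alpha}\lesssim R^{\alpha/2}|u_n|_{3/2}^{\alpha/2}$, and the time integral $\int_0^{T\wedge\tau_R^n}|u_n|_{3/2}^{\alpha/2}\,ds$ is controlled by the available $L^2$-in-time bound only when $\alpha\leq 4$. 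For $\alpha>4$ the claimed ``contribution of order $\Delta$, uniform in $n$ after stopping'' does not follow from the estimates you cite, and the It\^{o} isometry itself is not justified because the integrand need not be square-integrable on the stopped interval. (Note that the paper does use (\ref{cong2}), but only in Lemma \ref{lem1}, where the stopping time $\rho_R^{n,k}$ additionally enforces the pointwise bound $|u_n|_1\leq R$ coming from the bootstrap estimates, so $|u_n|_1^{\alpha}\leq R^{\alpha}$ and no integrability issue arises; that pointwise $\mathbb{H}^1$ control is not available to you on all of $[0,T]$.) The fix is simple and brings you back in line with the paper: since $|v|_{-1/2}\leq|v|_{1/2}$ on mean-zero fields, bound $\|g(s,u_n)\|_{\mathcal{L}_2(l^2;\mathbb{H}^{-1/2})}^2\leq\|g(s,u_n)\|_{\mathcal{L}_2(l^2;\mathbb{H}^{1/2})}^2$ and use (\ref{cong1}) instead, which on the stopped set is $\lesssim_R 1+|u_n|_{3/2}^2$ and yields the order-$\Delta$ bound after Fubini.
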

\begin{proof}
Combining  Lemmas \ref{lemt} and \ref{lem9}, it is sufficient to show that $(u_n)_{n\geq 1}$ satisfies (\ref{tight2}).

Define a stopping time
\begin{equation}\label{stop3}
\tau_R^{n}:=\inf\Bigg\{t\in[0,T]:|u_n(t)|_{1/2}+\int_0^t|u_n(s)|_{3/2}^{2}ds\geq R\Bigg\}\wedge T,
\end{equation}
with the convention $\inf\emptyset=\infty$.

By Lemma \ref{lem9}, it follows that
\begin{equation}\label{es24}
\lim_{R\rightarrow\infty}\sup_{n\in\mathbb{N}}\mathbf{P}\big(\tau_R^{n}<T\big)=0.
\end{equation}
On the other hand,
\begin{eqnarray}\label{sec3es2}
\!\!\!\!\!\!\!\!&&\mathbf{P}\Bigg(\sup_{\delta\in[0,\Delta]}\int_0^{T-\delta}|u_n(t+\delta)-u_n(t)|_{1/2}^2dt\geq\varepsilon\Bigg)
\nonumber \\
\leq \!\!\!\!\!\!\!\!&&\mathbf{P}\Bigg(\sup_{\delta\in[0,\Delta]}\int_0^{T-\delta}|u_n(t+\delta)-u_n(t)|_{1/2}^2dt\geq\varepsilon,\tau_R^{n}\geq T\Bigg)+\mathbf{P}\big(\tau_R^{n}<T\big)
\nonumber \\
\leq \!\!\!\!\!\!\!\!&&\frac{1}{\varepsilon}\mathbf{E}\Bigg[\sup_{\delta\in[0,\Delta]}\int_0^{T-\delta}|u_n((t+\delta)\wedge\tau_R^{n})-u_n(t\wedge\tau_R^{n})|_{1/2}^2dt\Bigg]+\mathbf{P}\big(\tau_R^{n}<T\big).
\end{eqnarray}

Using It\^{o}'s formula to the process $\big\{u_n(r)-u_n(t\wedge\tau_R^{n}),r\in[t\wedge\tau_R^{n},(t+\Delta)\wedge\tau_R^{n}]\big\}$, we obtain
\begin{eqnarray}\label{es38}
\!\!\!\!\!\!\!\!&&|u_n((t+\delta)\wedge\tau_R^{n})-u_n(t\wedge\tau_R^{n})|_{1/2}^2
\nonumber \\
=\!\!\!\!\!\!\!\!&&2\int_{t\wedge\tau_R^{n}}^{(t+\delta)\wedge\tau_R^{n}}(-\mathcal{A}u_n(s),\Lambda (u_n(s)-u_n(t\wedge\tau_R^{n})))ds
\nonumber \\
\!\!\!\!\!\!\!\!&&
-2\int_{t\wedge\tau_R^{n}}^{(t+\delta)\wedge\tau_R^{n}}(B(u_n(s)),\Lambda (u_n(s)-u_n(t\wedge\tau_R^{n})))ds
\nonumber \\
\!\!\!\!\!\!\!\!&&+\int_{t\wedge\tau_R^{n}}^{(t+\delta)\wedge\tau_R^{n}}\|\Pi_n\sigma(u_n(s))\|_{\mathcal{L}_2(l^2;\mathbb{H}^{1/2})}^2ds
\nonumber \\
\!\!\!\!\!\!\!\!&&
+\int_{t\wedge\tau_R^{n}}^{(t+\delta)\wedge\tau_R^{n}}\|\Pi_n g(s,u_n(s))\|_{\mathcal{L}_2(l^2;\mathbb{H}^{1/2})}^2ds
\nonumber \\
\!\!\!\!\!\!\!\!&&+2\int_{t\wedge\tau_R^{n}}^{(t+\delta)\wedge\tau_R^{n}}(\sigma(u_n(s))d\mathcal{W}(s), u_n(s)-u_n(t\wedge\tau_R^{n}))_{1/2}
\nonumber \\
\!\!\!\!\!\!\!\!&&+2\int_{t\wedge\tau_R^{n}}^{(t+\delta)\wedge\tau_R^{n}}( g(s,u_n(s))d\hat{\mathcal{W}}(s),u_n(s)-u_n(t\wedge\tau_R^{n}))_{1/2}
\nonumber \\
=:\!\!\!\!\!\!\!\!&&\text{I}(t)+\text{II}(t)+\text{III}(t)+\text{IV}(t)+\text{V}(t)+\text{VI}(t).
\end{eqnarray}
As for the term $\text{I}(t)$, using H\"{o}lder's inequality we have
\begin{eqnarray}\label{e9}
\!\!\!\!\!\!\!\!&&\int_0^{T-\delta}\text{I}(t)dt
\nonumber \\
\!\!\!\!\!\!\!\!&&\lesssim   \int_0^{T-\delta}\int_{t\wedge\tau_R^{n}}^{(t+\delta)\wedge\tau_R^{n}}\big(|u_n(s)|_{3/2}^2+|u_n(s)-u_n(t\wedge\tau_R^{n})|_{3/2}^2\big)dsdt
\nonumber \\
\!\!\!\!\!\!\!\!&&\lesssim \int_0^{T-\delta}\int_{t}^{t+\delta}\big(\mathbf{1}_{\{s\leq\tau_R^{n} \}}|u_n(s)|_{3/2}^2+|u_n(t\wedge\tau_R^{n})|_{3/2}^2\big)dsdt
\nonumber \\
\!\!\!\!\!\!\!\!&&=\int_0^{\delta}\int_0^s\big(\mathbf{1}_{\{s\leq\tau_R^{n} \}}|u_n(s)|_{3/2}^2\big)dtds+\int_{\delta}^{T-\delta}\int_{s-\delta}^s\big(\mathbf{1}_{\{s\leq\tau_R^{n} \}}|u_n(s)|_{3/2}^2\big)dtds
\nonumber \\
\!\!\!\!\!\!\!\!&&\,\,\,\,\,\,+\int_{T-\delta}^T\int_{s-\delta}^{T-\delta}\big(\mathbf{1}_{\{s\leq\tau_R^{n} \}}|u_n(s)|_{3/2}^2\big)dtds+ \int_0^{T-\delta}\int_{t}^{t+\delta}|u_n(t\wedge\tau_R^{n})|_{3/2}^2dsdt
\nonumber \\
\!\!\!\!\!\!\!\!&&=\int_0^{\delta}\big(s\mathbf{1}_{\{s\leq\tau_R^{n} \}}|u_n(s)|_{3/2}^2\big)ds+\int_{\delta}^{T-\delta}\big(\delta\mathbf{1}_{\{s\leq\tau_R^{n} \}}|u_n(s)|_{3/2}^2\big)ds
\nonumber \\
\!\!\!\!\!\!\!\!&&\,\,\,\,\,\,+\int_{T-\delta}^T(T-s)\big(\mathbf{1}_{\{s\leq\tau_R^{n} \}}|u_n(s)|_{3/2}^2\big)ds+\int_0^{T-\delta}\big(\delta|u_n(t\wedge\tau_R^{n})|_{3/2}^2\big)dt
\nonumber \\
\!\!\!\!\!\!\!\!&&\lesssim_R\delta.
\end{eqnarray}
As for the term $\text{II}(t)$,  by Sobolev embedding inequality and Young's inequality we have
\begin{eqnarray}\label{esq3}
\!\!\!\!\!\!\!\!&&\int_0^{T-\delta}\text{II}(t)dt
\nonumber \\
\!\!\!\!\!\!\!\!&&\lesssim\int_0^{T-\delta}\int_{t\wedge\tau_R^{n}}^{(t+\delta)\wedge\tau_R^{n}}|u_n(s)|_{L^6}|\nabla u_n(s)|_{L^2}|\Lambda(u_n(s)-u_n(t\wedge\tau_R^{n}))|_{L^3}dsdt
\nonumber \\
\!\!\!\!\!\!\!\!&&\lesssim\int_0^{T-\delta}\int_{t\wedge\tau_R^{n}}^{(t+\delta)\wedge\tau_R^{n}}\big(|u_n(s)|_{1/2}^2|u_n(s)|_{3/2}^2+|u_n(s)-u_n(t\wedge\tau_R^{n})|_{3/2}^2\big)dsdt
\nonumber \\
\!\!\!\!\!\!\!\!&&\lesssim_R  \int_0^{T-\delta}\int_{t\wedge\tau_R^{n}}^{(t+\delta)\wedge\tau_R^{n}}\big(|u_n(s)|_{3/2}^2+|u_n(s)-u_n(t\wedge\tau_R^{n})|_{3/2}^2\big)dsdt
\nonumber \\
\!\!\!\!\!\!\!\!&&
\lesssim_R\delta.
\end{eqnarray}
According to (\ref{sig1}) and $(\mathbf{H}_{g}^4)$, it follows that
\begin{eqnarray}\label{es46}
\!\!\!\!\!\!\!\!&&\int_0^{T-\delta}\big(\text{III}(t)+\text{IV}(t)\big)dt
\nonumber \\
\!\!\!\!\!\!\!\!&&\lesssim\int_0^{T-\delta}\int_{t\wedge\tau_R^{n}}^{(t+\delta)\wedge\tau_R^{n}}\big(1+|u_n(s)|_{1/2}^2\big)\big(1+|u_n(s)|_{3/2}^2\big)dsdt
\nonumber \\
\!\!\!\!\!\!\!\!&&
\lesssim_R\delta.
\end{eqnarray}

Regarding the term $\text{V}(t)$, by B-D-G's inequality and H\"{o}lder's inequality we derive
\begin{eqnarray}\label{es81}
\!\!\!\!\!\!\!\!&&\mathbf{E}\Bigg[\sup_{\delta\in[0,\Delta]}\int_0^{T-\delta}\text{V}(t)dt\Bigg]
\nonumber \\
\!\!\!\!\!\!\!\!&&\lesssim\int_0^{T}\mathbf{E}\Bigg[\sup_{\delta\in[0,\Delta]}\int_{t\wedge\tau_R^{n}}^{(t+\delta)\wedge\tau_R^{n}}(\sigma(u_n(s))d\mathcal{W}(s), u_n(s)-u_n(t\wedge\tau_R^{n}))_{1/2}ds\Bigg]dt
\nonumber \\
\!\!\!\!\!\!\!\!&&\lesssim\int_0^{T}\mathbf{E}\Bigg(\int_{t\wedge\tau_R^{n}}^{(t+\Delta)\wedge\tau_R^{n}}\|\sigma(u_n(s))\|_{\mathcal{L}_2(l^2;\mathbb{H}^{1/2})}^2 |u_n(s)-u_n(t\wedge\tau_R^{n})|_{1/2}^2ds\Bigg)^{\frac{1}{2}}dt
\nonumber \\
\!\!\!\!\!\!\!\!&&\lesssim\Bigg(\mathbf{E}\int_0^{T}\int_{t}^{t+\Delta}\mathbf{1}_{\{s\leq\tau_R^{n} \}}\|\sigma(u_n(s))\|_{\mathcal{L}_2(l^2;\mathbb{H}^{1/2})}^2 |u_n(s)-u_n(t\wedge\tau_R^{n})|_{1/2}^2dsdt\Bigg)^{\frac{1}{2}}
\nonumber \\
\!\!\!\!\!\!\!\!&&
\lesssim_R\Delta,
\end{eqnarray}
where the last inequality follows from similar arguments as in (\ref{e9}).

Similarly, we can also get 
\begin{equation}\label{es82}
\mathbf{E}\Bigg[\sup_{\delta\in[0,\Delta]}\int_0^{T-\delta}\text{VI}(t)dt\Bigg]\lesssim_R\Delta.
\end{equation}

Combining the estimates (\ref{es38})-(\ref{es82}) gives
\begin{eqnarray}\label{esq4}
\lim_{\Delta\to 0+}\sup_{n\in\mathbb{N}}\mathbf{E}\sup_{\delta\in[0,\Delta]}\int_0^{T-\delta}|u_n((t+\delta)\wedge\tau_R^{n})-u_n(t\wedge\tau_R^{n})|_{1/2}^2dt=0.
\end{eqnarray}
Collecting (\ref{es24}), (\ref{sec3es2}) and (\ref{esq4}),  letting $\Delta\to 0$  then $R\to\infty$ in (\ref{sec3es2}), we conclude that (\ref{tight2}) holds.
The proof is completed.
\end{proof}

\begin{remark}
In view of  the assumptions on  the diffusion coefficient $g$, it seems to be difficult to show the tightness of the sequence  $(\mathcal{L}_{u_n})_{n\geq 1}$ in the space of  $\mathbb{X}$-valued continuous functions space on $[0,T]$, even when $\mathbb{X}$ is a large enough Hilbert or Banach space. Nevertheless, by  exploiting the improved regularity obtained  in Lemma \ref{lem5} and $(\ref{apri4})$, one can prove the tightness in the space  of  $\mathbb{H}^{-1/2}$-valued continuous functions on $(0,T]$ as follows, using the criterion presented in Lemma \ref{lemt1}.

\end{remark}

The tightness of laws of $(u_n)_{n\geq 1}$ in $\mathcal{X}_2$ is given as follows.
\begin{lemma}\label{lem1}
The sequence  $(\mathcal{L}_{u_n})_{n\geq 1}$ is tight in $\mathcal{X}_2$.
\end{lemma}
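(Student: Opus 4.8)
The plan is to apply the tightness criterion of Lemma \ref{lemt1}, so it suffices to verify its two hypotheses for the Galerkin sequence $(u_n)$. Hypothesis (i) is immediate: estimate (\ref{apri}) of Lemma \ref{lem9} supplies, for each $\eta>0$, a constant $\mathcal{K}_\eta$ with $\sup_n\mathbf{P}\big(\sup_{t\in[0,T]}|u_n(t)|_{1/2}^p\ge\mathcal{K}_\eta\big)\le\eta$, and since $[\tfrac1k,T]\subseteq[0,T]$ the same bound controls $\sup_{t\in[1/k,T]}|u_n(t)|_{1/2}$ uniformly in $n$, with $C_{k,\varepsilon}=\mathcal{K}_\varepsilon^{1/p}$.

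The heart of the proof is hypothesis (ii), which I will obtain by verifying the Aldous condition in $\mathbb{H}^{-1/2}$ on each interval $[\tfrac1k,T]$ (Definition \ref{aldous}) and then assembling. Because only moments of order strictly less than $1$ of $|u_n|_{1/2}$ and merely logarithmic moments of $|u_n|_1$ are available, I cannot take expectations of increments directly and must localize. Fix $k$ and, for $R>0$, combine the stopping time $\tau_R^n$ from (\ref{stop3}) with the bootstrap stopping time $\varsigma_{k,R}^n:=\inf\{t\ge\tfrac1k:|u_n(t)|_1\ge R\}\wedge T$, setting $T_R^n:=\tau_R^n\wedge\varsigma_{k,R}^n$. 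By (\ref{apri}) and the bootstrap estimate (\ref{apri4}) (which rests on Lemma \ref{lem5}), both $\sup_n\mathbf{P}(\tau_R^n<T)$ and $\sup_n\mathbf{P}(\varsigma_{k,R}^n<T)$ tend to $0$ as $R\to\infty$; on the stopped trajectory one has simultaneously $|u_n|_{1/2}\le R$, $\int_0^\cdot|u_n(s)|_{3/2}^2\,ds\le R$, and $|u_n(t)|_1\le R$ for $t\in[\tfrac1k,\varsigma_{k,R}^n]$.

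Writing the Galerkin equation (\ref{eqf}) as $u_n=x_n+J_1^n+J_2^n+J_3^n+J_4^n$ (the Stokes, bilinear, transport-noise and nonlocal-noise contributions), I estimate the increment of each $J_i^n$ over an interval $I=[a,b]\subseteq[\tfrac1k,T]$ with $b-a\le\theta\le\Delta$, for the $T_R^n$-stopped process. Using $|\mathcal{A}u|_{-1/2}=|u|_{3/2}$ and Cauchy--Schwarz, the increment of $J_1^n$ is pathwise $\le\int_I|u_n|_{3/2}\,ds\le\theta^{1/2}R^{1/2}$; using (\ref{2.2}) with $m_1=m_2=m_3=\tfrac12$, so that $|B(u_n)|_{-1/2}\lesssim|u_n|_{1/2}|u_n|_{3/2}$, the increment of $J_2^n$ is pathwise $\lesssim R\,\theta^{1/2}R^{1/2}$. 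For the stochastic terms I use the It\^o isometry: by (\ref{sig3}),
\begin{equation*}
\mathbf{E}\big|J_3^n(b)-J_3^n(a)\big|_{-1/2}^2=\mathbf{E}\int_I\|\Pi_n\sigma(u_n)\|_{\mathcal{L}_2(l^2;\mathbb{H}^{-1/2})}^2\,ds\lesssim\mathbf{E}\int_I|u_n|_1^2\,ds\le R^2\theta,
\end{equation*}
and by (\ref{cong2}),
\begin{equation*}
\mathbf{E}\big|J_4^n(b)-J_4^n(a)\big|_{-1/2}^2=\mathbf{E}\int_I\|\Pi_n g(s,u_n)\|_{\mathcal{L}_2(l^2;\mathbb{H}^{-1/2})}^2\,ds\lesssim(1+R^\alpha)(1+R^\beta)\theta.
\end{equation*}
Here the pathwise bound $|u_n|_1\le R$ is exactly what tames the nonlocal forcing, whose high power $|u_n|_1^{\alpha}$ ($\alpha\ge2$) cannot be integrated against the weak $\mathbb{H}^{3/2}$-control alone. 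Combining these bounds through Chebyshev's inequality gives, for every stopping-time sequence $(\zeta_n)$ with $\zeta_n\in[\tfrac1k,T]$ (increments interpreted within $[\tfrac1k,T]$),
\begin{equation*}
\sup_n\sup_{0\le\theta\le\Delta}\mathbf{P}\big(|u_n(\zeta_n+\theta)-u_n(\zeta_n)|_{-1/2}\ge\eta\big)\le\sup_n\mathbf{P}(T_R^n<T)+\frac{C_R}{\eta}\,\Delta^{1/2},
\end{equation*}
so choosing $R$ large and then $\Delta$ small yields the Aldous condition on $[\tfrac1k,T]$.

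Finally, the Aldous condition together with hypothesis (i) delivers (ii) by a standard argument for continuous processes: fix $K$ with $\sum_{k>K}2^{-k}<\eta/2$, so the tail of the weighted sum in (\ref{stop5}) is controlled by the truncation $\wedge1$; for $k\le K$ one has $[\tfrac1k,T]\subseteq[\tfrac1K,T]$, and the uniform bound (i) plus the Aldous condition control the $\mathbb{H}^{-1/2}$-modulus of continuity on the fixed compact interval $[\tfrac1K,T]$, thereby bounding the remaining finitely many terms. This verifies (ii), and Lemma \ref{lemt1} yields tightness of $(\mathcal{L}_{u_n})$ in $\mathcal{X}_2$. The main obstacle throughout is the absence of finite second moments, which rules out direct increment estimates and forces the double stopping-time localization; the nonlocality of $g$ compounds this, and it is precisely the regularizing bootstrap bound (\ref{apri4}) on the $\mathbb{H}^1$-norm away from $t=0$ that closes the stochastic increment estimates.
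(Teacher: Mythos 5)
Your proposal is correct and follows essentially the same route as the paper: tightness via Lemma \ref{lemt1}, with hypothesis (i) supplied by (\ref{apri}) and hypothesis (ii) obtained from the Aldous condition on each $[\frac{1}{k},T]$, verified through the same double stopping-time localization (the stopping time (\ref{stop3}) combined with an $\mathbb{H}^{1}$ stopping time resting on the bootstrap estimate (\ref{apri4})) and the same increment bounds $|B(u)|_{-1/2}\lesssim|u|_{1/2}|u|_{3/2}$, (\ref{sig3}) and (\ref{cong2}), followed by the same weighted-sum assembly. The only cosmetic differences are that you bound the drift increments pathwise rather than in $L^2(\mathbf{P})$ and phrase the final Chebyshev step slightly differently; the substance is identical.
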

\begin{proof}
\textbf{Step 1.} We first prove that for any $k\in\mathbb{N}$ the sequence  $(\mathcal{L}_{u_n})_{n\geq 1}$ is tight in $C([\frac{1}{k},T];\mathbb{H}^{-1/2})$.

In view of Lemma \ref{lem9}, we only need to prove the Aldous condition in Definition \ref{aldous} holds.
Define a stopping time
\begin{equation*}
\tilde{\tau}_R^{n,k}:=\inf\bigg\{t\in[\frac{1}{k},T]:|u_n(t)|_{1}\geq R\bigg\},
\end{equation*}
with the convention $\inf\emptyset=\infty$. Then we denote
\begin{equation}\label{stop4}
\rho_R^{n,k}:=\tau_R^{n}\wedge\tilde{\tau}_R^{n,k},
\end{equation}
where the stopping time $\tau_R^{n}$ is defined by (\ref{stop3}).

Note that due to Lemmas \ref{lem9}, we deduce that
\begin{equation}\label{es52}
\lim_{R\to\infty}\mathbf{P}\big(\rho_R^{n,k}<T\big)
\leq\lim_{R\to\infty}\Big(\mathbf{P}\big(\tau_R^{n}<T\big)+\mathbf{P}\big(\tilde{\tau}_R^{n,k}<T\big)\Big)=0
\end{equation}
and
\begin{eqnarray}\label{es47}
\!\!\!\!\!\!\!\!&&\mathbf{P}(|u_n(\zeta_n+\theta)-u_n(\zeta_n)|_{-1/2}\geq \eta)
\nonumber \\
\leq\!\!\!\!\!\!\!\!&&\mathbf{P}\Big(|u_n(\zeta_n+\theta)-u_n(\zeta_n)|_{-1/2}\geq\eta,\rho_R^{n,k}\geq T\Big)+\mathbf{P}\big(\rho_R^{n,k}<T\big)
\nonumber \\
\leq \!\!\!\!\!\!\!\!&&\frac{1}{\eta^2}\mathbf{E}|u_n((\zeta_n+\theta)\wedge\rho_R^{n,k})-u_n(\zeta_n\wedge\rho_R^{n,k})|_{-1/2}^2+\mathbf{P}\big(\rho_R^{n,k}<T\big),
\end{eqnarray}
where  $(\zeta_n)_{n\in\mathbb{N}}$ is a stopping time sequence with $\zeta_n\in [\frac{1}{k},T]$.

The first term on the right hand side of (\ref{es47})  is estimated as follows
\begin{eqnarray}\label{es48}
\!\!\!\!\!\!\!\!&&\mathbf{E}|u_n((\zeta_n+\theta)\wedge\rho_R^{n,k})-u_n(\zeta_n\wedge\rho_R^{n,k})|_{-1/2}^2
\nonumber \\
\lesssim\!\!\!\!\!\!\!\!&&\mathbf{E}\Bigg(\int_{\zeta_n\wedge \rho_R^{n,k}}^{(\zeta_n+\theta)\wedge  \rho_R^{n,k}}| \mathcal{A}u_n(s)+B(u_n(s))|_{-1/2}ds\Bigg)^{2}
\nonumber \\
 \!\!\!\!\!\!\!\!&&+\mathbf{E}\int_{\zeta_n\wedge \rho_R^{n,k}}^{(\zeta_n+\theta)\wedge  \rho_R^{n,k}}\|\sigma(u_n(s))\|_{\mathcal{L}_2(l^2; \mathbb{H}^{-1/2})}^2ds
\nonumber \\
 \!\!\!\!\!\!\!\!&&+\mathbf{E}\int_{\zeta_n\wedge \rho_R^{n,k}}^{(\zeta_n+\theta)\wedge  \rho_R^{n,k}}\|g(s,u_n(s))\|_{\mathcal{L}_2(l^2; \mathbb{H}^{-1/2})}^2ds
 \nonumber \\
=: \!\!\!\!\!\!\!\!&&\text{I}+\text{II}+\text{III}.
\end{eqnarray}

As for the term $I$, using H\"{o}lder's inequality we have
\begin{eqnarray}\label{es49}
\text{I}\!\!\!\!\!\!\!\!&&\lesssim   |\theta| \mathbf{E}\int_{\zeta_n\wedge \rho_R^{n,k}}^{(\zeta_n+\theta)\wedge  \rho_R^{n,k}}\big(| \mathcal{A}u_n(s)|_{-1/2}^2+|B(u_n(s))|_{-1/2}^2\big)ds
\nonumber \\
\!\!\!\!\!\!\!\!&&\lesssim  |\theta|\mathbf{E}\int_{0}^{T\wedge \rho_R^{n,k}}\big(|u_n(s)|_{3/2}^2+|u_n(s)|_{1/2}^2|u_n(s)|_{3/2}^2\big)ds
\nonumber \\
\!\!\!\!\!\!\!\!&&\lesssim_{R,T} |\theta|.
\end{eqnarray}
As for the term $\text{II}$,  in view of (\ref{sig3}) we have
\begin{eqnarray}\label{es50}
\text{II}
\lesssim_{\mathcal{N}_1}\mathbf{E}\int_{\zeta_n\wedge \rho_R^{n,k}}^{(\zeta_n+\theta)\wedge  \rho_R^{n,k}}
\big|u_n(s)|_{1}^{2}ds
\lesssim_{R,{\mathcal{N}_1}}|\theta|.
\end{eqnarray}
As for the term $\text{III}$,  by the assumption $(\mathbf{H}_{g}^4)$ we have
\begin{eqnarray}\label{es53}
\text{III}
\lesssim\mathbf{E}\int_{\zeta_n\wedge \rho_R^{n,k}}^{(\zeta_n+\theta)\wedge  \rho_R^{n,k}}
\big(1+|u_n(s)|_{1}^{\alpha}\big)\big(1+|u_n(s)|_{1/2}^{\beta}\big)ds
\lesssim_R|\theta|.
\end{eqnarray}
Combining (\ref{es48})-(\ref{es53}) gives
\begin{eqnarray}\label{es51}
\lim_{\theta\to 0}\sup_{n\in\mathbb{N}}\mathbf{E}| u_n((\zeta_n+\theta)\wedge\rho_R^{n,k})-u_n(\zeta_n\wedge\rho_R^{n,k})|_{-1/2}^2=0.
\end{eqnarray}
Collecting (\ref{es52}), (\ref{es47}) and (\ref{es51}),  letting $\theta\to 0$  then $R\to\infty$ in (\ref{es47}), we conclude that the claim holds.

\vspace{1mm}
\noindent\textbf{Step 2.} In this step, we prove the sequence  $(\mathcal{L}_{u_n})_{n\geq 1}$ is tight in $\mathcal{X}_2$.

In view of  Lemma  \ref{lemt1} and Lemma \ref{lem9},   it is sufficient to prove (\ref{stop5}) holds for $u_n$.  Since for any $k\in\mathbb{N}$ the sequence  $(\mathcal{L}_{u_n})_{n\geq 1}$ is tight in $C([\frac{1}{k},T];\mathbb{H}^{-1/2})$, which yields  that for any $\eta>0$,
$$\lim_{\Delta\to0}\sup_{n\in\mathbb{N}}\mathbf{P}\Bigg(\sup_{|t-s|\leq \Delta, t,s\in[\frac{1}{k},T]}|u_n(t)-u_n(s)|_{-1/2}  > \eta   \Bigg)=0.$$
This implies that for any $\eta>0$,
$$\lim_{\Delta\to0}\sup_{n\in\mathbb{N}}\mathbf{P}\Bigg(\sum_{k=1}^{\infty}\frac{1}{2^k}\bigg(\sup_{|t-s|\leq \Delta, t,s\in[\frac{1}{k},T]}|u_n(t)-u_n(s)|_{-1/2}\wedge 1 \bigg) > \eta   \Bigg)=0,$$
which completes the proof.
\end{proof}

\vspace{1mm}
Building upon Lemma \ref{lem2} and Lemma \ref{lem1}, we state the following result.
\begin{lemma}\label{lem8}
The sequence  $(\mathcal{L}_{u_n})_{n\geq 1}$ is tight in $\mathcal{X}:=\mathcal{X}_1\cap\mathcal{X}_2$.
\end{lemma}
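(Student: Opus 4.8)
The plan is to derive tightness on the intersection space $\mathcal{X}=\mathcal{X}_1\cap\mathcal{X}_2$ directly from the two separate tightness statements already established, namely Lemma \ref{lem2} (tightness in $\mathcal{X}_1$) and Lemma \ref{lem1} (tightness in $\mathcal{X}_2$), by intersecting the associated compact sets. Fix $\varepsilon>0$. From these two lemmas I would select a set $\mathcal{K}^1_{\varepsilon}$ compact in $\mathcal{X}_1$ and a set $\mathcal{K}^2_{\varepsilon}$ compact in $\mathcal{X}_2$ with $\sup_{n}\mathbf{P}(u_n\notin\mathcal{K}^1_{\varepsilon})\leq \varepsilon/2$ and $\sup_{n}\mathbf{P}(u_n\notin\mathcal{K}^2_{\varepsilon})\leq \varepsilon/2$, and then set $\mathcal{K}_{\varepsilon}:=\mathcal{K}^1_{\varepsilon}\cap\mathcal{K}^2_{\varepsilon}$, regarded as a subset of $\mathcal{X}$. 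Once $\mathcal{K}_{\varepsilon}$ is shown to be compact in $\mathcal{X}$, a union bound gives
\[
\sup_{n\in\mathbb{N}}\mathbf{P}(u_n\notin\mathcal{K}_{\varepsilon})\leq \sup_{n\in\mathbb{N}}\mathbf{P}(u_n\notin\mathcal{K}^1_{\varepsilon})+\sup_{n\in\mathbb{N}}\mathbf{P}(u_n\notin\mathcal{K}^2_{\varepsilon})\leq \varepsilon,
\]
which is precisely the asserted tightness of $(\mathcal{L}_{u_n})_{n\geq1}$ on $\mathcal{X}$.

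The one nontrivial point, and what I expect to be the main obstacle, is the compactness of $\mathcal{K}_{\varepsilon}$ in the intersection topology: $\mathcal{X}_2$ is a metric space, whereas $\mathcal{X}_1$ carries weak and weak-$*$ topologies and is not metrizable, so one cannot simply invoke a product-of-metric-spaces argument. I would handle this through the diagonal embedding $\iota:\mathcal{X}\to\mathcal{X}_1\times\mathcal{X}_2$, $\iota(u)=(u,u)$. By the very definition of the intersection topology $\tau_{\mathcal{X}_1}$ on $\mathcal{X}_1$ together with the metric topology on $\mathcal{X}_2$, the map $\iota$ is a homeomorphism onto its image, and a direct set-theoretic check gives $\iota(\mathcal{K}_{\varepsilon})=(\mathcal{K}^1_{\varepsilon}\times\mathcal{K}^2_{\varepsilon})\cap\iota(\mathcal{X})$. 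The product $\mathcal{K}^1_{\varepsilon}\times\mathcal{K}^2_{\varepsilon}$ is compact by Tychonoff's theorem and moreover metrizable, since $\mathcal{K}^2_{\varepsilon}$ sits inside the metric space $\mathcal{X}_2$ while $\mathcal{K}^1_{\varepsilon}$ is a compact subset of $\mathcal{X}_1$, hence metrizable by Theorem \ref{sko1}(i) (which applies to $\mathcal{X}_1$ thanks to Theorem \ref{th1}). It therefore suffices to show that $\iota(\mathcal{K}_{\varepsilon})$ is sequentially closed in $\mathcal{K}^1_{\varepsilon}\times\mathcal{K}^2_{\varepsilon}$.

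To close this last step I would take $u_m\in\mathcal{K}_{\varepsilon}$ with $(u_m,u_m)\to(v,w)$, so that $u_m\to v$ in $\mathcal{X}_1$ and $u_m\to w$ in $\mathcal{X}_2$, and argue that $v=w$ as elements of $\mathcal{X}$. Convergence in $\mathcal{X}_1$ forces $u_m\to v$ in $L^2([0,T];\mathbb{H}^{1/2})$, while convergence in $\mathcal{X}_2=C((0,T];\mathbb{H}^{-1/2})$ forces $u_m(t)\to w(t)$ in $\mathbb{H}^{-1/2}$ for each $t\in(0,T]$; invoking the continuous embedding $\mathbb{H}^{1/2}\subset\mathbb{H}^{-1/2}$ from \eqref{emb} together with uniqueness of limits in $L^2([\delta,T];\mathbb{H}^{-1/2})$ for every $\delta>0$ then yields $v=w$, whence $(v,w)=\iota(v)\in\iota(\mathcal{K}_{\varepsilon})$. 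This makes $\iota(\mathcal{K}_{\varepsilon})$ a closed subset of the compact metrizable set $\mathcal{K}^1_{\varepsilon}\times\mathcal{K}^2_{\varepsilon}$, hence compact, and transporting back through the homeomorphism $\iota$ shows that $\mathcal{K}_{\varepsilon}$ is compact in $\mathcal{X}$, completing the argument. The delicate part throughout is exactly this reconciliation of the two limits across the mismatched topologies of $\mathcal{X}_1$ and $\mathcal{X}_2$; the remaining manipulations are routine.
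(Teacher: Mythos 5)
Your proposal is correct and takes essentially the same route as the paper: intersect the compact sets supplied by Lemma \ref{lem2} and Lemma \ref{lem1} and conclude by a union bound over the two exceptional events. In fact, your diagonal-embedding argument (metrizability of $\mathcal{K}^1_{\varepsilon}\times\mathcal{K}^2_{\varepsilon}$ plus identification of the $\mathcal{X}_1$- and $\mathcal{X}_2$-limits through the common embedding into $L^2([\delta,T];\mathbb{H}^{-1/2})$) carefully justifies the compactness of $\mathcal{K}^1_{\varepsilon}\cap\mathcal{K}^2_{\varepsilon}$ in the intersection topology, a point the paper disposes of in one line by appealing to the Hausdorff property of $\mathcal{X}_1$ and $\mathcal{X}_2$.
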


\begin{proof}
According to Lemma \ref{lem2} and Lemma \ref{lem1},  for every $\varepsilon>0$, there exist compact subsets $\mathcal{K}_{1,\varepsilon}$ and $\mathcal{K}_{2,\varepsilon}$ in $\mathcal{X}_1$ and $\mathcal{X}_2$, respectively, such that
$$\sup_{n\in\mathbb{N}}\mu_n(\mathcal{K}_{1,\varepsilon}^c)\leq \frac{\varepsilon}{2},~~\sup_{n\in\mathbb{N}}\mu_n(\mathcal{K}_{2,\varepsilon}^c)\leq \frac{\varepsilon}{2}.$$
Recall that $\mathcal{X}_2$ is a separable Banach space that is a Hausdorff space. In view of the proof of Theorem \ref{th1}, we can also deduce that  $\mathcal{X}_1$ is a Hausdorff space. Then the intersection  $\mathcal{K}_{\varepsilon}:=\mathcal{K}_{1,\varepsilon}\cap \mathcal{K}_{2,\varepsilon}$ is  compact in $\mathcal{X}$. Moreover, we have
$$\sup_{n\in\mathbb{N}}\mathcal{L}_{u_n}(\mathcal{K}_{\varepsilon}^c)\leq \sup_{n\in\mathbb{N}}\mathcal{L}_{u_n}(\mathcal{K}_{1,\varepsilon}^c)+\sup_{n\in\mathbb{N}}\mathcal{L}_{u_n}(\mathcal{K}_{2,\varepsilon}^c)\leq \varepsilon.$$
 The proof is  completed.
\end{proof}

\subsection{Passage to the limit}\label{sub2.5}
In this subsection, we first prove the existence of weak solutions via the stochastic compactness method. To pass to the limit $n\to\infty$ in (\ref{eqf}), several nontrivial  difficulties are needed to  deal with.   In particular, since the second moments are unavailable (see Lemmas \ref{lem3.0} and \ref{lem5}),  we need to carefully treat the convergence of every terms in (\ref{eqf}) (cf.~Lemmas \ref{lem4}-\ref{lem6} below). Furthermore, we need to construct a continuous version of the limit of the approximating sequence $(u_n)_{n\in\mathbb{N}}$, which belongs to $C([0,T];\mathbb{H}^{1/2})$. This is achieved through a localization procedure combined with a delicate stopping time argument.

Let us denote
$$\mathcal{Z}_T:=\mathcal{X}\times \mathbb{C}_T(\mathbb{U})\times \mathbb{C}_T(\mathbb{U}),$$
where $\mathbb{U}$ is a Hilbert space such that the embedding $l^2\subset \mathbb{U}$ is Hilbert-Schmidt.
Building upon Lemma \ref{lem8} and applying the  Jakubowski-Skorokhod representation theorem (i.e.~Theorem \ref{sko1}), there exists a subsequence still denoted by $(n)$, a probability space $(\tilde{\Omega},\tilde{\mathscr{F}},\tilde{\mathbf{P}})$ and  $\mathcal{Z}_T$-valued random variables $(\tilde{u}_n,\tilde{\mathcal{W}}_n,\tilde{\hat{\mathcal{W}}}_n)$, $(\tilde{u},\tilde{\mathcal{W}},\tilde{\hat{\mathcal{W}}})$ such that

\vspace{2mm}
\noindent(i) $\mathcal{L}_{(\tilde{u}_n,\tilde{\mathcal{W}}_n,\tilde{\hat{\mathcal{W}}}_n)}|_{\tilde{\mathbf{P}}}=\mathcal{L}_{(u_n,\mathcal{W},\tilde{\hat{\mathcal{W}}})}|_{\mathbf{P}}$, $n\in\mathbb{N}$;

\vspace{2mm}
\noindent(ii) the following convergence hold
\begin{equation}
\tilde{u}_n\to \tilde{u}~\text{in}~\mathcal{X}\,\,\,\,\,\,\tilde{\mathbf{P}}\text{-a.s.},~\text{as}~n\to\infty;\label{es80}
\end{equation}

\vspace{2mm}
\noindent(iii) $\tilde{\mathcal{W}}_n\to\tilde{\mathcal{W}}$ and $\tilde{\hat{\mathcal{W}}}_n\to\tilde{\hat{\mathcal{W}}}$ in  $\mathbb{C}_T(\mathbb{U})$ $\tilde{\mathbf{P}}$-a.s., as $n\to\infty$.

\vspace{2mm}
 Let $(\tilde{\mathscr{F}}^n_t)_{t\in[0,T]}$ be the filtration satisfying the usual conditions and generated by
 $$\big\{\tilde{u}_n(s),\tilde{\mathcal{W}}_n(s),\tilde{\hat{\mathcal{W}}}_n(s):s\in[0,t]\big\}.$$
Due to the claim (i), we have
 \begin{eqnarray*}
\!\!\!\!\!\!\!\!&&\mathbf{P}(\mathcal{W}(t)-\mathcal{W}(s)\in\cdot|\mathscr{F}_s)=\mathbf{P}(\mathcal{W}(t)-\mathcal{W}(s)\in\cdot)
 \\
\Rightarrow\!\!\!\!\!\!\!\!&&\tilde{\mathbf{P}}(\tilde{\mathcal{W}}_n(t)-\tilde{\mathcal{W}}_n(s)\in\cdot|\tilde{\mathscr{F}}^n_s)=\tilde{\mathbf{P}}(\tilde{\mathcal{W}}_n(t)-\tilde{\mathcal{W}}_n(s)\in\cdot).
\end{eqnarray*}
Thus, $\tilde{\mathcal{W}}_n$ is an $l^2$-valued  $(\tilde{\mathscr{F}}^n_t)$-cylindrical Wiener process. Analogously for $\tilde{\hat{\mathcal{W}}}_n$.  Moreover, in view of claims (i) and (iii) and Definition \ref{dew},  the following identity holds, $\tilde{\mathbf{P}}$-a.s. $t\in[0,T]$,
\begin{eqnarray}\label{eqns1}
\!\!\!\!\!\!\!\!&&(\tilde{u}_n(t),\phi)+\int_0^t( \nabla\tilde{u}_n(s),\nabla\phi)ds+\int_0^t(B( \tilde{u}_n(s)),\phi)ds
\nonumber \\
\!\!\!\!\!\!\!\!&&
 =(x_n,\phi)+ \int_0^t(\sigma(\tilde{u}_n(s))  d\tilde{\mathcal{W}}_n(s),\phi)+\int_0^t(g(s,\tilde{u}_n(s))  d\tilde{\hat{\mathcal{W}}}_n(s),\phi),~\phi\in\mathcal{V}.
\end{eqnarray}
According to Lemma \ref{lem9}, the  same bounds  hold for $ \tilde{u}_n$, that is,  for any $\varepsilon>0$ there exists  $\mathcal{K}>0$ such that for any $p\geq 2$,
\begin{equation}
\sup_{n\in\mathbb{N}}\tilde{\mathbf{P}}\Bigg(\sup_{t\in[0,T]}|\tilde{u}_n(t)|_{1/2}^p+\int_0^T|\tilde{u}_n(t)|_{3/2}^{2}dt\geq \mathcal{K}\Bigg)\leq\varepsilon. \label{apri1}
\end{equation}

Notice that by (\ref{es80}) we know $\tilde{u}_n\to \tilde{u}$ in $C((0,T];\mathbb{H}^{-1/2})$ $\tilde{\mathbf{P}}$-a.s., as $n\to\infty$.
%In addition, it is clear that $x_n\to x$ in $\mathbb{H}^{1/2}$, as $n\to\infty$.
Then, employing the lower semicontinuity of the norm $|\cdot|_{1/2}$ in $\mathbb{H}^{-1/2}$,  the weak lower semicontinuity of the norm $\|\cdot\|_{L^{2}([0,T];\mathbb{H}^{3/2})}$ in $L^{2}_{w}([0,T];\mathbb{H}^{3/2})$, and the weak-$*$ lower semicontinuity of the norm $\|\cdot\|_{L^{\infty}([0,T];\mathbb{H}^{1/2})}$ in $L^{\infty}_{w^*}([0,T];\mathbb{H}^{1/2})$, by Fatou's lemma it follows that  for any $\varepsilon>0$  there exists  $\mathcal{K}>0$  such that
\begin{eqnarray}\label{apri2}
\!\!\!\!\!\!\!\!&&\tilde{\mathbf{P}}\Bigg(\|\tilde{u}\|_{L^{\infty}([0,T];\mathbb{H}^{1/2})}^p+\sup_{t\in(0,T]}|\tilde{u}(t)|_{1/2}^p+\|\tilde{u}\|_{L^{2}([0,T];\mathbb{H}^{3/2})}^{2}dt\geq \mathcal{K}\Bigg)
\nonumber \\
\leq\!\!\!\!\!\!\!\!&&\tilde{\mathbf{P}}\Bigg(\liminf_{n\to\infty}\|\tilde{u}_n\|_{L^{\infty}([0,T];\mathbb{H}^{1/2})}^p+\sup_{t\in(0,T]}\liminf_{n\to\infty}|\tilde{u}_n(t)|_{1/2}^p
\nonumber \\
\!\!\!\!\!\!\!\!&&~~~~~
+\liminf_{n\to\infty}\|\tilde{u}_n\|_{L^{2}([0,T];\mathbb{H}^{3/2})}^{2}\geq \mathcal{K}\Bigg)
\nonumber \\
\leq\!\!\!\!\!\!\!\!&&\tilde{\mathbf{P}}\Bigg(\liminf_{n\to\infty}\bigg\{\|\tilde{u}_n\|_{L^{\infty}([0,T];\mathbb{H}^{1/2})}^p+\sup_{t\in(0,T]}|\tilde{u}_n(t)|_{1/2}^p+\|\tilde{u}_n\|_{L^{2}([0,T];\mathbb{H}^{3/2})}^{2}\bigg\}\geq \mathcal{K}\Bigg)
\nonumber \\
\leq\!\!\!\!\!\!\!\!&&\sup_{n\in\mathbb{N}}\tilde{\mathbf{P}}\Bigg(\|\tilde{u}_n\|_{L^{\infty}([0,T];\mathbb{H}^{1/2})}^p+\sup_{t\in(0,T]}|\tilde{u}_n(t)|_{1/2}^p+\|\tilde{u}_n\|_{L^{2}([0,T];\mathbb{H}^{3/2})}^{2}\geq \mathcal{K}\Bigg)
\nonumber \\
\leq\!\!\!\!\!\!\!\!&&\sup_{n\in\mathbb{N}}\tilde{\mathbf{P}}\Bigg(\sup_{t\in[0,T]}|\tilde{u}_n(t)|_{1/2}^p+\int_0^T|\tilde{u}_n(t)|_{3/2}^{2}dt\geq \frac{\mathcal{K}}{2}\Bigg)
\nonumber \\
\leq\!\!\!\!\!\!\!\!&&\varepsilon.
\end{eqnarray}

In order to pass to the limit in (\ref{eqns1}), we first consider the convergence of the stochastic integrals in (\ref{eqns1}).
 \begin{lemma}\label{lem4}
Along a subsequence still denoted by $\{n\}$,  we have the following convergence
\begin{equation*}
\int_0^{\cdot}(\sigma(\tilde{u}_n(s))  d\tilde{\mathcal{W}}_n(s),\phi)\to\int_0^{\cdot}(\sigma(\tilde{u}(s))  d\tilde{\mathcal{W}}(s),\phi)~\text{in}~L^{\infty}([0,T];\mathbb{R})\,\,\,\,\,\, \tilde{\mathbf{P}}\text{-a.s..}
\end{equation*}
 \end{lemma}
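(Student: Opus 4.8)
The plan is to test the stochastic term in \eqref{eqns1} against the \emph{fixed} smooth field $\phi\in\mathcal{V}$, thereby reducing the claim to the convergence of a sequence of scalar It\^o integrals. Setting $\Phi_n(s):=\big((\sigma_i(\tilde u_n(s)),\phi)\big)_{i\ge1}$ and $\Phi(s):=\big((\sigma_i(\tilde u(s)),\phi)\big)_{i\ge1}$, one has $\int_0^t(\sigma(\tilde u_n(s))\,d\tilde{\mathcal{W}}_n(s),\phi)=\int_0^t\Phi_n(s)\,d\tilde{\mathcal{W}}_n(s)$ and likewise for the limit, so it suffices to show $\int_0^\cdot\Phi_n\,d\tilde{\mathcal{W}}_n\to\int_0^\cdot\Phi\,d\tilde{\mathcal{W}}$ in $\mathbb{C}_T(\mathbb{R})$. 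First I would record that $u\mapsto T_\phi u:=\big((\sigma_i(u),\phi)\big)_{i\ge1}$ is a \emph{bounded linear} operator from $\mathbb{H}^{1/2}$ into $l^2$: since $\phi$ is divergence free and smooth, the $L^2$ pairing coincides with the $\mathbb{H}^{-1}$--$\mathbb{H}^{1}$ duality, so by \eqref{sig4},
\[
\|T_\phi u\|_{l^2}^2=\sum_{i\ge1}|\langle\sigma_i(u),\phi\rangle|^2\le|\phi|_1^2\sum_{i\ge1}|\sigma_i(u)|_{-1}^2=|\phi|_1^2\|\sigma(u)\|_{\mathcal{L}_2(l^2;\mathbb{H}^{-1})}^2\lesssim_{M_0}|\phi|_1^2|u|_{L^2}^2\le|\phi|_1^2|u|_{1/2}^2 .
\]

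Because $T_\phi$ is linear and bounded, and by \eqref{es80} we have $\tilde u_n\to\tilde u$ \emph{strongly} in $L^2([0,T];\mathbb{H}^{1/2})$ $\tilde{\mathbf{P}}$-a.s. (this being part of the $\mathcal{X}_1$-convergence), it follows that
\[
\int_0^T\|\Phi_n(s)-\Phi(s)\|_{l^2}^2\,ds=\int_0^T\|T_\phi(\tilde u_n(s)-\tilde u(s))\|_{l^2}^2\,ds\lesssim_{\phi,M_0}\int_0^T|\tilde u_n(s)-\tilde u(s)|_{1/2}^2\,ds\longrightarrow0
\]
$\tilde{\mathbf{P}}$-a.s. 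Thus the integrands converge in $L^2([0,T];l^2)$ almost surely, while the drivers converge by claim (iii), $\tilde{\mathcal{W}}_n\to\tilde{\mathcal{W}}$ in $\mathbb{C}_T(\mathbb{U})$ $\tilde{\mathbf{P}}$-a.s.; recall also that $\Phi_n$ is $(\tilde{\mathscr{F}}^n_t)$-adapted and $\tilde{\mathcal{W}}_n$ is an $(\tilde{\mathscr{F}}^n_t)$-cylindrical Wiener process, so the integrals are well defined.

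The remaining and genuinely delicate point is that the absence of finite second moments (Lemmas \ref{lem3.0} and \ref{lem5}) forbids applying the It\^o isometry directly in expectation, so I would proceed by localization. Given $\eta>0$, \eqref{apri1} and \eqref{apri2} furnish $R>0$ for which $\sup_n\tilde{\mathbf{P}}(\sup_t|\tilde u_n(t)|_{1/2}>R)$ and $\tilde{\mathbf{P}}(\sup_t|\tilde u(t)|_{1/2}>R)$ are both smaller than $\eta$; introducing $\tau_R^n:=\inf\{t:|\tilde u_n(t)|_{1/2}>R\}$ and the analogous $\tau_R$, the truncated integrands $\Phi_n\mathbf{1}_{[0,\tau_R^n]}$ and $\Phi\mathbf{1}_{[0,\tau_R]}$ are bounded in $L^2([0,T];l^2)$ by $C_\phi R^2T$. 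On the localized problem I would split
\[
\int_0^t\Phi_n\,d\tilde{\mathcal{W}}_n-\int_0^t\Phi\,d\tilde{\mathcal{W}}=\int_0^t(\Phi_n-\Phi)\,d\tilde{\mathcal{W}}_n+\Big(\int_0^t\Phi\,d\tilde{\mathcal{W}}_n-\int_0^t\Phi\,d\tilde{\mathcal{W}}\Big).
\]
The first term is handled, after localization, by the It\^o isometry together with the $L^2([0,T];l^2)$-convergence of the integrands, and path-continuity (Burkholder--Davis--Gundy) upgrades this to the sup-in-time, i.e. $L^\infty([0,T];\mathbb{R})$, norm. The second term, the driver-mismatch term, is the \textbf{main obstacle}: for it I would invoke the standard convergence-of-stochastic-integrals lemma in the Jakubowski--Skorokhod framework, approximating $\Phi$ by elementary adapted processes and exploiting the uniform convergence $\tilde{\mathcal{W}}_n\to\tilde{\mathcal{W}}$. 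Combining the two estimates and letting $R\to\infty$ yields convergence of $\big\|\int_0^\cdot\Phi_n\,d\tilde{\mathcal{W}}_n-\int_0^\cdot\Phi\,d\tilde{\mathcal{W}}\big\|_{L^\infty([0,T];\mathbb{R})}$ to $0$ in probability; a final extraction of an almost surely convergent subsequence delivers the stated $\tilde{\mathbf{P}}$-a.s. convergence.
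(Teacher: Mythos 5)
Your reduction and your verification of the integrand convergence are sound, and in fact more direct than the paper's own argument: testing against the fixed $\phi\in\mathcal{V}$, observing that $u\mapsto\sigma(u)^{*}\phi$ is a bounded linear map from $L^{2}$ to $l^{2}$ by \eqref{sig4}, and using the $\tilde{\mathbf{P}}$-a.s.\ strong convergence $\tilde{u}_n\to\tilde{u}$ in $L^{2}([0,T];\mathbb{H}^{1/2})$ from \eqref{es80} yields exactly the paper's condition \eqref{con5}. (The paper reaches the same conclusion by a longer route --- pointwise a.e.\ convergence, a cut-off argument for the convergence of the $\mathcal{L}_2(l^2;\mathbb{H}^{-1})$-norms, and a Radon--Riesz type step --- which your exploitation of the linearity of $\sigma$ bypasses.) Up to that point both proofs coincide in substance: Lemma \ref{lem4} is reduced to the convergence in probability of $\int_0^t|\sigma(\tilde{u}_n(s))^{*}\phi-\sigma(\tilde{u}(s))^{*}\phi|_{l^2}^2\,ds$.

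The gap is in your final step. The splitting into $\int_0^t(\Phi_n-\Phi)\,d\tilde{\mathcal{W}}_n$ plus $\big(\int_0^t\Phi\,d\tilde{\mathcal{W}}_n-\int_0^t\Phi\,d\tilde{\mathcal{W}}\big)$ introduces It\^o integrals of $\Phi$ against $\tilde{\mathcal{W}}_n$, and these are not well defined: $\Phi(s)=\sigma(\tilde{u}(s))^{*}\phi$ is a functional of the limit $\tilde{u}$, hence adapted to the limit filtration but not to $(\tilde{\mathscr{F}}^n_t)$, while $\tilde{\mathcal{W}}_n$ is only known to be a Wiener process with respect to $(\tilde{\mathscr{F}}^n_t)$. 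Nor can one enlarge the filtration by $\sigma(\tilde{u}(s):s\le t)$: the Jakubowski--Skorokhod theorem fixes the law of each triple $(\tilde{u}_n,\tilde{\mathcal{W}}_n,\tilde{\hat{\mathcal{W}}}_n)$ and gives a.s.\ convergence, but it gives no control on the joint law of $(\tilde{u},\tilde{\mathcal{W}}_n)$, so there is no way to verify that increments of $\tilde{\mathcal{W}}_n$ after time $t$ are independent of the enlarged $\sigma$-algebra. For the same reason the ``standard lemma'' cannot be invoked for the driver-mismatch term alone, and the It\^o isometry cannot be applied to $\int(\Phi_n-\Phi)\,d\tilde{\mathcal{W}}_n$ either; every piece of your decomposition inherits the adaptedness defect. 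The remedy --- and this is exactly what the paper does, citing Lemma 4.3 of \cite{BMX} --- is to apply that convergence-of-stochastic-integrals lemma to the \emph{whole} difference $\int_0^{\cdot}\Phi_n\,d\tilde{\mathcal{W}}_n-\int_0^{\cdot}\Phi\,d\tilde{\mathcal{W}}$ at once: its hypotheses are precisely (i) $\tilde{\mathcal{W}}_n\to\tilde{\mathcal{W}}$ in $\mathbb{C}_T(\mathbb{U})$ in probability, (ii) $\Phi_n\to\Phi$ in $L^{2}([0,T];l^2)$ in probability, (iii) each $\Phi_n$ adapted to $(\tilde{\mathscr{F}}^n_t)$, all of which you have already established; its proof proceeds by time discretization, so the cross terms are Riemann sums rather than stochastic integrals and the adaptedness issue never arises. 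With that substitution your proof closes, and your localization becomes unnecessary, since the lemma requires only convergence in probability, not moment bounds.
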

\begin{proof}
 According to Lemma 4.3 in \cite{BMX}, in order to prove Lemma \ref{lem4} it is sufficient  to show that for any $t\in[0,T]$, along a subsequence still denoted by $\{n\}$, we have
\begin{equation}
\int_0^t |\sigma(\tilde{u}_n(s))^*\phi-\sigma(\tilde{u}(s))^*\phi|_{l^2}^2ds\to 0~\text{in probability as}~n\to\infty. \label{con5}
\end{equation}
Since the mapping $\sigma$ is linear, then in terms of (\ref{sig4}) for any $u,v\in\mathbb{H}$, we have
\begin{equation}\label{con6}
\|\sigma(u)-\sigma(v)\|_{\mathcal{L}_2(l^2;\mathbb{H}^{-1})}\lesssim |u-v|_{L^2}.
\end{equation}

\noindent\textbf{Step 1.} In view of (\ref{es80}), we have the convergence $\tilde{u}_n\to \tilde{u}$ in $L^2([0,T];\mathbb{H}^{1/2})$ $\tilde{\mathbf{P}}$-a.s., as $n\to\infty$. Then we can deduce that
 \begin{equation}\label{es42}
 \lim_{n\to\infty}|\tilde{u}_n(\omega,t)-\tilde{u}(\omega,t)|_{1/2}=0
\end{equation}
 for $\tilde{\mathbf{P}}\otimes dt$-almost all $(\omega,t)\in\tilde{\Omega}\times [0,T]$ (here selecting a subsequence if necessary). By (\ref{con6}) and (\ref{es42}) it follows that
 \begin{equation}\label{es43}
 \lim_{n\to\infty}\|\sigma(\tilde{u}_n(\omega,t))-\sigma(\tilde{u}(\omega,t))\|_{\mathcal{L}_2(l^2;\mathbb{H}^{-1})}=0
\end{equation}
 for $\tilde{\mathbf{P}}\otimes dt$-almost all $(\omega,t)\in\tilde{\Omega}\times [0,T]$.

\vspace{1mm}
\noindent\textbf{Step 2.} In this part, we prove the convergence
\begin{equation}\label{es54}
\int_0^t\|\sigma(\tilde{u}_n(\omega,s))\|_{ \mathcal{L}_2(l^2;\mathbb{H}^{-1})}^2ds\to\int_0^t\|\sigma(\tilde{u}(\omega,s))\|_{\mathcal{L}_2(l^2;\mathbb{H}^{-1})}^2ds
\end{equation}
holds for $\tilde{\mathbf{P}}$-almost all $\omega\in\tilde{\Omega}$, as $n\to\infty$.

Let $\chi_M\in C^{\infty}_c(\mathbb{R})$ be a cut-off function with
$$\chi_M(r)=\begin{cases} 1,~~~~|r|\leq M&\quad\\
0,~~~~|r|>2M.&\quad\end{cases}$$
Moreover, we set
\begin{eqnarray*}
\!\!\!\!\!\!\!\!&&\Theta_M(t,w):=\int_0^t\|\sigma(w(s))\|_{\mathcal{L}_2(l^2;\mathbb{H}^{-1})}^2\chi_M(|w(s)|_{L^2})ds,
\nonumber \\
\!\!\!\!\!\!\!\!&& \Theta(t,w):=\int_0^t\|\sigma(w(s))\|_{\mathcal{L}_2(l^2;\mathbb{H}^{-1})}^2ds.
\end{eqnarray*}

First, by (\ref{con6}), (\ref{es42}) and the continuity of $\chi_M$,   we have
\begin{eqnarray*}
\lim_{n\to\infty}\!\!\!\!\!\!\!\!&&\Big|\|\sigma(\tilde{u}_n(\omega,t))\|_{\mathcal{L}_2(l^2;\mathbb{H}^{-1})}^2\chi_M(|\tilde{u}_n(\omega,t)|_{L^2})
\nonumber \\
\!\!\!\!\!\!\!\!&&
-
\|\sigma(\tilde{u}(\omega,t))\|_{\mathcal{L}_2(l^2;\mathbb{H}^{-1})}^2\chi_M(|\tilde{u}(\omega,t)|_{L^2})\Big|= 0
\end{eqnarray*}
holds for $\tilde{\mathbf{P}}\otimes dt$-almost all $(\omega,t)\in\tilde{\Omega}\times [0,T]$.
Thus, using  the dominated convergence theorem, it follows that
\begin{equation}\label{es55}
\Theta_M(t,\tilde{u}_n)\to \Theta_M(t,\tilde{u})~~\tilde{\mathbf{P}}\text{-a.s.},~\text{as}~n\to\infty.
\end{equation}
On the other hand, by the definition of $\chi_R$ we obtain that for any $\varepsilon>0$,
\begin{eqnarray}\label{con7}
\!\!\!\!\!\!\!\!&&\tilde{\mathbf{P}}\Big(|\Theta(t,\tilde{u}_n)-\Theta_M(t,\tilde{u}_n)|>\varepsilon\Big)
\nonumber \\
=\!\!\!\!\!\!\!\!&&\tilde{\mathbf{P}}\Big(|\Theta(t,\tilde{u}_n)-\Theta_M(t,\tilde{u}_n)|>\varepsilon,\sup_{t\in[0,T]}|\tilde{u}_n(t)|_{L^2}\leq M\Big)
\nonumber \\
\!\!\!\!\!\!\!\!&&+ \tilde{\mathbf{P}}\Big(|\Theta(t,\tilde{u}_n)-\Theta_M(t,\tilde{u}_n)|>\varepsilon,\sup_{t\in[0,T]}|\tilde{u}_n(t)|_{L^2}> M\Big)
\nonumber \\
\leq\!\!\!\!\!\!\!\!&& \sup_{n\in\mathbb{N}}\tilde{\mathbf{P}}\Big(\sup_{t\in[0,T]}|\tilde{u}_n(t)|_{1/2}> M\Big).
\end{eqnarray}
Due to  (\ref{apri1}), letting $n\to\infty$ then $M\to \infty$ in (\ref{con7}),  we deduce that
\begin{equation}\label{es56}
|\Theta(t,\tilde{u}_n)-\Theta_M(t,\tilde{u}_n)|\to 0~~\text{in probability}.
\end{equation}
By similar argument, we also obtain
\begin{equation}\label{es57}
|\Theta(t,\tilde{u})-\Theta_M(t,\tilde{u})|\to 0~~\text{in probability}.
\end{equation}
Collecting (\ref{es55}), (\ref{es56}) and  (\ref{es57}), then  (\ref{es54}) follows.

\vspace{1mm}
\noindent\textbf{Step 3.} In this step, we prove (\ref{con5}). First, combining (\ref{es43}) and (\ref{es54}), we have
\begin{equation}\label{es58}
\lim_{n\to\infty}\int_0^t\|\sigma(\tilde{u}_n(\omega,s))-\sigma(\tilde{u}(\omega,s))\|_{\mathcal{L}_2(l^2;\mathbb{H}^{-1})}^2ds=0
\end{equation}
holds for $\tilde{\mathbf{P}}$-almost all $\omega\in\tilde{\Omega}$.

Note that
\begin{eqnarray*}
\!\!\!\!\!\!\!\!&&\int_0^t |\sigma(\tilde{u}_n(s))^*\phi-\sigma(\tilde{u}(s))^*\phi|_{l^2}^2ds
\nonumber \\
=\!\!\!\!\!\!\!\!&&\int_0^t \|\langle(\sigma(\tilde{u}_n(s))-\sigma(\tilde{u}(s)))\cdot,\phi\rangle\|_{\mathcal{L}_2(l^2;\mathbb{R})}^2ds
\nonumber \\
\leq\!\!\!\!\!\!\!\!&&\int_0^t\|\sigma(\tilde{u}_n(s))-\sigma(\tilde{u}(s))\|_{\mathcal{L}_2(l^2;\mathbb{H}^{-1})}^2|\phi|_{1}^2ds,
\end{eqnarray*}
which combining with (\ref{es58}) yields  (\ref{con5}).

We complete the proof.
\end{proof}

\begin{lemma}\label{lem7}
Along a subsequence still denoted by $\{n\}$,  we have the following convergence
\begin{equation*}
\int_0^{\cdot}(g(s,\tilde{u}_n(s))  d\tilde{\hat{\mathcal{W}}}_n(s),\phi)\to\int_0^{\cdot}(g(s,\tilde{u}(s))  d\tilde{\hat{\mathcal{W}}}(s),\phi)~\text{in}~L^{\infty}([0,T];\mathbb{R})\,\,\,\,\,\, \tilde{\mathbf{P}}\text{-a.s..}
\end{equation*}
 \end{lemma}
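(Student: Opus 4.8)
The proof runs parallel to that of Lemma \ref{lem4}, with the linear coefficient $\sigma$ and the space $\mathbb{H}^{-1}$ replaced by the nonlinear map $g$ and the space $\mathbb{H}^{-1/2}$. By Lemma 4.3 in \cite{BMX} together with claims (i) and (iii) preceding (\ref{eqns1}), it suffices to prove that for every $t\in[0,T]$, along a subsequence,
\begin{equation*}
\int_0^t |g(s,\tilde u_n(s))^*\phi-g(s,\tilde u(s))^*\phi|_{l^2}^2\,ds\to 0\quad\text{in probability.}
\end{equation*}
Since $\phi\in\mathcal{V}\subset\mathbb{H}^{1/2}$, one has $|g(s,u)^*\phi-g(s,v)^*\phi|_{l^2}\leq\|g(s,u)-g(s,v)\|_{\mathcal{L}_2(l^2;\mathbb{H}^{-1/2})}|\phi|_{1/2}$, so the whole matter reduces to the analogue of (\ref{es58}), namely $\int_0^t\|g(s,\tilde u_n(s))-g(s,\tilde u(s))\|_{\mathcal{L}_2(l^2;\mathbb{H}^{-1/2})}^2\,ds\to 0$ in probability.

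The decisive new ingredient, absent in Lemma \ref{lem4} where $\sigma$ was linear, is that $g$ is nonlocal, fails to be weakly continuous, and that the Lipschitz-type bound $(\mathbf{H}_{g}^1)$ controls $\|g(s,u)-g(s,v)\|$ not only by $|u-v|_{1/2}$ but also by $|u-v|_1$. The convergence (\ref{es80}) yields strong convergence only in $L^2([0,T];\mathbb{H}^{1/2})$, which cannot absorb the $\mathbb{H}^1$-difference term. I would therefore first upgrade the convergence by interpolation: combining the strong $L^2([0,T];\mathbb{H}^{1/2})$-convergence from (\ref{es80}) with the bound (\ref{apri1}) on $\int_0^T|\tilde u_n|_{3/2}^2\,ds$ (uniform in $n$ in probability) and the inequality (\ref{es77}), one obtains, for $s=\frac12+\theta<\frac32$, the estimate $\int_0^T|\tilde u_n-\tilde u|_s^2\,ds\lesssim (\int_0^T|\tilde u_n-\tilde u|_{1/2}^2\,ds)^{1-\theta}(\int_0^T|\tilde u_n-\tilde u|_{3/2}^2\,ds)^{\theta}$, whence $\tilde u_n\to\tilde u$ in $L^2([0,T];\mathbb{H}^s)$ in probability for every $s<\frac32$, in particular in $L^2([0,T];\mathbb{H}^1)$. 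Passing to a further subsequence, $|\tilde u_n-\tilde u|_{1/2}\to 0$ and $|\tilde u_n-\tilde u|_1\to 0$ for $\tilde{\mathbf{P}}\otimes dt$-a.e. $(\omega,s)$.

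With this upgrade in hand I would mirror the three steps of Lemma \ref{lem4}. In Step~1, on the good set produced by a cut-off that keeps $\sup_{t\in[0,T]}|\tilde u_n(t)|_{1/2}+\int_0^T|\tilde u_n(s)|_{3/2}^2\,ds$ bounded, the weights in (\ref{es1})--(\ref{es0}) stay under control, so $(\mathbf{H}_{g}^1)$ combined with the a.e.\ convergence in $\mathbb{H}^{1/2}$ and $\mathbb{H}^1$ yields the operator convergence $\|g(s,\tilde u_n)-g(s,\tilde u)\|_{\mathcal{L}_2(l^2;\mathbb{H}^{-1/2})}\to 0$, the analogue of (\ref{es43}). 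In Step~2, using the growth bound (\ref{cong2}) in place of (\ref{sig4}) and a pointwise cut-off in the norms $|\tilde u_n(s)|_1+|\tilde u_n(s)|_{1/2}$ (so that the truncated integrand is bounded by a deterministic constant), the truncated functionals satisfy $\Theta_M(t,\tilde u_n)\to\Theta_M(t,\tilde u)$ by dominated convergence, while the tails $|\Theta-\Theta_M|$ are made small uniformly in $n$ by combining the a priori bounds (\ref{apri1})--(\ref{apri2}) with the interpolation (\ref{es77}) and Chebyshev's inequality, exactly in the spirit of (\ref{con7}); this gives the convergence of the integrated squared $\mathbb{H}^{-1/2}$-norms. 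In Step~3, combining the convergence of Step~1 with the norm convergence of Step~2 through the Riesz--Scheffé principle (pointwise convergence together with convergence of the $L^2$-norms forces $L^2$-convergence of the differences) produces the analogue of (\ref{es58}), and hence the claim.

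The hard part is the interplay between the nonlocality of $g$ and the lack of finite second moments. Because both the weights in $(\mathbf{H}_{g}^1)$ and the growth in (\ref{cong2}) are governed by the norms $|\cdot|_{3/2}$ and $|\cdot|_1$, for which only weak $L^2([0,T];\mathbb{H}^{3/2})$-convergence and no uniform-in-$n$, pointwise-in-$s$ control are available, a naive application of dominated convergence is impossible: the natural dominating function depends on $n$. The purpose of the cut-off is to reduce matters to a set on which the integrand is dominated by a deterministic constant, while the Riesz--Scheffé step (or, equivalently, a Vitali argument built on the cut-off and the uniform-in-probability bounds) is precisely what replaces the missing uniform domination. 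The unavoidable prices are that every convergence is obtained in probability and only along subsequences, and that the interpolation upgrade to $\mathbb{H}^1$ must be carried out before any of the three steps.
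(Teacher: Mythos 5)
Your proposal reproduces the paper's architecture: the reduction via Lemma 4.3 of \cite{BMX}, a pointwise a.e.\ convergence step for the Hilbert--Schmidt norms, a truncation argument for the convergence of the integrated squared norms, and the final ``pointwise convergence plus norm convergence implies $L^2$-convergence'' conclusion (which you call Riesz--Scheff\'e and the paper carries out implicitly). Your preliminary interpolation upgrade to strong $L^2([0,T];\mathbb{H}^1)$-convergence in probability is correct and is a genuine refinement of the paper, which instead uses the pointwise inequality $|u-v|_1^2\lesssim |u-v|_{1/2}|u-v|_{3/2}$ inside (\ref{es59}). However, your Step~2 contains a gap that the paper's proof is specifically constructed to avoid. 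You truncate with a cut-off in $|\tilde u_n(s)|_1+|\tilde u_n(s)|_{1/2}$ and control the tails ``in the spirit of (\ref{con7})''; that argument bounds the tail probability by $\tilde{\mathbf{P}}\big(\sup_{s\in[0,T]}(|\tilde u_n(s)|_1+|\tilde u_n(s)|_{1/2})>M\big)$, so it needs a uniform-in-$n$ bound on $\sup_{s\in[0,T]}|\tilde u_n(s)|_{1}$ in probability. No such bound exists: the $\mathbb{H}^1$-estimates of Lemma \ref{lem5} and (\ref{apri4}) hold only on $[\varepsilon,T]$, and indeed, since $\tilde u_n(0)=\Pi_n x$ with $x\in\mathbb{H}^{1/2}$ possibly outside $\mathbb{H}^1$, one has $|\tilde u_n(0)|_1\to\infty$, so the event $\{\sup_{s\in[0,T]}|\tilde u_n(s)|_1>M\}$ has probability one for all large $n$ and every fixed $M$. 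This is exactly why the paper's truncation is taken only in $|\tilde u_n(s)|_{1/2}$, for which (\ref{apri1}) supplies the uniform bound on all of $[0,T]$, with the $\mathbb{H}^{3/2}$-growth of (\ref{cong1}) kept inside the dominated-convergence step. Your fallback route (Chebyshev plus interpolation on the bad set) does not repair this: working in $\mathbb{H}^{-1/2}$ forces you onto (\ref{cong2}), whose growth $(1+|u|_1^{\alpha})$ with $\alpha\geq 2$ arbitrary (and $\alpha=4$ already for the motivating example (\ref{exaB1})) admits, on the good set, only an $L^1$-in-time bound via $|u|_1^4\leq |u|_{1/2}^2|u|_{3/2}^2$, with no uniform integrability; hence the integral over the small set $\{|\tilde u_n(s)|_1>M\}$ need not be small uniformly in $n$.

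A second, smaller discrepancy: in your Step~1 you assert that the weights $\rho_i,\eta_i$ of $(\mathbf{H}_{g}^1)$ ``stay under control'' on the good set. They do not, since by (\ref{es1})--(\ref{es0}) they involve $|\tilde u_n(s)|_{3/2}^2$ pointwise in $s$, whereas the good set only bounds $\int_0^T|\tilde u_n(s)|_{3/2}^2\,ds$; a.e.\ convergence in $\mathbb{H}^{1/2}$ and $\mathbb{H}^1$ alone cannot beat a weight that may blow up in $n$ at fixed $s$. The paper bridges precisely this point with the pointwise boundedness claim (\ref{es33}) (boundedness of $(\tilde u_n(\omega,t))_n$ in $\mathbb{H}^{3/2}$ for $\tilde{\mathbf{P}}\otimes dt$-a.e.\ $(\omega,t)$), which your write-up neither states nor replaces; without it, your Step~1 (and hence Step~3) does not close.
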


\begin{proof}
 Similar to Lemma \ref{lem4}, we need to show that for any $t\in[0,T]$,
\begin{equation}
\int_0^t |g(s,\tilde{u}_n(s))^*\phi-g(s,\tilde{u}(s))^*\phi|_{l^2}^2ds\to 0~\text{in probability as}~n\to\infty. \label{con3}
\end{equation}

\noindent\textbf{Step 1.} In this step,  we intend to show that  along  a subsequence still denoted by $\{n\}$,
  \begin{equation}\label{es36}
 \lim_{n\to\infty}\|g(t,\tilde{u}_n(\omega,t))-g(t,\tilde{u}(\omega,t))\|_{\mathcal{L}_2(l^2;\mathbb{H}^{1/2})}=0
\end{equation}
holds for $\tilde{\mathbf{P}}\otimes dt$-almost all $(\omega,t)\in\tilde{\Omega}\times [0,T]$.

\vspace{1mm}
Following from (\ref{es80}), we have $\tilde{u}_n\to \tilde{u}$ in $L^2_{w}([0,T];\mathbb{H}^{3/2})$ $\tilde{\mathbf{P}}$-a.s., as $n\to\infty$. Then,
we have $\tilde{u}(\omega,\cdot)\in L^2([0,T];\mathbb{H}^{3/2})$ and the sequence $(\tilde{u}_n(\omega,\cdot))_{n\geq 1}$ is bounded in $L^2([0,T];\mathbb{H}^{3/2})$, as well, for $\tilde{\mathbf{P}}$-almost all $\omega\in\tilde{\Omega}$.
Thus, it is clear that
 \begin{equation}\label{es33}
 \text{the sequence}~(\tilde{u}_n(\omega,t))_{n\geq 1}~\text{is bounded in}~ \mathbb{H}^{3/2},
 \end{equation}
 for $\tilde{\mathbf{P}}\otimes dt$-almost all $(\omega,t)\in\tilde{\Omega}\times [0,T]$.

According to the assumption $(\mathbf{H}_{g}^1)$, by the interpolation inequality we can deduce
\begin{eqnarray}\label{es59}
\!\!\!\!\!\!\!\!&&\|g(t,u)-g(t,v)\|_{\mathcal{L}_2(l^2;\mathbb{H}^{1/2})}^2
\nonumber \\
 \!\!\!\!\!\!\!\!&&\leq
(C+\rho_1(u)+\rho_2(v))|u-v|^{2}_{1/2}+(C+\eta_1(u)+\eta_2(v))|u-v|_{1/2}|u-v|_{3/2}
\nonumber \\
 \!\!\!\!\!\!\!\!&&\lesssim_{|u|_{3/2},|v|_{3/2}}|u-v|^{2}_{1/2}+|u-v|_{1/2}.
\end{eqnarray}
Collecting (\ref{es42}), (\ref{es33}) and  (\ref{es59}),
the convergence (\ref{es36}) holds.

\vspace{1mm}
\noindent\textbf{Step 2.} In this step, we  intend to show that along  a subsequence still denoted by $\{n\}$, the following convergence
\begin{equation}\label{e26}
\int_0^t\|g(s,\tilde{u}_n(\omega,s))\|_{\mathcal{L}_2(l^2;\mathbb{H}^{1/2})}^2ds\to\int_0^t\|g(s,\tilde{u}(\omega,s))\|_{\mathcal{L}_2(l^2;\mathbb{H}^{1/2})}^2ds
\end{equation}
holds for $\tilde{\mathbf{P}}$-almost all $\omega\in\tilde{\Omega}$, as $n\to\infty$.

We set
\begin{eqnarray*}
\!\!\!\!\!\!\!\!&&\varpi_M(t,w):=\int_0^t\|g(s,w(s))\|_{\mathcal{L}_2(l^2;\mathbb{H}^{1/2})}^2\chi_M(|w(s)|_{1/2})ds,
\nonumber \\
\!\!\!\!\!\!\!\!&& \varpi(t,w):=\int_0^t\|g(s,w(s))\|_{\mathcal{L}_2(l^2;\mathbb{H}^{1/2})}^2ds.
\end{eqnarray*}

First, by (\ref{es42}), (\ref{es36}), (\ref{es33}) and the continuity of $\chi_M$,   we have
\begin{eqnarray*}
\lim_{n\to\infty}\!\!\!\!\!\!\!\!&&\Big|\|g(t,\tilde{u}_n(\omega,t))\|_{\mathcal{L}_2(l^2;\mathbb{H}^{1/2})}^2\chi_M(|\tilde{u}_n(\omega,t)|_{1/2})
\nonumber \\
\!\!\!\!\!\!\!\!&&
-
\|g(t,\tilde{u}(\omega,t))\|_{\mathcal{L}_2(l^2;\mathbb{H}^{1/2})}^2\chi_M(|\tilde{u}(\omega,t)|_{1/2})\Big|= 0
\end{eqnarray*}
holds for $\tilde{\mathbf{P}}\otimes dt$-almost all $(\omega,t)\in\tilde{\Omega}\times [0,T]$.

Then, by assumption $(\mathbf{H}_{g}^4)$ and the dominated convergence theorem, it follows that
\begin{equation}\label{e28}
\varpi_M(t,\tilde{u}_n)\to \varpi_M(t,\tilde{u})\,\,\,\,\,\,\tilde{\mathbf{P}}\text{-a.s.},~\text{as}~n\to\infty.
\end{equation}
On the other hand, by the definition of $\chi_R$ we obtain that for any $\varepsilon>0$,
\begin{eqnarray}\label{es60}
\!\!\!\!\!\!\!\!&&\tilde{\mathbf{P}}\Big(|\varpi(t,\tilde{u}_n)-\varpi_M(t,\tilde{u}_n)|>\varepsilon\Big)
\nonumber \\
=\!\!\!\!\!\!\!\!&&\tilde{\mathbf{P}}\Big(|\varpi(t,\tilde{u}_n)-\varpi_M(t,\tilde{u}_n)|>\varepsilon,\sup_{t\in[0,T]}|\tilde{u}_n(t)|_{1/2}\leq M\Big)
\nonumber \\
\!\!\!\!\!\!\!\!&&+ \tilde{\mathbf{P}}\Big(|\varpi(t,\tilde{u}_n)-\varpi_M(t,\tilde{u}_n)|>\varepsilon,\sup_{t\in[0,T]}|\tilde{u}_n(t)|_{1/2}> M\Big)
\nonumber \\
\leq\!\!\!\!\!\!\!\!&& \sup_{n\in\mathbb{N}}\tilde{\mathbf{P}}\Big(\sup_{t\in[0,T]}|\tilde{u}_n(t)|_{1/2}> M\Big).
\end{eqnarray}
Due to  (\ref{apri1}), letting $n\to\infty$ then $M\to \infty$ in (\ref{es60}),  we deduce that
\begin{equation}\label{e29}
|\varpi(t,\tilde{u}_n)-\varpi_M(t,\tilde{u}_n)|\to 0~~\text{in probability}.
\end{equation}
By similar argument, we also obtain
\begin{equation}\label{e31}
|\varpi(t,\tilde{u})-\varpi_M(t,\tilde{u})|\to 0~~\text{in probability}.
\end{equation}
Collecting (\ref{e28}), (\ref{e29}) and (\ref{e31}), we conclude that (\ref{e26}) follows.

\vspace{1mm}
\noindent\textbf{Step 3.} In this step, we prove (\ref{con3}). First, combining (\ref{es36}) and (\ref{e26}), we have
\begin{equation}\label{es37}
\lim_{n\to\infty}\int_0^t\|g(s,\tilde{u}_n(\omega,s))-g(s,\tilde{u}(\omega,s))\|_{\mathcal{L}_2(l^2;\mathbb{H}^{1/2})}^2ds=0
\end{equation}
holds for $\tilde{\mathbf{P}}$-almost all $\omega\in\tilde{\Omega}$.

Note that
\begin{eqnarray*}
\!\!\!\!\!\!\!\!&&\int_0^t |g(s,\tilde{u}_n(s))^*\phi-g(s,\tilde{u}(s))^*\phi|_{l^2}^2ds
\nonumber \\
=\!\!\!\!\!\!\!\!&&\int_0^t \|((g(s,\tilde{u}_n(s))-g(s,\tilde{u}(s)))\cdot,\phi)\|_{\mathcal{L}_2(l^2;\mathbb{R})}^2ds
\nonumber \\
\leq\!\!\!\!\!\!\!\!&&\int_0^t\|g(s,\tilde{u}_n(s))-g(s,\tilde{u}(s))\|_{\mathcal{L}_2(l^2;\mathbb{H}^{1/2})}^2|\phi|_{L^2}^2ds,
\end{eqnarray*}
which combining with (\ref{es37}) yields  (\ref{con3}).

We complete the proof.
\end{proof}

\vspace{1mm}
In the sequel, we present the convergence of remaining terms in Eq.~(\ref{eqns1}).
 \begin{lemma}\label{lem6}
For any $\varphi\in L^{\infty}([0,T]\times \tilde{\Omega};\mathbb{R})$, the following convergence hold $\tilde{\mathbf{P}}$-a.s.

$(i)$ $$\lim_{n\to\infty}\int_0^T(\tilde{u}_n(t)-\tilde{u}(t),\varphi(t)\phi)dt=0;$$

\vspace{1mm}
$(ii)$ $$\lim_{n\to\infty}\int_0^T( \nabla\tilde{u}_n(t)-\nabla\tilde{u}(t),\varphi(t)\nabla\phi)dt=0;$$

\vspace{1mm}
$(iii)$ $$\lim_{n\to\infty}\int_0^T(B( \tilde{u}_n(t))-B( \tilde{u}(t)),\varphi(t)\phi) dt=0.$$
 \end{lemma}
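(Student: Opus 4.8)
The plan is to deduce all three limits from the single strong convergence contained in (\ref{es80}), namely $\tilde u_n\to\tilde u$ in $L^2([0,T];\mathbb{H}^{1/2})$, working for $\tilde{\mathbf P}$-a.e.\ fixed $\omega$ so that the almost sure statement follows. The recurring device is that $\phi\in\mathcal V$ is smooth and fixed in time: I shift every spatial derivative off $\tilde u_n,\tilde u$ and onto $\phi$, thereby reducing each pairing to the $L^2$-inner product in space. There the embedding $\mathbb{H}^{1/2}\hookrightarrow\mathbb{H}$ gives $|w|_{L^2}\lesssim|w|_{1/2}$, so that $\|\tilde u_n-\tilde u\|_{L^2([0,T];\mathbb{H})}\lesssim\|\tilde u_n-\tilde u\|_{L^2([0,T];\mathbb{H}^{1/2})}\to 0$. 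Throughout, the factor $\varphi$ contributes only its bound $\|\varphi\|_{L^\infty([0,T]\times\tilde\Omega)}$.

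For (i) nothing must be shifted: by the Cauchy--Schwarz inequality in space and in time,
\[
\Big|\int_0^T(\tilde u_n(t)-\tilde u(t),\varphi(t)\phi)\,dt\Big|
\le\|\varphi\|_{L^\infty}\,|\phi|_{L^2}\,\sqrt T\,\|\tilde u_n-\tilde u\|_{L^2([0,T];\mathbb{H})}\longrightarrow 0.
\]
For (ii) I integrate by parts on $\mathbb{T}^3$: since $\phi\in\mathcal V$ and there is no boundary, $(\nabla w,\nabla\phi)=(w,\Lambda^2\phi)$ with $\Lambda^2\phi=-\Delta\phi\in L^2$. Hence (ii) is precisely (i) with $\phi$ replaced by the smooth field $\Lambda^2\phi$, and the identical estimate closes it.

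For (iii), the nonlinear term, I invoke the antisymmetry (\ref{cb2}), $b(u,v,w)=-b(u,w,v)$, to write $(B(w),\phi)=b(w,w,\phi)=-b(w,\phi,w)$, which moves the derivative onto $\phi$. Subtracting and splitting the difference yields
\[
(B(\tilde u_n),\phi)-(B(\tilde u),\phi)=-b(\tilde u_n-\tilde u,\phi,\tilde u_n)-b(\tilde u,\phi,\tilde u_n-\tilde u).
\]
Using the pointwise bound $|b(u,\phi,w)|\lesssim|\nabla\phi|_{L^\infty}|u|_{L^2}|w|_{L^2}$, then multiplying by $\varphi(t)$, integrating over $[0,T]$, and applying Cauchy--Schwarz in time, I obtain
\[
\Big|\int_0^T(B(\tilde u_n)-B(\tilde u),\varphi\phi)\,dt\Big|
\lesssim_{\varphi,\phi}\|\tilde u_n-\tilde u\|_{L^2([0,T];\mathbb{H})}\big(\|\tilde u_n\|_{L^2([0,T];\mathbb{H})}+\|\tilde u\|_{L^2([0,T];\mathbb{H})}\big).
\]
For $\tilde{\mathbf P}$-a.e.\ fixed $\omega$ the convergent sequence $(\tilde u_n)$ is bounded in $L^2([0,T];\mathbb{H}^{1/2})$ and $\tilde u\in L^2([0,T];\mathbb{H}^{1/2})$, so the last factor stays bounded while the first tends to $0$, giving (iii). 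This nonlinear term is the main obstacle: only strong $L^2([0,T];\mathbb{H}^{1/2})$ (hence $L^2$-in-space) convergence of $\tilde u_n$ is available, with no strong convergence of $\nabla\tilde u_n$, and it is exactly the antisymmetry of $b$ that renders this low-regularity convergence sufficient.
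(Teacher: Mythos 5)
Your proposal is correct and follows essentially the same route as the paper: for (iii) you use exactly the paper's bilinear splitting $B(\tilde u_n)-B(\tilde u)=B(\tilde u_n-\tilde u,\tilde u_n)+B(\tilde u,\tilde u_n-\tilde u)$, move the derivative onto $\phi$ via the antisymmetry of $b$, and close with the strong $L^2([0,T];\mathbb{H})$ convergence together with boundedness of the sequence. The only cosmetic difference is in the linear terms (i)--(ii), where the paper simply quotes the weak convergence $\tilde u_n\rightharpoonup\tilde u$ in $L^2_w([0,T];\mathbb{H}^{3/2})$ from (\ref{es80}), whereas you use the strong $L^2([0,T];\mathbb{H}^{1/2})$ convergence plus integration by parts on $\mathbb{T}^3$; both are legitimate consequences of the same convergence statement.
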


\begin{proof}
Let us fix $\phi\in\mathcal{V}$.   Since by (\ref{es80}), $\tilde{u}_n\to\tilde{u}$  in $L^2_w([0,T];\mathbb{H}^{3/2})$ $\tilde{\mathbf{P}}$-a.s., which implies that the claims (i) and (ii) hold.

Now we focus on (iii). As stated in the proof of Lemma \ref{lem7}, the sequence $(\tilde{u}_n)_{n\geq 1}$ is bounded in $L^2([0,T];\mathbb{H}^{3/2})$
and by  (\ref{es80}) we have $\tilde{u}_n\to\tilde{u}$  in $L^2([0,T];\mathbb{H}^{1/2})$ ~$\tilde{\mathbf{P}}$-a.s.. Moreover, for all $t\in[0,T]$,
\begin{eqnarray*}
\!\!\!\!\!\!\!\!&&\int_0^t(B( \tilde{u}_n(s))-B( \tilde{u}(s)),\varphi(s)\phi) ds
\nonumber \\
=\!\!\!\!\!\!\!\!&&\int_0^t(B( \tilde{u}_n(s)-\tilde{u}(s),\tilde{u}_n(s)),\varphi(s)\phi)ds+   \int_0^t(B( \tilde{u}(s),\tilde{u}_n(s)-\tilde{u}(s)),\varphi(s)\phi) ds
\nonumber \\
\lesssim\!\!\!\!\!\!\!\!&&\big(\|\tilde{u}_n\|_{L^2([0,T];\mathbb{H})}+\|\tilde{u}\|_{L^2([0,T];\mathbb{H})}\big)\|\tilde{u}_n-\tilde{u}\|_{L^2([0,T];\mathbb{H})}|\nabla\phi|_{L^{\infty}}|\varphi|_{L^{\infty}}
\nonumber \\
\lesssim\!\!\!\!\!\!\!\!&&\big(\|\tilde{u}_n\|_{L^2([0,T];\mathbb{H})}+\|\tilde{u}\|_{L^2([0,T];\mathbb{H})}\big)\|\tilde{u}_n-\tilde{u}\|_{L^2([0,T];\mathbb{H})}|\phi|_{\gamma}|\varphi|_{L^{\infty}},
\end{eqnarray*}
where $\gamma>\frac{5}{2}$. Since $\phi\in\mathcal{V}$, we can conclude that the claim (iii) holds.

We complete the proof.
\end{proof}

Now, we have all ingredients to pass to the limit by the following argument. Collecting Lemmas \ref{lem4}-\ref{lem6}, for any $\phi\in\mathcal{V}$, $\varphi\in L^{\infty}([0,T]\times \tilde{\Omega};\mathbb{R})$ we obtain
\begin{eqnarray*}
\!\!\!\!\!\!\!\!&&\int_0^T( \tilde{u}(t),\varphi(t)\phi) dt
\nonumber \\
=\!\!\!\!\!\!\!\!&&\lim_{n\to\infty}\int_0^T( \tilde{u}_{n}(t),\varphi(t)\phi) dt
\nonumber \\
=\!\!\!\!\!\!\!\!&&\lim_{n\to\infty}\Bigg((x_n,\phi) \int_0^T\varphi(t)dt-
\int_0^T\int_0^t( \nabla\tilde{u}_n(s),\varphi(t)\nabla\phi) dsdt
\nonumber \\
\!\!\!\!\!\!\!\!&&-
\int_0^T\int_0^t(B( \tilde{u}_n(s)),\varphi(t)\phi) dsdt
\nonumber \\
\!\!\!\!\!\!\!\!&&
+\int_0^T   (   \int_0^t   \sigma(\tilde{u}_n(s))  d\tilde{\mathcal{W}}_n(s),\varphi(t) \phi)  dt
\nonumber \\
\!\!\!\!\!\!\!\!&&+\int_0^T   (   \int_0^t   g(s,\tilde{u}_n(s))  d\tilde{\hat{\mathcal{W}}}_n(s),\varphi(t) \phi)  dt\Bigg)
\nonumber \\
=\!\!\!\!\!\!\!\!&&\lim_{n\to\infty}\Bigg((x_n,\phi) \int_0^T\varphi(t)dt-
\int_0^T( \nabla\tilde{u}_n(s),\int_s^T\varphi(t)dt\cdot\nabla\phi) ds
\nonumber \\
\!\!\!\!\!\!\!\!&&-
\int_0^T(B( \tilde{u}_n(s)),\int_s^T\varphi(t) dt\cdot\phi) dsdt
\nonumber \\
\!\!\!\!\!\!\!\!&&
+\int_0^T   (   \int_0^t   \sigma(\tilde{u}_n(s))  d\tilde{\mathcal{W}}_n(s),\varphi(t) \phi)  dt
\nonumber \\
\!\!\!\!\!\!\!\!&&+\int_0^T   (   \int_0^t   g(s,\tilde{u}_n(s))  d\tilde{\hat{\mathcal{W}}}_n(s),\varphi(t) \phi)  dt\Bigg)
\nonumber \\
=\!\!\!\!\!\!\!\!&&\int_0^T(x,\phi)\cdot\varphi(t) dt-
\int_0^T \int_0^t(\nabla\tilde{u}(s),\nabla\phi)ds\cdot\varphi(t) dt
\nonumber \\
\!\!\!\!\!\!\!\!&&-
\int_0^T\int_0^t(B( \tilde{u}(s)),\phi)ds\cdot\varphi(t) dt
\nonumber \\
\!\!\!\!\!\!\!\!&&
+\int_0^T      \int_0^t   (\sigma(\tilde{u}(s))  d\tilde{\mathcal{W}}(s), \phi)\cdot\varphi(t)  dt
\nonumber \\
\!\!\!\!\!\!\!\!&&+\int_0^T      \int_0^t   (g(s,\tilde{u}(s))  d\tilde{\hat{\mathcal{W}}}(s), \phi)\cdot\varphi(t)  dt\,\,\,\,\,\,\tilde{\mathbf{P}}\text{-a.s.}.
\end{eqnarray*}
Hence, we can define
\begin{eqnarray}\label{es61}
\bar{u}(t):=\!\!\!\!\!\!\!\!&& x-\int_0^t \big[\mathcal{A}\tilde{u}(s)+B( \tilde{u}(s))\big]ds+ \int_0^t\sigma(\tilde{u}(s))  d\tilde{\mathcal{W}}(s)
\nonumber \\
\!\!\!\!\!\!\!\!&&
+\int_0^tg(s,\tilde{u}(s))  d\tilde{\hat{\mathcal{W}}}(s),~~t\in[0,T].
\end{eqnarray}
It is clear that
\begin{equation}\label{es62}
\tilde{u}=\bar{u}\,\,\,\,\,\,\tilde{\mathbf{P}}\otimes dt\text{-a.e.}.
\end{equation}

%Recall
% \begin{equation}\label{conti1}
%\tilde{u}\in C((0,T];\mathbb{H}^{-1/2})\,\,\,\,\,\,\tilde{\mathbf{P}}\text{-a.s.}.
%  \end{equation}

  We also derive the following  continuity result.
\begin{lemma}
 \begin{equation}\label{conti2}
 \bar{u}\in \mathbb{C}_T(\mathbb{H}^{-1/2})\,\,\,\,\,\, \tilde{\mathbf{P}}\text{-a.s.}.
  \end{equation}
  \end{lemma}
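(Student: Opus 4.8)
The plan is to show that $\bar u$, defined by the mild-type formula \eqref{es61}, has a modification that is continuous with values in $\mathbb{H}^{-1/2}$ by treating each of the four terms on the right-hand side separately and checking continuity in the weaker space $\mathbb{H}^{-3/2}$ first, then upgrading. First I would note that the initial datum $x$ is constant in $t$ and hence trivially continuous. For the two drift terms, I would use the a priori bounds \eqref{apri2}, which guarantee that $\tilde u \in L^2([0,T];\mathbb{H}^{3/2})$ $\tilde{\mathbf{P}}$-a.s.; consequently $\mathcal{A}\tilde u(s) = \Lambda^2 \tilde u(s) \in L^2([0,T];\mathbb{H}^{-1/2})$, and by the estimate $|B(u)|_{-1/2}\lesssim |u|_{1/2}|u|_{3/2}$ (a consequence of \eqref{2.2}) together with $\tilde u \in L^\infty([0,T];\mathbb{H}^{1/2})$, the map $s\mapsto B(\tilde u(s))$ lies in $L^2([0,T];\mathbb{H}^{-1/2})$ as well. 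Hence the Bochner integrals $t\mapsto \int_0^t[\mathcal{A}\tilde u(s)+B(\tilde u(s))]\,ds$ are absolutely continuous, in particular continuous, in $\mathbb{H}^{-1/2}$.

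For the two stochastic integrals I would invoke the Hilbert-space stochastic integral continuity results: a stochastic integral $\int_0^\cdot \Xi(s)\,dW(s)$ against a cylindrical Wiener process has a continuous modification in the target Hilbert space provided the integrand is almost surely square-integrable in the Hilbert–Schmidt norm. For the transport term, \eqref{sig3} gives $\|\sigma(\tilde u(s))\|_{\mathcal{L}_2(l^2;\mathbb{H}^{-1/2})}^2\lesssim_{M_0}|\tilde u(s)|_1^2$, and since $\tilde u\in L^2([0,T];\mathbb{H}^{3/2})\hookrightarrow L^2([0,T];\mathbb{H}^{1})$ by interpolation, the integrand is $\tilde{\mathbf{P}}$-a.s.\ in $L^2([0,T];\mathcal{L}_2(l^2;\mathbb{H}^{-1/2}))$; the stochastic convolution is therefore a continuous $\mathbb{H}^{-1/2}$-valued process. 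For the turbulent term, the bound \eqref{cong2} from $(\mathbf{H}_g^4)$ gives $\|g(s,\tilde u(s))\|_{\mathcal{L}_2(l^2;\mathbb{H}^{-1/2})}^2\lesssim (1+|\tilde u(s)|_1^\alpha)(1+|\tilde u(s)|_{1/2}^\beta)$, and combining $\tilde u\in L^\infty([0,T];\mathbb{H}^{1/2})$ with $\tilde u\in L^2([0,T];\mathbb{H}^{3/2})$ controls this in $L^1([0,T])$ after using interpolation to dominate the $\mathbb{H}^1$-powers by an integrable expression; the same continuity theorem then applies.

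The main obstacle I anticipate is precisely the integrability of the stochastic integrands: because only fractional moments are available (Lemmas \ref{lem3.0} and \ref{lem5} provide no finite second moment of $|\tilde u|_{1/2}$, and only a logarithmic moment of $|\tilde u|_1$), I cannot directly take expectations of the Hilbert–Schmidt norms to invoke a factorization or maximal-inequality argument in expectation. The resolution is to argue pathwise: it suffices that for $\tilde{\mathbf{P}}$-almost every $\omega$ the integrand lies in $L^2([0,T];\mathcal{L}_2(l^2;\mathbb{H}^{-1/2}))$, which follows from \eqref{apri2} without needing any moment control, since \eqref{apri2} says that $\sup_{t}|\tilde u(t)|_{1/2}$ and $\|\tilde u\|_{L^2([0,T];\mathbb{H}^{3/2})}$ are $\tilde{\mathbf{P}}$-a.s.\ finite. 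Thus the continuity of the stochastic integrals holds almost surely on the event where these quantities are finite, and since that event has probability one, $\bar u$ is $\tilde{\mathbf{P}}$-a.s.\ the sum of four $\mathbb{H}^{-1/2}$-continuous processes, giving \eqref{conti2}.
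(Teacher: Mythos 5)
Your overall strategy---reduce everything to pathwise ($\tilde{\mathbf{P}}$-a.s.) square-integrability in time of the drift in $\mathbb{H}^{-1/2}$ and of the stochastic integrands in Hilbert--Schmidt norm, then invoke continuity of Bochner integrals and of stochastic integrals with a.s. square-integrable integrands---is in substance the same as the paper's. The paper merely makes the localization explicit through the stopping times $\tilde{\tau}^{\Phi}_R$, $\tilde{\tau}^{\sigma}_R$, $\tilde{\tau}^{g}_R$, the convergence \eqref{es69}, and the stopped identity \eqref{es65}; this explicit localization is exactly the construction behind the ``continuous local martingale'' statement you cite, so that part of your reasoning is sound. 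Your treatment of the initial datum, of the drift (via $|B(u)|_{-1/2}\lesssim|u|_{1/2}|u|_{3/2}$, matching \eqref{es63}), and of the transport term via \eqref{sig3} together with $\tilde{u}\in L^2([0,T];\mathbb{H}^{1})$ a.s. are all fine.

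The genuine gap is in the turbulent term. You bound $\|g(s,\tilde{u}(s))\|^2_{\mathcal{L}_2(l^2;\mathbb{H}^{-1/2})}$ by \eqref{cong2}, i.e. by $(1+|\tilde{u}(s)|_{1}^{\alpha})(1+|\tilde{u}(s)|_{1/2}^{\beta})$, and claim that interpolation makes this a.s. integrable in time. But $(\mathbf{H}_{g}^4)$ only requires $\alpha\geq 2$, and the constant may be large: interpolation gives $|\tilde{u}|_{1}^{\alpha}\lesssim |\tilde{u}|_{1/2}^{\alpha/2}|\tilde{u}|_{3/2}^{\alpha/2}$, and the pathwise information from \eqref{apri2} (namely $\tilde{u}\in L^{\infty}([0,T];\mathbb{H}^{1/2})\cap L^{2}([0,T];\mathbb{H}^{3/2})$ a.s.) controls $\int_0^T|\tilde{u}(t)|_{3/2}^{\alpha/2}\,dt$ only when $\alpha/2\leq 2$, i.e. $\alpha\leq 4$. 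For $\alpha>4$, which the hypothesis permits, your dominating expression need not be in $L^1([0,T])$ and the step fails. The repair is precisely what the paper does: estimate $g$ in the \emph{stronger} Hilbert--Schmidt norm into $\mathbb{H}^{1/2}$ using \eqref{cong1}, $\|g(s,u)\|^2_{\mathcal{L}_2(l^2;\mathbb{H}^{1/2})}\lesssim(1+|u|_{3/2}^{2})(1+|u|_{1/2}^{\beta})$, which by \eqref{apri2} is a.s. in $L^1([0,T])$ (this is the computation \eqref{es64}); since $\mathbb{H}^{1/2}\hookrightarrow\mathbb{H}^{-1/2}$, this yields the a.s. square-integrability of the integrand in $\mathcal{L}_2(l^2;\mathbb{H}^{-1/2})$ that your continuity theorem requires, and the rest of your argument then goes through unchanged.
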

  \begin{proof}
We define
$$\tilde{\Phi}(t):=-\mathcal{A}\tilde{u}(t)-B( \tilde{u}(t))  $$
and the stopping times
\begin{eqnarray*}
\!\!\!\!\!\!\!\!&&\tilde{\tau}^{\Phi}_R:=\inf\Bigg\{t\in[0,T]:\int_0^t|\tilde{\Phi}(s)|_{-1/2}^{2}ds\geq R\Bigg\}\wedge T,~~R>0,
\nonumber \\
\!\!\!\!\!\!\!\!&&\tilde{\tau}^{\sigma}_R:=\inf\Bigg\{t\in[0,T]:\int_0^t\|\sigma(\tilde{u}(s))\|_{\mathcal{L}_2(l^2;\mathbb{H}^{1/2})}^{2}ds\geq R\Bigg\}\wedge T,~~R>0,
\nonumber \\
\!\!\!\!\!\!\!\!&&\tilde{\tau}^{g}_R:=\inf\Bigg\{t\in[0,T]:\int_0^t\|g(s,\tilde{u}(s))\|_{\mathcal{L}_2(l^2;\mathbb{H}^{1/2})}^{2}ds\geq R\Bigg\}\wedge T,~~R>0.
\end{eqnarray*}
Notice that
\begin{eqnarray}\label{es63}
\!\!\!\!\!\!\!\!&&\int_0^T|\tilde{\Phi}(t)|_{-1/2}^{2}dt
\nonumber \\
\!\!\!\!\!\!\!\!&&\lesssim \int_0^T(1+|\tilde{u}(t)|_{1/2}^{2})(1+|\tilde{u}(t)|_{3/2}^{2})dt
\nonumber \\
\!\!\!\!\!\!\!\!&&
\lesssim\big(1+\|\tilde{u}\|_{L^{\infty}([0,T];\mathbb{H}^{1/2})}^{2}\big)\int_0^T(1+|\tilde{u}(t)|_{3/2}^{2})dt
\nonumber \\
\!\!\!\!\!\!\!\!&&
\lesssim_T1+\|\tilde{u}\|_{L^{\infty}([0,T];\mathbb{H}^{1/2})}^{4}+\bigg(\int_0^T|\tilde{u}(t)|_{3/2}^{2}dt\bigg)^2.
\end{eqnarray}
Similarly, by Hypothesis \ref{h2} and $(\mathbf{H}_{g}^4)$ we have
\begin{eqnarray}\label{es64}
\!\!\!\!\!\!\!\!&&\int_0^T\|\sigma(\tilde{u}(t))\|_{\mathcal{L}_2(l^2;\mathbb{H}^{1/2})}^{2}dt+\int_0^T\|g(t,\tilde{u}(t))\|_{\mathcal{L}_2(l^2;\mathbb{H}^{1/2})}^{2}dt
\nonumber \\
\!\!\!\!\!\!\!\!&&\lesssim \int_0^T(1+|\tilde{u}(t)|_{1/2}^{\beta})(1+|\tilde{u}(t)|_{3/2}^{2})dt
\nonumber \\
\!\!\!\!\!\!\!\!&&
\lesssim\big(1+\|\tilde{u}\|_{L^{\infty}([0,T];\mathbb{H}^{1/2})}^{\beta}\big)\int_0^T(1+|\tilde{u}(t)|_{3/2}^{2})dt
\nonumber \\
\!\!\!\!\!\!\!\!&&
\lesssim_T1+\|\tilde{u}\|_{L^{\infty}([0,T];\mathbb{H}^{1/2})}^{2\beta}+\bigg(\int_0^T|\tilde{u}(t)|_{3/2}^{2}dt\bigg)^2.
\end{eqnarray}
Combining (\ref{es63})-(\ref{es64}) and  utilizing   the estimate (\ref{apri2}), we can see
\begin{equation}\label{es69}
\tilde{\tau}_R:=\tilde{\tau}^{\Phi}_R\wedge\tilde{\tau}^{\sigma}_R\wedge\tilde{\tau}^{g}_R \xrightarrow{R\to\infty} T\,\,\,\,\,\,\tilde{\mathbf{P}}\text{-a.s.}.
\end{equation}
Now, consider the stopped process
\begin{eqnarray}\label{es65}
\bar{u}(t\wedge\tilde{\tau}_R)=\!\!\!\!\!\!\!\!&& x-\int_0^t \mathbf{1}_{\{s\leq\tilde{\tau}_R\} }\big[\mathcal{A}\tilde{u}(s)+B( \tilde{u}(s))\big]ds
\nonumber \\
\!\!\!\!\!\!\!\!&&
+ \int_0^t\mathbf{1}_{\{s\leq\tilde{\tau}_R\} }\sigma(\tilde{u}(s))  d\tilde{\mathcal{W}}(s)
+\int_0^t\mathbf{1}_{\{s\leq\tilde{\tau}_R\} }g(s,\tilde{u}(s))  d\tilde{\hat{\mathcal{W}}}(s).
\end{eqnarray}
It is clear that $\bar{u}(\cdot\wedge\tilde{\tau}_R)\in C([0,T];\mathbb{H}^{-1/2})$, which implies  $\bar{u}\in C([0,\tilde{\tau}_R];\mathbb{H}^{-1/2})$. Taking $R\to \infty$ and by (\ref{es69}), the claim follows.
\end{proof}

\vspace{1mm}
Note that by (\ref{es62}), for any $t\in[0,T]$, we have
\begin{eqnarray}\label{conti1}
\bar{u}(t)=\!\!\!\!\!\!\!\!&& x-\int_0^t \big[\mathcal{A}\bar{u}(s)+B( \bar{u}(s))\big]ds+ \int_0^t\sigma(\bar{u}(s))  d\tilde{\mathcal{W}}(s)
\nonumber \\
\!\!\!\!\!\!\!\!&&
+\int_0^tg(s,\bar{u}(s))  d\tilde{\hat{\mathcal{W}}}(s)\,\,\,\,\,\,\tilde{\mathbf{P}}\text{-a.s.}.
\end{eqnarray}
Consequently, combining (\ref{conti2}) and (\ref{conti1}), we deduce that
%\begin{equation}\label{es66}
%\tilde{u}(t)=\bar{u}(t),~t\in(0,T],~~\tilde{\mathbf{P}}\text{-a.s.}.
%\end{equation}
%In view of the equality (\ref{es61}) and (\ref{es66}),  we obtain that
$\bar{u}$ satisfies the equality  (\ref{de2}) in Definition  \ref{dew}, i.e.,  $\tilde{\mathbf{P}}$\text{-a.s.}
\begin{eqnarray}\label{es67}
\!\!\!\!\!\!\!\!&&(\bar{u}(t),\phi)+\int_0^t (B(\bar{u}(s)),\phi) ds+\int_0^t (\nabla \bar{u}(s),\nabla\phi)ds
\nonumber \\
\!\!\!\!\!\!\!\!&&=
(x,\phi)+\int_0^t (\sigma(\bar{u}(s))d \tilde{\mathcal{W}}(s),\phi)+\int_0^t (g(s,\bar{u}(s))d \tilde{\hat{\mathcal{W}}}(s),\phi),~~t\in[0,T],
\end{eqnarray}
holds for  all $\phi\in \mathcal{V}$.

\subsection{Proof of Theorem \ref{thg}}\label{sub2.6}
In this part, we first prove the existence of weak solutions in the sense of Definition \ref{dew} to Eq.~(\ref{sns1}) and then prove the pathwise uniqueness, which yield Theorem \ref{thg} in terms of the infinite-dimensional version of Yamada-Watanabe theorem (cf.~\cite{RSZ}).

To establish the existence of weak solutions, in view of (\ref{es67}) it is sufficient  to show that  $\bar{u}$ satisfies the estimate (\ref{esap})  and $\bar{u}\in \mathbb{C}_T(\mathbb{H}^{1/2})$  $\tilde{\mathbf{P}}$-a.s..

\vspace{1mm}
\noindent\textbf{Proof of existence of weak solutions.}   First, we recall
 \begin{equation}\label{conti3}
\tilde{u}\in C((0,T];\mathbb{H}^{-1/2})\,\,\,\,\,\,\tilde{\mathbf{P}}\text{-a.s.}.
  \end{equation}
Combining (\ref{es62}),  (\ref{conti2}) and (\ref{conti3}),  we derive
\begin{equation}\label{es66}
\tilde{u}(t)=\bar{u}(t),~t\in(0,T],~~\tilde{\mathbf{P}}\text{-a.s.}.
\end{equation}
Using the estimate (\ref{apri2}), the equality (\ref{es62}) and (\ref{es66}), it follows that for any $\varepsilon>0$  there exists  $\mathcal{K}>0$  such that
\begin{eqnarray}\label{es68}
\!\!\!\!\!\!\!\!&&\tilde{\mathbf{P}}\Bigg(\sup_{t\in[0,T]}|\bar{u}(t)|_{1/2}^p+\|\bar{u}\|_{L^{2}([0,T];\mathbb{H}^{3/2})}^{2}dt\geq \mathcal{K}\Bigg)
\nonumber \\
\leq\!\!\!\!\!\!\!\!&&\tilde{\mathbf{P}}\Bigg(|x|_{1/2}^p+\sup_{t\in(0,T]}|\tilde{u}(t)|_{1/2}^p+\|\tilde{u}\|_{L^{2}([0,T];\mathbb{H}^{3/2})}^{2}dt\geq \mathcal{K}\Bigg)
\nonumber \\
=\!\!\!\!\!\!\!\!&&\tilde{\mathbf{P}}\Bigg(\sup_{t\in(0,T]}|\tilde{u}(t)|_{1/2}^p+\|\tilde{u}\|_{L^{2}([0,T];\mathbb{H}^{3/2})}^{2}dt\geq \mathcal{K}-|x|_{1/2}^p\Bigg)
\nonumber \\
\leq\!\!\!\!\!\!\!\!&&\varepsilon.
\end{eqnarray}

Next, we turn  to prove
\begin{equation}\label{es41}
\bar{u}\in \mathbb{C}_T(\mathbb{H}^{1/2})\,\,\,\,\,\,\tilde{\mathbf{P}}\text{-a.s.}.
 \end{equation}
We denote $\bar{\tau}_R$  by
\begin{equation*}
\bar{\tau}_R:=\inf\Bigg\{t\in[0,T]:|\bar{u}(t)|_{1/2}+\int_0^t |\bar{u}(s)|_{3/2}^{2}ds\geq R\Bigg\}\wedge T,~~R>0.
\end{equation*}

\begin{lemma}
$\bar{\tau}_R$ is an $(\tilde{\mathscr{F}}_t)$-stopping time.
\end{lemma}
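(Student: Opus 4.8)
The plan is to realise $\bar\tau_R$ as the d\'ebut (first entry time) of a Borel threshold by a real-valued progressively measurable process, and then exploit that $(\tilde{\mathscr{F}}_t)$ satisfies the usual conditions. Set
\[
G(t):=|\bar u(t)|_{1/2}+\int_0^t|\bar u(s)|_{3/2}^{2}\,ds ,\qquad t\in[0,T],
\]
so that $\bar\tau_R=\inf\{t\in[0,T]:G(t)\ge R\}\wedge T$. The whole difficulty is that, at this point of the argument, $\bar u$ is only known to be continuous with values in $\mathbb{H}^{-1/2}$ (by \eqref{conti2}) together with the a priori bounds \eqref{apri2}; the $\mathbb{H}^{1/2}$-continuity of $\bar u$ is precisely what is proved only afterwards, so $G$ cannot be treated as continuous and must instead be shown to have lower semicontinuous, adapted paths.

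First I would record the two structural facts currently available: $\bar u$ is $(\tilde{\mathscr{F}}_t)$-adapted and lies in $C([0,T];\mathbb{H}^{-1/2})$ $\tilde{\mathbf P}$-a.s.\ by \eqref{conti2}, while \eqref{apri2} gives $\bar u\in L^{\infty}([0,T];\mathbb{H}^{1/2})\cap L^{2}([0,T];\mathbb{H}^{3/2})$ $\tilde{\mathbf P}$-a.s. Next I would prove that the functional $\Psi\colon\mathbb{H}^{-1/2}\to[0,+\infty]$, defined by $\Psi(v)=|v|_{1/2}$ when $v\in\mathbb{H}^{1/2}$ and $\Psi(v)=+\infty$ otherwise, is lower semicontinuous on $\mathbb{H}^{-1/2}$. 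Indeed, writing $\Psi(v)^2=\sum_{k}|k|\,|\hat v_k|^2=\sup_{N}\sum_{|k|\le N}|k|\,|\hat v_k|^2$ exhibits $\Psi^2$ as an increasing supremum of the finite-dimensional functionals $v\mapsto\sum_{|k|\le N}|k|\,|\hat v_k|^2$, each of which is continuous for the $\mathbb{H}^{-1/2}$-topology (convergence in $\mathbb{H}^{-1/2}$ forces convergence of every fixed Fourier mode, so a finite sum of modes is continuous); a supremum of continuous maps is lower semicontinuous. Composing with the a.s.-continuous path $t\mapsto\bar u(t)\in\mathbb{H}^{-1/2}$ shows that $t\mapsto|\bar u(t)|_{1/2}$ is lower semicontinuous, hence Borel, on $[0,T]$. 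The second summand $t\mapsto\int_0^t|\bar u(s)|_{3/2}^2\,ds$ is continuous and nondecreasing (finite a.s.\ by \eqref{apri2}), so $G$ has lower semicontinuous paths.

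For the measurability I would argue that $\bar u$, being an $\mathbb{H}^{-1/2}$-valued continuous adapted process, is progressively measurable, so that $t\mapsto\Psi(\bar u(t))=|\bar u(t)|_{1/2}$ remains progressively measurable after composition with the Borel map $\Psi$; the integral term is continuous and adapted, hence progressive as well. Thus $G$ is a real-valued progressively measurable process on a probability space equipped with the completed, right-continuous filtration $(\tilde{\mathscr{F}}_t)$. Applying the D\'ebut theorem to the Borel set $[R,\infty)$ yields that $\inf\{t:G(t)\in[R,\infty)\}$ is an $(\tilde{\mathscr{F}}_t)$-stopping time, and intersecting with the deterministic time $T$ preserves this property, so $\bar\tau_R$ is an $(\tilde{\mathscr{F}}_t)$-stopping time.

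The main obstacle is exactly the loss of continuity in the critical norm: since $\mathbb{H}^{1/2}$-continuity of $\bar u$ is not yet at our disposal (assuming it here would be circular), $t\mapsto|\bar u(t)|_{1/2}$ is genuinely only lower semicontinuous, and the elementary "continuous adapted process hits a closed set" criterion does not apply. One must therefore rely on the lower semicontinuous/progressively measurable structure together with the usual conditions on the filtration (the D\'ebut theorem), and I would take care with the $\ge R$ versus $>R$ distinction, which is precisely the point where lower semicontinuity alone, without the completeness of the filtration, would be insufficient.
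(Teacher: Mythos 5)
Your proof is correct, and it reaches the conclusion by a genuinely different mechanism than the paper. The shared core is the observation that, with only (\ref{conti2}) and (\ref{apri2}) available, $t\mapsto|\bar u(t)|_{1/2}$ must be handled as a lower semicontinuous (not continuous) path. You establish this by writing $|v|_{1/2}^2=\sup_N\sum_{|k|\le N}|k||\hat v_k|^2$ as an increasing supremum of $\mathbb{H}^{-1/2}$-continuous functionals, which is cleaner than the paper's route: the paper deduces lower semicontinuity from weak continuity of $\bar u$ in $\mathbb{H}^{1/2}$ (which implicitly needs the a.s.\ $L^\infty([0,T];\mathbb{H}^{1/2})$ bound for the path even to take values in $\mathbb{H}^{1/2}$), whereas your truncation argument needs no a priori bound and gracefully assigns the value $+\infty$ off $\mathbb{H}^{1/2}$; you also treat the full functional $G$ including the integral term, which the paper's displayed set identity silently drops. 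Where the proofs truly diverge is the passage from lower semicontinuity to the stopping-time property: you invoke progressive measurability of $G$ and the D\'ebut theorem, a heavier tool whose proof rests on the section/projection theorem and which requires both completeness and right-continuity of $(\tilde{\mathscr{F}}_t)$; the paper instead argues by hand, identifying $\{\rho_R\ge t\}$ with a countable intersection over rational times (legitimate precisely because of lower semicontinuity) and then using only right-continuity, $\{\rho_R\le t\}=\bigcap_{\varepsilon>0}\{\rho_R<t+\varepsilon\}\in\tilde{\mathscr{F}}_{t+}=\tilde{\mathscr{F}}_t$. The paper's argument is more elementary and self-contained; yours is shorter at the cost of quoting a deeper theorem and of assuming the completed filtration, an assumption consistent with, but slightly stronger than, what the paper's own argument uses.
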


 \begin{proof}
Since the embedding $\mathbb{H}^{1/2}\subset\mathbb{H}^{-1/2}$ is continuous and dense, by (\ref{conti2})  it is known that $\bar{u}$ is weakly continuous in $\mathbb{H}^{1/2}$, so that $|\bar{u}(\cdot)|_{1/2}$ is lower semicontinuous.

Denote
\begin{equation*}
\rho_R:=\inf\Bigg\{t\geq0:|\bar{u}(t)|_{1/2}+\int_0^t |\bar{u}(s)|_{3/2}^{2}ds\geq R\Bigg\},~~R>0.
\end{equation*}
 We claim that for any $t>0$
 \begin{equation*}
 \{\rho_R\geq t\}=\bigcap_{s\in[0,t]}\{|\bar{u}(s)|_{1/2}\leq R\}=\bigcap_{s\in[0,t]\cap\mathbb{Q}}\{|\bar{u}(s)|_{1/2}\leq R\}\in \tilde{\mathscr{F}}_t.
 \end{equation*}
The first equality is straightforward by the definition of $\rho_R$. As for the second equality, we assume $\omega$ belongs to the right hand side, then for any $s\in[0,t]\backslash \mathbb{Q}$, there exists a sequence $(s_k)_{k\in\mathbb{N}}\subset [0,t]\cap\mathbb{Q}$ with $s_k\to s$ such that
$|\bar{u}(\omega,s_k)|_{1/2}\leq R$. By the lower semicontinuity
of $|\bar{u}(\cdot)|_{1/2}$, we have $|\bar{u}(\omega,s)|_{1/2}\leq R$ and $\omega$ belongs to the left hand side as well. It follows that
$$\{\rho_R\leq t\}=\bigcap_{\varepsilon>0}\{\rho_R< t+\varepsilon\}\in\tilde{\mathscr{F}}_{t+}=\tilde{\mathscr{F}}_t.$$
Consequently, $\bar{\tau}_R=\rho_R\wedge T $ is an $(\tilde{\mathscr{F}}_t)$-stopping time.
 \end{proof}

It is clear that
$$\lim_{R\to\infty}\bar{\tau}_R=T\,\,\,\,\,\,\tilde{\mathbf{P}}\text{-a.s.}.$$
Denote
$$ Y(t):=\mathbf{1}_{\{t\leq\bar{\tau}_R \}}\big[\mathcal{A}\bar{u}(t)+B( \bar{u}(t))\big],$$
$$Z_1(t):=\mathbf{1}_{\{t\leq\bar{\tau}_R \}}\sigma(\bar{u}(t)),~Z_2(t):=\mathbf{1}_{\{t\leq\bar{\tau}_R \}}g(t,\bar{u}(t)) .$$
In order to prove (\ref{es41}), we work on the evolution triple $\mathbb{H}^{3/2}\subset\mathbb{H}^{1/2}\subset \mathbb{H}^{-1/2}$ and rewrite the following equality
\begin{equation}\label{eqns2}
\bar{u}(t\wedge \bar{\tau}_R)=x-\int_0^{t\wedge \bar{\tau}_R}Y(s)ds+\int_0^{t\wedge \bar{\tau}_R}Z_1(s)d\tilde{\mathcal{W}}(s)+\int_0^{t\wedge \bar{\tau}_R}Z_2(s)d\tilde{\hat{\mathcal{W}}}(s),~t\in[0,T].
\end{equation}
By (\ref{es63}) and (\ref{es64}),  it is easy to see that
$$\bar{u}(\cdot\wedge \bar{\tau}_R)\mathbf{1}_{\{\cdot\leq\bar{\tau}_R \}}\in L^{2}([0,T]\times \Omega;\mathbb{H}^{3/2}),~Y(\cdot)\in L^{2}([0,T]\times \Omega;\mathbb{H}^{-1/2}),$$
and
$$Z_1(\cdot)\in L^{2}([0,T]\times \Omega;\mathbb{H}^{1/2}),~Z_2(\cdot)\in L^{2}([0,T]\times \Omega;\mathbb{H}^{1/2})$$
Thanks to Proposition 4.2 in \cite{GC1}, we can  deduce that
$\bar{u}\in C([0,\bar{\tau}_R];\mathbb{H}^{1/2})$  $\tilde{\mathbf{P}}$-a.s..
Since $\lim_{R\to\infty}\bar{\tau}_R=T$, it implies that (\ref{es41}) holds.

Hence, we can conclude that $\bar{u}$ is a weak solution of Eq.~(\ref{sns1}). Furthermore,  following from same argument as in Lemma \ref{lem3.0}, it follows that
$$\sup_{t\in[0,T]}\mathbf{E}|\bar{u}(t)|_{1/2}^{2-\gamma}+\mathbf{E}\int_0^{T}|\bar{u}(t)|_{3/2}^{2-\gamma}dt<\infty.$$
We complete the proof of the existence of weak solutions.
\hspace{\fill}$\Box$

\vspace{2mm}
Next, the pathwise uniqueness of Eq.~(\ref{sns1}) is derive by the following.

\vspace{1mm}

\noindent\textbf{Proof of  pathwise uniqueness.}  Let $u,v$ be two solutions of Eq.~(\ref{sns1}) with initial value $u(0)=x\in\mathbb{H}^{1/2},v(0)=y\in\mathbb{H}^{1/2}$. Set $z:=u-v$ that satisfies the following equation
\begin{eqnarray*}
\!\!\!\!\!\!\!\!&&z(t)+\int_0^t \big(   B(u(s))-B(v(s))   \big)ds
+\int_0^t \mathcal{A} z(s)ds
\nonumber \\
=\!\!\!\!\!\!\!\!&&(x-y)+
\int_0^t \big( \sigma(u(s))-\sigma(v(s)) \big)  d \mathcal{W}(s)+\int_0^t \big( g(s,u(s))-g(s,v(s))\big)d \hat{\mathcal{W}}(s).
\end{eqnarray*}
For any $R>0$, we denote the following stopping times
\begin{eqnarray*}
\!\!\!\!\!\!\!\!&&\tau^u_R:=\inf\Bigg\{t\in[0,T]:|u(t)|_{1/2}+\int_0^t |u(s)|_{3/2}^{2}ds\geq R\Bigg\}\wedge T,
\nonumber \\
\!\!\!\!\!\!\!\!&&\tau^v_R:=\inf\Bigg\{t\in[0,T]:|v(t)|_{1/2}+\int_0^t |v(s)|_{3/2}^{2}ds\geq R\Bigg\}\wedge T.
\end{eqnarray*}
Let $\tau_R:=\tau^u_R\wedge\tau^v_R$, it follows that $\lim_{R\to\infty}\tau_R=T$.

\vspace{1mm}
Applying It\^{o}'s formula and using (\ref{sig1}) and $(\mathbf{H}_{g}^1)$, we have
\begin{eqnarray}\label{es70}
\!\!\!\!\!\!\!\!&&\mathbf{E}\Big[\sup_{t\in[0,T\wedge\tau_R]}|z(t)|_{1/2}^2\Big]
\nonumber \\
= \!\!\!\!\!\!\!\!&&|x-y|_{1/2}^2+\mathbf{E}\int_0^{T\wedge\tau_{R} }(-\mathcal{A}z(t),\Lambda z(t))dt
\nonumber \\
\!\!\!\!\!\!\!\!&&
-\mathbf{E}\int_0^{T\wedge\tau_R }(B(u(t))-B(v(t)),\Lambda z(t))dt
\nonumber \\
\!\!\!\!\!\!\!\!&&+\mathbf{E}\int_0^{T\wedge\tau_R }\|\sigma(u(t))-\sigma(v(t))\|_{\mathcal{L}_2(l^2;\mathbb{H}^{1/2})}^2dt
\nonumber \\
\!\!\!\!\!\!\!\!&&+\mathbf{E}\int_0^{T\wedge\tau_R }\|g(t,u(t))-g(t,v(t))\|_{\mathcal{L}_2(l^2;\mathbb{H}^{1/2})}^2dt
\nonumber \\
\!\!\!\!\!\!\!\!&&
+\mathbf{E}\Big[\sup_{t\in[0,T\wedge\tau_R]}\mathcal{M}_1(t)\Big]+\mathbf{E}\Big[\sup_{t\in[0,T\wedge\tau_R]}\mathcal{M}_2(t)\Big]
\nonumber \\
\leq \!\!\!\!\!\!\!\!&&|x-y|_{1/2}^2-\mathbf{E}\int_0^{T\wedge\tau_R }|z(t)|_{3/2}^2dt+\frac{1}{4}\mathbf{E}\int_0^{T\wedge\tau_R }|z(t)|_{3/2}^2dt
\nonumber \\
\!\!\!\!\!\!\!\!&&
+\mathbf{E}\int_0^{T\wedge\tau_R }(B(u(t))-B(v(t)),\Lambda z(t))dt
\nonumber \\
\!\!\!\!\!\!\!\!&&+\mathbf{E}\int_0^{T\wedge\tau_R }\big(C+\rho_1(u(t))+\rho_2(v(t))\big)|z(t)|_{1/2}^2dt
\nonumber \\
\!\!\!\!\!\!\!\!&&+\mathbf{E}\int_0^{T\wedge\tau_R }\big(C+\eta_1(u(t))+\eta_2(v(t))\big)|z(t)|_{1}^2dt
\nonumber \\
\!\!\!\!\!\!\!\!&&
+\mathbf{E}\Big[\sup_{t\in[0,T\wedge\tau_R]}|\mathcal{M}_1(t)|\Big]+\mathbf{E}\Big[\sup_{t\in[0,T\wedge\tau_R]}|\mathcal{M}_2(t)|\Big]
\nonumber \\
=:\!\!\!\!\!\!\!\!&&|x-y|_{1/2}^2-\frac{3}{4}\mathbf{E}\int_0^{T\wedge\tau_R }|z(t)|_{3/2}^2dt+\text{I}+\text{II}+\text{III}+\text{IV}+\text{V},
\end{eqnarray}
where $\mathcal{M}_1(t),\mathcal{M}_2(t) $ are continuous local martingales given by
\begin{eqnarray*}
\!\!\!\!\!\!\!\!&&\mathcal{M}_1(t):=\int_0^{t }(\big(\sigma(u(s))-\sigma(v(s))\big)d\mathcal{W}(s),z(s))_{1/2}ds,
\nonumber \\
\!\!\!\!\!\!\!\!&&
\mathcal{M}_2(t):=\int_0^{t }(\big(g(s,u(s))-g(s,v(s))\big)d\hat{\mathcal{W}}(s),z(s))_{1/2}ds.
\end{eqnarray*}

Note that  using H\"{o}lder's inequality and Sobolev embedding theorem,  we have
\begin{eqnarray}\label{es71}
\text{I}
\leq\!\!\!\!\!\!\!\!&&\mathbf{E}\int_0^{T\wedge\tau_R }|\langle\Lambda^{1/2}\big((u(t)\cdot\nabla)z(t)\big),\Lambda^{1/2} z(t)\rangle|dt
\nonumber \\
\!\!\!\!\!\!\!\!&&
+\mathbf{E}\int_0^{T\wedge\tau_R }|(B(z(t),v(t)),\Lambda z(t))|dt
\nonumber \\
=: \!\!\!\!\!\!\!\!&&\text{I}_1+\text{I}_2.
\end{eqnarray}
For the term $\text{I}_1$,  due to $\langle u\cdot\nabla\Lambda^{1/2}z,\Lambda^{1/2} z\rangle=0 $, making use of the  commutator estimate (\ref{escom}) with $p_1=3,p_2=6,p_3=6,p_4=3$ we deduce that
\begin{eqnarray}\label{es72}
\text{I}_1
\leq \!\!\!\!\!\!\!\!&&
\mathbf{E}\int_0^{T\wedge\tau_R }|\langle\Lambda^{1/2}\big((u(t)\cdot\nabla)z(t)\big),\Lambda^{1/2} z(t)\rangle-\langle (u(t)\cdot\nabla)\Lambda^{1/2}z(t),\Lambda^{1/2} z(t)\rangle|dt
\nonumber \\
=\!\!\!\!\!\!\!\!&&\mathbf{E}\int_0^{T\wedge\tau_R }|\langle[\Lambda^{1/2},u(t)]\cdot\nabla z(t),\Lambda^{1/2} z(t)\rangle|dt
\nonumber \\
\leq \!\!\!\!\!\!\!\!&&\mathbf{E}\int_0^{T\wedge\tau_R }|[\Lambda^{1/2},u(t)]\cdot\nabla z(t)|_{L^{2}}|\Lambda^{1/2} z(t)|_{L^2}dt
\nonumber \\
\leq\!\!\!\!\!\!\!\!&&\mathbf{E}\int_0^{T\wedge\tau_R }\big(|\nabla u(t)|_{L^3}|  \Lambda^{-1/2}\nabla z(t)|_{L^6}+|\Lambda^{1/2}u(t)|_{L^6}|\nabla z(t)|_{L^3}\big) |\Lambda^{1/2} z(t)|_{L^2}dt
\nonumber \\
\leq \!\!\!\!\!\!\!\!&&\epsilon_0\mathbf{E}\int_0^{T\wedge\tau_R }| z(t)|_{3/2}^2dt+ C_{\epsilon_0}\mathbf{E}\int_0^{T\wedge\tau_R }| u(t)|_{3/2}^2  |z(t)|_{1/2}^2dt,
\end{eqnarray}
where $\epsilon_0>0$ is a small constant that will be chosen later, and we used Sobolev embedding inequality and Young' inequality in the last step.

For the term $\text{I}_2$, we use the estimate (\ref{2.2}) to derive
\begin{eqnarray}\label{es73}
\text{I}_2\leq\!\!\!\!\!\!\!\!&&\mathbf{E}\int_0^{T\wedge\tau_R }|z(t)|_{1/2}|v(t)|_{3/2}|\Lambda z(t)|_{1/2}dt
\nonumber \\
 \leq\!\!\!\!\!\!\!\!&&\epsilon_0\mathbf{E}\int_0^{T\wedge\tau_R }|z(t)|_{3/2}^2dt+C_{\epsilon_0}\mathbf{E}\int_0^{T\wedge\tau_R }|v(t)|_{3/2}^2| z(t)|_{1/2}^2dt      .
\end{eqnarray}
For the term $\text{III}$, by (\ref{es0}), (\ref{inteq2}) and Young's inequality it follows that
\begin{eqnarray}\label{es74}
\text{III}
\lesssim \!\!\!\!\!\!\!\!&&\mathbf{E}\int_0^{T\wedge\tau_R }\big(C+\eta_1(u(t))+\eta_2(v(t))\big)|z(t)|_{1/2}|z(t)|_{3/2}dt
 \nonumber \\
\leq \!\!\!\!\!\!\!\!&&C_{R,\epsilon_0}\mathbf{E}\int_0^{T\wedge\tau_R }\big(1+|u(t)|_{3/2}^2+|v(t)|_{3/2}^2\big)|z(t)|_{1/2}^2dt
\nonumber \\
 \!\!\!\!\!\!\!\!&&
+ \epsilon_0\mathbf{E}\int_0^{T\wedge\tau_R }|z(t)|_{3/2}^2dt.
\end{eqnarray}

Moreover, using B-D-G's inequality leads to
\begin{eqnarray}\label{es75}
\text{IV}
\leq\!\!\!\!\!\!\!\!&&
\mathbf{E}\Bigg(\int_0^{T\wedge\tau_R}\|\sigma(u(t))-\sigma(v(t))\|_{\mathcal{L}_2(l^2;\mathbb{H}^{1/2})}^2
|z(t)|_{1/2}^2dt\Bigg)^{\frac{1}{2}}
\nonumber \\
 \leq\!\!\!\!\!\!\!\!&&\frac{1}{4}\mathbf{E}\Big[\sup_{t\in[0,T\wedge\tau_R]}|z(t)|_{1/2}^2\Big]
 +\frac{1}{4}\mathbf{E}\int_0^{T\wedge\tau_R }|z(t)|_{3/2}^2dt
 \nonumber \\
\!\!\!\!\!\!\!\!&&+C\mathbf{E}\int_0^{T\wedge\tau_R }|z(t)|_{1/2}^2dt
\end{eqnarray}
and
\begin{eqnarray}\label{es755}
\text{V}
\leq\!\!\!\!\!\!\!\!&&
\mathbf{E}\Bigg(\int_0^{T\wedge\tau_R}\|g(t,u(t))-g(t,v(t))\|_{\mathcal{L}_2(l^2;\mathbb{H}^{1/2})}^2
|z(t)|_{1/2}^2dt\Bigg)^{\frac{1}{2}}
\nonumber \\
 \leq\!\!\!\!\!\!\!\!&&\frac{1}{2}\mathbf{E}\Big[\sup_{t\in[0,T\wedge\tau_R]}|z(t)|_{1/2}^2\Big]
\nonumber \\
\!\!\!\!\!\!\!\!&&
+\mathbf{E}\int_0^{T\wedge\tau_R }\big(C+\rho_1(u(t))+\rho_2(v(t))\big)|z(t)|_{1/2}^2dt
\nonumber \\
\!\!\!\!\!\!\!\!&&+\mathbf{E}\int_0^{T\wedge\tau_R }\big(C+\eta_1(u(t))+\eta_2(v(t))\big)|z(t)|_{1}^2dt
\nonumber \\
 \leq\!\!\!\!\!\!\!\!&&\frac{1}{4}\mathbf{E}\Big[\sup_{t\in[0,T\wedge\tau_R]}|z(t)|_{1/2}^2\Big]+ \epsilon_0\mathbf{E}\int_0^{T\wedge\tau_R }|z(t)|_{3/2}^2dt
\nonumber \\
\!\!\!\!\!\!\!\!&&+
C_{R,\epsilon_0}\mathbf{E}\int_0^{T\wedge\tau_R }\big(1+|u(t)|_{3/2}^2+|v(t)|_{3/2}^2\big)|z(t)|_{1/2}^2dt  .
\end{eqnarray}

Collecting estimates (\ref{es70})-(\ref{es75}) and  taking $\epsilon_0<\frac{1}{8}$, we can get
\begin{eqnarray*}
\!\!\!\!\!\!\!\!&&\mathbf{E}\Big[\sup_{t\in[0,T\wedge\tau_R]}|z(t)|_{1/2}^2\Big]
+\mathbf{E}\int_0^{T\wedge\tau_R }|z(t)|_{3/2}^2dt
\nonumber \\
\leq\!\!\!\!\!\!\!\!&&C|x-y|_{1/2}^2+C_R\mathbf{E}\int_0^{T\wedge\tau_R }\big(1+|u(t)|_{3/2}^2+|v(t)|_{3/2}^2\big)|z(t)|_{1/2}^2dt.
\end{eqnarray*}
By the definition of stopping time $\tau_R$ and the  stochastic Gronwall's lemma (cf.~Lemma \ref{appen2} in Appendix), we obtain
\begin{equation}\label{es76}
\mathbf{E}\Big[\sup_{t\in[0,T\wedge\tau_R]}|z(t)|_{1/2}^2\Big]+\mathbf{E}\int_0^{T\wedge\tau_R }|z(t)|_{3/2}^2dt
\lesssim_R|x-y|_{1/2}^2.
\end{equation}
Consequently, taking $x=y$ and applying Fatou's lemma, we derive
\begin{equation*}
\mathbf{E}\Big[\sup_{t\in[0,T]}|z(t)|_{1/2}^2\Big]\leq \liminf_{R\to\infty}\mathbf{E}\Big[\sup_{t\in[0,T\wedge\tau_R]}|z(t)|_{1/2}^2\Big]
\leq 0,
\end{equation*}
which yields the pathwise uniqueness of Eq.~(\ref{sns1}). The proof is complete. \hspace{\fill}$\Box$

\subsection{Proof of Theorem \ref{thF}}\label{sub2.7}
Define the stopping time
\begin{eqnarray*}
\tau_{n,R}:=\!\!\!\!\!\!\!\!&&\inf\Bigg\{t\geq0:|u(t,x_n)|_{1/2}+|u(t,x)|_{1/2}
\nonumber \\
\!\!\!\!\!\!\!\!&&
+\int_0^t |u(s,x_n)|_{3/2}^{2}ds+\int_0^t |u(s,x)|_{3/2}^{2}ds\geq R\Bigg\},~~R>0,
\end{eqnarray*}
with the convention $\inf\emptyset=\infty$. In light of the estimate (\ref{esap}) and the convergence of $x_n$ by the assumption, we can deduce that
\begin{equation}\label{con4}
\lim_{R\to\infty}\sup_{n\in\mathbb{N}}\mathbf{P}(\tau_{n,R}<T)=0.
\end{equation}

Following same argument as in the proof of (\ref{es76}),  we also derive
\begin{eqnarray}\label{es27}
\!\!\!\!\!\!\!\!&&\mathbf{E}\Big[\sup_{t\in[0,T\wedge\tau_{n,R}]}|u(t,x_n)-u(t,x)|_{1/2}^2\Big]+\mathbf{E}\int_0^{T\wedge\tau_{n,R} }|u(t,x_n)-u(t,x)|_{3/2}^2dt
\nonumber \\
\!\!\!\!\!\!\!\!&&\lesssim_R|x_n-x|_{1/2}^2.
\end{eqnarray}
Therefore, for any $\varepsilon>0$,
\begin{eqnarray}\label{es23}
\!\!\!\!\!\!\!\!&&\mathbf{P}\Big(\sup_{t\in[0,T]}|u(t,x_n)-u(t,x)|_{1/2}>\varepsilon\Big)
\nonumber \\
\leq\!\!\!\!\!\!\!\!&&\mathbf{P}\Big(\sup_{t\in[0,T\wedge\tau_{n,R}]}|u(t,x_n)-u(t,x)|_{1/2}>\varepsilon\Big)+\mathbf{P}(\tau_{n,R}<T)
\nonumber \\
\leq\!\!\!\!\!\!\!\!&&\frac{C_R|x_n-x|_{1/2}^2}{\varepsilon^2}+\mathbf{P}(\tau_{n,R}<T).
\end{eqnarray}
In view of (\ref{es27}) we also have
\begin{equation}\label{es28}
\mathbf{P}\Bigg(\int_0^{T}|u(t,x_n)-u(t,x)|_{3/2}^2dt>\varepsilon\Bigg)
\leq\frac{C_R|x_n-x|_{1/2}^2}{\varepsilon}+\mathbf{P}(\tau_{n,R}<T).
\end{equation}
Hence, combining (\ref{con4}) and (\ref{es23})-(\ref{es28}) and letting $n\uparrow\infty$ then $R\uparrow\infty$, we conclude that (\ref{apri3}) follows.

\vspace{1mm}
From now on, we assume that the map $g(t,u)$ is independent of $t$. We turn to prove that the function $(\mathcal{T}_t)_{t\geq 0}$ is Feller. More precisely, for any $t\geq 0$ and $C_b(\mathbb{H}^{1/2})$ we shall show
\begin{equation}\label{feller}
\mathcal{T}_t\varphi(x_n)=\mathbf{E}\varphi(u(t,x_n))\to \mathbf{E}\varphi(u(t,x))=\mathcal{T}_t\varphi(x)~~ \text{if}~ x_n\to x~\text{in}~ \mathbb{H}^{1/2}.
\end{equation}
We only need to consider $\varphi\in \text{Lip}_b(\mathbb{H}^{1/2})$ due to the fact
 $\text{Lip}_b(\mathbb{H}^{1/2})\subset C_b(\mathbb{H}^{1/2})$ densely. By (\ref{apri3}), it follows that for any $\varepsilon>0$,
\begin{equation*}
\lim_{n\to\infty}\mathbf{P}\Big(|\varphi(u(t,x_n))-\varphi(u(t,x))|>\varepsilon\Big)
\leq\lim_{n\to\infty}\mathbf{P}\Big(C_{\text{Lip}}|u(t,x_n)-u(t,x)|_{1/2}>\varepsilon\Big)=0,
\end{equation*}
where $C_{\text{Lip}}$ is the Lipschitz constant of function $\varphi$.
Hence,
$$\varphi(u(t,x_n))\to \varphi(u(t,x))~\text{in probability},~\text{as}~n\to\infty.$$
Since the function $\varphi$ is bounded, then (\ref{feller}) follows directly from the Lebesgue dominated convergence theorem.

In order to prove the Markov property (\ref{markov}), we shall prove
\begin{equation}\label{es78}
\mathbf{E}\big[\varphi(u(t+s,x))\phi\big]=\mathbf{E}\big[\mathcal{T}_s\varphi(u(t,x))\phi\big]~~~\text{for any}~\phi\in\mathscr{F}_t.
\end{equation}
By the uniqueness of solutions to (\ref{sns1}),  we know
$$u(t+s,x)=u(t,t+s,u(t,x))\,\,\,\,\,\,\mathbf{P}\text{-a.s.},$$
where we denote by $u(t,t+s,u(t,x))$ the solution of (\ref{sns1}) with initial time $t$ and initial value $u(t,x)$.

Thus, in order to prove (\ref{es78}), it suffices to show
\begin{equation}\label{es79}
\mathbf{E}\big[\varphi(u(t,t+s,Z))\phi\big]=\mathbf{E}\big[\mathcal{T}_s\varphi(Z)\phi\big]
\end{equation}
holds for every $(\mathscr{F}_t)$-measurable random variable $Z$. Note that by standard  approximation procedure, it is enough to prove (\ref{es79}) for random variables $Z=\sum_{i=1}^nZ_i\mathbf{1}_{A_i}$, where  $Z_i\in\mathbb{H}^{1/2}$ is deterministic and $(A_i)\subset\mathscr{F}_t$
 is a collection of disjoint sets such that $\bigcup_{i}A_i=\Omega$.  Then it is enough to prove (\ref{es79}) for every deterministic $Z\in\mathbb{H}^{1/2} $.  Note that, in this case, $\varphi(u(t,t+s,Z))$ depends only on  the increments of the Wiener process between $t$ and $t+s$, which is independent of $\mathscr{F}_t$. It follows that
\begin{equation*}
\mathbf{E}\big[\varphi(u(t,t+s,Z))\phi\big]=\mathbf{E}\big[ \varphi(u(t,t+s,Z))\big]\mathbf{E}\big[\phi\big].
\end{equation*}
Since $u(t,t+s,Z)$ coincides in law with $u(s,Z)$ by uniqueness,  then we deduce that
\begin{equation*}
\mathbf{E}\big[\varphi(u(t,t+s,Z))\phi\big]=\mathbf{E}\big[ \varphi(u(s,Z))\big]\mathbf{E}\big[\phi\big]=\mathcal{T}_s\varphi(Z)\mathbf{E}\big[\phi\big]=\mathbf{E}\big[\mathcal{T}_s\varphi(Z)\phi\big],
\end{equation*}
which completes the proof of (\ref{es79}).

Finally, taking expectation on both sides of (\ref{markov}), we have
$$\mathbf{E}\big[\mathbf{E}[\varphi(u(t+s,x))|\mathscr{F}_t]\big]=\mathbf{E}[\varphi(u(t+s,x))]=\mathcal{T}_{t+s}\varphi(x)$$
and on the other hand,
$$\mathbf{E}\big[(\mathcal{T}_s\varphi)(u(t,x))\big]=(\mathcal{T}_{t}(\mathcal{T}_s\varphi))(x).$$
Therefore, the semigroup property follows. \hspace{\fill}$\Box$

\section{Long-time behaviour}\label{sec4}

\subsection{Main results}\label{subsre2}

 In this section, building upon the global well-posedness result of Eq.~(\ref{sns1}), we intend to investigate the long-time behaviour  of  stochastic forced 3D Navier-Stokes equations.

To be more precise, we consider the following  3D Navier-Stokes systems perturbed by the autonomous stochastic forcing
\begin{equation}\label{sns2}
\begin{cases}
&\!\!\!\!\!\!du(t)+[\mathcal{A} u(t) +B( u(t))] dt =\sigma(u(t))d\mathcal{W}(t) +    g(u(t)) d\hat{\mathcal{W}}(t) , \\
&\!\!\!\!\!\!u(0) = x.
\end{cases}
\end{equation}

We first present the following assumption, which is stronger than $(\mathbf{H}_{g}^{2})$, in order to get the decay estimates associated to the  stochastic forced  3D Navier-Stokes equations.
\begin{enumerate}
\item [$(\mathbf{H}_{g}^{2*})$]   There exists a constant  $\gamma\in(1,2)$  such that for any $u\in\mathbb{H}^{3/2}$,
\begin{equation}\label{cong3}
\kappa_1(|u|_{1}^{4}+1)|u|_{1/2}^4+\|g(u)\|_{\mathcal{L}_2(l^2;\mathbb{H}^{1/2})}^2|u|_{1/2}^2
\leq \gamma\|(g(u)\cdot,u)_{1/2}\|_{\mathcal{L}_2(l^2;\mathbb{R})}^2,
\end{equation}
where $\kappa_1$ is the same as in $(\mathbf{H}_{g}^{2})$.
\end{enumerate}

\begin{remark}\label{reb3}
We point out that following from the proof in Remark \ref{reb2},  the nonlocal stochastic forcing $(\ref{exaB1})$ also satisfies the assumption $(\mathbf{H}_{g}^{2*})$.

\end{remark}

We derive a crucial result  characterizing the decay of solutions to Eq.~(\ref{sns2}).
\begin{theorem}\label{th4}
Suppose that  Hypothesis \ref{h2}-\ref{h3}   hold with $(\mathbf{H}_{g}^{2})$ replaced by $(\mathbf{H}_{g}^{2*})$.  There exist a constant $\kappa\in (0,\lambda^*(2-\gamma))$, where $\lambda^*$ is a positive constant from the Poincar\'{e} inequality,  and an $\mathbf{P}$-a.s. finite random time $\tau$ such that
  \begin{equation*}
|u(t)|_{1/2}^{2-\gamma}\leq e^{-\kappa t}|x|_{1/2}^{2-\gamma},~~t\geq \tau,~~~~~~\mathbf{P}\text{-a.s.}.
\end{equation*}

%
% \begin{equation}\label{es56}
%|u(t)|_{1/2}^{2-\gamma}\lesssim e^{-\kappa t}|x|_{1/2}^{2-\gamma}~~~\mathbf{P}\text{-a.s.}.
%\end{equation}

\end{theorem}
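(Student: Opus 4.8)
The plan is to rerun the Lyapunov computation of Lemma~\ref{lem3.0} with the distinguished exponent, and to exploit that the strengthened hypothesis $(\mathbf{H}_{g}^{2*})$ removes the additive constant $C$ which, under $(\mathbf{H}_{g}^{2})$, kept the drift merely bounded rather than strictly dissipative. Set $\alpha:=1-\frac{\gamma}{2}\in(0,\tfrac12)$, so that $\Phi(x):=x^{\alpha}$ gives $\Phi(|u(t)|_{1/2}^{2})=|u(t)|_{1/2}^{2-\gamma}$, and apply It\^o's formula to $\Phi(|u(t)|_{1/2}^{2})$ exactly as in (\ref{es5})--(\ref{es18}). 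Using the bilinear estimate (\ref{es7}), the noise bound (\ref{sig1}), the choice $\kappa_{1}\ge\max\{\delta_{1},\frac{4M_{0}^{2}}{1-8N_{0}}\}$, and discarding the nonpositive $\sigma$-quadratic-variation term, one obtains on $\{|u|_{1/2}>0\}$
\begin{equation*}
d|u(t)|_{1/2}^{2-\gamma}+\frac{\alpha}{2}\,\frac{|u(t)|_{3/2}^{2}}{|u(t)|_{1/2}^{\gamma}}\,dt\le \alpha\,\frac{\mathcal{G}(u(t))}{|u(t)|_{1/2}^{2+\gamma}}\,dt+d\mathcal{N}(t),
\end{equation*}
where $\mathcal{N}:=2\alpha(\mathcal{M}^{1}+\mathcal{M}^{2})$ is a continuous local martingale and $\mathcal{G}(u):=\kappa_{1}(|u|_{1}^{4}+1)|u|_{1/2}^{4}+\|g(u)\|_{\mathcal{L}_{2}(l^{2};\mathbb{H}^{1/2})}^{2}|u|_{1/2}^{2}-\gamma\|(g(u)\cdot,u)_{1/2}\|_{\mathcal{L}_{2}(l^{2};\mathbb{R})}^{2}$ is precisely the left minus right member of (\ref{cong3}); hence $\mathcal{G}(u(t))\le0$ by $(\mathbf{H}_{g}^{2*})$. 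This is the one step where the strengthened assumption is indispensable. (The non-smoothness of $\Phi$ at $0$ is handled by the regularization $\Phi^{\varepsilon}(x)=(\varepsilon+x)^{\alpha}$, $\varepsilon\to0$, together with the indicator $\mathbf{1}_{\{|u|_{1/2}>0\}}$, as in Lemma~\ref{lem3.0}; on $\{|u|_{1/2}=0\}$ the stated bound is trivial since by pathwise uniqueness the solution started at $0$ remains $0$.)

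Feeding in the Poincar\'e inequality $|u|_{3/2}^{2}\ge\lambda^{*}|u|_{1/2}^{2}$ (valid on $\mathbb{T}^{3}$ for zero-mean fields), we get $\frac{|u|_{3/2}^{2}}{|u|_{1/2}^{\gamma}}\ge\lambda^{*}|u|_{1/2}^{2-\gamma}$. Writing $Y(t):=|u(t)|_{1/2}^{2-\gamma}$ and dropping the nonpositive drift, the previous display collapses to the stochastic differential inequality
\begin{equation*}
dY(t)\le-\frac{\alpha\lambda^{*}}{2}\,Y(t)\,dt+d\mathcal{N}(t).
\end{equation*}

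To turn this into pathwise decay, fix an auxiliary $\kappa'\in\bigl(0,\frac{\alpha\lambda^{*}}{2}\bigr)$ and apply the product rule to $e^{\kappa' t}Y(t)$, which yields
\begin{equation*}
d\bigl(e^{\kappa' t}Y(t)\bigr)\le-\Bigl(\tfrac{\alpha\lambda^{*}}{2}-\kappa'\Bigr)e^{\kappa' t}Y(t)\,dt+e^{\kappa' t}\,d\mathcal{N}(t)\le e^{\kappa' t}\,d\mathcal{N}(t).
\end{equation*}
Thus $e^{\kappa' t}Y(t)$ is the difference of the local martingale $Y(0)+\int_{0}^{\cdot}e^{\kappa' s}d\mathcal{N}(s)$ and a nondecreasing process, i.e.\ a nonnegative local supermartingale; since $Y\ge0$ and $\mathbf{E}Y(0)=|x|_{1/2}^{2-\gamma}<\infty$, Fatou's lemma upgrades it to a genuine supermartingale, and the supermartingale convergence theorem forces $e^{\kappa' t}Y(t)$ to converge $\mathbf{P}$-a.s.\ to a finite limit. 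In particular $C(\omega):=\sup_{t\ge0}e^{\kappa' t}Y(t)<\infty$ $\mathbf{P}$-a.s. Finally, for any $\kappa\in(0,\kappa')$ --- and note $\kappa<\kappa'<\frac{\alpha\lambda^{*}}{2}=\frac{\lambda^{*}(2-\gamma)}{4}<\lambda^{*}(2-\gamma)$, so $\kappa$ lies in the asserted range --- set (assuming $x\ne0$, the case $x=0$ being trivial)
\begin{equation*}
\tau:=\max\Bigl\{0,\ \tfrac{1}{\kappa'-\kappa}\log\tfrac{C(\omega)}{|x|_{1/2}^{2-\gamma}}\Bigr\},
\end{equation*}
which is $\mathbf{P}$-a.s.\ finite; then for $t\ge\tau$ one estimates $Y(t)\le C(\omega)e^{-\kappa' t}=\bigl(C(\omega)e^{-(\kappa'-\kappa)t}\bigr)e^{-\kappa t}\le|x|_{1/2}^{2-\gamma}e^{-\kappa t}$, which is exactly the claim.

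The main obstacle is this last conversion. Because only moments of order $2-\gamma<1$ are available (Theorem~\ref{thg}), one cannot control $\mathcal{N}$ by taking expectations or Burkholder--Davis--Gundy estimates to produce a deterministic exponential rate, as one would in the finite-energy $L^{2}$ setting. The decisive point is that the \emph{nonnegativity} of $Y$ makes $e^{\kappa' t}Y(t)$ a nonnegative supermartingale, whose almost-sure convergence holds regardless of higher integrability; it is this convergence that simultaneously produces the a.s.-finite random time $\tau$ and the pathwise rate $\kappa$. A secondary, purely technical issue is the rigorous It\^o expansion of the non-smooth Lyapunov function $\Phi(x)=x^{\alpha}$ near the origin, resolved by the $\varepsilon$-regularization already used in Lemma~\ref{lem3.0}.
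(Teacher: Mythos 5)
Your proposal is correct, and its first half coincides with the paper's own argument: the paper likewise runs the $\varepsilon$-regularized It\^o computation for $\Phi^{\varepsilon}(x)=(\varepsilon+x)^{\alpha}$ with $\alpha=1-\gamma/2$, multiplies by $e^{c_0 t}$, uses $(\mathbf{H}_{g}^{2*})$ to annihilate exactly your quantity $\mathcal{G}(u)$, and applies Poincar\'e to conclude that $e^{c_0 t}|u(t)|_{1/2}^{2-\gamma}$ is a nonnegative supermartingale (the paper keeps a full copy of the dissipation for Poincar\'e and so gets $c_0=\lambda^*(1-\frac{\gamma}{2})$, versus your $\alpha\lambda^*/2$; this only shrinks the admissible rate window, and both windows sit inside the stated range $(0,\lambda^*(2-\gamma))$). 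Where you genuinely diverge is the conversion to pathwise decay. The paper takes expectations to get $\mathbf{E}|u(t)|_{1/2}^{2-\gamma}\le e^{-\lambda^*(1-\gamma/2)t}|x|_{1/2}^{2-\gamma}$, then applies Doob's maximal inequality for the supermartingale on each unit interval $[k,k+1]$ and sums the resulting geometric tail via Borel--Cantelli, which yields the a.s.\ finite time $\tau$ but no formula for it. You instead invoke the supermartingale convergence theorem (equivalently, the a.s.\ finiteness of the running supremum of a nonnegative supermartingale) to get $C(\omega):=\sup_{t\ge0}e^{\kappa' t}|u(t)|_{1/2}^{2-\gamma}<\infty$ a.s., and then read off $\tau$ explicitly as $\max\bigl\{0,\tfrac{1}{\kappa'-\kappa}\log\tfrac{C(\omega)}{|x|_{1/2}^{2-\gamma}}\bigr\}$. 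Both endgames are valid and rest on the same maximal-inequality technology; yours avoids the interval decomposition and exhibits $\tau$ concretely in terms of the path supremum, while the paper's gives the sharper constraint $\kappa<\lambda^*(2-\gamma)/2$.

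One small remark: your appeal to pathwise uniqueness to dispose of the set $\{|u|_{1/2}=0\}$ is unnecessary (and not quite the right mechanism, since the solution could in principle reach zero at a positive random time rather than start there). As in the paper's display culminating in the supermartingale inequality, every drift term in the $\varepsilon\to0$ limit carries a factor vanishing on $\{|u(s)|_{1/2}=0\}$, so the cancellation $c_0\int\tilde\Phi\,ds-\lambda^*\alpha\int\tilde\Phi\,ds\le0$ (respectively your $(\kappa'-\alpha\lambda^*/2)\int e^{\kappa' s}Y\,ds\le0$) holds with the indicators in place, and no separate treatment of the zero set is needed.
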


\begin{remark}
Regarding Theorem \ref{th4}, we observe that the solutions of the 3D Navier-Stokes equations perturbed by transport noise and nonlocal stochastic forcing decay exponentially to zero as time tends to infinity. This seems to be  the first characterization in the literature concerning the decay rate of solutions for the stochastic forced 3D Navier-Stokes equations.
\end{remark}

With the help of Theorem \ref{th4}, we can investigate the ergodicity of stochastic 3D NS system.   We recall the definition of the invariant measures associated to  $(\mathcal{T}_t)_{t\geq 0}$.
\begin{definition}
A probability measure $\mu$ on $\mathbb{H}^{1/2}$ is called an invariant measure associated to  $(\mathcal{T}_t)_{t\geq 0}$,  if
$$\int_{\mathbb{H}^{1/2}}\varphi(x)\mu(dx)=\int_{\mathbb{H}^{1/2}}\mathcal{T}_t\varphi(x)\mu(dx),~t\geq 0,~\varphi\in\mathscr{B}_b(\mathbb{H}^{1/2}).$$
\end{definition}

Let us  state  the existence, uniqueness and the concentration property of invariant measures  to stochastic forced 3D Navier-Stokes equations (\ref{sns2}).
\begin{theorem}\label{th3}
Suppose that Hypothesis \ref{h2}-\ref{h3}   hold.
Then
there exists an  invariant measure $\mu$ associated to the  transition semigroup $(\mathcal{T}_t)_{t\geq 0}$ of Eq.~$(\ref{sns2})$, which satisfies the following concentration property
\begin{equation}\label{sup1}
\int_{\mathbb{H}^{3/2}}|x|_{3/2}^{2-\gamma}\mu(dx)<\infty,
\end{equation}
where the constant $\gamma\in(1,2)$ is the same as in $(\mathbf{H}_{g}^2)$.

Furthermore, if the assumption $(\mathbf{H}_{g}^{2})$ is replaced by $(\mathbf{H}_{g}^{2*})$, then there exists a unique invariant measure.
\end{theorem}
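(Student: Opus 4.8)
The plan is to obtain existence through the Krylov--Bogoliubov procedure and to deduce uniqueness from the almost sure decay recorded in Theorem \ref{th4}. First I would fix an initial datum $x\in\mathbb{H}^{1/2}$ and introduce the averaged occupation measures
\begin{equation*}
\mu_T(\Gamma):=\frac{1}{T}\int_0^T\mathbf{P}\big(u(t,x)\in\Gamma\big)\,dt,\qquad \Gamma\in\mathscr{B}(\mathbb{H}^{1/2}),~T\geq 1.
\end{equation*}
The energy estimate (\ref{esap}) yields $\mathbf{E}\int_0^T|u(t,x)|_{3/2}^{2-\gamma}\,dt\lesssim |x|_{1/2}^{2-\gamma}+T$, so that
\begin{equation*}
\int_{\mathbb{H}^{1/2}}|y|_{3/2}^{2-\gamma}\,\mu_T(dy)=\frac{1}{T}\mathbf{E}\int_0^T|u(t,x)|_{3/2}^{2-\gamma}\,dt\lesssim 1+|x|_{1/2}^{2-\gamma}
\end{equation*}
uniformly in $T\geq 1$. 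Since the embedding $\mathbb{H}^{3/2}\subset\mathbb{H}^{1/2}$ is compact, the sublevel sets $\{y:|y|_{3/2}\leq R\}$ are compact in $\mathbb{H}^{1/2}$, and Chebyshev's inequality combined with the previous bound gives $\sup_{T\geq 1}\mu_T(\{|y|_{3/2}>R\})\lesssim R^{-(2-\gamma)}\to 0$ as $R\to\infty$. Hence $(\mu_T)_{T\geq 1}$ is tight, and Prokhorov's theorem provides a sequence $T_k\to\infty$ along which $\mu_{T_k}$ converges weakly to a probability measure $\mu$ on $\mathbb{H}^{1/2}$. Because $(\mathcal{T}_t)_{t\geq 0}$ is Feller (Theorem \ref{thF}, applicable since $g$ is autonomous in (\ref{sns2})), the standard Krylov--Bogoliubov argument shows that $\mu$ is invariant.

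For the concentration property (\ref{sup1}) I would argue by lower semicontinuity. The functional $y\mapsto|y|_{3/2}^{2-\gamma}$, extended by $+\infty$ on $\mathbb{H}^{1/2}\setminus\mathbb{H}^{3/2}$, is nonnegative and lower semicontinuous on $\mathbb{H}^{1/2}$, because its sublevel sets are the balls of $\mathbb{H}^{3/2}$, which are compact and hence closed in $\mathbb{H}^{1/2}$. The Portmanteau theorem for weak convergence then gives
\begin{equation*}
\int_{\mathbb{H}^{3/2}}|y|_{3/2}^{2-\gamma}\,\mu(dy)\leq\liminf_{k\to\infty}\int_{\mathbb{H}^{1/2}}|y|_{3/2}^{2-\gamma}\,\mu_{T_k}(dy)\lesssim 1+|x|_{1/2}^{2-\gamma}<\infty,
\end{equation*}
which is exactly (\ref{sup1}); in particular $\mu(\mathbb{H}^{1/2}\setminus\mathbb{H}^{3/2})=0$, so $\mu$ concentrates on $\mathbb{H}^{3/2}$.

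For uniqueness under $(\mathbf{H}_{g}^{2*})$ I would invoke Theorem \ref{th4}: there exist $\kappa>0$ and an $\mathbf{P}$-a.s. finite random time $\tau$ with $|u(t,x)|_{1/2}^{2-\gamma}\leq e^{-\kappa t}|x|_{1/2}^{2-\gamma}$ for $t\geq\tau$, whence $|u(t,x)|_{1/2}\to 0$ almost surely as $t\to\infty$, for every $x\in\mathbb{H}^{1/2}$. Let $\mu$ be any invariant measure and $\varphi\in C_b(\mathbb{H}^{1/2})$. Invariance gives $\int\varphi\,d\mu=\int\mathcal{T}_t\varphi\,d\mu=\int_{\mathbb{H}^{1/2}}\mathbf{E}[\varphi(u(t,x))]\,\mu(dx)$; letting $t\to\infty$ and applying dominated convergence twice---once inside the expectation, using the almost sure convergence and the boundedness of $\varphi$, and once in the integral $\mu(dx)$---yields $\int\varphi\,d\mu=\varphi(0)$. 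As this holds for all $\varphi\in C_b(\mathbb{H}^{1/2})$, we conclude $\mu=\delta_0$. Since $g(t,0)=0$ and $\sigma$ is linear, $u\equiv 0$ is the solution issued from $x=0$, so $\delta_0$ is indeed invariant and is therefore the unique invariant measure.

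The main obstacle is the tightness-and-concentration step. The only uniform-in-time control furnished by the Lyapunov analysis is the sub-linear moment of $|u(t)|_{3/2}$ of order $2-\gamma<1$, so one must verify that this fractional moment still closes the Chebyshev tightness estimate and survives the lower-semicontinuity passage to the weak limit, without recourse to a finite second moment (which, as noted after Theorem \ref{thg}, is unavailable here). A secondary point requiring care is that $\mathbb{H}^{1/2}$ carries its genuine norm topology, so that the Feller property of Theorem \ref{thF} and the classical Krylov--Bogoliubov theorem apply directly; one only needs the autonomy of $g$ in (\ref{sns2}) to invoke the Feller property.
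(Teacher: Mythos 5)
Your proposal is correct and follows essentially the same route as the paper: Krylov--Bogoliubov averaging with tightness obtained from the uniform-in-time fractional moment $\frac{1}{T}\mathbf{E}\int_0^T|u(t)|_{3/2}^{2-\gamma}dt\lesssim 1+|x|_{1/2}^{2-\gamma}/T$ and the compact embedding $\mathbb{H}^{3/2}\subset\mathbb{H}^{1/2}$, concentration via lower semicontinuity of $|\cdot|_{3/2}^{2-\gamma}$ under the weak limit, and uniqueness from the almost sure exponential decay of Theorem \ref{th4} combined with dominated convergence, exactly as in the paper's two-step proof. The only cosmetic differences are that you start the occupation measures from a general $x$ rather than $x=0$, phrase the concentration step through the Portmanteau theorem instead of the paper's truncation by $M$, and explicitly verify that $\delta_0$ is invariant; none of these changes the substance of the argument.
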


\begin{remark}
(i) As far as we know, there are few results regarding the ergodicity for the stochastic forced 3D Navier-Stokes equations. An important work was presented by  Da Prato and Debussche in \cite{DD03}, where they investigated the asymptotic properties of the 3D  Navier-Stokes equations perturbed by additive noise. Owing to the lack of uniqueness of solutions, they characterized the ergodicity for the Markov selection semigroup.

(ii) Theorem \ref{th3} reveals that, with a suitable multiplicative noise, one can establish the ergodicity for the stochastic forced 3D Navier-Stokes equations, rather than relying on Markov selection.

\end{remark}

\subsection{Proof of decay estimates}
The proof of Theorem \ref{th4} is divided into the following two steps.

\vspace{1mm}
\noindent\textbf{Step 1.} We claim that  the process
$$\big\{e^{\lambda^*(1-\frac{\gamma}{2}) t}|u(t)|_{1/2}^{2-\gamma}\big\}_{t\geq 0}$$
is a non-negative supermartingale, i.e.
$$\mathbf{E}\Big[e^{\lambda^*(1-\frac{\gamma}{2})t}|u(t)|_{1/2}^{2-\gamma}|\mathscr{F}_{r}\Big]\leq e^{\lambda^*(1-\frac{\gamma}{2})r}|u(r)|_{1/2}^{2-\gamma},~r<t.$$

Recall the equality (\ref{es5}), we deduce that
\begin{eqnarray*}
\!\!\!\!\!\!\!\!&&d\Phi^{\varepsilon}(|u(t)|_{1/2}^2)
\nonumber \\
=\!\!\!\!\!\!\!\!&&2\alpha (\varepsilon+|u(t)|_{1/2}^2)^{\alpha-1}\big[(u(t),\sigma(u(t))d\mathcal{W}(t))_{1/2}+(u(t),g(u(t))d\hat{\mathcal{W}}(t))_{1/2}\big]
\nonumber \\
\!\!\!\!\!\!\!\!&&
+\alpha(\varepsilon+|u(t)|_{1/2}^2)^{\alpha-1}\Big[-2|u(t)|_{3/2}^2-2(B(u(t)),\Lambda u(t))_{L^2}
\nonumber \\
\!\!\!\!\!\!\!\!&&
+\|\sigma(u(t))\|_{\mathcal{L}_2(l^2;\mathbb{H}^{1/2})}^2+\|g(u(t))\|_{\mathcal{L}_2(l^2;\mathbb{H}^{1/2})}^2\Big]dt
\nonumber \\
\!\!\!\!\!\!\!\!&&
-2\alpha(1-\alpha)(\varepsilon+|u(t)|_{1/2}^2)^{\alpha-2}\|((\sigma(u(t)))\cdot,u(t))_{1/2}\|_{\mathcal{L}_2(l^2;\mathbb{R})}^2dt
\nonumber \\
\!\!\!\!\!\!\!\!&&
-2\alpha(1-\alpha)(\varepsilon+|u(t)|_{1/2}^2)^{\alpha-2}\|((g(u(t)))\cdot,u(t))_{1/2}\|_{\mathcal{L}_2(l^2;\mathbb{R})}^2dt.
\end{eqnarray*}
Furthermore, applying the product rule   to the  function $\tilde{\Phi}^{\varepsilon}(t,x):=e^{c_0t}\Phi^{\varepsilon}(x)$, where the positive constant $c_0$ will be chosen later, we derive
\begin{eqnarray*}
\!\!\!\!\!\!\!\!&&d\tilde{\Phi}^{\varepsilon}(t,|u(t)|_{1/2}^2)
\nonumber \\
=\!\!\!\!\!\!\!\!&&c_0\tilde{\Phi}^{\varepsilon}(t,|u(t)|_{1/2}^2)   dt+   2\alpha e^{c_0t} (\varepsilon+|u(t)|_{1/2}^2)^{\alpha-1}\big[(u(t),\sigma(u(t))d\mathcal{W}(t))_{1/2}
\nonumber \\
\!\!\!\!\!\!\!\!&&
+(u(t),g(u(t))d\hat{\mathcal{W}}(t))_{1/2}\big]
\nonumber \\
\!\!\!\!\!\!\!\!&&
+\alpha e^{c_0t}(\varepsilon+|u(t)|_{1/2}^2)^{\alpha-1}\Big[-2|u(t)|_{3/2}^2-2(B(u(t)),\Lambda u(t))_{L^2}
\nonumber \\
\!\!\!\!\!\!\!\!&&
+\|\sigma(u(t))\|_{\mathcal{L}_2(l^2;\mathbb{H}^{1/2})}^2+\|g(u(t))\|_{\mathcal{L}_2(l^2;\mathbb{H}^{1/2})}^2\Big]dt
\nonumber \\
\!\!\!\!\!\!\!\!&&
-2\alpha(1-\alpha)e^{c_0t}(\varepsilon+|u(t)|_{1/2}^2)^{\alpha-2}\|((\sigma(u(t)))\cdot,u(t))_{1/2}\|_{\mathcal{L}_2(l^2;\mathbb{R})}^2dt
\nonumber \\
\!\!\!\!\!\!\!\!&&
-2\alpha(1-\alpha)e^{c_0t}(\varepsilon+|u(t)|_{1/2}^2)^{\alpha-2}\|((g(u(t)))\cdot,u(t))_{1/2}\|_{\mathcal{L}_2(l^2;\mathbb{R})}^2dt
.
\end{eqnarray*}
Following same calculations in the proof of the inequality (\ref{es18}) and applying the Poincar\'{e} inequality,  due to the assumption $g(0)=0$, there exists  a positive constant $\lambda^*$ such that
\begin{eqnarray}\label{es577}
\!\!\!\!\!\!\!\!&&\tilde{\Phi}^{\varepsilon}(t,|u(t)|_{1/2}^2)
\nonumber \\
\leq\!\!\!\!\!\!\!\!&&\tilde{\Phi}^{\varepsilon}(r,|u(r)|_{1/2}^2) +2\alpha\big[\mathcal{M}_{\varepsilon}^1(r,t)+\mathcal{M}_{\varepsilon}^2(r,t)\big]+c_0\int_r^t\tilde{\Phi}^{\varepsilon}(s,|u(s)|_{1/2}^2) ds
\nonumber \\
\!\!\!\!\!\!\!\!&&
-\alpha\int_r^te^{c_0s}\frac{\lambda^*|u(s)|_{1/2}^2}{(\varepsilon+|u(s)|_{1/2}^2)^{1-\alpha}}\mathbf{1}_{\{|u(s)|_{1/2}>0\}}ds
\nonumber \\
\!\!\!\!\!\!\!\!&&
-\alpha\int_r^te^{c_0s}\frac{|u(s)|_{3/2}^2}{(\varepsilon+|u(s)|_{1/2}^2)^{1-\alpha}}ds+\alpha\int_r^te^{c_0s}\frac{\|\sigma(u(s))\|_{\mathcal{L}_2(l^2;\mathbb{H}^{1/2})}^2}{(\varepsilon+|u(s)|_{1/2}^2)^{1-\alpha}}ds
\nonumber \\
\!\!\!\!\!\!\!\!&&
+\alpha\int_r^te^{c_0s}\Bigg\{\frac{\big(\delta_1|u(s)|_{1}^{4}|u(s)|_{1/2}^{2}+\|g(u(s))\|_{\mathcal{L}_2(l^2;\mathbb{H}^{1/2})}^2\big)(\varepsilon+|u(s)|_{1/2}^2)}{(\varepsilon+|u(s)|_{1/2}^2)^{2-\alpha}}\Bigg\}
\mathbf{1}_{\{|u(s)|_{1/2}>0\}}ds
\nonumber \\
\!\!\!\!\!\!\!\!&&
-2\alpha(1-\alpha)\int_r^te^{c_0s}\frac{\|((g(u(t)))\cdot,u(t))_{1/2}\|_{\mathcal{L}_2(l^2;\mathbb{R})}^2}{(\varepsilon+|u(s)|_{1/2}^2)^{2-\alpha}}\mathbf{1}_{\{|u(s)|_{1/2}>0\}}ds
\nonumber \\
\leq\!\!\!\!\!\!\!\!&&\tilde{\Phi}^{\varepsilon}(r,|u(r)|_{1/2}^2) +2\alpha\big[\mathcal{M}_{\varepsilon}^1(r,t)+\mathcal{M}_{\varepsilon}^2(r,t)\big]+c_0\int_r^t\tilde{\Phi}^{\varepsilon}(s,|u(s)|_{1/2}^2) ds
\nonumber \\
\!\!\!\!\!\!\!\!&&
-\alpha\int_r^te^{c_0s}\frac{\lambda^*|u(s)|_{1/2}^2}{(\varepsilon+|u(s)|_{1/2}^2)^{1-\alpha}}\mathbf{1}_{\{|u(s)|_{1/2}>0\}}ds
\nonumber \\
\!\!\!\!\!\!\!\!&&
+\alpha\int_r^te^{c_0s}\Bigg\{\frac{\kappa_1\big((|u(s)|_{1}^{4}+1)|u(s)|_{1/2}^{2}+\|g(u(s))\|_{\mathcal{L}_2(l^2;\mathbb{H}^{1/2})}^2\big)(\varepsilon+|u(s)|_{1/2}^2)}{(\varepsilon+|u(s)|_{1/2}^2)^{2-\alpha}}
\nonumber \\
\!\!\!\!\!\!\!\!&&
-\frac{2(1-\alpha)\|((g(u(t)))\cdot,u(t))_{1/2}\|_{\mathcal{L}_2(l^2;\mathbb{R})}^2}{(\varepsilon+|u(s)|_{1/2}^2)^{2-\alpha}}\Bigg\}\mathbf{1}_{\{|u(s)|_{1/2}>0\}}ds
,~0\leq r<t,
\end{eqnarray}
where $\lambda^*$ is a positive constant from the Poincar\'{e} inequality.
Here $\mathcal{M}_{\varepsilon}^1(r,t)$, $\mathcal{M}_{\varepsilon}^2(r,t)$ are  continuous local martingales given by
\begin{equation*}
\mathcal{M}_{\varepsilon}^1(r,t):=\int_r^t\mathbf{1}_{\{|u(s)|_{1/2}>0\}}e^{c_0s}\frac{(\sigma(u(s))d\mathcal{W}(s),u(s))_{1/2}}{(\varepsilon+|u(s)|_{1/2}^2)^{1-\alpha}}
\end{equation*}
and
\begin{equation*}
\mathcal{M}_{\varepsilon}^2(r,t):=\int_r^t\mathbf{1}_{\{|u(s)|_{1/2}>0\}}e^{c_0s}\frac{(g(u(s))d\hat{\mathcal{W}}(s),u(s))_{1/2}}{(\varepsilon+|u(s)|_{1/2}^2)^{1-\alpha}}.
\end{equation*}

Let us denote $\tilde{\Phi}(t,x):=e^{c_0t}x^{2-\gamma}$. Taking  $\varepsilon\to0$ in (\ref{es577}) and $\alpha=1-\frac{\gamma}{2}$, by condition $(\mathbf{H}_{g}^{2*})$  we have for all $0\leq r<t$,
\begin{eqnarray}\label{es588}
\!\!\!\!\!\!\!\!&&\tilde{\Phi}(t,|u(t)|_{1/2})
\nonumber \\
\leq\!\!\!\!\!\!\!\!&&\tilde{\Phi}(r,|u(r)|_{1/2}) +2\alpha\big[\tilde{\mathcal{M}}^1(r,t)+\tilde{\mathcal{M}}^2(r,t)\big]+c_0\int_r^t\tilde{\Phi}(s,|u(s)|_{1/2})\mathbf{1}_{\{|u(s)|_{1/2}>0\}}  ds
\nonumber \\
\!\!\!\!\!\!\!\!&&
-\alpha\int_r^te^{c_0s}\frac{\lambda^*|u(s)|_{1/2}^2}{|u(s)|_{1/2}^{2(1-\alpha)}}\mathbf{1}_{\{|u(s)|_{1/2}>0\}}ds
\nonumber \\
\!\!\!\!\!\!\!\!&&
+\alpha\int_r^te^{c_0s}\Bigg\{\frac{\kappa_1\big((|u(s)|_{1}^{4}+1)|u(s)|_{1/2}^{2}+\|g(u(s))\|_{\mathcal{L}_2(l^2;\mathbb{H}^{1/2})}^2\big)|u(s)|_{1/2}^2}{|u(s)|_{1/2}^{2(2-\alpha)}}
\nonumber \\
\!\!\!\!\!\!\!\!&&
-\frac{2(1-\alpha)\|((g(u(t)))\cdot,u(t))_{1/2}\|_{\mathcal{L}_2(l^2;\mathbb{R})}^2}{|u(s)|_{1/2}^{2(2-\alpha)}}\Bigg\}\mathbf{1}_{\{|u(s)|_{1/2}>0\}}ds
\nonumber \\
\leq\!\!\!\!\!\!\!\!&&\tilde{\Phi}(r,|u(r)|_{1/2}) +2\alpha\big[\tilde{\mathcal{M}}^1(r,t)+\tilde{\mathcal{M}}^2(r,t)\big]+c_0\int_r^t\tilde{\Phi}(s,|u(s)|_{1/2})\mathbf{1}_{\{|u(s)|_{1/2}>0\}}  ds
\nonumber \\
\!\!\!\!\!\!\!\!&&
-\lambda^*\alpha\int_r^t\tilde{\Phi}(s,|u(s)|_{1/2})\mathbf{1}_{\{|u(s)|_{1/2}>0\}}ds,
\end{eqnarray}
where $\tilde{\mathcal{M}}^1(r,t),\tilde{\mathcal{M}}^2(r,t)$ are  continuous local martingales given by
\begin{equation*}
\tilde{\mathcal{M}}^1(r,t):=\int_r^t\mathbf{1}_{\{|u(s)|_{1/2}>0\}}\frac{(\sigma(u(s))d\mathcal{W}(s),u(s))_{1/2}}{|u(s)|_{1/2}^{2(1-\alpha)}}
\end{equation*}
and
\begin{equation*}
\tilde{\mathcal{M}}^2(r,t):=\int_r^t\mathbf{1}_{\{|u(s)|_{1/2}>0\}}\frac{(g(u(s))d\hat{\mathcal{W}}(s),u(s))_{1/2}}{|u(s)|_{1/2}^{2(1-\alpha)}}.
\end{equation*}
Choosing $c_0=\lambda^*\alpha$ and taking the conditional expectation on both side of inequality (\ref{es588}), we derive
$$\mathbf{E}\Big[e^{\lambda^*(1-\frac{\gamma}{2})t}|u(t)|_{1/2}^{2-\gamma}|\mathscr{F}_{r}\Big]\leq e^{\lambda^*(1-\frac{\gamma}{2})r}|u(r)|_{1/2}^{2-\gamma},~r<t,$$
 which completes the desired result.

\vspace{1mm}
\noindent\textbf{Step 2.} According to Step 1, we can get
  \begin{equation}\label{es599}
\mathbf{E}|u(t)|_{1/2}^{2-\gamma}\leq e^{-\lambda^*(1-\frac{\gamma}{2})t}|x|_{1/2}^{2-\gamma},~t\geq 0.
\end{equation}

Without loss of generality, we only consider $ |x|_{1/2}>0$ since, in terms of (\ref{es599}), $ u(t)\equiv 0$ $\mathbf{P}\text{-a.s.}$ if $x=0$.
Let $\kappa\in (0,\lambda^*(1-\frac{\gamma}{2}))$. Then we obtain
\begin{eqnarray*}
\!\!\!\!\!\!\!\!&&\mathbf{P}\bigg(\sup_{t\in[k,k+1]}e^{\kappa t}|u(t)|_{1/2}^{2-\gamma}\geq |x|_{1/2}^{2-\gamma}   \bigg)
\nonumber \\
\leq\!\!\!\!\!\!\!\!&&\mathbf{P}\bigg(\sup_{t\in[k,k+1]}e^{\lambda^*(1-\frac{\gamma}{2}) t}|u(t)|_{1/2}^{2-\gamma}\geq e^{(\lambda^*(1-\frac{\gamma}{2})-\kappa) k}|x|_{1/2}^{2-\gamma}   \bigg)
\nonumber \\
\leq\!\!\!\!\!\!\!\!&&\mathbf{E}\Big[e^{\lambda^*(1-\frac{\gamma}{2}) k}|u(k)|_{1/2}^{2-\gamma}\Big]\Big/\Big\{e^{(\lambda^*(1-\frac{\gamma}{2})-\kappa) k}|x|_{1/2}^{2-\gamma}\Big\}
\nonumber \\
\leq\!\!\!\!\!\!\!\!&&e^{-(\lambda^*(1-\frac{\gamma}{2})-\kappa) k},
\end{eqnarray*}
where we used the maximal supermartingale inequality in the third step, which implies
\begin{equation}\label{es600}
\sum_{k=0}^{\infty}\mathbf{P}\bigg(\sup_{t\in[k,k+1]}e^{\kappa t}|u(t)|_{1/2}^{2-\gamma}\geq |x|_{1/2}^{2-\gamma}   \bigg)<\infty.
\end{equation}
Consequently,  (\ref{es600}) and Borel-Cantelli's lemma imply that for
almost all $\omega\in \Omega$ there exists a   finite random time $\tau=\tau(\omega)$ such that
$$|u(t,\omega)|_{1/2}^{2-\gamma}\leq e^{-\kappa t}|x|_{1/2}^{2-\gamma},~~t\geq \tau(\omega).$$
We complete the proof.  \hspace{\fill}$\Box$

\subsection{Proof of ergodicity}
We first present the following lemma concerning the time-average estimates.
\begin{lemma}\label{lemta}
There exists a positive constant $C$  such that for any $T\geq 1$,
 \begin{equation}\label{e116}
\frac{1}{T}\int_0^T\mathbf{E}|u(t)|_{3/2}^{2-\gamma}dt\leq C+\frac{C|x|_{1/2}^{2-\gamma}}{T}.
\end{equation}

\end{lemma}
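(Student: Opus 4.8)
The plan is to control the time average of $\mathbf{E}|u(t)|_{3/2}^{2-\gamma}$ by splitting the $\mathbb{H}^{3/2}$-norm into the dissipation functional already supplied by the energy estimate of Lemma \ref{lem3.0} and a lower-order $\mathbb{H}^{1/2}$-contribution, the latter being tamed through the Poincar\'e coercivity hidden inside that same estimate. Write $\alpha=1-\frac{\gamma}{2}\in(0,\tfrac12)$ and $\Phi(x)=x^{\alpha}$ as in the proof of Lemma \ref{lem3.0}, so that $2-\gamma=2\alpha$. A single application of Young's inequality with exponents $\frac1\alpha$ and $\frac1{1-\alpha}$ gives the pointwise interpolation
\[
|u|_{3/2}^{2-\gamma}\leq \alpha\,\frac{|u|_{3/2}^2}{(1+|u|_{1/2}^2)^{1-\alpha}}+(1-\alpha)\,(1+|u|_{1/2}^2)^{\alpha}.
\]
Integrating over $[0,T]$ and taking expectation reduces the claim to two bounds: one for $\mathbf{E}\int_0^T \frac{|u(t)|_{3/2}^2}{(1+|u(t)|_{1/2}^2)^{1-\alpha}}\,dt$, and one for $\int_0^T\mathbf{E}(1+|u(t)|_{1/2}^2)^{\alpha}\,dt\leq T+\int_0^T\mathbf{E}\Phi(|u(t)|_{1/2}^2)\,dt$, where I used $(1+y)^\alpha\le 1+y^\alpha$.

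The first bound is immediate. Applying the It\^o--Lyapunov computation of Lemma \ref{lem3.0} to the solution $u$ of Eq.~(\ref{sns2}) (exactly as was done for $\bar u$ at the end of Subsection \ref{sub2.6}) yields, for every $t\ge 0$,
\[
\mathbf{E}\Phi(|u(t)|_{1/2}^2)+\frac{\alpha}{2}\,\mathbf{E}\int_0^t \frac{|u(s)|_{3/2}^2}{(1+|u(s)|_{1/2}^2)^{1-\alpha}}\,ds\leq \Phi(|x|_{1/2}^2)+\alpha C t,
\]
so the first integral is at most $\frac{2}{\alpha}\Phi(|x|_{1/2}^2)+2Ct$. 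For the second integral I would extract coercivity from this very dissipation term: by the Poincar\'e inequality $|u|_{3/2}^2\ge \lambda^*|u|_{1/2}^2$ and the elementary bound $\frac{y}{(1+y)^{1-\alpha}}\ge 2^{\alpha-1}y^{\alpha}-1$ (check $y\ge 1$ via $(1+y)^{1-\alpha}\le 2^{1-\alpha}y^{1-\alpha}$, and $y<1$ trivially), the dissipation functional dominates $c_0\,\Phi(|u|_{1/2}^2)-1$ with $c_0=2^{\alpha-1}\lambda^*$. Feeding this back into the displayed energy inequality gives the closed integral inequality
\[
\mathbf{E}\Phi(|u(t)|_{1/2}^2)+a\int_0^t \mathbf{E}\Phi(|u(s)|_{1/2}^2)\,ds\leq \Phi(|x|_{1/2}^2)+b\,t,
\]
with $a=\tfrac{\alpha c_0}{2}$ and $b=\alpha C+\tfrac{\alpha\lambda^*}{2}$. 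Setting $G(t)=\int_0^t\mathbf{E}\Phi(|u(s)|_{1/2}^2)\,ds$ this reads $G'(t)+aG(t)\le \Phi(|x|_{1/2}^2)+bt$; multiplying by $e^{at}$ and integrating yields $G(T)\le \frac{1}{a}\Phi(|x|_{1/2}^2)+\frac{b}{a}\,T$.

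Assembling the two bounds into the integrated interpolation inequality gives
\[
\mathbf{E}\int_0^T |u(t)|_{3/2}^{2-\gamma}\,dt\leq \Big(2+\tfrac{1-\alpha}{a}\Big)\Phi(|x|_{1/2}^2)+\Big(2\alpha C+(1-\alpha)\big(1+\tfrac{b}{a}\big)\Big)T,
\]
and dividing by $T$, together with $\Phi(|x|_{1/2}^2)=|x|_{1/2}^{2-\gamma}$, produces precisely the asserted form $C+C|x|_{1/2}^{2-\gamma}/T$ for $T\ge 1$. The one genuinely delicate point is justifying the energy inequality and the Poincar\'e extraction directly for $u$ rather than for the Galerkin approximants: since the $\mathbb{H}^{1/2}$-moments are only of order $2-\gamma<2$, the stochastic integrals are merely local martingales, so the computation must be run up to the stopping times $\tau_{n,R}$ of Lemma \ref{lem3.0} (or their analogues for $u$), with Fatou's lemma used in passing $R\to\infty$. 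Once this localization is in place, the Gronwall step and the final assembly are routine.
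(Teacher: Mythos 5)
Your proposal is correct, and it rests on the same backbone as the paper's argument --- the It\^o--Lyapunov dissipation estimate of Lemma \ref{lem3.0} (inequality (\ref{es20})), run for the solution itself via stopping times and Fatou, which you rightly flag as the one delicate point --- but your decomposition and bookkeeping are genuinely different. The paper's proof (it simply points to (\ref{es13})) is a three-line computation: write
\begin{equation*}
|u|_{3/2}^{2-\gamma}=\frac{|u|_{3/2}^{2-\gamma}\,(1+|u|_{1/2}^2)^{\gamma/2}}{(1+|u|_{1/2}^2)^{\gamma/2}},
\end{equation*}
bound the numerator pointwise by $1+|u|_{3/2}^2$ using $|u|_{1/2}\lesssim|u|_{3/2}$, and integrate: the $|u|_{3/2}^2$ part is exactly the dissipation functional controlled by (\ref{es20}), while the residual part contributes at most $T$ because the weight $(1+|u|_{1/2}^2)^{-\gamma/2}\le 1$. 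Your Young-inequality split instead leaves the genuine lower-order term $(1+|u|_{1/2}^2)^{\alpha}$, which forces you to produce a separate linear-in-time bound on $\int_0^T\mathbf{E}\Phi(|u(t)|_{1/2}^2)\,dt$; you obtain it by extracting Poincar\'e coercivity from inside the dissipation and closing a Gronwall-type inequality. This works, and as a by-product gives the slightly stronger statement that $\int_0^T\mathbf{E}|u(t)|_{1/2}^{2-\gamma}\,dt$ grows at most linearly in $T$; the price is an extra absorption step that the paper's multiply-and-divide trick (which applies Poincar\'e pointwise, turning the residue into a constant) avoids entirely. Two cosmetic remarks: your elementary bound gives domination of $c_0\Phi(|u|_{1/2}^2)-\lambda^*$ rather than $c_0\Phi(|u|_{1/2}^2)-1$ (your constant $b=\alpha C+\tfrac{\alpha\lambda^*}{2}$ is already consistent with the corrected version, so nothing breaks); and the Gronwall step is overkill, since $G'(t)=\mathbf{E}\Phi(|u(t)|_{1/2}^2)\ge 0$ lets you drop $G'$ in $G'+aG\le \Phi(|x|_{1/2}^2)+bt$ and read off $G(T)\le a^{-1}\big(\Phi(|x|_{1/2}^2)+bT\big)$ directly.
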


\begin{proof}
%Fix $T>0$. In view of  (\ref{es10}), we have
%\begin{equation}\label{es81}
%\frac{1}{T}\mathbf{E}\int_0^T\frac{| u(t)|_{3/2}^2}{(1+|u(t)|_{1/2}^2)^{\frac{\gamma}{2}}}dt
%\lesssim\frac{|x|_{1/2}^{2-\gamma}}{T}+1.
%\end{equation}
%Consequently, by (\ref{es81}) and Young's inequality we have
%\begin{eqnarray*}
%\frac{1}{T}\mathbf{E}\int_0^T| u(t)|_{3/2}^{2-\gamma}dt
%\leq\!\!\!\!\!\!\!\!&&\frac{1}{T}\mathbf{E}\int_0^T\frac{| u(t)|_{3/2}^{2-\gamma}(1+|u(t)|_{1/2}^{2})^{\frac{\gamma}{2}}}{(1+|u(t)|_{1/2}^2)^{\frac{\gamma}{2}}}dt
%\nonumber \\
%\lesssim\!\!\!\!\!\!\!\!&&
%\frac{1}{T}\mathbf{E}\int_0^T\frac{1+| u(t)|_{3/2}^2}{(1+|u(t)|_{1/2}^2)^{\frac{\gamma}{2}}}dt
%\nonumber \\
%\leq\!\!\!\!\!\!\!\!&&C+\frac{C|x|_{1/2}^{2-\gamma}}{T}.
%\end{eqnarray*}
The proof follows from the same argument as in (\ref{es13}), we omit it.
\end{proof}

Now we are in the position to prove the existence and uniqueness of invariant measures  to Eq.~(\ref{sns2}).

\vspace{1mm}
\noindent\textbf{Proof of Theorem \ref{th3}.} In what follows, we will prove Theorem \ref{th3} in the following two steps.\\
\textbf{Step 1.} (Proof of existence  of invariant measures).
Since we have proved in Theorem \ref{thF} that the transition semigroup $(\mathcal{T}_t)_{t\geq 0}$ defined by $(\ref{semig})$ is  Feller, from the method of Krylov-Bogoliubov
 we define the occupation measure
$$\mu_n:=\frac{1}{n}\int_0^n \delta_0 \mathcal{T}_t dt,~n\geq 1,$$
where $\delta_0$ is Dirac measure at $0$. It is well-known  that for the existence of invariant measures, one only needs to verify the tightness of $\{\mu_n:n\in\mathbb{N}\}$ on $\mathbb{H}^{1/2}$.

Due to Lemma \ref{lemta}, we can get that there exists a  constant $C>0$ independent of $n$,
\begin{equation}\label{es44}
\mu_n(|\cdot|_{3/2}^{2-\gamma})=\frac{1}{n}\int_0^n\mathbf{E}| u(t,0)|_{3/2}^{2-\gamma}dt\leq C.
\end{equation}
Note that the embedding $\mathbb{H}^{3/2}\subset\mathbb{H}^{1/2}$ is compact, then, for any positive constant $K$, the set $\{u\in \mathbb{H}^{1/2}:|u|_{3/2}\leq K\}$ is relatively compact in
$\mathbb{H}^{1/2}$. Hence, the estimate (\ref{es44}) implies the tightness of $\{\mu_n:n\in\mathbb{N}\}$ on $\mathbb{H}^{1/2}$. Consequently, the limit of a convergent
subsequence provides an invariant measure $\mu$ associated to the transition semigroup $\mathcal{T}_t$.

We proceed to prove the concentration property (\ref{sup1}). Following from the above argument, there exists a subsequence, still denoted by $(\mu_n)$, such that
$$\mu_n\xrightarrow{n \to \infty}\mu~\text{in}~ \mathscr{P}(\mathbb{H}^{1/2}).$$
Using the  lower semi-continuity of norm $|\cdot|_{3/2}$ in $\mathbb{H}^{1/2}$ and the estimate (\ref{es44}), we can deduce that
\begin{eqnarray*}
\int |x|_{3/2}^{2-\gamma}\mu(dx)\leq \!\!\!\!\!\!\!\!&& \liminf_{M\to\infty}\int (|x|_{3/2}^{2-\gamma}\wedge M)\mu(dx)
\nonumber \\
\leq \!\!\!\!\!\!\!\!&& \liminf_{M\to\infty}\liminf_{n\to\infty}\int (|x|_{3/2}^{2-\gamma}\wedge M)\mu_n(dx)
\nonumber \\
\leq\!\!\!\!\!\!\!\!&& \liminf_{n\to\infty}\frac{1}{n}\int_0^n\mathbf{E}|u(t,0)|_{3/2}^{2-\gamma}dt
\nonumber \\
<\!\!\!\!\!\!\!\!&&\infty.
\end{eqnarray*}

\noindent\textbf{Step 2.} (Proof of uniqueness  of invariant measures).  It follows from Theorem \ref{th4}  that, as $t\to\infty$,  almost all sample paths of the solution to Eq.~(\ref{sns2}) will tend to the equilibrium state $u(\infty)=0$. Therefore, for any $\varphi\in C_b(\mathbb{H}^{1/2})$ and initial value $x\in \mathbb{H}^{1/2}$,
$$\varphi(u(t,x))\xrightarrow{t \to \infty} \varphi(0)~~\mathbf{P}\text{-a.s.},$$
which together with the dominated convergence theorem gives
\begin{equation}\label{es611}
\mathcal{T}_t\varphi(x)\xrightarrow{t \to \infty} \varphi(0).
\end{equation}

We now prove that $\delta_0$ is the unique invariant measure associated to the semigroup $\mathcal{T}_t$. Indeed, let $\mu$ be any invariant measure, i.e.
\begin{equation}\label{es622}
\int_{\mathbb{H}^{1/2}}\varphi(x)\mu(dx)=\int_{\mathbb{H}^{1/2}}\mathcal{T}_t\varphi(x)\mu(dx),~t\geq 0,~\varphi\in\mathscr{B}_b(\mathbb{H}^{1/2}).
\end{equation}
Let $\varphi\in C_b(\mathbb{H}^{1/2})$.  Taking $t\to\infty$ on both sides of (\ref{es622}) and using  the dominated convergence theorem,  due to (\ref{es611}) we have
\begin{equation*}
\int_{\mathbb{H}^{1/2}}\varphi(x)\mu(dx)=\varphi(0)=\int_{\mathbb{H}^{1/2}}\varphi(x)\delta_0(dx),~\varphi\in C_b(\mathbb{H}^{1/2}),
\end{equation*}
which completes the proof of the claim.
 \hspace{\fill}$\Box$

\section{Appendix}\label{appendix}

\subsection{Proof of Lemma \ref{Bes12}}\label{subsecpr}

We first prove \eref{es7}. Using H\"{o}lder's inequality and Lemma \ref{lemso}, we have
\begin{eqnarray*}
(B(u),\Lambda u)
\leq\!\!\!\!\!\!\!\!&&|B(u)|_{L^{3/2}}|\nabla u|_{L^3}
\nonumber \\
\lesssim\!\!\!\!\!\!\!\!&&|u|_{L^6} |u|_{1}|u|_{3/2}
\nonumber \\
\leq \!\!\!\!\!\!\!\!&&\frac{1}{4}| u|_{3/2}^2+C_1|u|_{1}^4
\nonumber \\
\leq \!\!\!\!\!\!\!\!&&\frac{1}{4}| u|_{3/2}^2+C_2|u|_{1}^2|u|_{1/2}|u|_{3/2}
\nonumber \\
\leq \!\!\!\!\!\!\!\!&&\frac{1}{2}| u|_{3/2}^2+\frac{3C_2}{4}|u|_{1}^4|u|_{1/2}^2 ,
\end{eqnarray*}
where we used Lemma \ref{lemso} in the third inequality, the interpolation inequality (\ref{inteq2})
in the fourth step, and Young's inequality  in the third and fifth inequalities. Thus \eref{es7} holds by taking $\delta_1=\frac{3C_2}{4}$, which depends on the constants derived from the Sobolev embedding theorem and the interpolation inequality.

For \eref{es7}, using Agmon's inequality (cf.~(A.29) in \cite{FMRT}) and Young's inequality, there exists $\delta_2>0$
\begin{eqnarray*}
(B(u),\Lambda^2 u)
\leq|u|_{L^{\infty}} |u|_{1}|u|_{2}
\lesssim|u|_{1}^{\frac{3}{2}} |u|_{2}^{\frac{3}{2}}
\leq \frac{1}{2}| u|_{2}^2+\delta_2|u|_{1}^6 .
\end{eqnarray*}

\subsection{Deterministic compactness criterions}
The following two lemmas give the compactness criterions associated to the spaces  $\mathcal{X}_1$ and  $\mathcal{X}_2$.
\begin{lemma}\label{lemc}
Let  $\mathcal{K}$ be a subset of $\mathcal{X}_1$ such that the following conditions hold:
\begin{enumerate}[$(i)$]

\item  $\sup_{u\in\mathcal{K}}\sup_{ t\in[0, T]}|u(t)|_{1/2}<\infty$;

\item $\sup_{u\in\mathcal{K}}\int_0^T|u(t)|_{3/2}^2 dt<\infty$;

\item
$$\lim_{\Delta\to 0^+}\sup_{u\in\mathcal{K}}\int_0^{T-\Delta}|u(t+\Delta)-u(t)|_{1/2}^2dt=0.$$
\end{enumerate}
Then, $\mathcal{K}$ is relatively compact in $\mathcal{X}_1$.
\end{lemma}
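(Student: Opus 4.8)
The plan is to establish relative compactness separately in each of the three topologies whose intersection defines $\mathcal{X}_1$, and then to glue these together via the diagonal embedding $u\mapsto(u,u,u)$ into the product space $L^2_w([0,T];\mathbb{H}^{3/2})\times L^2([0,T];\mathbb{H}^{1/2})\times L^{\infty}_{w^*}([0,T];\mathbb{H}^{1/2})$, on whose image $\tau_{\mathcal{X}_1}$ is exactly the subspace product topology. On bounded sets each of the three factor topologies is metrizable (the first and third being the weak, resp.\ weak-$*$, topology on a bounded subset of a separable reflexive space, resp.\ of the dual of a separable space), so it is enough to show that every sequence $(u_n)\subset\mathcal{K}$ admits a subsequence converging, in all three topologies simultaneously, to one and the same limit $u$.

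For the two weak components the extraction is immediate. By $(ii)$ the sequence $(u_n)$ is bounded in the Hilbert space $L^2([0,T];\mathbb{H}^{3/2})$, so reflexivity yields a subsequence converging weakly there. By $(i)$ it is bounded in $L^{\infty}([0,T];\mathbb{H}^{1/2})$; since $\mathbb{H}^{-1/2}$ is separable, the predual $L^1([0,T];\mathbb{H}^{-1/2})$ is separable, and the Banach--Alaoglu theorem provides a further subsequence converging in the weak-$*$ topology. As both limits coincide with the limit taken in the sense of distributions, they agree with a single element $u$.

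The crux is the strong convergence in $L^2([0,T];\mathbb{H}^{1/2})$, and I expect the main difficulty to lie here. I would invoke an Aubin--Lions--Simon / Kolmogorov--Riesz type compactness criterion built on the compact embedding $\mathbb{H}^{3/2}\hookrightarrow\hookrightarrow\mathbb{H}^{1/2}$, whose two hypotheses are supplied exactly by $(ii)$ and $(iii)$. First, the uniform bound in $L^2([0,T];\mathbb{H}^{3/2})$ forces, for every $0\le a<b\le T$, the time averages $\int_a^b u(t)\,dt$ to lie in a fixed ball of $\mathbb{H}^{3/2}$ (indeed $|\int_a^b u\,dt|_{3/2}\le (b-a)^{1/2}\|u\|_{L^2([0,T];\mathbb{H}^{3/2})}$), hence in a precompact subset of $\mathbb{H}^{1/2}$, which furnishes the required ``spatial'' compactness. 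Second, $(iii)$ is precisely the uniform time-translation equicontinuity $\sup_{u\in\mathcal{K}}\|u(\cdot+\Delta)-u(\cdot)\|_{L^2([0,T-\Delta];\mathbb{H}^{1/2})}\to0$. The endpoint/tightness terms that appear when one extends functions by zero outside $[0,T]$ are controlled by $(i)$, since $\int_0^{\Delta}|u(t)|_{1/2}^2\,dt+\int_{T-\Delta}^{T}|u(t)|_{1/2}^2\,dt\le 2\Delta\sup_{t}|u(t)|_{1/2}^2\to0$ uniformly over $\mathcal{K}$. A diagonal extraction then yields a subsequence converging strongly in $L^2([0,T];\mathbb{H}^{1/2})$, whose limit must again equal $u$ by uniqueness of distributional limits.

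Combining the three extractions, the selected subsequence converges to the common limit $u$ simultaneously in $L^2_w([0,T];\mathbb{H}^{3/2})$, in $L^2([0,T];\mathbb{H}^{1/2})$, and in $L^{\infty}_{w^*}([0,T];\mathbb{H}^{1/2})$, that is, in $\tau_{\mathcal{X}_1}$. Hence every sequence in $\mathcal{K}$ has a $\tau_{\mathcal{X}_1}$-convergent subsequence, and since $\mathcal{K}$ lies inside a product of compact metrizable sets the notions of relative sequential compactness and relative compactness coincide; this shows that $\mathcal{K}$ is relatively compact in $\mathcal{X}_1$ and completes the proof.
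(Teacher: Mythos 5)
Your proof is correct and follows essentially the same route as the paper: the compactness criterion you reconstruct from the time-average bound (via $(ii)$ and the compact embedding $\mathbb{H}^{3/2}\hookrightarrow\hookrightarrow\mathbb{H}^{1/2}$) together with the translation equicontinuity $(iii)$ is exactly Simon's Theorem 5 in \cite{S3}, which the paper invokes for relative compactness in $L^2([0,T];\mathbb{H}^{1/2})$, while the Banach--Alaoglu argument from $(i)$ and $(ii)$ handles the $L^2_w$ and $L^{\infty}_{w^*}$ components just as in the paper. Your additional care in gluing the three extractions (metrizability on bounded sets and identification of the limits through distributional convergence) merely fills in the step the paper compresses into ``consequently, we can deduce.''
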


\begin{proof}
We sketch the proof for reader's convenience.  First, due to the compactness criterion presented in Theorem 5 of \cite{S3}, it follows from the conditions  (ii) and (iii) that $\mathcal{K}$ is relatively compact in $L^2([0,T];\mathbb{H}^{1/2})$.  Furthermore, by the Banach-Alaoglu theorem and the conditions (i) and (ii) it is clear that $\mathcal{K}$ is relatively compact in $L^2_w([0,T];\mathbb{H}^{3/2})\cap L^{\infty}_{w^*}([0,T];\mathbb{H}^{1/2})$. Consequently, we can deduce that $\mathcal{K}$ is relatively compact in $\mathcal{X}_1$.
%Without loss of generality, we may assume that $\mathcal{K}$ is a closed subset in  $\mathcal{Z}_T$.
%Let us denote
%$$\hat{\mathcal{Z}}_T:=\mathbb{C}_T(\mathbb{H}^{-1/2-\theta})\cap L^2_w([0,T];\mathbb{H}^{3/2+\theta})\cap L^2([0,T];\mathbb{H}^{1/2+\theta}).$$
%Let $(u_n)_{n\in\mathbb{N}}$ be a sequence in  $\mathcal{Z}_T$.  Following exactly similar arguments as in \cite[Lemma 3.1]{BM13}, due to assumptions (ii) and (iii), we can deduce that  $\mathcal{K}$ is relatively compact in $\hat{\mathcal{Z}}_T$. Therefore, there exists a subsequence, still denoted by $(u_n)_{n\in\mathbb{N}}$,  that is convergent in $\mathbb{C}_T(\mathbb{H}^{-1/2-\theta})$. By (i) and Lemma 3.2 in \cite{BM13}, we have the sequence
% $(u_n)_{n\in\mathbb{N}}$ is also convergent in $\mathbb{C}_T(\mathbb{H}^{1/2+\theta}_{R,w})$. Hence, the proof is complete.
\end{proof}

\begin{lemma}\label{lemc1}
Let  $\mathcal{K}$ be a subset of $\mathcal{X}_2$ such that the following conditions hold:
\begin{enumerate}[$(i)$]

\item  For any $k\in\mathbb{N}$ there exists a constant $C_k>0$ such that

$$\sup_{u\in\mathcal{K}}\sup_{ t\in[\frac{1}{k}, T]}|u(t)|_{1/2}\leq C_k;$$

\item
$$\lim_{\Delta\to 0^+}\sup_{u\in\mathcal{K}}\sum_{k=1}^{\infty}\frac{1}{2^k}\bigg(\sup_{|t-s|\leq \Delta, t,s\in[\frac{1}{k},T]}|u(t)-u(s)|_{-1/2}\wedge 1 \bigg)=0.$$
\end{enumerate}
Then, $\mathcal{K}$ is relatively compact in $\mathcal{X}_2$.
\end{lemma}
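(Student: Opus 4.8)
The plan is to reduce the statement to the classical Arzel\`a--Ascoli theorem applied on each of the compact time-slabs $[\frac{1}{k},T]$, and then to assemble the resulting slab-wise compactness through a diagonal extraction adapted to the metric $d$ defining $\mathcal{X}_2$. Since $\mathcal{X}_2$ is a metric space, it suffices to show that $\mathcal{K}$ is relatively \emph{sequentially} compact, i.e. that every sequence $(u_n)_{n\geq1}\subset\mathcal{K}$ admits a subsequence converging in $\mathcal{X}_2$ to some limit $u\in C((0,T];\mathbb{H}^{-1/2})$.

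First I would fix $k\in\mathbb{N}$ and verify the two hypotheses of Arzel\`a--Ascoli on the Banach space $C([\frac{1}{k},T];\mathbb{H}^{-1/2})$. For the pointwise precompactness, observe that by $(i)$ the set $\{u(t):u\in\mathcal{K}\}$ is bounded in $\mathbb{H}^{1/2}$, uniformly over $t\in[\frac{1}{k},T]$; since the embedding $\mathbb{H}^{1/2}\hookrightarrow\mathbb{H}^{-1/2}$ is compact (see \eref{emb}), this bounded set is relatively compact in $\mathbb{H}^{-1/2}$. For the equicontinuity, note that every summand in $(ii)$ is nonnegative, so for the fixed $k$ one has
$$\sup_{u\in\mathcal{K}}\Big(\sup_{|t-s|\leq\Delta,\,t,s\in[\frac{1}{k},T]}|u(t)-u(s)|_{-1/2}\wedge 1\Big)\leq 2^k\sup_{u\in\mathcal{K}}\sum_{j=1}^{\infty}\frac{1}{2^j}\Big(\sup_{|t-s|\leq\Delta,\,t,s\in[\frac{1}{j},T]}|u(t)-u(s)|_{-1/2}\wedge 1\Big),$$
whose right-hand side tends to $0$ as $\Delta\to0^+$ by $(ii)$. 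Once $\Delta$ is small enough the truncation by $1$ is inactive, whence the family is uniformly equicontinuous on $[\frac{1}{k},T]$. Arzel\`a--Ascoli then yields that $\mathcal{K}$ is relatively compact in $C([\frac{1}{k},T];\mathbb{H}^{-1/2})$ for every $k$.

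To pass from these slab-wise statements to relative compactness in $\mathcal{X}_2$, I would argue by diagonal extraction: starting from $(u_n)$, extract a subsequence converging uniformly on $[1,T]$, then a further subsequence converging uniformly on $[\frac12,T]$, and so on; the diagonal sequence $(u_{n_j})$ then converges uniformly on every $[\frac{1}{k},T]$ to a limit $u$. The limit is continuous on each $[\frac{1}{k},T]$, hence $u\in C((0,T];\mathbb{H}^{-1/2})=\mathcal{X}_2$. Finally, uniform convergence on each slab upgrades to convergence in $d$: given $\varepsilon>0$, choose $K$ with $\sum_{k>K}2^{-k}<\varepsilon/2$ and then $j$ large enough that the finite sum $\sum_{k\leq K}2^{-k}\big(\sup_{t\in[\frac{1}{k},T]}|u_{n_j}(t)-u(t)|_{-1/2}\wedge1\big)<\varepsilon/2$, so that $d(u_{n_j},u)<\varepsilon$. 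This establishes the desired relative compactness.

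The only genuinely delicate points are the use of the compact embedding $\mathbb{H}^{1/2}\hookrightarrow\mathbb{H}^{-1/2}$ to furnish pointwise precompactness in the state space $\mathbb{H}^{-1/2}$ (where the $|\cdot|_{1/2}$-bounds of $(i)$ become compactness), and the bookkeeping in the diagonal argument, which is needed precisely because $\mathcal{X}_2$ is a countable-slab intersection rather than a single space of continuous functions. Both are standard once set up, so I do not expect any step to present a serious obstacle.
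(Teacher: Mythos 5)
Your proposal is correct and follows essentially the same route as the paper's own proof: Arzel\`a--Ascoli on each slab $[\frac{1}{k},T]$ (with the compact embedding $\mathbb{H}^{1/2}\subset\mathbb{H}^{-1/2}$ giving pointwise precompactness from $(i)$ and condition $(ii)$ giving equicontinuity), followed by a diagonal extraction and the standard tail estimate for the metric $d$. You have merely spelled out the details that the paper leaves implicit, and all of them check out.
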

\begin{proof}
Let $\{u_n\}$ be a sequence in $\mathcal{K}$. For any $k\in\mathbb{N}$,    using the Arzela-Ascoli theorem  and the conditions (i) and (ii),  we can find a subsequence $\{u_n^{k}\}$ of the sequence $\{u_n^{k-1}\}$,
which is convergent in $C([\frac{1}{k},T];\mathbb{H}^{-1/2})$.  Then the result follows from the diagonal argument.
\end{proof}

\subsection{Standard Borel space}

We present the definitions of countably generated Borel spaces and standard
Borel spaces in the sense of Parthasarathy (cf. \cite[Chapter V, Definition 2.1 and 2.2]{Liang}).

\begin{definition}
$($Countably generated Borel space$)$ A
Borel space $(\mathbb{X}, \mathscr{B}_{\mathbb{X}})$ is said to be countably generated if there exists a denumerable class
$\mathcal{D} \subset \mathscr{B}_{\mathbb{X}}$ such that $\mathcal{D}$ generates $\mathscr{B}_{\mathbb{X}}$.
\end{definition}

\begin{definition}\label{de5}
$($Standard Borel space$)$ A countably
generated Borel space $(\mathbb{X}, \mathscr{B}_{\mathbb{X}})$ is called standard if there exists a complete separable mertic
space $(\mathbb{Y},\mathscr{B}_{\mathbb{Y}})$ such that the $\sigma$-algebras $\mathscr{B}_{\mathbb{X}}$ and $\mathscr{B}_{\mathbb{Y}}$ are $\sigma$-isomorphic.
\end{definition}

To apply the Jakubowski's version of the Skorokhod theorem, we recall the following result from \cite{Liang}.

\begin{theorem}\label{th2}$($Theorem B.4 in \cite{Liang}$)$
Let $(\mathbb{X}, \mathscr{B}_{\mathbb{X}})$ be any standard Borel space. Suppose that $\{f_m\}_{m\in\mathbb{N}}$ is an
$\mathscr{B}_{\mathbb{X}}$-measurable sequence from $\mathbb{X}$ to $\mathbb{R}$, which separate the points of $\mathbb{X}$. Denote by $\sigma_0(\mathbb{X})  $ the $\sigma$-algebra generated by  $\{f_m\}_{m\in\mathbb{N}}$. Then $\sigma_0(\mathbb{X})=\mathscr{B}_{\mathbb{X}}$.
\end{theorem}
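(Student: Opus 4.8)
The plan is to establish the nontrivial inclusion $\mathscr{B}_{\mathbb{X}} \subseteq \sigma_0(\mathbb{X})$; the reverse inclusion $\sigma_0(\mathbb{X}) \subseteq \mathscr{B}_{\mathbb{X}}$ is immediate since each $f_m$ is $\mathscr{B}_{\mathbb{X}}$-measurable. First I would package the family $\{f_m\}$ into a single map $F:\mathbb{X}\to\mathbb{R}^{\mathbb{N}}$, $F(x):=(f_m(x))_{m\in\mathbb{N}}$, where $\mathbb{R}^{\mathbb{N}}$ carries its product topology (a Polish space) and the associated Borel (= product) $\sigma$-algebra $\mathscr{B}(\mathbb{R}^{\mathbb{N}})$. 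Two facts are then recorded. First, since the coordinate projections $\pi_m$ generate $\mathscr{B}(\mathbb{R}^{\mathbb{N}})$ and $f_m=\pi_m\circ F$, one has the identity $\sigma_0(\mathbb{X})=F^{-1}(\mathscr{B}(\mathbb{R}^{\mathbb{N}}))$; equivalently, $\sigma_0(\mathbb{X})$ is the smallest $\sigma$-algebra making $F$ measurable, and it is countably generated, e.g.\ by the sets $\{f_m<q\}$ with $m\in\mathbb{N}$, $q\in\mathbb{Q}$. Second, because $\{f_m\}$ separates the points of $\mathbb{X}$, the map $F$ is injective. Using Definition \ref{de5}, I would fix a complete separable metric space $(\mathbb{Y},\mathscr{B}_{\mathbb{Y}})$ together with a $\sigma$-isomorphism $\Psi:(\mathbb{X},\mathscr{B}_{\mathbb{X}})\to(\mathbb{Y},\mathscr{B}_{\mathbb{Y}})$, and set $G:=F\circ\Psi^{-1}:\mathbb{Y}\to\mathbb{R}^{\mathbb{N}}$, which is an injective Borel map between standard Borel spaces. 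It then suffices to prove the analogous identity for $G$ and transport it back along $\Psi$.

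The decisive ingredient is the Lusin--Souslin theorem from descriptive set theory: an injective Borel-measurable map between standard Borel spaces has a Borel image and is a Borel isomorphism onto that image. Applying it to $G$, the image $G(\mathbb{Y})$ is a Borel subset of $\mathbb{R}^{\mathbb{N}}$ and $G$ is a Borel isomorphism of $\mathbb{Y}$ onto $G(\mathbb{Y})$. Consequently, for every $B\in\mathscr{B}_{\mathbb{Y}}$ the image $G(B)$ is Borel in $G(\mathbb{Y})$, hence of the form $C\cap G(\mathbb{Y})$ for some $C\in\mathscr{B}(\mathbb{R}^{\mathbb{N}})$, whence $B=G^{-1}(C)\in G^{-1}(\mathscr{B}(\mathbb{R}^{\mathbb{N}}))$. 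This gives $\mathscr{B}_{\mathbb{Y}}\subseteq G^{-1}(\mathscr{B}(\mathbb{R}^{\mathbb{N}}))$, and since $G$ is Borel the reverse inclusion is trivial, so $G^{-1}(\mathscr{B}(\mathbb{R}^{\mathbb{N}}))=\mathscr{B}_{\mathbb{Y}}$. Transporting this equality along the $\sigma$-isomorphism $\Psi$ yields $F^{-1}(\mathscr{B}(\mathbb{R}^{\mathbb{N}}))=\mathscr{B}_{\mathbb{X}}$, which by the first recorded fact is precisely the desired $\sigma_0(\mathbb{X})=\mathscr{B}_{\mathbb{X}}$.

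The entire weight of the argument rests on the Lusin--Souslin theorem, and this is the main obstacle: proving it from scratch is not routine, as it relies on Souslin's separation theorem for analytic sets together with the fact that a Borel injection sends Borel sets to sets that are both analytic and coanalytic. Rather than reproving it, I would invoke it as a standard fact (as is done in \cite{Liang}). The role of the standing hypothesis that $\mathbb{X}$ is a \emph{standard} Borel space is exactly to make this theorem applicable; without standardness a countably generated point-separating sub-$\sigma$-algebra need not coincide with $\mathscr{B}_{\mathbb{X}}$. As an alternative route I would note Blackwell's theorem: $\sigma_0(\mathbb{X})$ is a countably generated sub-$\sigma$-algebra of $\mathscr{B}_{\mathbb{X}}$ whose atoms are singletons (by point separation), the same as the atoms of $\mathscr{B}_{\mathbb{X}}$, and Blackwell's theorem then forces equality. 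Either way, the only genuine difficulty is the appeal to this descriptive-set-theoretic machinery, whereas the measure-theoretic bookkeeping—the preimage identity, the injectivity of $F$, and the transport along $\Psi$—is elementary.
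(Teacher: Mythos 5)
Your proposal is correct, but note that the paper itself contains no proof of this statement: Theorem \ref{th2} is quoted verbatim from \cite{Liang} (Theorem B.4 there), so there is nothing internal to compare against, and your argument is precisely the standard proof of that cited fact. Your route — packaging $\{f_m\}$ into an injective Borel map $F:\mathbb{X}\to\mathbb{R}^{\mathbb{N}}$ with $F^{-1}(\mathscr{B}(\mathbb{R}^{\mathbb{N}}))=\sigma_0(\mathbb{X})$, reducing to a Polish model, and invoking the Lusin--Souslin theorem (equivalently Kuratowski's theorem on Borel injections) to get $\mathscr{B}_{\mathbb{Y}}\subseteq G^{-1}(\mathscr{B}(\mathbb{R}^{\mathbb{N}}))$ — is the classical argument, and the Blackwell alternative you sketch is equally valid. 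One small gloss worth making explicit: the paper's Definition \ref{de5} only provides a $\sigma$-isomorphism of the $\sigma$-algebras, not a priori a point map $\Psi$; to realize it as a point bijection you should observe that $\mathscr{B}_{\mathbb{X}}$ separates points of $\mathbb{X}$ (it contains $\sigma_0(\mathbb{X})$, which separates points by hypothesis), so its atoms are singletons and the abstract $\sigma$-isomorphism with the Borel $\sigma$-algebra of a Polish space is induced by a bimeasurable bijection, as in Parthasarathy's development. With that sentence added, your proof is complete modulo the standard descriptive-set-theoretic input you correctly identify as the only genuinely nontrivial ingredient.
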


\subsection{Stochastic Gronwall's lemma}\label{subsec3}

We recall the following stochastic Gronwall's lemma (cf. \cite[Lemma
5.3]{GZ1}).
\begin{lemma}\label{appen2}
Fix $T>0$. Assume that
$X,Y,Z,R:[0,T)\times\Omega\rightarrow\mathbb{R}$ are real-valued,
non-negative stochastic process. Let $\tau<T$ be a stopping time
such that $$\mathbf{E}\int_0^\tau(RX+Z)ds<\infty.$$ Assume,
moreover, that for some fixed constant $\kappa$,
$$\int_0^\tau{R}ds<\kappa,~~~\text{a.s.}$$
Suppose that for all stopping times $0\leq\tau_a<\tau_b\leq\tau$
\begin{eqnarray}{\mathbf{E}}
\bigg[\sup_{t\in[\tau_a,\tau_b]}X+\int_{\tau_a}^{\tau_b}Yds\bigg]\leq c_0{\mathbf{E}}\bigg[X(\tau_a)+\int_{\tau_a}^{\tau_b}(RX+Z)ds\bigg],\label{a.1}
\end{eqnarray}
where $c_0$ is a constant independent of the choice of
$\tau_a,\tau_b$. Then
\begin{eqnarray}{\mathbf{E}}\bigg[\sup_{t\in[0,\tau]}X+\int_{0}^{\tau}Yds\bigg]\leq c{\mathbf{E}}\bigg[X(0)+\int_{0}^{\tau}Zds\bigg],\label{a.2}
\end{eqnarray}
where $c=c_{c_o,T,\kappa}$.
\end{lemma}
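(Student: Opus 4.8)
The plan is to prove the lemma by iterating the hypothesis (\ref{a.1}) over a random partition of the stochastic interval $[0,\tau]$ whose \emph{length is deterministic}. Set $\delta := \tfrac{1}{2c_0}$ and define stopping times $\sigma_0 := 0$ and, recursively, $\sigma_{i+1} := \inf\{t\ge \sigma_i : \int_{\sigma_i}^t R\,ds \ge \delta\}\wedge \tau$. Since $t\mapsto \int_0^t R\,ds$ is continuous and nondecreasing, each $\sigma_i$ is a genuine $(\mathscr{F}_t)$-stopping time, and on every \emph{complete} sub-interval (one with $\sigma_{i+1}<\tau$) exactly $\int_{\sigma_i}^{\sigma_{i+1}} R\,ds = \delta$ is consumed. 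Because $\int_0^\tau R\,ds < \kappa$ almost surely, at most $\kappa/\delta = 2c_0\kappa$ complete sub-intervals can occur; hence for the deterministic integer $N := \lfloor 2c_0\kappa\rfloor + 1$ one has $\sigma_N = \tau$ a.s. This is the crucial mechanism: the uniform a.s. bound on $\int R$ converts a random number of pieces into a fixed one, so the iteration terminates after finitely many steps.

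On each $[\sigma_i,\sigma_{i+1}]$ I would apply (\ref{a.1}) with $\tau_a=\sigma_i$, $\tau_b=\sigma_{i+1}$ (degenerate intervals $\sigma_i=\sigma_{i+1}$ contribute nothing and are harmless), and bound the troublesome cross term by $\int_{\sigma_i}^{\sigma_{i+1}} R X\,ds \le \big(\sup_{[\sigma_i,\sigma_{i+1}]} X\big)\int_{\sigma_i}^{\sigma_{i+1}} R\,ds \le \delta \sup_{[\sigma_i,\sigma_{i+1}]} X$. With $c_0\delta = \tfrac12$ this lets me absorb half of $\mathbf{E}[\sup X]$ into the left-hand side, giving
\begin{equation*}
\mathbf{E}\Big[\sup_{t\in[\sigma_i,\sigma_{i+1}]} X\Big] + 2\,\mathbf{E}\int_{\sigma_i}^{\sigma_{i+1}} Y\,ds \le 2c_0\,\mathbf{E}[X(\sigma_i)] + 2c_0\,\mathbf{E}\int_{\sigma_i}^{\sigma_{i+1}} Z\,ds.
\end{equation*}
Writing $a_i := \mathbf{E}[X(\sigma_i)]$ and $b_i := \mathbf{E}\int_{\sigma_i}^{\sigma_{i+1}} Z\,ds$, the left side dominates $a_{i+1}$, so $a_{i+1}\le 2c_0 a_i + 2c_0 b_i$; unrolling yields $a_i \le (2c_0)^i\big(a_0 + \sum_j b_j\big) \le (2c_0)^N\big(\mathbf{E}[X(0)] + \mathbf{E}\int_0^\tau Z\,ds\big)$ for all $i\le N$. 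The integrability assumption $\mathbf{E}\int_0^\tau(RX+Z)\,ds<\infty$ guarantees every $a_i,b_i$ is finite, so no $\infty-\infty$ arises in the absorption step.

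Finally I would sum the single-interval bounds over $i=0,\dots,N-1$. By non-negativity, $\sup_{[0,\tau]} X \le \sum_i \sup_{[\sigma_i,\sigma_{i+1}]} X$ and $\int_0^\tau Y\,ds = \sum_i \int_{\sigma_i}^{\sigma_{i+1}} Y\,ds$, so combining the displayed estimate with the geometric bound on $a_i$ produces $\mathbf{E}\big[\sup_{[0,\tau]} X + \int_0^\tau Y\,ds\big] \le c\big(\mathbf{E}[X(0)] + \mathbf{E}\int_0^\tau Z\,ds\big)$ with $c$ of the form $(2c_0)^{N+1}$, i.e.\ depending only on $c_0$ and $\kappa$ (and, through the finiteness requirements, possibly on $T$), as claimed in (\ref{a.2}).

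The main obstacle I anticipate is not the analytic estimate but the careful construction and measurability of the partition: one must verify that each $\sigma_i$ is a stopping time, that the count $N$ is genuinely deterministic (this rests entirely on the uniform a.s.\ bound $\int_0^\tau R\,ds < \kappa$), and that the absorption inequality $\int_{\sigma_i}^{\sigma_{i+1}} R\,ds \le \delta$ holds also on the terminal interval ending at $\tau$. Correctly handling degenerate intervals and checking that the integrability hypothesis propagates through the recursion are the remaining bookkeeping points; once the partition is in place, the proof reduces to the one-step \emph{absorb-and-iterate} argument above.
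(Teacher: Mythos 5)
Your proposal is correct, and it coincides with the argument behind the paper's proof-by-citation: the paper states this lemma without proof, referring to \cite{GZ1}, and your absorb-and-iterate scheme --- partitioning $[0,\tau]$ by stopping times at which $\int R\,ds$ accumulates $\delta=\tfrac{1}{2c_0}$, using $\int_0^\tau R\,ds<\kappa$ a.s.\ to make the number of pieces deterministic, absorbing $c_0\delta\,\mathbf{E}[\sup X]=\tfrac12\mathbf{E}[\sup X]$ into the left side of (\ref{a.1}), and iterating the resulting recursion $a_{i+1}\le 2c_0(a_i+b_i)$ --- is essentially the proof given there. The only points you leave implicit are standard: one may assume $\mathbf{E}[X(0)]<\infty$ (otherwise (\ref{a.2}) is vacuous) and $c_0\ge 1$ without loss of generality, which together with $\mathbf{E}\int_0^\tau(RX+Z)\,ds<\infty$ makes each $\mathbf{E}\big[\sup_{[\sigma_i,\sigma_{i+1}]}X\big]$ finite before the absorption step, exactly as you indicate.
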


%%%%%%%%%%%%%%%%%%%%%%%%%%%%%%%%%%%%%%%%%%%%%%
%% Funding information, if any,             %%
%% should be provided in the                %%
%% funding section.                         %%
%%%%%%%%%%%%%%%%%%%%%%%%%%%%%%%%%%%%%%%%%%%%%%
\vspace{3mm}

%Declarations

%\noindent\textbf{Data availability} Data sharing is not applicable to this article as no datasets were generated or analysed during
%the current study.
%
%
%\noindent\textbf{Conflict of interest} On behalf of all authors, the corresponding author states that there is no conflict of interest.
%
%
%
%\vspace{0.5cm}
%\noindent\textbf{Acknowledgment}   This work is supported by National Key R\&D program of China (No. 2023YFA1010101).
%The research of S. Li is also supported by NSFC (No.~12371147). The research of W. Liu is also supported by NSFC (No.~12171208, 12090011,12090010) and the PAPD of Jiangsu Higher Education Institutions.
%

\end{document}